    \DeclareMathAlphabet{\vect}{OT1}{ptm}{bx}{it}}{
    \newcommand{\vect}[1]{\boldsymbol{#1}}} 
\newcommand{\diff}[2]{\frac{\partial#1}{\partial#2}} 
\newcommand{\grad}[1][]{\nabla_{#1}} 
\renewcommand{\div}[1][]{\mathrm{div}_{#1}}
\newcommand{\lapl}[1][]{\Delta_{#1}} 
\renewcommand{\natural}{\mathbb{N}} 
\newcommand{\Real}{\mathbb{R}} 
\newcommand{\dd}{\,{\mathrm d}} 
\newcommand{\abs}[2][y]{\if#1y\left\fi\lvert#2\if#1y\right\fi\rvert} 
\newcommand{\norm}[2][y]{\if#1y\left\fi\lVert#2\if#1y\right\fi\rVert} 
\DeclareMathOperator*{\argmin}{arg\,min} 
\DeclareMathOperator*{\tp}{\textstyle\bigotimes} 
\newcommand{\BIGOP}[1]{\mathop{\mathchoice%
{\raise-0.22em\hbox{\huge $#1$}}%
{\raise-0.05em\hbox{\Large $#1$}}{\hbox{\large $#1$}}{#1}}}
\newcommand{\bigtimes}{\BIGOP{\times}} 
\newcommand{\compEmb}{\Subset} 
\newcommand{\supp}{\mathrm{supp}}
\newcommand{\ii}[1]{[#1]} 
\newcommand{\seq}[2]{\left(#1\right)_{#2}} 
\newcommand{\D}{\mathsf{D}}
\newcommand{\trans}{\mathrm{T}} 
\newcommand{\Tau}{\mathrm{T}} 
\newcommand{\vel}{\vect{u}} 
\newcommand{\conf}{\vect{q}} 
\newcommand{\ponf}{\vect{p}} 
\newcommand{\nb}{N\negmedspace+\negmedspace1} 
\newcommand{\wi}{{\rm Wi}} 
\newcommand{\maxw}{\mathsf{M}} 
\newcommand{\HH}{{\mathrm H}}
\newcommand{\LL}{{\mathrm L}}
\newcommand{\CC}{{\mathrm C}}
\newcommand{\CIC}{\CC^\infty_0} 
\newcommand{\HDM}{\HH(\D;\maxw)}
\newcommand{\LTOOM}{\LL^2_{1/\maxw}(\D)} 
\newcommand{\HOM}{\HH^1_\maxw(\D)}
\newcommand{\LTM}{\LL^2_\maxw(\D)}
\newcommand{\LOL}{\LL^1_{\loc}(\D)}
\newcommand{\HDiMi}{\HH(D_i;M_i)}
\newcommand{\HOMi}{\HH^1_{M_i}(D_i)}
\newcommand{\LTOOMi}{\LL^2_{1/M_i}(D_i)}
\newcommand{\LTMi}{\LL^2_{M_i}(D_i)}
\newcommand{\LTMj}{\LL^2_{M_j}(D_j)}
\newcommand{\tcdot}{\vect{\cdot}} 
\newcommand{\sumi}{\sum_{i=1}^N} 
\newcommand{\sumj}{\sum_{j=1}^N} 
\newcommand{\allibutj}{\substack{i=1\\i\neq j}}
\newcommand{\psp}[1]{\sp{(#1)}} 
\newcommand{\sspsp}[1]{\sp{\smash{(#1)}}} 
\newcommand{\onehalf}{{\textstyle\frac{1}{2}}}
\newcommand{\loc}{\mathrm{loc}}
\newcommand{\smartxi}{\bigg(\frac{s}{p_d}\bigg)}
\newcommand{\lsp}{\thinspace} 
\newtheorem{theorem}{Theorem}[section]
\newaliascnt{lemma}{theorem}
\newtheorem{lemma}[lemma]{Lemma}
\newaliascnt{corollary}{theorem}
\newtheorem{corollary}[corollary]{Corollary}
\theoremstyle{remark}
\newtheorem{remark}{Remark}
\theoremstyle{definition}
\newtheorem{hypothesis}{Hypothesis}
\Crefname{hypothesis}{Hypothesis}{Hypotheses}
\newtheoremstyle{Algorithmicky}{.5\baselineskip plus .2\baselineskip minus .2 \baselineskip}{.5\baselineskip plus .2\baselineskip minus .2 \baselineskip}{\rmfamily\itshape}{}{\bfseries}{}{ }{\thmname{#1}\thmnumber{ #2}.\thmnote{ (\mdseries{#3})}}
\theoremstyle{Algorithmicky}
\newtheorem{alg}{Algorithm}
\Crefname{alg}{Algorithm}{Algorithms}
\numberwithin{equation}{section}
\numberwithin{figure}{section}
\newcounter{constant}
\newcommand{\const}{\@ifstar
    {\constStar}%
    {\constNoStar}%
} 
\newcommand{\constNoStar}{\addtocounter{constant}{1}C_{\arabic{constant}}}
\newcommand{\constStar}{C_{\arabic{constant}}}
\newcommand{\resetconst}{\setcounter{constant}{0}} 
\newcounter{shiftedConstant}
\newcommand{\shiftedconst}[1]{\setcounter{shiftedConstant}{\value{constant}}\addtocounter{shiftedConstant}{#1}C_{\arabic{shiftedConstant}}} 
\begin{document}
\title{Greedy approximation of high-dimensional Ornstein--Uhlenbeck operators}
\thanks{The first author acknowledges a doctoral scholarship from the Chilean government's \emph{Comisi\'on Nacional de Investigaci\'on Cient\'{\i}fica y Tecnol\'ogica}. The second author was supported by the EPSRC Science and Innovation award to the Oxford Centre for Nonlinear PDE (EP/E035027/1).}
\author{Leonardo E. Figueroa \and Endre S\"uli}
\address{L. E. Figueroa (corresponding author)\\
    Centro de Investigaci\'on en Ingenier\'{\i}a Matem\'atica (CI\textsuperscript{2}MA), Universidad de Concepci\'on, Casilla 160-C, Concepci\'on, Chile.}
\email{leonardo@leonardofigueroa.org}
\address{E. S\"uli\\
    Mathematical Institute, University of Oxford, 24--29 St Giles', Oxford OX1 3LB, United Kingdom.}
\email{Endre.Suli@maths.ox.ac.uk}

\subjclass[2000]{65N15, 65D15, 41A63, 41A25} 
\keywords{Nonlinear approximation, Greedy algorithm, Fokker--Planck equation}

\begin{abstract}
We investigate the convergence of a nonlinear approximation method introduced
by Ammar et al.\ (J.\ Non-Newtonian Fluid Mech.\ 139:153--176, 2006) for the
numerical solution of high-dimensional Fokker--Planck equations featuring in
Navier--Stokes--Fokker--Planck systems that arise in kinetic models of dilute
polymers. In the case of Poisson's equation on a rectangular domain in
$\Real^2$, subject to a homogeneous Dirichlet boundary condition, the
mathematical analysis of the algorithm was carried out recently by Le Bris,
Leli\`evre and Maday (Const.\ Approx.\ 30:621--651, 2009), by exploiting its
connection to greedy algorithms from nonlinear approximation theory, explored,
for example, by DeVore and Temlyakov (Adv.\ Comput.\ Math.\ 5:173--187, 1996);
hence, the variational version of the algorithm, based on the minimization of a
sequence of Dirichlet energies, was shown to converge. Here, we extend
the convergence analysis of the pure greedy and orthogonal greedy algorithms
considered by Le Bris et al.\ to a technically more complicated
situation, where the Laplace operator is replaced by an
Ornstein--Uhlenbeck operator of the kind that appears in
Fokker--Planck equations that arise in bead-spring chain type kinetic polymer
models with finitely extensible nonlinear elastic potentials, posed on a
high-dimensional Cartesian product configuration space $\D = D_1 \times \dotsm
\times D_N$ contained in $\Real^{N d}$, where each set $D_i$, $i=1, \dotsc, N$,
is a bounded open ball in $\Real^d$, $d = 2, 3$.
\end{abstract}

\maketitle

\section{Introduction}
High-dimensional partial differential equations are ubiquitous in mathematical
models in science, engineering and finance. They arise in a number of areas,
including, for example, kinetic theory, molecular dynamics, quantum mechanics,
and uncertainty quantification based on polynomial chaos expansions, to name
only a few.

The purpose of the present paper is to explore the convergence of a numerical
algorithm that was recently proposed in the engineering literature in a
succession of papers by Ammar, Mokdad, Chinesta, Keunings and collaborators
\cite{AMChK06,AMChK07,ANDGCCh,GAChC,ChALK}, for the numerical solution of
high-dimensional Fokker--Planck equations in kinetic models of polymeric
fluids under the names \emph{Separated Representation} and \emph{Proper
Generalized Decomposition}. A variant with a discretization based on spectral
methods instead of the finite element methods preferred by Ammar et al.\ was
presented by Leonenko and Phillips \cite{LP:2009}. A similar method was
considered independently by Nouy
\cite{Nouy:2007,Nouy:2008} and Nouy \& Le Ma{\^{\i}}tre \cite{NL} under the
name \emph{Power type Generalized Spectral Decomposition}, for the numerical
solution of stochastic partial differential equations, although the historical
roots of the technique can be traced back to the work of Schmidt
\cite{Schmidt}. Ammar et al.\ and Nouy report that the algorithm performs well
in numerical experiments and comment that it extends to a large variety of
partial differential equations.

In the simplified mathematical setting of Poisson's equation $-\lapl u = f$
posed on the rectangular domain $\Omega = \Omega_x \times \Omega_y$, where
$\Omega_x$ and $\Omega_y$ are bounded open subintervals of $\Real$, subject to
a homogeneous Dirichlet boundary condition on $\partial \Omega$, the
convergence of the algorithm was shown in a recent paper by Le Bris, Leli\`evre
and Maday \cite{LLM}, by drawing on connections with greedy algorithms from
nonlinear approximation theory (cf.\ DeVore and Temlyakov \cite{DvT}). In
\cite{LLM}, the solution was represented as a sum
\begin{equation}\label{tensor1}
u(x, y) =\sum_{n\geq 1} r_n(x)\, s_n(y)
\end{equation}
by iteratively determining functions $x\in\Omega_x \mapsto r_n(x)$ and $y \in \Omega_y \mapsto s_n(y)$,
$n \geq 1$, such that
for all $n$, the product $(x,y)\in \Omega \mapsto r_n(x)\,s_n(y)$ is the best approximation in the norm
of the Sobolev space $\HH^1_0(\Omega)$ to the solution $(x,y) \in \Omega \mapsto v(x, y)$ of the Poisson
equation
\[-\Delta v(x, y) = f (x, y)+\lapl\left(\sum_{k\leq n-1}
r_k(x) s_k(y)\right),\]
subject to a homogeneous Dirichlet boundary condition, in terms of a single
function of the factorized form $r(x)\,s(y)$; Le Bris et al.\ thus show that it is
possible to give a sound mathematical basis to the algorithm proposed by Ammar
et al., provided that one considers a variational form of the approach that
manipulates minimizers of Dirichlet energies instead of stationary points to
the associated Euler--Lagrange equations (in the follow-up paper
\cite{CEL:2011} by Canc\`es, Ehrlacher and Leli\`evre it was further shown that
one can also work with local---yet still energy-decreasing---minimizers
provided that one stays within the two-fold tensor product setting of
\eqref{tensor1}). In order to reformulate the approach
in such a variational setting, the arguments in \cite{LLM} crucially rely on
the fact that the Laplace operator is self-adjoint, and as noted by the authors
of \cite{LLM}, the analysis does not apply exactly to the actual implementation
of the method as described in the papers by Ammar et al., where stationary
points of the Euler--Lagrange equations associated with the Dirichlet energies
are computed instead. Indeed, since minimizers of Dirichlet energies in the
approach of Le Bris et al.\ on the one hand and stationary points
of the associated Euler--Lagrange equations in the approach of Ammar et al.\ on
the other are each sought in \emph{nonlinear} manifolds embedded in a Sobolev
space, rather than over the entire Sobolev space (which is a normed
\emph{linear} space), the two approaches are not equivalent. The authors of
\cite{LLM} also comment that: ``Likewise, it is unclear to us how to provide a
mathematical foundation of the approach for nonvariational situations, such as
an equation involving a differential operator that is not self-adjoint.'' This
latter remark is particularly pertinent in the context of Fokker--Planck
equations for kinetic bead-spring chain models for dilute polymers, of the kind
considered by Ammar et al., where the differential operator in configuration
space featuring in the Fokker--Planck equation, a generalized
Ornstein--Uhlenbeck operator, is a non-self-adjoint elliptic operator with a
drift term that involves an unbounded potential.

It is this last point that the present paper is aimed at addressing: we shall
be concerned with the numerical approximation of high-dimensional
Fokker--Planck equations that arise in bead-spring chain type kinetic models of
dilute polymers on the Cartesian product domain $\Omega \times \D$, where
$\Omega \subset \Real^d$, $d = 2, 3$, is the physical (flow) domain, and the
\emph{configuration space} $\D$ is the $N$-fold Cartesian product
$\bigtimes_{i=1}^N D_i$ of sets $D_i \subset \Real^d$, $i = 1, \dotsc, N$, $N
\geq 2$, each of which is a bounded open ball in $\Real^d$. Here, $N$ denotes
the number of springs connecting, in a linear fashion, the $\nb$ beads in the
bead-spring chain model (cf.\ \cref{modelPoly}).
Proceeding as in \cite{BS:2007,BS:2008,BS:2009,BS:2011a,BS:2011b}, we
rewrite the Ornstein--Uhlenbeck operator, a
non-self-adjoint elliptic operator with respect to the configuration space
variable $\conf$ featuring in the Fokker--Planck equation whose drift term
contains an unbounded potential, as a degenerate, but now self-adjoint,
elliptic operator on a Maxwellian-weighted Sobolev space. We then perform a
nonlinear approximation of the analytical solution
$\psi\colon(\conf_1,\dotsc,\conf_N) \in \D \mapsto \psi(\conf_1, \dotsc,
\conf_N)$ to this high-dimensional degenerate elliptic boundary-value problem on
the appropriate Maxwellian-weighted Sobolev space by separated representations of the form
\[\sum_{k = 1}^K \prod_{i=1}^N \psi_k\psp{i}(\conf_i),\]
where the factors
$\psi_k\psp{i}$, $k = 1, \dotsc ,K$, are defined on the $d$-dimensional domain
$D_i$, $i = 1, \dotsc, N$. Instead of being selected from an \emph{a
priori} fixed set, the factors $\psi_k^{(i)}$, $i=1, \dotsc, N$, are obtained,
$N$ at a time, for each $k \in \{1, \dotsc, K\}$, as the best approximation (in
a sense to be made precise in \cref{sec:SR}) among all possible such
factors. The (potentially large) number of terms $K$ is likewise not fixed in advance, but
depends on a termination criterion.

The paper is structured as follows. After introducing our notational
conventions and formulating briefly an alternating direction scheme that
separates, by a fractional step method, the full Fokker--Planck equation into a
low-dimensional physical space part and a high-dimensional configuration space
part, we will concentrate on the latter problem. The central difficulty in the
numerical solution of the configuration space problem is the presence of the
high-dimensional Ornstein--Uhlenbeck operator, a non-self-adjoint elliptic
operator whose drift term contains an unbounded potential. In \cref{sec:operator} we show that
the configuration space problem can be restated, in a Maxwellian-weighted
Sobolev space, as the weak formulation of a symmetric degenerate elliptic
boundary-value problem on the high-dimensional configuration space $\D$.
\Cref{sec:SR} is devoted to the description of a separated representation
strategy for the problem, in the spirit of Le Bris et al.\
\cite{LLM}. Following \cite{LLM}, we consider a pure greedy algorithm and an
orthogonal greedy algorithm. \Cref{sec:convergence} concentrates on the
convergence of the two algorithms. We shall characterize the convergence rates
of the two greedy algorithms by invoking abstract convergence results due to
DeVore and Temlyakov \cite{DvT}. In \cref{sec:characterization}, we give
explicit necessary and sufficient conditions, in terms of Maxwellian-weighted
Sobolev spaces, for membership of the space of DeVore and Temlyakov in the case
of our degenerate elliptic problem. In \cref{sec:conclusions}, we provide
some conclusions and possible directions for further work.

At an abstract level, our convergence proof follows the arguments in
\cite{LLM}; however, the verification of certain key properties of the function
spaces involved, on the one hand, and the characterization of verifiable
sufficient conditions under which the predicted convergence rates of the two
greedy algorithms considered are observed, on the other, for the
high-dimensional degenerate elliptic problem studied herein are considerably
more complicated than in the case of Poisson's equation studied in \cite{LLM}.
The former is mostly based on tensorizing the corresponding results for the
function spaces associated with the single-spring case (i.e., the
\emph{dumbbell}) and the latter relies on shift-theorems for degenerate
elliptic operators in Maxwellian-weighted Sobolev spaces and delicate results
from the spectral theory of self-adjoint degenerate elliptic operators, which
we were unable to find in the literature; these are described in
\cref{sec:characterization} and \cref{sec:ev-asymptotics}, respectively.
Appendices \ref{sec:CPAIL-results} and \ref{sec:distrResults} collect a number
of technical results that are used throughout the paper.

\subsection{Notation} We denote by $\ii{k}$ the integer interval $\{i \in
\natural \colon 1 \leq i \leq k\}$. We shall denote sequences and
arrangements of elements $a_i$ indexed by indices $i$ in an index set
$\mathcal{I}$ by $\seq{a_i}{i \in \mathcal{I}}$.

We shall write $\conf = (\conf_1, \dotsc, \conf_N) \in D_1 \times \dotsm \times
D_N = \bigtimes_{i\in\ii{N}} D_i =: \D$. Given $N$ real-valued functions $f_i$,
each defined on the corresponding set $D_i$, we denote by $\tp_{i\in\ii{N}} f_i$
their \emph{tensor product}; i.e., the function
\[ \conf \in \D \mapsto \prod_{i\in\ii{N}} f_i(\conf_i). \]
We extend this notation in three ways.
Firstly, as the tensor-product operation is order-dependent, we will use
subscripts on the $\otimes$ and the $\tp$ signs to denote where on $\conf \in
\D$ the function, or functions, following them act; e.g., $\tp_{i \in \ii{N}
\setminus \{ j \}} f_i \otimes_j f_j$ evaluated on $\conf \in \D$ is
$f_j(\conf_j)\prod_{i \in \ii{N} \setminus \{ j \}} f_i(\conf_i)$.
Secondly, we will
use the same notation for the sets resulting from the tensor products of
members of function spaces: suppose that $F_i$ is a nonempty set of real-valued
functions defined on $D_i$, $i \in \ii{N}$; we then write $\tp_{i\in\ii{N}} F_i
:= \{ \tp_{i\in\ii{N}} f_i \colon f_i \in F_i,\ i \in \ii{N} \}$. Thirdly, if
exactly one of the factors is vector-valued, the products involving it at the
time of evaluation must be interpreted as scalar-vector products implying that
the resulting tensor product will be vector-valued too.

The symbol $\compEmb$ will stand for the compact embedding relation. The
support of a real-valued function $f$ will be denoted by $\supp(f)$.


Given a measurable and almost everywhere positive real-valued function $w$
defined on an open set $E \subset \Real^n$; i.e., a \emph{weight}, we denote by
$\LL^2_w(E)$ the Lebesgue space of square-integrable functions with respect to
the weight $w$, equipped with its usual norm,
\[ \norm{\varphi}_{\LL^2_w(E)} := \bigg(\int_E \abs{\varphi}^2 w\bigg)^{1/2}. \]
We also define the $w$-weighted Sobolev space $\HH^m_w(E)$ and its norm $\norm{\tcdot}_{\HH^m_w(E)}$ by
\begin{gather*}
\HH^m_w(E) := \left\{ \varphi \in \LL^2_w(E) \cap \LL^1_{\loc}(E) \colon \partial_\alpha \varphi \in \LL^2_w(E),\ \abs{\alpha}\leq m \right\},\\
\norm{\varphi}_{\HH^m_w(E)} := \bigg(\sum_{\abs{\alpha} \leq m} \norm{\partial_\alpha \varphi}_{\LL^2_w(E)}^2\bigg)^{1/2} \quad \forall\,\varphi \in \HH^m_w(E).
\end{gather*}

We shall suppose henceforth that $\Omega$ is a bounded open set in $\Real^d$
with a sufficiently regular (say, Lipschitz continuous) boundary, and denote by
$\mathfrak{n}_{\vect{x}}$ and $\mathfrak{n}_{\conf_i}$ the unit outward normal
vector defined (a.e.\ with respect to the surface measure) on $\partial\Omega$
and $\partial{D_i}$, $i \in \ii{N}$, respectively.

\subsection{Fokker--Planck equation}

\begin{figure}[t]
\centering\includegraphics[scale=0.7]{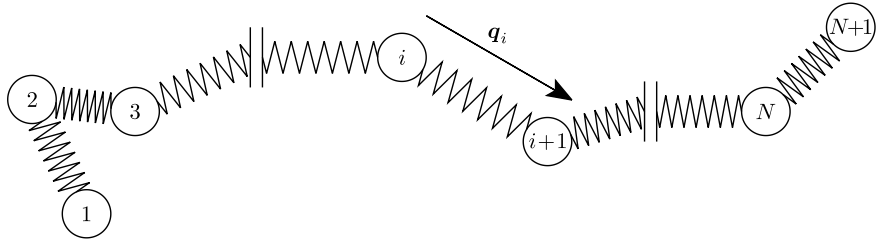}
\caption{Bead-spring chain with $N$ springs and $\nb$ beads. Adapted from
Figure 11.4-1 of \cite{Bird2}}
\label{modelPoly}
\end{figure}

The \emph{spring forces} in the model are given by functions $\vect{F}_i \colon
D_i \rightarrow \Real^d$, which have the form $\vect{F}_i(\ponf) = U_i'(\onehalf
\abs{\ponf}^2) \ponf$, $\ponf \in D_i := B(0,\sqrt{b_i}) \subset \Real^d$, $b_i
> 0$, $i \in \ii{N}$, and the $U_i\colon[0,b_i/2) \rightarrow \Real$, the
\emph{spring potentials}, are such that $U_i(s) \to +\infty$ as $s \to b_i/2_-$.
It follows that $\vect{F}_i(\ponf) = -\vect{F}_i(-\ponf)$ for all $\ponf \in
D_i$. Typical examples include the \emph{FENE (Finitely Extensible Nonlinear
Elastic) model} \cite{Warner:1972} with
\begin{equation}\label{FENE-model}
U_i(s) = - \frac{b_i}{2}\ln \left(1 - \frac{2s}{b_i}\right) \qquad\text{and}\qquad \vect{F}_i(\conf_i) = \frac{1}{1 - \abs{\conf_i}^2\!/b_i} \conf_i,
\end{equation}
where $b_i>0$ is a parameter, and \emph{Cohen's Pad\'e approximant to
the Inverse Langevin (CPAIL) model} \cite{Cohen} with
\begin{equation}\label{CPAIL-model}
U_i(s) = \frac{s}{3} - \frac{b_i}{3}\ln\left(1 - \frac{2 s}{b_i}\right) \qquad\text{and}\qquad \vect{F}_i(\conf_i) = \frac{ 1 - \abs{\conf_i}^2\!/(3 b_i) }{ 1 - \abs{\conf_i}^2\!/b_i }\conf_i,
\end{equation}
where $b_i>0$ is again a parameter. We note in passing that both of these force laws are approximations to the \emph{Inverse Langevin force law} \cite{KG}
\[ \vect{F}_i(\conf_i) = \frac{\sqrt{b_i}}{3} L^{-1}\left( \frac{\abs{\conf_i}}{\sqrt{b_i}} \right) \frac{\conf_i}{\abs{\conf_i}},\]
where the $\emph{Langevin function}$ $L$ is defined by $L(t) := \coth(t)-1/t$
on $[0,\infty)$. As $L$ is strictly monotonic increasing on $[0,\infty)$ and tends to
$1$ as its argument tends to $\infty$, it follows that the function
$\abs{\conf_i} \in [0,\sqrt{b_i}) \mapsto L^{-1}(|\conf_i|/\sqrt{b_i})
 \in [0,\infty)$ is strictly monotonic increasing, with a vertical asymptote at $\abs{\conf_i} = \sqrt{b_i}$.

\begin{remark} An important spring force model, which is excluded from our
considerations, is the simple \emph{Hookean model} described by
\[ D_i = \Real^d, \qquad U_i(s) = s \qquad\text{and}\qquad \vect{F}_i(\conf_i) = \conf_i. \]
However, in many practically relevant flow regimes the physically unrealistic allowance
of the Hookean model for indefinitely extended springs outweighs its mathematical
convenience.
\end{remark}

The Fokker--Planck equation under consideration for the probability density
function $\psi$ has the following form (cf. \cite{BS:2007,BS:2008,BS:2009,BS:2011b,BS:2011a}):
\begin{subequations}\label{FP-full}
\begin{multline}\label{FP-full-body}
    \diff{\psi}{t} + \div[\vect{x}](\vel\psi)
    + \sumi \div[\conf_i]\bigg[ (\grad[\vect{x}]\vel)\conf_i\psi -
    \frac{1}{4\wi} \sumj A_{ij} (\vect{F}_j(\conf_j)\psi + \grad[\conf_j]\psi) \bigg]\\
    = \frac{(l_0/L_0)^2}{4\wi(\nb)}\lapl[\vect{x}]\psi,
    \quad (\vect{x},\conf,t) \in \Omega \times \D \times (0,T],
\end{multline}
with initial and no-flux boundary conditions
\begin{align}
\psi(\tcdot,\tcdot,0) & = \psi_0, && (\vect{x},\conf) \in \Omega \times \D,\\
\frac{(l_0/L_0)^2}{4\wi(\nb)}\grad[\vect{x}]\psi \cdot \mathfrak{n}_{\vect{x}} & = 0, && (\vect{x},\conf,t) \in \partial\Omega \times \D \times (0,T],
\end{align}
and
\begin{equation}\label{FP-full-bc-q}
\bigg[(\grad[\vect{x}] \vel) \conf_i\psi - \frac{1}{4\wi}\sumj A_{ij}(\vect{F}_j(\conf_j)\psi + \grad[\conf_j]\psi)\bigg] \cdot \mathfrak{n}_{\conf_i} = 0,\quad
i \in \ii{N},\ (\vect{x},\conf,t) \in \Omega \times \partial \D \times (0,T].
\end{equation}
\end{subequations}
Here, $\vel \colon \overline\Omega \times [0,T] \rightarrow \Real^d$ is the
flow velocity, $\wi := \lambda U_0/L_0$ is the (nondimensional) Weissenberg
number, $l_0$ is the characteristic length-scale of a spring, $\lambda$ is the
characteristic relaxation time of a spring and $L_0$ and $U_0$ are the
characteristic macroscopic length and velocity, respectively (thus, $\wi$ is
the ratio of the microscopic to macroscopic time scales). The matrix $A =
\seq{A_{ij}}{i,j\in\ii{N}}$ is symmetric and positive definite; we denote the
smallest eigenvalue of $A$ by $\lambda_{\rm min}$.

We remark that the boundary condition \eqref{FP-full-bc-q} is an ensemble of
$N$ boundary conditions, which collectively account for the full $(N d -
1)$-dimensional measure of $\partial{\D}$.

We define the partial Maxwellians $M_i$ and the (full) Maxwellian
$\maxw$ by
\begin{equation}\label{partial-maxw}
M_i(\ponf) := Z_i^{-1} \exp\left( -U_i\left( \onehalf \abs[n]{\ponf}^2 \right) \right), \quad \ponf \in D_i, \quad i \in \ii{N};
\end{equation}
\begin{equation}\label{full-maxw}
\maxw(\conf) := \prod_{i=1}^N M_i(\conf_i), \quad \vect{q} \in \D;
\end{equation}
that is, $\maxw = \tp_{i\in\ii{N}} M_i$. Here, each $Z_i$ is a positive
constant chosen so that $\int_{D_i} M_i = 1$ (we can do so
because of \cref{hyp:potential}, below). Thereby, $\int_{\D} \maxw
= 1$. We note that since $U_i$ is assumed to tend to $+\infty$
as $\conf_i$ approaches $\partial D_i$, the corresponding partial Maxwellian $M_i$
tends to $0$ as $\conf_i$ approaches $\partial D_i$, $i \in [N]$; consequently,
$\maxw$ tends to $0$ as $\conf$ approaches $\partial \D$.
The fact that the Maxwellian factorizes---which comes from the
fact that the energy stored in the chain is the sum of the potential energies
stored in each spring---will be crucial throughout the rest of this paper. For
a start, this fact allows us to write
\begin{equation}\label{Kolmogorov-regularization}
\vect{F}_j(\conf_j) \psi + \grad[\conf_j]\psi
= \psi \grad[\conf_j] U_j(\onehalf\abs[n]{\conf_j}^2) + \grad[\conf_j]\psi
= \maxw \grad[\conf_j]\left(\frac{\psi}{\maxw}\right).
\end{equation}
Multiplying \eqref{FP-full-body} by $\varphi/\maxw$, using
\eqref{Kolmogorov-regularization} and (formally) integrating by parts, the
corresponding weak form of \eqref{FP-full} is: Find $\psi =
\psi(\vect{x},\conf,t)$ such that
\begin{multline}\label{FP-weak}
\int_{\Omega \times \D} \Bigg\{ \diff{\psi}{t} \frac{\varphi}{\maxw}
+ \div[\vect{x}](\vel\psi)\frac{\varphi}{\maxw}
- \sumi \bigg[ (\grad[\vect{x}]\vel)\conf_i\psi - \sumj \frac{A_{ij}}{4\wi} \maxw \grad[\conf_j]\left(\frac{\psi}{\maxw}\right) \bigg] \cdot \grad[\conf_i] \left(\frac{\varphi}{\maxw}\right)\\
+ \frac{(l_0/L_0)^2}{4\wi(\nb)} \grad[\vect{x}]\psi \cdot \grad[\vect{x}] \varphi \frac{1}{\maxw} \Bigg\} = 0
\end{multline}
for all $\varphi = \varphi(\vect{x},\conf)$ in a suitable function space.

For the sake of convenience we define the following bilinear forms:
\begin{gather}\label{bilinear-forms}
\tilde{\mathcal{T}}(\vel;\sigma,\tau) := \int_{\Omega \times \D} \div[\vect{x}](\vel\sigma)\frac{\tau}{\maxw}, \qquad
\tilde{\mathcal{K}}(\sigma,\tau) := \frac{(l_0/L_0)^2}{4\wi(\nb)} \int_{\Omega \times \D}\grad[\vect{x}]\sigma \cdot \grad[\vect{x}] \tau \frac{1}{\maxw},\\
\mathcal{T}(\vel;\sigma,\tau) := -\int_{\Omega \times \D} \sumi (\grad[\vect{x}]\vel)\conf_i\sigma \cdot \grad[\conf_i]\left(\frac{\tau}{\maxw}\right),\\
\mathcal{K}(\sigma,\tau) := \int_{\Omega \times \D} \sumi\sumj \frac{A_{ij}}{4\wi} \maxw \grad[\conf_j]\left(\frac{\sigma}{\maxw}\right) \cdot \grad[\conf_i]\left(\frac{\tau}{\maxw}\right).
\end{gather}
Then, \eqref{FP-weak} can be written concisely as
\begin{equation}\label{FP-weak-2}
\bigg\langle \diff{\psi}{t}, \varphi/\maxw \bigg\rangle + \tilde{\mathcal{T}}(\vel;\psi,\varphi) + \tilde{\mathcal{K}}(\psi,\varphi) + \mathcal{T}(\vel;\psi,\varphi) + \mathcal{K}(\psi,\varphi) = 0
\end{equation}
for all $\varphi = \varphi(\vect{x},\conf)$ in a suitable function space. We
note that $\tilde{\mathcal{T}}$ and $\tilde{\mathcal{K}}$ involve partial
derivatives of their arguments with respect to the \emph{spatial} variable
$\vect{x}$ only. Analogously, $\mathcal{T}$ and $\mathcal{K}$ involve partial
derivatives of their arguments with respect to the \emph{configuration} space
variable $\conf$ only. This 
motivates the use of the alternating
direction scheme based on operator splitting whose informal description is
given in the next subsection.

\subsection{Alternating direction scheme} Let $\Delta t$ be such that $M :=
T/\Delta t \in \natural$ and define $t^n := n \Delta t$ for $n \in \{0,
\dotsc, M\}$. We will consider the following \emph{alternating-direction}
semidiscretization of \eqref{FP-weak}: We initialize the scheme by defining
$\psi^0 := \psi_0$; for $n \in \{ 0, \dotsc, M - 1 \}$ and then define the
`intermediate' function $\psi^{n+1/2}$ and the approximation $\psi^{n+1}$
to $\psi(t^{n+1}, \tcdot, \tcdot)$, respectively, by
\begin{subequations}\label{FP-weak-SD}
\begin{multline}\label{FP-weak-SD-x}
\bigg\langle \frac{\psi^{n+1/2} - \psi^n}{\Delta t/2}, \frac{\varphi}{\maxw} \bigg\rangle
+ \tilde{\mathcal{T}}(\vel(\tcdot,t^{n+1});\psi^{n+1/2},\varphi) + \tilde{\mathcal{K}}(\psi^{n+1/2},\varphi)\\
= -\mathcal{T}(\vel(\tcdot,t^n);\psi^n,\varphi) - \mathcal{K}(\psi^n,\varphi)
\end{multline}
and
\begin{multline}\label{FP-weak-SD-q}
\bigg\langle \frac{\psi^{n+1} - \psi^{n+1/2}}{\Delta t/2}, \frac{\varphi}{\maxw} \bigg\rangle
+ \mathcal{K}(\psi^{n+1},\varphi) = -\mathcal{T}(\vel(\tcdot,t^n);\psi^n,\varphi)\\
- \tilde{\mathcal{T}}(\vel(\tcdot,t^{n+1});\psi^{n+1/2},\varphi) - \tilde{\mathcal{K}}(\psi^{n+1/2},\varphi),
\end{multline}
\end{subequations}
for all $\varphi=\varphi(\vect{x},\conf)$ in a suitable function space. In
\eqref{FP-weak-SD-x} the spatial bilinear forms $\tilde{\mathcal T}$ and
$\tilde{\mathcal K}$ are treated implicitly while the configuration space
bilinear forms $\mathcal{T}$ and $\mathcal{K}$ are treated explicitly. In
\eqref{FP-weak-SD-q} the spatial bilinear forms $\tilde{\mathcal T}$ and
$\tilde{\mathcal K}$ and the configuration space bilinear form $\mathcal{T}$
associated with the drag term are treated explicitly, while the bilinear
form $\mathcal{K}$ is treated implicitly.

Let $\seq{ (\conf\psp{k}, w_{\D}\psp{k}) }{k \in \ii{Q_{\D}} }$ and $\seq{
(\vect{x}\psp{k}, w_\Omega\psp{k}) }{ k \in \ii{Q_\Omega} }$ be $\frac{1}{\maxw}$- and
$1$-weighted quadrature rules on $\D$ and $\Omega$, respectively. We then
approximate \eqref{FP-weak-SD-x} by performing numerical integration over the
configuration space, which results in
\begin{multline*}
\sum_{k=1}^{Q_{\D}}\! w_{\D}\psp{k}\!\!\! \int_{\Omega}\!\! \frac{\psi^{n+1/2}(\tcdot,\conf\psp{k}) - \psi^n(\tcdot,\conf\psp{k}) }{\Delta t/2} \varphi(\tcdot,\conf\psp{k})\displaybreak[0]\\
+ \sum_{k=1}^{Q_{\D}}\! w_{\D}\psp{k}\!\! \int_{\Omega}\! \div[\vect{x}]\!\left( \vel(\tcdot,t^{n+1}) \psi^{n+1/2}(\tcdot,\conf\psp{k}) \right)\! \varphi(\tcdot,\conf\psp{k})\displaybreak[0]\\
+ \sum_{k=1}^{Q_{\D}} w_{\D}\psp{k} \frac{(l_0/L_0)^2}{4\wi(\nb)} \int_{\Omega} \grad[\vect{x}] \psi^{n+1/2}(\tcdot,\conf\psp{k}) \cdot \grad[\vect{x}] \varphi(\tcdot,\conf\psp{k})\displaybreak[0]\\
\shoveleft{\approx \sum_{k=1}^{Q_{\D}} w_{\D}\psp{k} \int_{\Omega} \sumi \maxw(\conf\psp{k}) (\grad[\vect{x}]\vel(\tcdot,t^n))\conf\psp{k}_i\psi^n(\tcdot,\conf\psp{k}) \cdot \grad[\conf_i] \left.\left( \frac{\varphi}{\maxw} \right)\right|_{(\tcdot,\conf\psp{k})}}\displaybreak[0]\\
- \sum_{k=1}^{Q_{\D}} w_{\D}\psp{k} \int_{\Omega} \sumi\sumj \frac{A_{ij}}{4\wi}\maxw(\conf\psp{k}) \left.\grad[\conf_j] \left( \frac{\psi^n}{\maxw} \right)\right|_{(\tcdot,\conf\psp{k})} \cdot \left.\grad[\conf_i] \left( \frac{\varphi}{\maxw} \right)\right|_{(\tcdot,\conf\psp{k})},
\end{multline*}
for all $\varphi=\varphi(\vect{x},\conf)$ in a suitable function space. Here, the symbol
$\approx$ denotes equality, up to quadrature errors. By
selecting $Q_{\D}$ linearly independent functions $\zeta_{(m)}$, $m \in
\ii{Q_{\D}}$, of $\conf \in \D$ such that $\zeta_{(m)}(\conf^{(k)}) =
\delta_{km}$, $k,m \in \ii{Q_{\D}}$, and taking successively $\varphi =
\varphi_{(m)}$, where $\varphi_{(m)}(\vect{x},\conf) := \chi(\vect{x})
\zeta_{(m)}(\conf)$, in the equality above, we obtain a total of $Q_{\D}$
independent variational problems, each posed over the $d$-dimensional domain
$\Omega$, of the form:
\begin{multline}\label{FP-weak-SD-x-2}
\frac{1}{\Delta t/2} \int_{\Omega} \psi^{n+1/2}(\tcdot,\conf\psp{m}) \chi
+ \int_{\Omega} \div[\vect{x}]\left( \vel(\tcdot,t^{n+1}) \psi^{n+1/2}(\tcdot,\conf^{(m)}) \right) \chi\\
+ \frac{(l_0/L_0)^2}{4\wi(\nb)} \int_{\Omega} \grad[\vect{x}] \psi^{n+1/2}(\tcdot,\conf\psp{m}) \cdot \grad[\vect{x}] \chi
\approx \frac{1}{\Delta t/2}\int_{\Omega} \psi^n(\tcdot,\conf\psp{m}) \chi 
\\
+ \frac{1}{w_{\D}^{(m)}}\sum_{k=1}^{Q_{\D}}w_{\D}^{(k)}\!\Bigg[\int_{\Omega} \sumi \maxw(\conf\psp{k}) (\grad[\vect{x}]\vel(\tcdot,t^n))\conf\psp{k}_i\psi^n(\tcdot,\conf\psp{k}) \cdot\! \left.\grad[\conf_i]\!\left(\frac{\zeta_{(m)}}{\maxw} \right)\right|_{(\tcdot,\conf\psp{k})}\!\chi\\
- \int_{\Omega} \sumi\sumj \frac{A_{ij}}{4\wi}\maxw(\conf\psp{k}) \grad[\conf_j] \left.\left( \frac{\psi^n}{\maxw} \right)\right|_{(\tcdot,\conf\psp{k})} \cdot \grad[\conf_i] \left.\left( \frac{\zeta_{(m)}}{\maxw} \right)\right|_{(\tcdot,\conf\psp{k})}\chi\Bigg]\\
=: \mathfrak{M}_{(m)}(\psi^{n}; \chi) \qquad \forall\, m \in \ii{Q_{\D}},
\end{multline}
for all $\chi = \chi(\vect{x})$ in a suitable function space, where each
$\mathfrak{M}_{(m)}(\psi^{n}; \tcdot)$, $m \in \ii{Q_{\D}}$, is a linear
functional. Thus, \eqref{FP-weak-SD-x-2} amounts to solving $Q_{\D}$
mutually independent linear convection-diffusion problems over $\Omega$.

In turn, we can approximate \eqref{FP-weak-SD-q} by performing numerical quadrature over
$\Omega$, resulting in
\begin{multline*}
\sum_{k=1}^{Q_\Omega} w_\Omega\psp{k} \int_{\D} \frac{\psi^{n+1}(\vect{x}\psp{k},\tcdot) - \psi^{n+1/2}(\vect{x}\psp{k},\tcdot) }{\Delta t/2} \frac{\varphi(\vect{x}\psp{k},\tcdot)}{\maxw}
\\
- \sum_{k=1}^{Q_\Omega} w_\Omega\psp{k} \int_{\D} \sumi (\grad[\vect{x}]\vel(\vect{x}\psp{k},t^n))\conf_i\psi^n (\vect{x}\psp{k},\tcdot) \cdot \grad[\conf_i] \left( \frac{\varphi(\vect{x}\psp{k},\tcdot)}{\maxw} \right)\displaybreak[0]\\
+ \sum_{k=1}^{Q_\Omega} w_\Omega\psp{k} \int_{\D} \sumi\sumj \frac{A_{ij}}{4\wi} \maxw \grad[\conf_j] \left( \frac{\psi^{n+1}(\vect{x}\psp{k},\tcdot)}{\maxw} \right) \cdot \grad[\conf_i] \left( \frac{\varphi(\vect{x}\psp{k},\tcdot)}{\maxw} \right)\displaybreak[0]\\
\approx -\sum_{k=1}^{Q_\Omega} w_\Omega\psp{k} \int_{\D} \left.\div[\vect{x}]\left( \vel(\tcdot,t^{n+1}) \psi^{n+1/2} \right) \right|_{(\vect{x}\psp{k},\tcdot)} \varphi(\vect{x}^{(k)},\tcdot) \frac{1}{\maxw}\displaybreak[0]\\
- \sum_{k=1}^{Q_\Omega} w_\Omega\psp{k} \frac{(l_0/L_0)^2}{4\wi(\nb)} \int_{\D} \left.\grad[\vect{x}] \psi^{n+1/2}\right|_{(\vect{x}\psp{k},\tcdot)} \cdot \left.\grad[\vect{x}] \varphi\right|_{(\vect{x}\psp{k},\tcdot)} \frac{1}{\maxw},
\end{multline*}
for all $\varphi=\varphi(\vect{x},\conf)$ in a suitable function space. By
selecting $Q_{\Omega}$ linearly independent functions $\chi_{(m)}$, $m \in
\ii{Q_{\Omega}}$, of $\vect{x} \in \Omega$ such that
$\chi_{(m)}(\vect{x}^{(k)}) = \delta_{km}$, $k,m \in \ii{Q_{\Omega}}$, and
taking successively $\varphi = \varphi_{(m)}$, where
$\varphi_{(m)}(\vect{x},\conf) := \chi_{(m)} \zeta(\conf)$, in the equality
above, we obtain a total of $Q_{\Omega}$ independent variational problems over
the $N d$-dimensional domain $\D$ of the form:
\begin{multline}\label{FP-weak-SD-q-2}
\frac{1}{\Delta t/2} \int_{\D}\psi^{n+1}(\vect{x}\psp{m},\tcdot) \frac{\zeta}{\maxw}
+ \int_{\D} \sumi\sumj \frac{A_{ij}}{4\wi} \maxw \grad[\conf_j] \left( \frac{\psi^{n+1}(\vect{x}\psp{m},\tcdot)}{\maxw} \right) \cdot \grad[\conf_i] \left( \frac{\zeta}{\maxw} \right)\displaybreak[0]\\
\approx \left[\frac{1}{\Delta t/2}\int_{\D}\psi^{n+1/2}(\vect{x}\psp{m},\tcdot)\frac{\zeta}{\maxw} +
\int_{\D} \sumi (\grad[\vect{x}]\vel(\vect{x}^{(m)},t^n))\conf_i\psi^n (\vect{x}\psp{m},\tcdot) \cdot \grad[\conf_i] \left( \frac{\zeta}{\maxw} \right)\right.\displaybreak[0]\\
- \int_{\D} \left.\div[\vect{x}]\left( \vel(\tcdot,t^{n+1}) \psi^{n+1/2}\right)\right|_{(\vect{x}\psp{m},\tcdot)} \frac{\zeta}{\maxw}\displaybreak[0]\\
\left.- \frac{1}{w_\Omega^{(m)}}\sum_{k=1}^{Q_\Omega}w_\Omega^{(k)}\frac{(l_0/L_0)^2}{4\wi(\nb)} \int_{\D} \left.\grad[\vect{x}] \psi^{n+1/2}\right|_{(\vect{x}\psp{k},\tcdot)} \cdot \left.\grad[\vect{x}] \chi_{(m)}\right|_{(\vect{x}\psp{k},\tcdot)} \frac{\zeta}{\maxw}\right]\displaybreak[0]\\
=: \mathfrak{N}_{(m)}(\psi^{n+1/2}; \zeta) \qquad \forall\, m \in \ii{Q_{\Omega}},
\end{multline}
for all $\zeta = \zeta(\conf)$ in a suitable function space, where each
$\mathfrak{N}_{(m)}(\psi^{n+1/2}; \tcdot)$, $m \in \ii{Q_{\Omega}}$, is a linear
functional. Thus, \eqref{FP-weak-SD-q-2} amounts to solving $\ii{Q_{\Omega}}$
mutually independent linear elliptic variational problems, each posed on the
high-dimensional configurational domain $\D = D_1 \times \dotsm \times D_N
\subset \Real^{N d}$. It is the approximate solution of \eqref{FP-weak-SD-q-2}
by greedy algorithms that this paper is concerned with.

\section{The configuration space operator}\label{sec:operator}

\subsection{Variational formulation and function spaces} The form of the problem
\eqref{FP-weak-SD-q-2} motivates us to consider the linear elliptic variational problem
\begin{equation}\label{FP-elliptic-3}
    a(\psi, \varphi) = f(\varphi),
\end{equation}
posed on the high-dimensional configurational domain $\D = D_1 \times \dotsm \times D_N
\subset \Real^{N d}$, where
\begin{equation}\label{definition-of-a}
a(\psi,\varphi) := \int_{\D} \sumi\sumj \frac{A_{ij}}{4\wi} \maxw \grad[\conf_j]\left(\frac{\psi}{\maxw}\right) \cdot \grad[\conf_i]\left(\frac{\varphi}{\maxw}\right) + c \int_{\D}\frac{\psi\varphi}{\maxw},
\end{equation}
the parameter $c$ is positive and $f$ is a linear functional. The natural function space associated with
problem \eqref{FP-elliptic-3} is
\[ \HDM := \left\{ \varphi\in\LTOOM \cap \maxw \, \LOL \colon \grad[\conf_i](\varphi/\maxw) \in [\LTM]^d \quad\forall\, i \in \ii{N} \right\}\!, \]
equipped with the norm
\[ \norm{\varphi}_{\HDM} := \bigg(\norm{\varphi}_{\LTOOM}^2 + \sumi \norm{\grad[\conf_i](\varphi/\maxw)}_{[\LTM]^d}^2\bigg)^{1/2}. \]

The spaces $\LTOOM$ and $\HDM$ are isometrically isomorphic to, respectively,
$\LTM$ and $\HOM$ via the relations
\begin{subequations}\label{iso-iso}
\begin{align}
    \LTOOM = \maxw\,\LTM, & \quad \norm{\tcdot}_{\LTOOM} = \norm{\maxw^{-1}\tcdot}_{\LTM}\!,\\
    \HDM = \maxw\,\HOM, & \quad \norm{\tcdot}_{\HDM} = \norm{\maxw^{-1}\tcdot}_{\HOM}\!.
\end{align}
\end{subequations}

Later, we will make use of the spaces $\HDiMi$, $i \in \ii{N}$, each of which
is the $i$-th partial Maxwellian analogue of $\HDM$. That is,
\[ \HDiMi := \left\{ \varphi\in\LTOOMi \cap M_i \, \LL^1_{\loc}(D_i) \colon \grad(\varphi/M_i) \in [\LTMi]^d \right\}\!, \]
equipped with the norm $\norm{\varphi}_{\HDiMi} := \Big( \norm{\varphi}_{\LTOOMi}^2 + \norm{\grad(\varphi/M_i)}_{[\LTMi]^d}^2 \Big)^{1/2}$.
\begin{remark}\noindent
\begin{enumerate}
\item For $i \in \ii{N}$, $\HDiMi$ is exactly $\HDM$ if $N = 1$ and $\maxw =
M_i$. None of the results involving $\HDM$ appearing below depend on
restrictions on $N$ and thereby remain valid for $\HDiMi$. Just like
\eqref{iso-iso}, $\varphi \mapsto M_i \varphi$ is an isometric isomorphism
between $\LTMi$ and $\LTOOMi$ and between $\HOMi$ and $\HDiMi$.
\item The definitions above can be extended to open subsets of $\D$ and of the
$D_i$, $i \in \ii{N}$, in the usual way.
\end{enumerate}
\end{remark}

Before listing our structural hypotheses and proving the properties we need of
$\HDM$ we fully state the weak formulation of our model problem:

\medskip

Given $f \in \HDM'$, find $\psi \in \HDM$ such that
\begin{align}\label{FP-elliptic-4}
a(\psi, \varphi) = f(\varphi)\qquad \forall\, \varphi \in \HDM.
\end{align}

\medskip

We adopt the following structural hypotheses.

\begin{hypothesis}\label{hyp:potential} For each $i \in \ii{N}$, the spring
potential $U_i$ belongs to $\CC^1([0,\frac{b_i}{2}))$, where $b_i > 0$, and
satisfies $\lim_{s \to b_i/2_-}\allowbreak U(s) = +\infty$.
\end{hypothesis}

Immediate consequences of \cref{hyp:potential} are that $\maxw \in
\CC(\overline{\D}) \cap \CC^1(\D)$ and that, for any $K \compEmb \D$, there
exist positive constants $c_K$ and $C_K$ such that $c_K \leq \maxw(\conf) \leq
C_K$, for all $\conf \in K$.

\begin{hypothesis}\label{hyp:partialCompEmb} For each $i \in \ii{N}$, $\HOMi$
is compactly embedded in $\LTMi$.
\end{hypothesis}

\begin{remark}\label[remark]{rem:hypotheses}
It is easy to check that springs obeying any of the example force models
\eqref{FENE-model} and \eqref{CPAIL-model} comply with
\cref{hyp:potential}.

In Step 1 of section A.1 of \cite{BS:2008} it is proved that springs obeying
the FENE model \eqref{FENE-model} satisfy \cref{hyp:partialCompEmb}, under
the condition $b_i \geq 2$. The compliance with \cref{hyp:partialCompEmb}
of springs obeying the CPAIL model \eqref{CPAIL-model} is shown in
\cref{lem:CPAIL-results} in \cref{sec:CPAIL-results}
under the condition $b_i \geq 3$.
\end{remark}

\begin{lemma}\label[lemma]{lem:Hilbert}
$\LTM$, $\HH^m_{\maxw}(\D)$ for $m \in \natural$, $\LTOOM$ and $\HDM$ are
separable Hilbert spaces.
\end{lemma}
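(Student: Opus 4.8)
The plan is to reduce everything to the classical fact that $\LL^2$ spaces are separable Hilbert spaces and then carry this through the weighted and Sobolev constructions. First I would verify that $\LTM = \LL^2_{\maxw}(\D)$ is a separable Hilbert space: it is clearly an inner product space with $\langle \varphi,\chi\rangle := \int_{\D} \varphi\chi\,\maxw$, and completeness follows because $\maxw$ is (by the consequences of \cref{hyp:potential}) bounded above and below by positive constants on every $K \compEmb \D$, so a Cauchy sequence in $\LTM$ is Cauchy in $\LL^2_{\loc}(\D)$ (with respect to Lebesgue measure), hence has a locally $\LL^2$ limit along a subsequence converging almost everywhere, and Fatou's lemma identifies this limit as the $\LTM$-limit; separability follows either from the general theory of $\LL^p$ spaces on $\sigma$-finite measure spaces or, more concretely, by noting that $\varphi \mapsto \maxw^{1/2}\varphi$ is an isometric isomorphism from $\LTM$ onto the standard (separable) space $\LL^2(\D)$.

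Next I would treat $\HH^m_{\maxw}(\D)$. It is an inner product space under $\langle \varphi,\chi\rangle_{\HH^m_{\maxw}(\D)} := \sum_{\abs{\alpha}\leq m}\int_{\D}\partial_\alpha\varphi\,\partial_\alpha\chi\,\maxw$, and the associated norm is the one stated in the Notation section. For completeness, take a Cauchy sequence $\seq{\varphi_k}{k}$ in $\HH^m_{\maxw}(\D)$; then $\varphi_k$ and each $\partial_\alpha\varphi_k$ ($\abs{\alpha}\leq m$) are Cauchy in $\LTM$, hence converge there to limits $\varphi$ and $g_\alpha$; using again that $\maxw$ is locally bounded away from $0$ and $\infty$, convergence in $\LTM$ implies convergence in $\LL^2_{\loc}(\D)$, which is strong enough to pass to the limit in the distributional identity $\int_{\D}\partial_\alpha\varphi_k\,\phi = (-1)^{\abs{\alpha}}\int_{\D}\varphi_k\,\partial_\alpha\phi$ for every $\phi \in \CIC(\D)$, so $g_\alpha = \partial_\alpha\varphi$ in the sense of distributions and $\varphi \in \HH^m_{\maxw}(\D)$ with $\varphi_k \to \varphi$ in that space. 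Separability is inherited because $\varphi \mapsto \seq{\partial_\alpha\varphi}{\abs{\alpha}\leq m}$ embeds $\HH^m_{\maxw}(\D)$ isometrically as a closed subspace of the finite product $\prod_{\abs{\alpha}\leq m}\LTM$, and closed subspaces of separable metric spaces are separable.

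Finally, $\LTOOM$ and $\HDM$ are handled by transport of structure through the isometric isomorphisms \eqref{iso-iso}: the map $\chi \mapsto \maxw\,\chi$ is, by construction, an isometry from $\LTM$ onto $\LTOOM$ and from $\HOM$ onto $\HDM$. An isometric isomorphism carries Hilbert space structure, completeness, and separability from one space to the other; since $\LTM$ is a separable Hilbert space by the first step and $\HOM = \HH^1_{\maxw}(\D)$ is one by the second step (the case $m = 1$), it follows that $\LTOOM$ and $\HDM$ are separable Hilbert spaces as well. The inner product inducing the stated norm on $\HDM$ is the pullback $\langle \varphi,\chi\rangle_{\HDM} := \langle \maxw^{-1}\varphi,\maxw^{-1}\chi\rangle_{\HOM}$, which by polarization recovers $\norm{\tcdot}_{\HDM}$.

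I do not anticipate a genuine obstacle here; the only point requiring a little care is the completeness argument, where one must justify that $\LTM$- (equivalently weighted-) convergence is strong enough to identify weak derivatives in the limit — this is exactly where the local two-sided bound on $\maxw$ from \cref{hyp:potential} is used, so that no hypothesis beyond \cref{hyp:potential} is needed for this lemma.
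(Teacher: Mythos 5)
Your proposal is correct and follows essentially the same route as the paper: reduce $\LTM$ to $\LL^2(\D)$ by the $\sqrt{\maxw}$-weighting isometry, handle $\HH^m_\maxw(\D)$ by a closed-subspace-of-a-product (or, equivalently, an Adams--Fournier ¶3.5-type) argument, transport everything to $\LTOOM$ and $\HDM$ through the isomorphism \eqref{iso-iso}, and observe the norms are Hilbertian. The only real divergence is that where the paper cites Theorem 1.11 of \cite{KO} for completeness of $\HH^m_\maxw(\D)$ (with a caveat that the cited result is stated only for $m=1$), you prove completeness directly from the local two-sided bounds on $\maxw$ implied by \cref{hyp:potential}; this is the same underlying argument and if anything sidesteps the caveat, so there is no gap.
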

\begin{proof} The operation $\varphi \in \LTM \mapsto \varphi/\sqrt{\maxw}$
defines an isometric isomorphism between $\LTM$ and $\LL^2(\D)$. Therefore
the first space inherits its separability from the latter. On noting that
$\maxw^{-1} \in \LOL$, Theorem 1.11 of \cite{KO} guarantees the completeness of
$\HH^m_{\maxw}(\D)$ (this source actually states the result for the case $m = 1$ only; however, the proof carries over to higher $m$ in this single-weight case) and thus, $\HH^m_{\maxw}(\D)$ is separable by an argument
along the lines of \cite[\P 3.5]{AF:2003}. The spaces $\LTOOM$ and $\HDM$
inherit these properties via the isometric isomorphism \eqref{iso-iso}.
Finally, as their respective norms obey the parallelogram law, these spaces are
Hilbert spaces.
\end{proof}

\begin{lemma}\label[lemma]{lem:compEmb}
$\HOM$ is compactly embedded in $\LTM$, and $\HDM$ is compactly
embedded in $\LTOOM$.
\end{lemma}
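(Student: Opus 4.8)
The plan is to reduce the claim about $\HDM \Subset \LTOOM$ to the corresponding statement for the unweighted-looking pair $\HOM \Subset \LTM$ via the isometric isomorphism $\varphi \mapsto \maxw^{-1}\varphi$ of \eqref{iso-iso}, and then to prove the latter by a tensor-product argument that bootstraps the single-factor hypothesis \cref{hyp:partialCompEmb}. First I would record the trivial reduction: since \eqref{iso-iso} gives isometric isomorphisms $\LTOOM \cong \LTM$ and $\HDM \cong \HOM$ that are compatible (the same map $\varphi \mapsto \maxw^{-1}\varphi$ realizes both), compactness of the embedding $\HOM \hookrightarrow \LTM$ transfers verbatim to compactness of $\HDM \hookrightarrow \LTOOM$. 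So the whole content is the first assertion, $\HOM \Subset \LTM$, and the second is immediate once the first is in place (and conversely one could phrase everything directly in the $\HDM$/$\LTOOM$ picture, but the $\maxw$-weighted Sobolev space is the cleaner place to run a tensorization).

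For $\HOM \Subset \LTM$ I would exploit that $\maxw = \tp_{i\in\ii{N}} M_i$ factorizes, so that $\LTM$ is (isometrically) the Hilbert tensor product $\bigotimes_{i\in\ii{N}} \LTMi$ and $\HOM$ sits inside $\bigcap_{i\in\ii{N}} \LL^2_{M_1}\otimes\dotsm\otimes \HOMi \otimes\dotsm\otimes\LL^2_{M_N}$ with an equivalent norm given by $\|\varphi\|_{\LTM}^2 + \sum_i \|\grad[\conf_i](\varphi/\maxw)\|_{[\LTM]^d}^2$. The standard way to get compactness of such an intersection-type embedding is: take a bounded sequence $\seq{\varphi_n}{n}$ in $\HOM$; one needs a subsequence converging in $\LTM$. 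One natural route is a direct argument: for each fixed $i$, \cref{hyp:partialCompEmb} provides that the partial map $D_i \ni \conf_i \mapsto \varphi_n(\dotsc,\conf_i,\dotsc)$ is ``compact in the $\conf_i$-direction'', and one tensorizes these $N$ one-directional compactness statements. Concretely, I would use a diagonal/induction-on-$N$ scheme: writing $\D = D_1 \times \D'$ with $\D' = \bigtimes_{i=2}^N D_i$, and $\maxw = M_1 \otimes \maxw'$, one views $\HOM$ as a space of $\HH^1_{M_1}(D_1)$-valued (resp.\ $\LL^2_{\maxw'}(\D')$-valued) functions, applies the Aubin–Lions/Lebesgue-dominated-convergence style argument factor by factor, using the compact embedding in the new variable at each step together with the already-obtained convergence in the remaining variables, and closes by induction. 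Alternatively — and this is probably the slicker write-up — one can invoke an abstract tensor-product compactness lemma: if $T_i\colon \HOMi \hookrightarrow \LTMi$ are compact for each $i$, then the induced map on the intersection domain $\bigcap_i \LTMi$-tensor-$\HOMi$-in-the-$i$th-slot into $\bigotimes_i \LTMi$ is compact; this is the Hilbert-space analogue of the fact that a finite tensor product of compact operators is compact, adapted to the ``sum of one-directional $\HH^1$ seminorms'' norm.

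The main obstacle I anticipate is purely the bookkeeping of the tensorization: one must be careful that the norm on $\HOM$ really is equivalent to the ``sum over $i$ of one-slot $\HH^1_{M_i}$ seminorms plus the $\LTM$ norm'' (this uses only $\grad[\conf_i](\varphi/\maxw) = M_i^{-1}\maxw^{-1}\cdot(\text{stuff})$-type identities and the product structure of $\maxw$), and that the one-directional compactness from \cref{hyp:partialCompEmb} can be applied uniformly, i.e.\ that the $\LL^2_{M_i}(D_i;\,X)$-valued version of \cref{hyp:partialCompEmb} (with $X$ a fixed Hilbert space built from the other factors) still holds — this is true because compactness of $\HOMi \hookrightarrow \LTMi$ tensorizes trivially against the identity on a fixed Hilbert space when one works with Hilbert–Schmidt-type bounds, but it does need to be stated. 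Everything else — extracting subsequences, a diagonal argument over the $N$ directions, and passing to the limit via completeness of the spaces from \cref{lem:Hilbert} — is routine. I would therefore organize the proof as: (i) reduce to $\HOM\Subset\LTM$ via \eqref{iso-iso}; (ii) state and use the norm-equivalence that exhibits $\HOM$ as an intersection of one-directional $\maxw$-weighted $\HH^1$ spaces; (iii) prove, by induction on $N$ using \cref{hyp:partialCompEmb} as the base case, that this intersection embeds compactly into $\LTM$, via a direction-by-direction subsequence extraction; (iv) transfer back to conclude $\HDM\Subset\LTOOM$.
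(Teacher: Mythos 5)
Your reduction to $\HOM \compEmb \LTM$ via \eqref{iso-iso} matches the paper's (which does the same thing, in one line at the end), but the core step is handled on a genuinely different route, and as written it has a gap. The key assertion that compactness of $\HOMi \hookrightarrow \LTMi$ ``tensorizes trivially against the identity on a fixed Hilbert space'' is false, and the parenthetical appeal to Hilbert--Schmidt bounds does not repair it: if $T\colon V \to H$ is a nonzero compact (even Hilbert--Schmidt) operator and $X$ is any infinite-dimensional Hilbert space, then $T\otimes I_X$ is \emph{not} compact --- pick $v\in V$ with $Tv\neq 0$ and an orthonormal sequence $\seq{e_n}{n}$ in $X$; then $\seq{(Tv)\otimes e_n}{n}$ is an orthogonal sequence with no convergent subsequence. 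That is exactly the obstruction your ``direction-by-direction subsequence extraction'' runs into on each factor, and the induction as described never overcomes it. What saves the statement is that the domain $\HOM$ is the \emph{intersection} of the one-slot spaces $\LL^2_{M_1}\otimes\dotsm\otimes \HOMi\otimes\dotsm\otimes\LL^2_{M_N}$, not any one of them. There is indeed a correct abstract intersection-compactness lemma of the type you gesture at, but it is not ``the Hilbert-space analogue of the fact that a finite tensor product of compact operators is compact'' (that concerns the far smaller domain $\bigotimes_i \HOMi$); proving it requires a singular-value decomposition of each embedding $\HOMi\hookrightarrow\LTMi$ and a quantitative estimate showing the tail spectral projections are uniformly small on the intersection unit ball. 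None of that is in your sketch.

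The paper avoids the tensorization subtlety entirely by invoking \cite[Theorem 17.6]{OK}, which characterizes the compact embedding $\HOM\compEmb\LTM$ by the vanishing, along a nested exhaustion $\seq{\D_{(n)}}{n}$ of $\D$ by bounded Lipschitz domains, of $\sup_{u\neq 0} \int_{\D\setminus\D_{(n)}}u^2\maxw \,/\,\norm{u}_{\HOM}^2$. This converts compactness into a scalar, domain-localized smallness estimate, which \emph{does} tensorize effortlessly: a Fubini argument, together with the analogous single-spring characterization that \cref{hyp:partialCompEmb} provides, bounds the tail integral over $\D\setminus\D_{(n)}$ by $\varepsilon\norm{u}_{\HOM}^2$, after first checking (again by Fubini and \cref{lem:Hilbert}) that $u(\conf_1,\tcdot)\in\HH^1_{M_2}(D_2)$ for a.e.\ $\conf_1$ and that $\HH^1_{\maxw}(\D_{(n)})\compEmb\LL^2_\maxw(\D_{(n)})$ (classical Rellich on the Lipschitz subdomain where $\maxw$ is bilaterally bounded). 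If you want to retain your approach, you must state and prove the intersection-compactness lemma in full; if you want the shortest correct route, the \cite[Theorem 17.6]{OK} characterization is the tool that makes the tensorization genuinely trivial.
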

\begin{proof}
Throughout this proof we will assume, for ease of exposition, that $N = 2$; the
argument carries over to higher $N$ without difficulties. Let $u \in \HOM$. As,
by \eqref{full-maxw}, $\maxw = M_1 \otimes M_2$, it follows from Fubini's
theorem that, for almost all $\conf_1 \in D_1$,
\[ u(\conf_1,\tcdot) \in \LL^2_{M_2}(D_2) \cap \LL^1_{\loc}(D_2) \quad\text{and}\quad \partial_\alpha u(\conf_1,\tcdot) \in \LL^2_{M_2}(D_2), \]
where $\alpha$ is any multi-index in $[\natural_0]^d$ with $0 \leq
\abs{\alpha} \leq 1$. Fubini's theorem, again, ensures that, given $\varphi_2
\in \CIC(D_2)$ and $\alpha_2 \in [\natural_0]^d$, $0 \leq \alpha_2 \leq 1$,
\begin{equation*}
\int_{D_1} \left[ (-1) \int_{D_2} u(\conf_1,\tcdot) \partial_{\alpha_2}\varphi_2 \right] \varphi_1 \dd \conf_1
= \int_{D_1} \left[ \int_{D_2} \partial_{(0,\alpha_2)}u(\conf_1,\tcdot) \varphi_2 \right] \varphi_1 \dd \conf_1,
\end{equation*}
for all $\varphi_1 \in \CIC(D_1)$. Therefore,
$\partial_{\alpha_2}[u(\conf_1,\tcdot)] = \partial_{(0,\alpha_2)}
u(\conf_1,\tcdot)$ in the weak sense on $D_2$ for almost all $\conf_1 \in D_1$.
As $\partial_{(0,\alpha_2)} u(\conf_1,\tcdot)$ lies in $\LL^2_{M_2}(D_2)$ for
almost all $\conf_1 \in D_1$, we have that
\begin{equation}\label{regularity-a.e.}
u(\conf_1,\tcdot) \in \HH^1_{M_2}(D_2) \quad\text{for almost all }\conf_1 \in D_1.
\end{equation}
In the same way it can be proved that
$u(\tcdot,\conf_2) \in \HH^1_{M_1}(D_1)$ for almost all $\conf_2 \in D_2$.

Let us define, for $i \in \{1, 2\}$, the sequence $\seq{D_{i,(n)}}{n \geq 1}$
of bounded and proper subsets of $D_i$ by $D_{i,(n)} := B\big( 0,
\frac{\sqrt{b_i}n}{n+1} \big)$. Then,
\[ D_{i,(n)} \subset D_{i,(n+1)},\ n \in \natural,\qquad \bigcup_{n=1}^\infty D_{i,(n)} = D_i\qquad\text{and}\qquad \HH^1_{M_i}(D_{i,(n)}) \compEmb \LL^2_{M_i}(D_{i,(n)}). \]
This last relation is a consequence of the corresponding relation for the
unweighted case, $\HH^1(D_{i,(n)})\allowbreak\compEmb \LL^2(D_{i,(n)})$---in turn a
consequence of the boundedness and Lipschitz continuity of $D_{i,(n)}$---on
account of the existence of positive lower and upper bounds for $M_i$ on
$D_{i,(n)}$, whereupon there is algebraic and topological equivalence between
$\HH^1_{M_i}(D_{i,(n)})$ and $\HH^1(D_{i,(n)})$ and between
$\LL^2_{M_i}(D_{i,(n)})$ and $\LL^2(D_{i,(n)})$.

Letting, for $n \in \natural$,
$\D_{(n)} := \bigtimes_{i=1}^2 D_{i,(n)} \subsetneq \D$, the above properties get
inherited:
\[ \D_{(n)} \subset \D_{(n+1)},\ n \in \natural,\qquad \bigcup_{n=1}^\infty \D_{(n)} = \D\qquad\text{and}\quad \HH^1_{\maxw}(\D_{(n)}) \compEmb \LL^2_{\maxw}(\D_{(n)}). \]
The third statement follows from the fact that the $\D_{(n)}$, being Cartesian
products of bounded Lipschitz domains, are also bounded Lipschitz
domains\footnote{This follows by combining Theorem 3.1~in the Ph.D. Thesis of
Reinhard Hochmuth: {\em Randwertproblem einer nicht hypoelliptischen linearen
partiellen Differentialgleichung. Dissertation, Freie Universitat Berlin,
1989}, which implies that the Cartesian product of a finite number of bounded
domains, each satisfying the uniform cone property, is a bounded domain
satisfying the uniform cone property, and Theorem 1.2.2.2~in the book of
Grisvard \cite{Grisvard}, which states that a bounded open set in $\Real^n$ has
the uniform cone property if, and only if, its boundary is Lipschitz. In the
special case of the domain $\D_{(n)}$ an alternative proof is to note that, as
a Cartesian product of bounded open convex sets, $\D_{(n)}$ is a bounded open
convex set in $\Real^n$ (cf.\ \cite{HUL}, p.~23), and then apply Corollary
1.2.2.3 in Grisvard \cite{Grisvard}, which states that a bounded open convex set in $\Real^n$ has 
Lipschitz boundary.}. Let us define $D_i\psp{n} := D_i \setminus D_{i,(n)}$ and
$\D\psp{n} := \D \setminus \D_{(n)}$. Thanks to \cite[Theorem 17.6]{OK},
the above compact embeddings on members of a nested covering imply the
following characterizations (the first, for $i \in \{1, 2\}$):
\begin{gather}
\label{equiv-i}
\HOMi \compEmb \LTMi \iff
\lim_{n \to \infty} \sup_{u \in \HOMi \setminus \{0\}}
\int_{D_i\psp{n}} u^2 M_i \,/\, \norm{u}_{\HOMi}^2 = 0,\\
\label{equiv-full}
\HOM \compEmb \LTM \iff
\lim_{n \to \infty} \sup_{u \in \HOM \setminus \{0\}}
\int_{\D\psp{n}} u^2 \maxw \,/\, \norm{u}_{\HOM}^2 = 0.
\end{gather}
From \cref{hyp:partialCompEmb}, the left-hand side of \eqref{equiv-i} holds;
hence, its right-hand side also holds. Using \eqref{regularity-a.e.} and
\eqref{equiv-i} with $i = 2$, we deduce that for each $\varepsilon > 0$ there
exists some $\tilde n = \tilde n(\varepsilon) \in \natural$ such that $n \geq
\tilde n$ implies
\begin{equation*}
\begin{split}
\int_{D_1 \times D_2\psp{n}} u^2 \maxw & = \int_{D_1} \left[ \int_{D_2\psp{n}} u^2(\conf_1,\tcdot) M_2 \right] M_1(\conf_1) \dd \conf_1\\
& \leq \varepsilon \int_{D_1} \norm{u(\conf_1,\tcdot)}_{\HH^1_{M_2}(D_2)}^2 M_1(\conf_1) \dd\conf_1\\
& = \varepsilon \int_{D_1} \! \left[ \int_{D_2} \! \left( u^2(\conf_1,\tcdot) + \abs{\grad[\conf_2] u(\conf_1,\tcdot)}^2 \right) M_2 \right] \! M_1(\conf_1) \dd \conf_1
\leq \varepsilon \norm{u}_{\HOM}^2.
\end{split}
\end{equation*}
An analogous result can be proved for the $\maxw$-weighted integral of
$u^2$ on $D_1\psp{n} \times D_2$. Then, since $\D\psp{n} = (D_1 \times D_2\psp{n})
\cup (D_1\psp{n} \times D_2)$, the right-hand side of \eqref{equiv-full} holds;
hence, so does its left-hand side.

Finally, the embedding $\HDM \compEmb \LTOOM$ follows directly from the
embedding $\HOM \compEmb \LTM$ on account of the isometric isomorphism
\eqref{iso-iso}.
\end{proof}

\begin{lemma}\label[lemma]{lem:inclusions} The following inclusion holds:
$\CC^1_0(\D) \subset \HDM$.
\end{lemma}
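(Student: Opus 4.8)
The plan is to show that an arbitrary $\varphi \in \CC^1_0(\D)$ satisfies all three defining requirements of $\HDM$: namely $\varphi \in \LTOOM$, $\varphi \in \maxw\,\LOL$, and $\grad[\conf_i](\varphi/\maxw) \in [\LTM]^d$ for each $i \in \ii{N}$. The key structural facts I would exploit are the consequences of \cref{hyp:potential} already recorded in the excerpt: $\maxw \in \CC(\overline\D) \cap \CC^1(\D)$, $\maxw > 0$ on $\D$, and---crucially---for every compact $K \compEmb \D$ there are constants $0 < c_K \leq C_K$ with $c_K \leq \maxw \leq C_K$ on $K$. Since $\varphi \in \CC^1_0(\D)$ has compact support $K := \supp(\varphi) \compEmb \D$, on which $\maxw$ is bounded above and below by positive constants, every weighted norm in play reduces, up to these constants, to the corresponding unweighted norm of a $\CC^1$ function over a bounded set, which is trivially finite.

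Concretely, I would first note $\int_\D \abs{\varphi}^2 / \maxw = \int_K \abs{\varphi}^2/\maxw \leq c_K^{-1} \int_K \abs{\varphi}^2 < \infty$, so $\varphi \in \LTOOM$; this also shows $\varphi \in \LOL$, and since $\maxw$ is bounded away from $0$ on any compact subset of $\D$, dividing by $\maxw$ preserves local integrability, giving $\varphi/\maxw \in \LOL$, i.e.\ $\varphi \in \maxw\,\LOL$. For the gradient condition, on the open set $\D$ where $\maxw \in \CC^1$ and $\maxw > 0$, the quotient $\varphi/\maxw$ is classically $\CC^1$ and the quotient rule gives $\grad[\conf_i](\varphi/\maxw) = (\maxw\,\grad[\conf_i]\varphi - \varphi\,\grad[\conf_i]\maxw)/\maxw^2$, which vanishes outside $K$. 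On $K$ the numerator is continuous hence bounded, and $\maxw^2 \geq c_K^2 > 0$, so $\grad[\conf_i](\varphi/\maxw)$ is bounded with support in $K$; therefore $\int_\D \abs{\grad[\conf_i](\varphi/\maxw)}^2 \maxw \leq C_K \int_K \abs{\grad[\conf_i](\varphi/\maxw)}^2 < \infty$. This holds for each $i \in \ii{N}$, and summing over $i$ shows $\varphi \in \HDM$, with the whole argument also delivering an estimate of $\norm{\varphi}_{\HDM}$ in terms of the $\CC^1(\overline K)$ norm of $\varphi$ and the constants $c_K, C_K$.

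There is essentially no serious obstacle here; the lemma is a soft consequence of the local two-sided boundedness of the Maxwellian on compact sets. The only point requiring the mildest care is confirming that the \emph{classical} derivative of $\varphi/\maxw$ on the open set $\D$ coincides with the \emph{weak} derivative demanded in the definition of $\HDM$ (and that $\varphi/\maxw$, extended by $0$ outside $K$, is still weakly differentiable across $\partial K$); this is immediate because $\varphi$ and all its first derivatives vanish on a neighbourhood of $\partial K$ in $\D$, so $\varphi/\maxw$ extends to a genuinely $\CC^1$ function on $\D$ whose classical and distributional gradients agree. I would state this observation in one line and conclude.
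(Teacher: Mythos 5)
Your proposal is correct and follows essentially the same route as the paper: exploit the two-sided bounds $c_K \le \maxw \le C_K$ on the compact support $K = \supp(\varphi)$ to verify each defining membership, and observe that $\varphi/\maxw$ is classically $\CC^1$ on $\D$ so its distributional and classical gradients agree. The paper bounds the integrals by $|K|\,\sup_K(\cdot)$ where you prefer $c_K^{-1}\int_K(\cdot)$ and $C_K\int_K(\cdot)$, but these are cosmetic variants of the same estimate.
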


\begin{proof}
Let $\varphi \in \CC^1_0(\D)$ and $K := \supp(\varphi) \compEmb \D$. Then,
trivially, $\varphi \in \LTOOM$, since
\begin{equation*}
\int_{\D}\varphi^2\frac{1}{\maxw} = \int_{K} \varphi^2 \frac{1}{\maxw}
\leq |K| \sup_{\conf \in K} \frac{\varphi(\conf)^2}{\maxw(\conf)} < \infty,
\end{equation*}
which, in turn, stems from the fact that $\maxw$ is positively bounded from
below on each compact subset of $\D$. Similarly, for all $K' \compEmb \D$,
\[ \int_{K'} \abs{\frac{\varphi}{\maxw}} \leq \abs{K' \cap K} \sup_{\conf \in K'\cap K} \frac{\abs{\varphi(\conf)}}{\maxw(\conf)} < \infty \]
on account of which $\varphi \in \maxw \, \LOL$. The latter implies that
$\varphi/\maxw$ defines a regular distribution in the usual way. Then, for each
$i \in \ii{N}$, $\grad[\conf_i](\varphi/\maxw)$ exists as a distribution and
coincides with the classical $i$-th component gradient of $\varphi / \maxw$,
which belongs to $[\CC(\D)]^d$ because of \cref{hyp:potential}. Then,
\begin{equation*}
\int_{\D}\Big|\grad[\conf_i]\Big(\frac{\varphi}{\maxw}\Big)\Big|^2 \maxw
\leq |K|\sup_{\conf \in K} \abs{ \grad[\conf_i] \negmedspace \left(\frac{\varphi(\conf)}{\maxw(\conf)}\right) }^2 \maxw(\conf) < \infty
\end{equation*}
and that proves the lemma.
\end{proof}

\subsection{Properties of tensor products}
\begin{lemma}\label[lemma]{lem:distrTP} Suppose that $T \in \mathcal{D}'(\D)$ is a
distribution such that
\[ T\left(\tp_{i=1}^N \varphi\psp{i}\right) = 0
\qquad \forall\, (\varphi\psp{1}, \dotsc, \varphi\psp{N}) \in \bigtimes_{i\in\ii{N}} \CIC(D_i). \]
Then, $T=0$ in $\mathcal{D}'(\D)$.

Further, for any ensemble of sequences of distributions $\seq{R_n\psp{i}}{n
\geq 1}$, $i\in\ii{N}$, with $R_n\psp{i} \in \mathcal{D}'(D_i)$ and such that $\lim_{n
\rightarrow \infty} R\psp{i}_n = R\psp{i}$ in $\mathcal{D}'(D_i)$ for $i \in
\ii{N}$, we have that
\[ \lim_{n \rightarrow \infty} \tp_{i\in\ii{N}} R\psp{i}_n = \tp_{i\in\ii{N}} R\psp{i} \quad\text{in}\quad \mathcal{D}'(\D). \]
\end{lemma}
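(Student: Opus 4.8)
The plan is to prove the two assertions in turn, each by induction on $N$: the first is a density statement and the second a functional-analytic continuity argument built on it.

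\textbf{First assertion.} I would first record the (classical) fact that the linear span of the pure tensor products $\tp_{i\in\ii{N}}\varphi\psp{i}$, $\varphi\psp{i}\in\CIC(D_i)$, is sequentially dense in $\CIC(\D)$ equipped with its usual inductive-limit topology, whose continuous dual is $\mathcal{D}'(\D)$. For $N=2$ this follows by taking $\varphi\in\CIC(D_1\times D_2)$ with $\supp\varphi\subset K_1\times K_2$, fixing cut-offs $\chi_j\in\CIC(D_j)$ equal to $1$ on a neighbourhood of $K_j$, placing $\varphi$ inside a box, and expanding it in a multivariate Fourier series: the truncated partial sums are finite linear combinations of products of one-variable exponentials, so multiplying them by $\chi_1\otimes\chi_2$ produces a sequence in $\CIC(D_1)\otimes\CIC(D_2)$ whose supports stay inside $\supp\chi_1\times\supp\chi_2$ and which converges to $\varphi$ in every $\CC^m$ norm, hence in $\CIC(D_1\times D_2)$; the general $N$ follows by peeling off one factor at a time, using the continuity of $h\mapsto\varphi\psp{1}\otimes h$ together with the two-factor statement. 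Since $T\in\mathcal{D}'(\D)$ is continuous and vanishes on this dense subspace, $T=0$.

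\textbf{Second assertion.} The case $N=1$ is the hypothesis. For $N=2$ I would use the defining iterated formula $(R\otimes S)(\varphi)=R\bigl(\conf_1\mapsto S(\varphi(\conf_1,\tcdot))\bigr)$ for $R\in\mathcal{D}'(D_1)$, $S\in\mathcal{D}'(D_2)$, $\varphi\in\CIC(D_1\times D_2)$, together with the standard facts that $\conf_1\mapsto S(\varphi(\conf_1,\tcdot))$ belongs to $\CIC(D_1)$ with $\partial_\alpha$ passing under $S$, and that $\{\partial_\alpha\varphi(\conf_1,\tcdot)\colon\conf_1\in K_1\}$ is a compact subset of $\CIC(D_2)$. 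Setting $\psi_n(\conf_1):=R_n\psp{2}(\varphi(\conf_1,\tcdot))$ and $\psi(\conf_1):=R\psp{2}(\varphi(\conf_1,\tcdot))$, the convergence $R_n\psp{2}\to R\psp{2}$ in $\mathcal{D}'(D_2)$ makes $\{R_n\psp{2}\}$ equicontinuous by the Banach--Steinhaus theorem ($\CIC(D_2)$ being barrelled), whence $R_n\psp{2}\to R\psp{2}$ uniformly on the compacta $\{\partial_\alpha\varphi(\conf_1,\tcdot)\colon\conf_1\in K_1\}$; this gives $\psi_n\to\psi$ in $\CIC(D_1)$, the supports being trapped in $K_1$ and all derivatives converging uniformly. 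Splitting
\[(R_n\psp{1}\otimes R_n\psp{2})(\varphi)-(R\psp{1}\otimes R\psp{2})(\varphi)=R_n\psp{1}(\psi_n-\psi)+\bigl(R_n\psp{1}(\psi)-R\psp{1}(\psi)\bigr),\]
the first summand tends to $0$ because $\{R_n\psp{1}\}$ is equicontinuous and $\psi_n-\psi\to0$ in $\CIC(D_1)$, and the second because $R_n\psp{1}\to R\psp{1}$ in $\mathcal{D}'(D_1)$ tested against the fixed element $\psi$. For general $N$ I would invoke associativity of the distributional tensor product, $\tp_{i\in\ii{N}}R_n\psp{i}=R_n\psp{1}\otimes\bigl(\tp_{i=2}^{N}R_n\psp{i}\bigr)$ (legitimate since both sides agree on pure tensors, which are dense by the first assertion), use the inductive hypothesis to get $\tp_{i=2}^{N}R_n\psp{i}\to\tp_{i=2}^{N}R\psp{i}$ in $\mathcal{D}'(D_2\times\dotsm\times D_N)$, and apply the two-factor case with the second factor taken to be $D_2\times\dotsm\times D_N$.

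\textbf{Main obstacle.} The genuinely delicate point is the two-factor case of the second assertion, and within it the functional-analytic step: promoting pointwise convergence of the $R_n\psp{2}$ to uniform convergence on compact subsets of $\CIC(D_2)$ and thereby controlling $\psi_n-\psi$ in the topology of $\CIC(D_1)$. Everything else --- the support bookkeeping, the smoothness of $\conf_1\mapsto S(\varphi(\conf_1,\tcdot))$, the Fourier truncation in the density argument, and the two inductions --- is routine.
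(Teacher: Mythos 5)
Your proof is correct, and it is essentially the classical argument. The paper itself does not prove this lemma — it simply cites Section 1.3.2 of Vladimirov's book and moves on — and your two-part argument (density of pure tensors in $\CIC(\D)$ via localized Fourier truncation, then the iterated formula for the tensor product combined with Banach--Steinhaus equicontinuity to pass to the limit, each followed by induction on $N$) is exactly the standard development one finds in that reference and its peers, so there is nothing to contrast.
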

\begin{proof}
These are standard results from the theory of distributions, so we omit the
proofs and refer the reader to Section 1.3.2 of the book of Vladimirov
\cite{Vladimirov}, for example.
\end{proof}

\begin{lemma}\label[lemma]{lem:TP} The following statements hold:
\begin{enumerate}
\item For any ensemble $r\psp{i} \in \HDiMi$, $i\in\ii{N}$, $\tp_{i\in\ii{N}}
r\psp{i} \in \HDM$.
\item Suppose that $r\psp{i}\colon D_i \rightarrow \Real$, $i\in\ii{N}$, are
measurable functions. Then, the next two statements are equivalent:
\begin{itemize}
\item[(a)] $r\psp{i} \in \HDiMi \setminus\{0\}\quad \text{for all }
i\in\ii{N}$;
\item[(b)] $\tp_{i\in\ii{N}} r\psp{i}\in \HDM \setminus\{0\}$.
\end{itemize}
\end{enumerate}
\end{lemma}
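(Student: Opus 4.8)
The plan is to transfer both assertions to the $\HH^1$-type Maxwellian-weighted spaces via the isometric isomorphisms \eqref{iso-iso} and their single-spring analogues, and there to establish the corresponding tensor-product statements using Tonelli's theorem for the $\LL^2$-norm identities and the distributional calculus recalled in \cref{lem:distrTP} for the weak derivatives. Since each $M_i$ is strictly positive on $D_i$, the quotients $u\psp{i} := r\psp{i}/M_i$ are well-defined measurable functions on $D_i$; moreover $r\psp{i} \in \HDiMi$ if and only if $u\psp{i} \in \HOMi$, and $\tp_{i\in\ii{N}} r\psp{i} \in \HDM$ if and only if $\tp_{i\in\ii{N}} u\psp{i} = \maxw^{-1}\tp_{i\in\ii{N}} r\psp{i} \in \HOM$, these equivalences preserving the property of being nonzero. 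Tonelli's theorem gives $\norm{\tp_{i\in\ii{N}} v\psp{i}}_{\LTM}^2 = \prod_{i\in\ii{N}}\norm{v\psp{i}}_{\LTMi}^2$ for any measurable $v\psp{i}$. Hence it suffices to prove: (i) if $u\psp{i} \in \HOMi$ for every $i \in \ii{N}$ then $\tp_{i\in\ii{N}} u\psp{i} \in \HOM$; and (ii) for measurable $u\psp{i}$, $\tp_{i\in\ii{N}} u\psp{i} \in \HOM \setminus \{0\}$ if and only if $u\psp{i} \in \HOMi \setminus \{0\}$ for every $i \in \ii{N}$.

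For (i), the Tonelli identity already places $\tp_{i\in\ii{N}} u\psp{i}$ in $\LTM$, hence (since $\maxw$ is bounded below on every compact subset of $\D$) in $\LL^1_{\loc}(\D)$ as well. For the weak derivatives I would invoke the standard differentiation formula for a tensor product of distributions: by the first assertion of \cref{lem:distrTP} it is enough to check the identity against test functions of product form, for which Fubini's theorem gives $\grad[\conf_i]\big(\tp_{j\in\ii{N}} u\psp{j}\big) = \big(\tp_{j\in\ii{N}\setminus\{i\}} u\psp{j}\big) \otimes_i \grad u\psp{i}$ in $[\mathcal{D}'(\D)]^d$ for each $i \in \ii{N}$, where $\grad u\psp{i} \in [\LTMi]^d$ is the distributional gradient of $u\psp{i}$; the right-hand side is the regular distribution of a function which, by Tonelli's theorem, lies in $[\LTM]^d$, with $\norm{\grad[\conf_i](\tp_{j\in\ii{N}} u\psp{j})}_{[\LTM]^d}^2 = \norm{\grad u\psp{i}}_{[\LTMi]^d}^2 \prod_{j\neq i}\norm{u\psp{j}}_{\LTMj}^2$. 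Summing over $i \in \ii{N}$ shows $\tp_{i\in\ii{N}} u\psp{i} \in \HOM$, in fact with $\norm{\tp_{i\in\ii{N}} u\psp{i}}_{\HOM} \leq \prod_{i\in\ii{N}}\norm{u\psp{i}}_{\HOMi}$; this is part (1) of the lemma, and, combined with the Tonelli identity (a product of measurable functions is nonzero in $\LTM$ exactly when every factor is nonzero in $\LTMi$), it also yields the implication (a) $\Rightarrow$ (b) of part (2).

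The implication (b) $\Rightarrow$ (a) is the step I expect to require the most care. Since $\tp_{i\in\ii{N}} u\psp{i}$ is not the zero element of $\HOM$, it is not almost-everywhere zero on $\D$; by Fubini's theorem this forces each $u\psp{i}$ to be nonzero on a subset of $D_i$ of positive measure, whence (using the Tonelli identity, whose left-hand side is now known to be finite) each $\norm{u\psp{i}}_{\LTMi}$ is finite and positive, i.e.\ $u\psp{i} \in \LTMi \setminus \{0\}$. To upgrade this to $\HOMi$-membership, fix $i \in \ii{N}$ and argue exactly as in the proof of \cref{lem:compEmb} (cf.\ the derivation of \eqref{regularity-a.e.}): from $\tp_{j\in\ii{N}} u\psp{j} \in \HOM$ it follows that, for almost every $\hat{\conf} := (\conf_j)_{j\neq i} \in \bigtimes_{j\neq i} D_j$, the slice $\conf_i \mapsto \big(\prod_{j\neq i} u\psp{j}(\conf_j)\big)\, u\psp{i}(\conf_i)$ belongs to $\HOMi$. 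Because each $u\psp{j}$, $j \neq i$, is nonzero on a set of positive measure, the scalar $c(\hat{\conf}) := \prod_{j\neq i} u\psp{j}(\conf_j)$ is itself nonzero on a set of positive measure, which therefore intersects the full-measure set of $\hat{\conf}$ giving an $\HOMi$-slice; picking $\hat{\conf}$ in that intersection and dividing the associated slice by $c(\hat{\conf})$ shows $u\psp{i} \in \HOMi$, and since $u\psp{i} \neq 0$ this is precisely (a). The main obstacle is the bookkeeping in this last slicing argument — combining an almost-everywhere statement (slice regularity) with a positive-measure statement (non-vanishing of the other factors) to extract a single good slice.
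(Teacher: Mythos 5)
Your proof is correct, and parts (1) and the implication (a)~$\Rightarrow$~(b) of part (2) follow the same route as the paper, modulo the superficial difference that you first transfer everything to the $\HOM$-side via \eqref{iso-iso} rather than working directly in $\HDM$; in both cases the engine is \cref{lem:distrTP} plus Tonelli's theorem for the norm factorizations.

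Where you genuinely diverge from the paper is in the implication (b)~$\Rightarrow$~(a). The paper's argument is \emph{algebraic}: having established $r\psp{i}/M_i \in \LTMi$ for each $i$, it invokes the distributional identity \eqref{gradWTP} and Tonelli to obtain the exact norm factorization \eqref{factor2}; the left-hand side is finite because $\tp r\psp{i} \in \HDM$, and since each $\norm{r\psp{i}}_{\LTOOMi}^2$ is finite and strictly positive, each gradient factor $\norm{\grad(r\psp{j}/M_j)}_{[\LL^2_{M_j}(D_j)]^d}^2$ is forced to be finite. Your argument is \emph{measure-theoretic}: you first pin down $u\psp{i} \in \LTMi\setminus\{0\}$ via Tonelli, then re-use the Fubini slicing mechanism from the proof of \cref{lem:compEmb} (the derivation of \eqref{regularity-a.e.}) to show that for almost every $\hat\conf$ the $i$-th coordinate slice of $\tp u\psp{j}$ is in $\HOMi$, and you intersect the full-measure set of good slices with the positive-measure set where $c(\hat\conf) = \prod_{j\neq i} u\psp{j}(\conf_j) \neq 0$, then divide by $c(\hat\conf)$. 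This intersection step is the right thing and you carry it out correctly. Both approaches are valid. The paper's is arguably slicker because it yields the quantitative identity \eqref{factor2} as a byproduct (which the paper reuses at \eqref{factor3} and in the proof of \cref{cor:tensorDensity}), whereas yours avoids writing out the tensorized gradient formula in exchange for the slicing bookkeeping; if you only want the qualitative equivalence, the two costs are comparable.
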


\begin{proof} (1) It is immediate from the factorization of $\maxw$ that
$\tp_{i\in\ii{N}} r\psp{i}$ belongs to $\LTOOM$. Thanks to \cref{lem:distrTP}, the
identity
\begin{equation}\label{gradWTP}
\grad[\conf_j]\left(\frac{\tp_{i=1}^N r\psp{i}}{\maxw}\right) = \tp_{\allibutj}^N \left(\frac{r\psp{i}}{M_i}\right) \otimes_j \grad\negthickspace\left(\frac{r\psp{j}}{M_j}\right)
\end{equation}
holds in the distributional sense. Then, as $r\psp{i}/M_i \in \LTMi$ for $i \in
\ii{N} \setminus \{ j \}$, and $\grad(r\psp{j}/M_j) \in [\LL^2_{M_j}(D_j)]^d$,
the factorization of the Maxwellian $\maxw$ implies that, for $j \in
\ii{N}$,
$\grad[\conf_j](\tp_{i=1}^N r\psp{i}/\maxw) \in [\LTM]^d$.
That completes the proof of Part (1).

\medskip

(2) We shall prove the second part by showing that (b) is both necessary and
sufficient for (a).

\smallskip

(a) $\implies$ (b): This is immediate from the first part and the fact that the
tensor product of the $r\psp{i}$, $i \in \ii{N}$, cannot be null if none of its factors is.

\smallskip

(b) $\implies$ (a): Suppose that $\tp_{i=1}^N r\psp{i}\in \HDM \setminus
\{0\}$; then, because of the tensor-product structure of $\maxw$, the
positivity of $M_i$ on compact subsets of $D_i$ for $i \in \ii{N}$ and Fubini's
theorem, $r\psp{i} \in M_i\, \LL^1_{\loc}(D_i) \cap \LTOOMi$, $i \in \ii{N}$.
Hence, each $r\psp{i} / M_i$ defines a regular distribution in
$\mathcal{D}'(D_i)$. Again, \cref{lem:distrTP} makes \eqref{gradWTP} valid
and thus,
\begin{equation}\label{factor2}
\norm{\tp_{i=1}^N r\psp{i}}_{\HDM}^2 = \prod_{i=1}^N \norm{r\psp{i}}_{\LTOOMi}^2 + \sum_{j=1}^N \Bigg[ \prod_{\allibutj}^N \norm{r\psp{i}}_{\LTOOMi}^2 \Bigg] \norm{\grad\negthickspace\left(\frac{r\psp{j}}{M_j}\right)}_{[\LL^2_{M_j}(D_i)]^d}^2.
\end{equation}
Now, none of the $r\psp{i}$ can be null (otherwise their tensor product would
be null). On combining this with their $1/M_i$-weighted square integrability,
the identity \eqref{factor2} yields
$ \|\grad(r\psp{i}/M_i)\|_{[\LTMi]^d} < \infty$ for all $i \in \ii{N}$.
Hence $r\psp{i} \in \HDiMi \setminus \{ 0 \}$ for $i \in \ii{N}$.
\end{proof}

\section{Separated representation}\label{sec:SR}

\subsection{Two algorithms} The existence of a unique weak solution to
\eqref{FP-elliptic-3} is an immediate consequence of the Lax--Milgram theorem
via the facts that $\HDM$ is a Hilbert space (cf.\ \cref{lem:Hilbert}) and
$a$ is a bounded and coercive bilinear form on $\HDM$. By virtue of
the Riesz representation theorem, there exists a bounded linear operator
$\mathcal{A}\colon \HDM \rightarrow \HDM'$, defined by
$(\mathcal{A} \psi)(\varphi) = a(\psi, \varphi)$ for all  $\varphi \in \HDM$.
Thanks to the symmetry of $a$, the weak formulation \eqref{FP-elliptic-3} can
be restated as the following, equivalent, energy minimization problem:
\begin{equation}\label{FP-elliptic-3a}
\psi := \argmin_{\varphi \in \HDM} J_{f}(\varphi)\qquad \text{where}\qquad J_{f}(\varphi) := \frac{1}{2}\, a(\varphi,\varphi) - f(\varphi).
\end{equation}

We observe that, with $\psi \in \HDM$ as in \eqref{FP-elliptic-3a},
\begin{equation}\label{identity}
J_{f}(\varphi) = \frac{1}{2}\,a(\varphi - \psi, \varphi - \psi) - \frac{1}{2}\,a(\psi, \psi)\qquad
\forall\, \varphi \in \HDM.
\end{equation}
Following the work of Le~Bris, Leli\`evre and Maday \cite{LLM} concerning the
numerical solution of high-dimensional Poisson equations, we consider two
numerical methods.

\begin{alg}[Pure Greedy Algorithm]\label[alg]{PGA}\noindent
\begin{itemize}
\item[0.] Define: $f_0:=f \in \HDM'$.
\item[1.] For $n\geq 1$ do:
\begin{itemize}
\item[1.1] Find $r_n\psp{i} \in \HDiMi$, $i \in \ii{N}$, such that
\begin{equation}\label{greedy1}
(r_n\psp{1}, \dotsc, r_n\psp{N}) \in \argmin_{(s\psp{1}, \dotsc, s\psp{N})\in \bigtimes_{i=1}^N\HDiMi}
J_{f_{n-1}} \! \left( \tp_{i=1}^N s\psp{i} \! \right).
\end{equation}
\item[1.2] Define:
$f_n:= f_{n-1} - \mathcal{A} \left(\tp_{i=1}^N r_n\psp{i}\right) \in \HDM'$.
\item[1.3] If $\norm{f_n}_{\HDM'} \geq \mathtt{TOL}$, then proceed to iteration
$n+1$; else, stop.
\end{itemize}
\end{itemize}
\end{alg}

\begin{alg}[Orthogonal Greedy Algorithm]\label[alg]{OGA}\noindent
\begin{itemize}
\item[0.] Define: $f_0:=f \in \HDM'$.
\item[1.] For $n\geq 1$ do:
\begin{itemize}
\item[1.1] Find $r_n\psp{i} \in \HDiMi$, $i \in \ii{N}$, such that
\begin{equation}\label{greedy2}
(r_n\psp{1}, \dotsc, r_n\psp{N}) \in \argmin_{(s\psp{1}, \dotsc, s\psp{N})\in \bigtimes_{i=1}^N\HDiMi}
J_{f_{n-1}} \!\left( \tp_{i=1}^N r\psp{i}\! \right).
\end{equation}
\item[1.2] Minimize $J_f$ on the span of $\seq{\tp_{i=1}^N r\psp{i}_k}{k \in \ii{n} }$; i.e., find $\alpha\psp{n} \in \Real^n$ such that
\begin{equation}\label{galerkin}
\alpha\psp{n} = \argmin_{\beta\in\Real^n} \, J_f \! \left( \sum_{k=1}^n \beta_k \tp_{i=1}^N r_k\psp{i} \!\right).
\end{equation}
\item[1.3] Define:
$f_n:= f - \mathcal{A} \left(\sum_{k=1}^n\alpha\psp{n}_k \tp_{i=1}^N r_k\psp{i}\right) \in \HDM'$.
\item[1.4] If $\norm{f_n}_{\HDM'} \geq \mathtt{TOL}$, then proceed to
iteration $n+1$; else, stop.
\end{itemize}
\end{itemize}
\end{alg}

For future reference, we define $\psi_n \in \HDM$ as the unique solution of the
problem
\[ a(\psi_n, \varphi) = f_n(\varphi)\qquad \forall\, \varphi \in \HDM. \]
Clearly, for all $n$ up to the (existing or not) termination of the
corresponding algorithm,
\begin{equation}\label{errors}
\psi_n = \left\{\begin{array}{ll}
\psi_{n-1} - \tp_{i=1}^N r\psp{i}_n & \text{for the Pure Greedy Algorithm},\\
\psi - \sum_{k=1}^n \alpha\psp{n}_k \tp_{i=1}^N r\psp{i}_k & \text{for the Orthogonal Greedy Algorithm},
\end{array} \right.
\end{equation}
where $\psi = \psi_0$ is the unique solution of \eqref{FP-elliptic-3a}. Proving
the convergence of the algorithms amounts to showing that the sequences
$\seq{\psi_n}{n\geq 0}$ defined by \eqref{errors} converge to $0$ in $\HDM$.

\subsection{Correctness of the algorithms} The proof of the correctness of
\cref{PGA} (respectively \cref{OGA}) amounts to showing that, given
$f_{n-1} \in \HDM'$ (respectively $(f_{n-1},\alpha\psp{n-1}) \in \HDM' \times
\Real^{n-1}$), the loop \texttt{1} returns a well-defined member of $\HDM'$
(resp.\ $\HDM' \times \Real^n$).

We start by observing that, thanks to the first part of \cref{lem:TP}, the set
of $N$-way tensor products of ensembles of functions $\HDiMi$, $i \in \ii{N}$,
is a subset of $\HDM$, thereby rendering the minimization \emph{problems}
\eqref{greedy1} and \eqref{greedy2} sound. However, the existence of
\emph{solutions} $(r_n\psp{1}, \dotsc, r_n\psp{N})$ to these problems is quite
another matter: it will be proved using \cref{lem:belowZero} and
\cref{thm:existence} below.

\begin{lemma}\label[lemma]{lem:belowZero}
Suppose that $f \in \HDM' \setminus \{0\}$ and consider the functional $J_{f}$,
as in \eqref{FP-elliptic-3a}. Then, there exists $(r\psp{1}, \dotsc, r\psp{N})$
in $\bigtimes_{i=1}^N \HDiMi$ such that
\[ J_{f}\left(\tp_{i=1}^N r\psp{i}\right) < 0.\]
\end{lemma}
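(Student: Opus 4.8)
The plan is to exploit the fact that $J_f$ is a quadratic functional with a strictly negative minimum, and that its negativity can be transferred from a general element of $\HDM$ to a tensor-product element by a density argument. First I would use $f \neq 0$ together with the coercivity of $a$ and the Lax--Milgram/Riesz framework already set up: there is a unique $\psi \in \HDM$ with $a(\psi, \varphi) = f(\varphi)$ for all $\varphi$, and since $f \neq 0$ we have $\psi \neq 0$, hence $a(\psi,\psi) > 0$. Plugging $\varphi = \psi$ into the identity \eqref{identity} gives $J_f(\psi) = -\tfrac12 a(\psi,\psi) < 0$. So $J_f$ attains a strictly negative value somewhere in $\HDM$; it remains only to find a tensor-product element on which $J_f$ is still negative.

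Next I would observe that $J_f$ is continuous on $\HDM$ (because $a$ is bounded and $f \in \HDM'$), so the set $\{\varphi \in \HDM : J_f(\varphi) < 0\}$ is a nonempty open subset of $\HDM$. Hence it suffices to show that finite sums of $N$-fold tensor products $\tp_{i=1}^N s\psp{i}$, with $s\psp{i} \in \HDiMi$, are dense in $\HDM$ — then some such finite sum $w = \sum_{k} \tp_{i=1}^N s_k\psp{i}$ lies in that open set, i.e. $J_f(w) < 0$. Using the isometric isomorphism \eqref{iso-iso} between $\HDM$ and $\HOM$ (and between $\HDiMi$ and $\HOMi$), this density question is equivalent to the density of finite sums of tensor products of $\HOMi$-functions in $\HOM$. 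The natural way to get this is via \cref{lem:inclusions}: $\CC^1_0(\D) \subset \HDM$, and more relevantly one expects $\CIC(\D)$ (or $\CC^1_0(\D)$) to be such that tensor products of $\CIC(D_i)$-functions span a dense subspace; combined with the fact that such smooth compactly supported functions are dense in the relevant weighted Sobolev spaces on the $D_i$ and on $\D$ (on the bounded Lipschitz exhausting subdomains $\D_{(n)}$ one has equivalence with unweighted spaces, where tensor-product density of smooth functions is classical), one deduces the required density in $\HDM$.

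However, there is a genuine gap to bridge: \cref{lem:belowZero} as stated asserts that a \emph{single} tensor product $\tp_{i=1}^N r\psp{i}$ — not a finite sum of them — makes $J_f$ negative. A finite sum $w$ with $J_f(w) < 0$ need not have this property term-by-term, since $J_f$ is not linear. The trick to resolve this: write $w = \sum_{k=1}^K w_k$ with each $w_k = \tp_{i=1}^N s_k\psp{i}$ a tensor product, and use the identity \eqref{identity}, which says $J_f(\varphi) = \tfrac12 a(\varphi - \psi, \varphi - \psi) - \tfrac12 a(\psi,\psi)$. Since $J_f(w) < 0$ means $a(w - \psi, w - \psi) < a(\psi, \psi)$, and since the minimum of $J_f$ over the (non-closed) linear span of $\{w_1, \dots, w_K\}$ is also strictly negative (it is bounded above by $J_f(w) < 0$, and the argmin over that finite-dimensional space exists), one can then argue that among the one-dimensional subspaces $\mathrm{span}\{w_k\}$ at least one must already yield a negative value — more precisely, if $g := \sum_k c_k w_k$ is the minimizer over the span, then $J_f(g) \le J_f(t g)$ fails to be useful directly, so instead I would use the standard greedy-type observation: for the minimizer $g$ over the span, $a(g, g) = f(g) = -2 J_f(g) > 0$ and $f(g) = \sum_k f(c_k w_k)$, so $f(c_{k_0} w_{k_0}) > 0$ for some $k_0$; then along the line $t \mapsto J_f(t c_{k_0} w_{k_0})$, which is $\tfrac12 t^2 a(c_{k_0}w_{k_0}, c_{k_0}w_{k_0}) - t f(c_{k_0}w_{k_0})$, the minimum over $t \in \Real$ is strictly negative because the linear coefficient $f(c_{k_0} w_{k_0})$ is nonzero. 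Taking $r\psp{i}$ to absorb the scalar $t c_{k_0}$ into one factor of $w_{k_0} = \tp_{i=1}^N s_{k_0}\psp{i}$ yields the desired $(r\psp{1}, \dots, r\psp{N}) \in \bigtimes_{i=1}^N \HDiMi$ with $J_f(\tp_{i=1}^N r\psp{i}) < 0$.

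The main obstacle I expect is the density step — establishing rigorously that finite sums of tensor products $\tp_{i=1}^N \HOMi$ are dense in $\HOM$ (equivalently in $\HDM$) — since the weights $M_i$ degenerate at $\partial D_i$ and one must be careful that approximation on the exhausting subdomains $\D_{(n)}$, where everything reduces to the classical unweighted tensor-product density, actually patches together to give density on all of $\D$; this should follow from a cutoff/truncation argument using the monotone exhaustion $\D_{(n)} \uparrow \D$ together with dominated convergence in the weighted norms, but it is the step requiring the most care. The passage from a dense finite sum to a single tensor product, by contrast, is a short linear-algebra-plus-one-variable-calculus argument once the density is in hand.
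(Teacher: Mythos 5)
Your concluding reduction --- from a finite sum $w = \sum_k w_k$ of tensor products with $J_f(w) < 0$ to a single tensor product $\tp_{i=1}^N r\psp{i}$ with $J_f(\tp_{i=1}^N r\psp{i}) < 0$, via the Galerkin minimizer $g = \sum_k c_k w_k$ over $\mathrm{span}\{w_k\}$, the identity $J_f(g) = -\tfrac12 f(g) < 0$, and optimizing $J_f$ along the line through the term $w_{k_0}$ with $f(c_{k_0} w_{k_0}) > 0$ --- is correct and rather elegant. But the proposal founders on the density step that must precede it.

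You need finite linear combinations of elements of $\tp_{i\in\ii{N}}\HDiMi$ to be dense in $\HDM$. Inside this paper that fact is \cref{cor:tensorDensity}, which is proved from \cref{thm:PGA-converges}, whose proof in turn invokes \cref{lem:belowZero} itself; appealing to it here would be circular. The alternative route you gesture at --- approximating by smooth tensor products, reducing to the exhausting Lipschitz subdomains $\D_{(n)}$ where the weight is uniformly positive, and passing to the limit --- glosses over exactly the hard point: a generic $u \in \HDM$ does not, after cutting off to $\D_{(n)}$, converge to $u$ in $\HDM$, because $\maxw$ degenerates at $\partial\D$ and the $\HDM$-norm sees the behaviour of $u/\maxw$ there. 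Establishing density of $\CIC(\D)$ in $\HDM$ (property \eqref{fullDensity}) is a genuinely delicate matter that the paper verifies only under additional structural conditions on the spring potentials (cf.\ \cref{rem:densities}), conditions that \cref{lem:belowZero} does not assume and that the paper deliberately avoids invoking at this point.

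The paper's own proof is shorter and constructs no approximant: it argues by contradiction. If $J_f \geq 0$ on all tensor products, replace one factor $r\psp{1}$ by $\varepsilon\, r\psp{1}$; since $J_f$ is quadratic, dividing by $\varepsilon$ and letting $\varepsilon \to 0_+$ and $\varepsilon \to 0_-$ eliminates the quadratic contribution and forces the linear term $f\bigl(\tp_{i=1}^N r\psp{i}\bigr)$ to vanish for every ensemble. Specializing to $\bigtimes_{i=1}^N \CIC(D_i)$ (using \cref{lem:inclusions}) and invoking the distribution-theoretic \cref{lem:distrTP} then gives $f = 0$, contradicting $f \neq 0$. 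The scaling trick is the idea your proposal is missing; it converts the sign condition on the quadratic $J_f$ directly into linearity in $f$ on the tensor-product manifold, and so dispenses with the need to exhibit a near-minimizer of $J_f$ at all.
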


\begin{proof} Consider any functional $f \in \HDM' \setminus \{ 0 \}$ and
assume that the thesis is false; i.e., $ J_{f}\left( \tp_{i=1}^N
r\psp{i}\right) \geq 0$ for all ensembles $(r\psp{1}, \dotsc, r\psp{N}) \in
\bigtimes_{i=1}^N \HDiMi$; then,
\[ \frac{1}{2}\, a\left( \tp_{i=1}^N r\psp{i} , \tp_{i=1}^N r\psp{i}\right) \geq f\left(\tp_{i=1}^N r\psp{i} \right) \qquad \forall\ (r\psp{1},\dotsc,r\psp{N}) \in \bigtimes_{i=1}^N \HDiMi. \]
Given a particular ensemble $(r\psp{1},\dotsc,r\psp{N}) \in \bigtimes_{i=1}^N
\HDiMi$, we can replace $r\psp{1}$ with $\varepsilon r\psp{1}$ and, by virtue
of the bilinearity of $a$ and the linearity of $f$, we obtain
\begin{equation}\label{TP-ineq}
\frac{1}{2}\,\varepsilon^2 a\left( \tp_{i=1}^N r\psp{i} , \tp_{i=1}^N r\psp{i}\right) \geq \varepsilon\, f\left(\tp_{i=1}^N r\psp{i} \right).
\end{equation}
By combining the inequalities resulting from dividing both sides of
\eqref{TP-ineq} by positive $\varepsilon$ and taking the one-sided limit
$\varepsilon \to 0_+$ and from dividing \eqref{TP-ineq} by a negative
$\varepsilon$ and taking the one-sided limit $\varepsilon \to 0_-$ we get that
\[ f\left(\tp_{i=1}^N r\psp{i} \right) = 0. \]
As this is valid for any ensemble $(r\psp{1},\dotsc,r\psp{N}) \in
\bigtimes_{i=1}^N \HDiMi$, \cref{lem:inclusions} implies that it is valid,
in particular, for any ensemble $(r\psp{1},\dotsc,r\psp{N}) \in
\bigtimes_{i=1}^N \CIC(D_i)$, whence \cref{lem:distrTP} implies that $f =
0$. As this contradicts the hypotheses of the lemma, its thesis holds.
\end{proof}

We are now in a position to prove the existence of solutions to problems
\eqref{greedy1} and \eqref{greedy2}.

\begin{theorem}\label{thm:existence}
Given $f_{n-1} \in \HDM'$, each of the problems \eqref{greedy1} and
\eqref{greedy2} has a solution.
\end{theorem}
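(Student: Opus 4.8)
The plan is to argue by the direct method of the calculus of variations. Write $g := f_{n-1} \in \HDM'$ and
\[ \mathcal{S} := \bigl\{ \tp_{i=1}^N s\psp{i} : s\psp{i} \in \HDiMi,\ i \in \ii{N} \bigr\}, \]
which is a subset of $\HDM$ by the first part of \cref{lem:TP}; then \eqref{greedy1} and \eqref{greedy2} both amount to showing that $J_g$ attains its infimum $\mu := \inf_{v \in \mathcal{S}} J_g(v)$ on $\mathcal{S}$. If $g = 0$ this is immediate, since $J_0(v) = \onehalf a(v,v) \geq 0 = J_0(0)$ and $0 \in \mathcal{S}$; so I would assume $g \neq 0$. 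Then \cref{lem:belowZero} gives $\mu < 0$, while $J_g(v) \geq \onehalf a(v,v) - \norm{g}_{\HDM'}\norm{v}_{\HDM}$, which by coercivity of $a$ on $\HDM$ tends to $+\infty$ as $\norm{v}_{\HDM} \to \infty$; hence $\mu$ is finite and every minimizing sequence $\seq{v_n}{n \geq 1} \subset \mathcal{S}$ is bounded in $\HDM$, say $\norm{v_n}_{\HDM} \leq C$ for all $n$.

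The one genuinely delicate point is to renormalize $\seq{v_n}{n}$ so that \emph{all} of its tensor factors remain bounded in the respective $\HDiMi$; it is this that prevents the rank-one structure from collapsing in the weak limit. First I would show $\liminf_n \norm{v_n}_{\LTOOM} > 0$: if not, then along a subsequence $v_n \to 0$ in $\LTOOM$; along a further subsequence $v_n \rightharpoonup v_\star$ weakly in $\HDM$ (the $v_n$ being bounded there, cf.\ \cref{lem:Hilbert}), and continuity of $\HDM \hookrightarrow \LTOOM$ forces $v_\star = 0$; hence $g(v_n) \to g(0) = 0$, so $a(v_n,v_n) = 2 J_g(v_n) + 2 g(v_n) \to 2\mu < 0$, contradicting $a(v_n,v_n) \geq 0$. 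After passing to a subsequence, then, $0 < \eta \leq \norm{v_n}_{\LTOOM} \leq \norm{v_n}_{\HDM} \leq C$ for all $n$; discarding finitely many terms I may also assume $J_g(v_n) < 0 = J_g(0)$, so $v_n \neq 0$ and hence each tensor factor of $v_n$ is nonzero. Using the scaling invariance $\tp_i s\psp{i} = \tp_i(\lambda_i s\psp{i})$ for $\prod_i \lambda_i = 1$, I would pick representatives $v_n = \tp_{i=1}^N s_n\psp{i}$ with $\norm{s_n\psp{i}}_{\LTOOMi} = \norm{v_n}_{\LTOOM}^{1/N} =: c_n \in [\eta^{1/N}, C^{1/N}]$ for every $i \in \ii{N}$. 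Feeding this into \eqref{factor2} gives
\[ \norm{v_n}_{\HDM}^2 = \norm{v_n}_{\LTOOM}^2 + c_n^{2(N-1)} \sumj \norm{\grad(s_n\psp{j}/M_j)}_{[\LTMj]^d}^2, \]
so, since $\norm{v_n}_{\HDM} \leq C$ and $c_n \geq \eta^{1/N}$, each $s_n\psp{i}$ is bounded in $\HDiMi$, uniformly in $n$.

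From here I would conclude by compactness and weak lower semicontinuity. By \cref{lem:Hilbert} each $\HDiMi$ is reflexive, and each embeds compactly in $\LTOOMi$ (this is \cref{hyp:partialCompEmb} carried across the isometric isomorphism $\varphi \mapsto M_i\varphi$, i.e.\ the single-spring instance of \cref{lem:compEmb}); so, along a further subsequence, $s_n\psp{i} \rightharpoonup r\psp{i}$ in $\HDiMi$ and $s_n\psp{i} \to r\psp{i}$ in $\LTOOMi$ for every $i$. Since the $\LTOOM$-norm of a tensor product factorizes (Fubini, using $\maxw = \tp_i M_i$), a telescoping estimate with the uniform bounds on the factor norms gives $v_n = \tp_i s_n\psp{i} \to v := \tp_i r\psp{i}$ strongly in $\LTOOM$; in particular $\norm{v}_{\LTOOM} = \lim_n \norm{v_n}_{\LTOOM} \geq \eta > 0$, so $v \neq 0$, whence \cref{lem:TP} gives $r\psp{i} \in \HDiMi \setminus \{0\}$ for all $i$ and $v \in \mathcal{S}$. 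Being bounded in $\HDM$, $\seq{v_n}{n}$ has a weak $\HDM$-limit along a subsequence, and that limit coincides with its weak $\LTOOM$-limit, hence equals $v$; thus $v_n \rightharpoonup v$ in $\HDM$. Finally $J_g$ is weakly lower semicontinuous on $\HDM$: $v \mapsto \onehalf a(v,v)$ is convex (as $a$ is symmetric and positive semidefinite) and norm-continuous (as $a$ is bounded), hence weakly lsc, and $-g \in \HDM'$ is weakly continuous. Therefore $J_g(v) \leq \liminf_n J_g(v_n) = \mu$, and since $v \in \mathcal{S}$ also $J_g(v) \geq \mu$; hence $J_g(v) = \mu$, so $(r\psp{1}, \dotsc, r\psp{N})$ solves both \eqref{greedy1} and \eqref{greedy2}.

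I expect the main obstacle to be precisely the failure of $\mathcal{S}$ to be weakly closed in $\HDM$: a priori a minimizing sequence need only converge weakly to some element of $\HDM$ that is not of rank one. The mechanism that circumvents this is the combination of the a priori bound $\liminf_n \norm{v_n}_{\LTOOM} > 0$ — which follows from the sign of $\mu$ and, through \eqref{factor2} and the geometric-mean normalization, forces every factor to be bounded in $\HDiMi$ — with the compact embeddings $\HDiMi \compEmb \LTOOMi$ of \cref{hyp:partialCompEmb}, which upgrade weak convergence of the factors to strong $\LTOOMi$-convergence and thereby make the limiting tensor product genuinely rank one.
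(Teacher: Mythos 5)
Your proof is correct, but it takes a genuinely different route from the paper's. The paper normalizes only the first $N-1$ tensor factors to unit $\LTOOMi$-norm, which yields boundedness of the factors in $\LTOOMi$ but \emph{not} in $\HDiMi$ (if the $N$-th factor's $\LTOOMi$-norm degenerates to zero, the gradients of the others may blow up; cf.\ \eqref{factor4}); it then extracts $\LTOOMi$-weak limits of the factors and identifies the $\HDM$-weak limit $r$ of the products as their tensor product via \cref{lem:distrTP} and the Hausdorff property of $\mathcal{D}'(\D)$, never invoking the compact embeddings. You instead first establish the a priori lower bound $\liminf_n \norm{v_n}_{\LTOOM} > 0$ --- a step absent from the paper, which you deduce from $\mathfrak{m} < 0$ and coercivity --- and then use a geometric-mean normalization so that \emph{all} $N$ factors are bounded in $\HDiMi$, not just $\LTOOMi$. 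This lets you invoke the compact embeddings $\HDiMi \compEmb \LTOOMi$ (i.e.\ \cref{hyp:partialCompEmb} transported via \eqref{iso-iso}) to get strong $\LTOOMi$-convergence of each factor, whence strong $\LTOOM$-convergence of the tensor product by a telescoping estimate, bypassing the distributional identification entirely. The trade-off is that you use \cref{hyp:partialCompEmb} and the extra lower bound where the paper uses only weak compactness and distribution theory; what you gain is a more transparent passage to the limit that makes the preservation of the rank-one structure explicit rather than inferring it by uniqueness of distributional limits.
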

\begin{proof} Since problems \eqref{greedy1} and \eqref{greedy2} are completely
analogous, it suffices to consider one of them---say, \eqref{greedy1}. Then, as
$(0,\dotsc,0)$ is a solution of \eqref{greedy1} and \eqref{greedy2} when
$f_{n-1} = 0$, we assume from now on that $f_{n-1} \neq 0$.

By \eqref{identity} and the coerciveness of $a$, $J_{f_{n-1}}(\varphi)
\geq - \frac{1}{2} a(\psi, \psi)$ for all $\varphi \in \HDM$, where $\psi$ is
the unique solution of \eqref{FP-elliptic-4} in $\HDM$ when $f = f_{n-1}$. As,
by \cref{lem:TP}, the $N$-way tensor product of functions in $\HDiMi$, $i
\in \ii{N}$, is a subset of $\HDM$, $J_{f_{n-1}}$ is bounded from below over
that manifold. That is,
\begin{equation}\label{inf}
\mathfrak{m} := \inf_{(s\psp{1},\dotsc,s\psp{N}) \in \bigtimes_{i=1}^N \HDiMi} J_{f_{n-1}}\left(\tp_{i=1}^N s\psp{i}\right) > -\infty.
\end{equation}
It follows from \cref{lem:belowZero} that $\mathfrak{m}<0$. Our aim is to
show that the infimum $\mathfrak{m}$ is attained at an element of the form
$\tp_{i=1}^N r\psp{i}$ with $(r\psp{1}, \dotsc, r\psp{N}) \in \bigtimes_{i=1}^N
(\HDiMi \setminus \{ 0 \})$.

From \eqref{inf}, there exists a sequence $\seq{\tp_{i\in\ii{N}} r_k\psp{i}}{k
\geq 1}$ of $N$-way tensor products of functions in $\HDiMi$, $i\in \ii{N}$,
such that
\[ \lim_{k \rightarrow \infty} J_{f_{n-1}}\left(\tp_{i=1}^N r_k\psp{i}\right) = \mathfrak{m}.\]
On noting that, from the definition of $a$ in \eqref{definition-of-a}, for all $\varphi \in \HDM$,
\begin{equation*}
\begin{split}
J_{f_{n-1}}(\varphi) & = \frac{1}{2}\,a(\varphi - \psi, \varphi - \psi) - \frac{1}{2}\,a(\psi, \psi)
\geq \frac{1}{4}\, a(\varphi, \varphi) - a(\psi, \psi)\\
& \geq \frac{1}{4} \min\left(\frac{\lambda_{\mathrm{min}}}{4\wi}, c\right)\, \norm{\varphi}_{\HDM}^2 - a(\psi, \psi),
\end{split}
\end{equation*}
it follows, by setting $\varphi = \tp_{i\in\ii{N}}
r_k\psp{i}$, that the sequence $\seq{\tp_{i\in\ii{N}} r_k\psp{i}}{k \geq 1}$ is
bounded in $\HDM$; in other words, there exists $C>0$ such that (cf.\
\eqref{factor2}):
\begin{equation}\label{factor3}
\norm{\tp_{i=1}^N r_k\psp{i}}_{\HDM}^2 = \prod_{i=1}^N \norm{r_k\psp{i}}_{\LTOOMi}^2
+ \sum_{j=1}^N \Bigg(\prod_{\allibutj}^N \norm{r_k\psp{i}}_{\LTOOMi}^2 \Bigg) \norm{\grad(r_k\psp{j} / M_j)}_{[\LL^2_{M_j}(D_j)]^d}^2 \leq C
\end{equation}
for all $k \geq 1$. Since the value of $\tp_{i\in\ii{N}} r_k\psp{i}$ is
unaltered by multiplying the first $N-1$ factors by positive constants
$c_{1,k}, \dotsc, c_{N-1,k}$, respectively, and dividing the final factor by
the product $c_{1,k}\dotsm c_{N-1,k}$, we can assume without loss of generality
that
\begin{equation}\label{factor3a}
\norm[n]{r_k\psp{i}}_{\LTOOMi}^2 = 1, \qquad i \in \ii{N-1}.
\end{equation}
Thus, it follows from \eqref{factor3} that
\begin{equation}\label{factor4}
\norm{r_k\psp{N}}_{\LL^2_{1/M_{\!N}}(D_N)}^2 + \norm{r_k\psp{N}}_{\LL^2_{1/M_{\!N}}(D_N)}^2 \sum_{j=1}^{N-1} \norm{\grad(r_k\psp{j}/M_j)}_{[\LL^2_{M_j}(D_j)]^d}^2 + \norm{\grad(r_k\psp{N}/M_N)}_{[\LL^2_{M_{\!N}}(D_N)]^d}^2
\leq C.
\end{equation}

Since the sequence $\seq{\tp_{i\in\ii{N}} r_k\psp{i}}{k \geq 1}$ is bounded in
$\HDM$, and $\HDM$ is a Hilbert space, and therefore reflexive, the sequence
has a weakly convergent subsequence in $\HDM$, denoted by
$\seq{\tp_{i\in\ii{N}} r_{\phi(k)}\psp{i}}{k \geq 1}$; we denote its weak limit by
$r \in \HDM$. Since $J_{f_{n-1}}$ is convex on $\HDM$ and continuous (and
thereby also semicontinuous) in the strong topology of $\HDM$, it is weakly
lower-semicontinuous on $\HDM$. Hence
\[ J_{f_{n-1}}(r) \leq \liminf_{k \rightarrow \infty} J_{f_{n-1}}\left(\tp_{i=1}^N r_{\phi(k)}\psp{i}\right) = \lim_{k \rightarrow \infty} J_{f_{n-1}}\left(\tp_{i=1}^N r_k\psp{i}\right) = \mathfrak{m} < 0.\]
Thus we deduce that $r \neq 0$ (as $r = 0$ would imply that $J_{f_{n-1}}(r) =
0$); hence, $r \in \HDM \setminus \{ 0 \}$.

According to \eqref{factor3a} and \eqref{factor4} each subsequence
$\seq{r_{\phi(k)}\psp{i}}{k \geq 1}$, is bounded in the respective space $\LTOOMi$, for $i
\in \ii{N}$. Then, $\seq{r_{\phi(k)}\psp{i}}{k \geq 1}$ has a weakly convergent
subsequence in $\LTOOMi$, say $\seq{r_{\phi'(k)}\psp{i}}{k \geq 1}$, for $i \in \ii{N}$; let
us denote by $r\psp{i} \in \LTOOMi$ the corresponding weak limits. As by \cref{lem:inclusions}, $\CIC(D_i) \subset \HDiMi \subset \LTOOMi$, for all $\varphi \in
\CIC(D_i)$ the mapping $\xi \in \LTOOMi \mapsto \langle \varphi, \xi\rangle_{\LTOOMi}$ defines a bounded linear functional on $\LTOOMi$. Thus, $\seq{ r_{\phi'(k)}\psp{i}/M_i }{k \geq 1}$ converges to $r\psp{i}/M_i$
in $\mathcal{D}'(D_i)$ for $i \in \ii{N}$. Hence, by
\cref{lem:distrTP},
\begin{equation}\label{SE1}
\lim_{k\to\infty} \tp_{i=1}^N \frac{r_{\phi'(k)}\psp{i}}{M_i} = \tp_{i=1}^N \frac{r\psp{i}}{M_i} = \frac{\tp_{i=1}^N r\psp{i}}{\maxw} \quad \text{in } \mathcal{D}'(\D).
\end{equation}
Similarly, the inclusion $\CIC(\D) \subset \HDM$ (cf. \cref{lem:inclusions}) and the fact that, for all $\varphi \in \CIC(\D)$, the mapping $\xi \in \HDM \mapsto \langle \varphi, \xi \rangle_{\LTOOM}$ defines a bounded linear functional on $\HDM$ imply
\begin{equation}\label{SE3}
\lim_{k\to\infty} \tp_{i=1}^N \frac{ r\psp{i}_{\phi'(k)} }{M_i} = \lim_{k \to \infty} \frac{\tp_{i=1}^N r\psp{i}_{\phi'(k)}}{\maxw} = \frac{r}{\maxw} \quad \text{in } \mathcal{D}'(\D)
\end{equation}
on account of $r$ being the $\HDM$-weak limit of the sequence $\seq{ \tp_{i\in\ii{N}} r\psp{i}_{\phi(k)} }{ k \geq 1}$. As $\mathcal{D}'(\D)$ is a Hausdorff topological space, the limits in \eqref{SE1} and \eqref{SE3} have to coincide. That is,
\[ \maxw^{-1} r = \maxw^{-1} \tp_{i=1}^N r\psp{i} \qquad \text{in } \mathcal{D}'(\D) .\]
Hence, $r = \tp_{i=1}^N r\psp{i}$ almost everywhere. As $r \in \HDM
\setminus \{ 0 \}$ and has a tensor-product structure, the second part of
\cref{lem:TP} implies that $r\psp{i} \in \HDiMi \setminus \{ 0 \}$ for $i
\in \ii{N}$.
Now,
\[ J_{f_{n-1}}\left( \tp_{i=1}^N r\psp{i} \right) = J_{f_{n-1}}(r) \leq \mathfrak{m}.\]
Recalling the definition of $\mathfrak{m}$ from \eqref{inf}, we have thus shown
that the infimum in \eqref{inf} is attained at $\tp_{i=1}^N r\psp{i}$. Thus,
$(r\psp{1}, \dotsc, r\psp{N}) \in \bigtimes_{i=1}^N (\HDiMi \setminus \{0\})$
is a solution to problem \eqref{greedy1}.

\end{proof}

Having proved that the minimization problems \eqref{greedy1} of \cref{PGA}
and \eqref{greedy2} of \cref{OGA} have solutions, establishing the
correctness of what is left of the algorithms is straightforward. The Galerkin
problem \texttt{1.2} of \cref{OGA} is well-defined and has a unique solution
for each $n\geq 1$, because it is equivalent to the minimization of a coercive quadratic
form over a finite-dimensional linear space. Then, at last, the definition of
the $n$-th residual in step \texttt{1.2} of \cref{PGA} and in step
\texttt{1.3} of \cref{OGA} are correct on noting that $\mathcal{A}$ maps
$\HDM$ into $\HDM'$.

\medskip

In the next section we establish the convergence of the two algorithms.

\section{Convergence of the Algorithms}\label{sec:convergence}

\subsection{Euler--Lagrange equations}

\begin{lemma}\label[lemma]{lem:EL} Local minimizers $(r_n\psp{1}, \dotsc, r_n\psp{N})$
of the minimization problems \eqref{greedy1} or \eqref{greedy2} satisfy the
following Euler--Lagrange equation system: For all $(s\psp{1}, \dotsc,
s\psp{N}) \in \bigtimes_{i\in\ii{N}} \HDiMi$,
\begin{equation}\label{euler-lagrange}
a \Biggl( \tp_{i=1}^N r_n\psp{i}, \sum_{j=1}^N \tp_{\allibutj}^N r_n\psp{i} \otimes_j s\psp{j} \Biggr) = f_{n-1} \Biggl( \sum_{j=1}^N\tp_{\allibutj}^N r_n\psp{i} \otimes_j s\psp{j} \Biggr).
\end{equation}
From this, it follows that, for the Pure Greedy Algorithm (\cref{PGA}):
\begin{equation}\label{orthogonality}
a\left(\psi_n,\tp_{i=1}^N r_n\psp{i} \right) = 0.
\end{equation}
\end{lemma}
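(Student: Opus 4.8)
The plan is to obtain \eqref{euler-lagrange} by differentiating the energy $J_{f_{n-1}}$ along the coordinate directions of the nonlinear manifold $\tp_{i=1}^N\HDiMi$, and then to read off \eqref{orthogonality} as a purely algebraic consequence. Throughout I abbreviate $R := \tp_{i=1}^N r_n\psp{i}$, which lies in $\HDM$ by Part~(1) of \cref{lem:TP}.

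For the first step I fix $j \in \ii{N}$ and $s\psp{j} \in \HDiMi$ and look at the one-parameter family $t \mapsto \tp_{\allibutj}^N r_n\psp{i} \otimes_j (r_n\psp{j} + t\,s\psp{j})$. Since $\HDiMi$ is a linear space, every member of this family is an admissible tensor product; by multilinearity it equals $R + t\,S_j$, where $S_j := \tp_{\allibutj}^N r_n\psp{i} \otimes_j s\psp{j}$ also lies in $\HDM$ by \cref{lem:TP}(1). Because $(r_n\psp{1},\dotsc,r_n\psp{N})$ is a local minimizer of $J_{f_{n-1}} \circ \tp_{i=1}^N$ over $\bigtimes_{i=1}^N\HDiMi$, restricting to the $j$-th slice shows that $t \mapsto J_{f_{n-1}}(R + t\,S_j)$ has a local minimum at $t = 0$. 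This scalar function is a polynomial of degree $\leq 2$ in $t$ (by bilinearity and symmetry of $a$ and linearity of $f_{n-1}$), so its derivative at $t = 0$ vanishes, which is precisely $a(R, S_j) = f_{n-1}(S_j)$. Summing over $j = 1, \dotsc, N$ and using bilinearity of $a$ and linearity of $f_{n-1}$ once more yields \eqref{euler-lagrange}.

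For the second step I specialize \eqref{euler-lagrange} to $s\psp{j} := r_n\psp{j}$ for every $j$. Then each summand $\tp_{\allibutj}^N r_n\psp{i} \otimes_j r_n\psp{j}$ is just $R$, so the right-hand sum collapses to $N R$, and after cancelling the factor $N$ this reads $a(R, R) = f_{n-1}(R)$. Combining this with the defining relation $a(\psi_{n-1}, R) = f_{n-1}(R)$ of $\psi_{n-1}$ and with the identity $\psi_n = \psi_{n-1} - R$ from \eqref{errors} for the Pure Greedy Algorithm gives $a(\psi_n, R) = a(\psi_{n-1}, R) - a(R, R) = f_{n-1}(R) - f_{n-1}(R) = 0$, which is \eqref{orthogonality}.

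I do not expect any genuine obstacle here; the entire argument is elementary. The only points that warrant a word of care are that the perturbed slices $R + t\,S_j$ really stay in $\HDM$---so that $a$ and $f_{n-1}$ may be evaluated on them---and that differentiation in $t$ is legitimate; the first follows from \cref{lem:TP}(1) together with the linearity of the spaces $\HDiMi$, and the second from the polynomial structure of $t \mapsto J_{f_{n-1}}(R + t\,S_j)$. In contrast to \cref{thm:existence}, no reflexivity, weak compactness, or lower-semicontinuity input is needed.
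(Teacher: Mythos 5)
Your proof is correct and rests on the same underlying idea as the paper's: stationarity of the quadratic functional $J_{f_{n-1}}$ along admissible perturbations of the local minimizer, followed by the substitution $s\psp{j} = r_n\psp{j}$ and the use of \eqref{errors}. The only presentational difference is that the paper perturbs all factors at once, considering $\varepsilon \mapsto J_{f_{n-1}}\bigl(\tp_{i=1}^N (r_n\psp{i} + \varepsilon s\psp{i})\bigr)$ and differentiating at $\varepsilon = 0$, whereas you perturb one factor at a time, which yields the slightly stronger coordinate-wise identities $a(R, S_j) = f_{n-1}(S_j)$ for each $j$ before summing; this makes each slice affine (so the map is visibly a quadratic in $t$ rather than a degree-$N$ polynomial in $\varepsilon$) and makes it slightly more transparent that the perturbation remains in the manifold, but the resulting Euler--Lagrange system and the orthogonality conclusion are identical.
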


\begin{proof}
Let $(r_n\psp{1}, \dotsc, r_n\psp{N})$ be a solution to the minimization
problem \eqref{greedy1} or \eqref{greedy2}. Then, given any ensemble $(s\psp{1}, \dotsc,
s\psp{N})$, \eqref{euler-lagrange} is but a way of writing that the derivative
of
\[ J_{f_{n-1}}\left(\tp_{i=1}^N \left(r_n\psp{i} + \varepsilon s\psp{i}\right) \right) \]
with respect to $\varepsilon$ is zero when evaluated at $\varepsilon = 0$.
As, by hypothesis, $(r_n\psp{1}, \dotsc, r_n\psp{N})$ is a local minimizer of $J_{f_{n-1}}$ and $\varepsilon
\mapsto \mathfrak{J}_n(\varepsilon):=J_{f_{n-1}}\left(\tp_{i=1}^N \left(r_n\psp{i} + \varepsilon
s\psp{i}\right) \right)$ is regular enough, the fact that $\mathfrak{J}'_n(0)=0$ implies that
\eqref{euler-lagrange} holds.

Setting $(s\psp{1}, \dotsc, s\psp{N}) = (r_n\psp{1}, \dotsc, r_n\psp{N})$ in
\eqref{euler-lagrange}, we obtain from the definition of the $\psi_n$ that
\begin{equation}\label{orthogonality1}
a\left( \tp_{i=1}^N r_n\psp{i}, \tp_{i=1}^N r_n\psp{i} \right) = a\left( \psi_{n-1}, \tp_{i=1}^N r_n\psp{i} \right).
\end{equation}
Combining this with \eqref{errors} we obtain \eqref{orthogonality}.
\end{proof}

\begin{samepage}
\begin{remark}\noindent
\begin{enumerate}
\item The above lemma only states that local minima of the minimization problem
\eqref{greedy1} and \eqref{greedy2} satisfy the Euler--Lagrange equation
\eqref{euler-lagrange}. The converse may be false, of course: although the functional
that is minimized is quadratic, the set over which it is minimized is nonlinear, so
there is no reason why a stationary point should be a local minimum.
\item In what follows we make liberal use of the norm $\norm{\tcdot}_a :=
a(\tcdot,\tcdot)^{1/2}$ on $\HDM$, which, thanks to its
equivalence with $\norm{\tcdot}_{\HDM}$, makes no difference when making
topological statements (such as convergence).
\end{enumerate}
\end{remark}
\end{samepage}

\begin{lemma}\label[lemma]{lem:mostEnergy} Let $(r_n\psp{1}, \dotsc, r_n\psp{N})$ be a
global minimizer for the minimization problem \eqref{greedy1} of the
\cref{PGA}. Then,
\begin{equation}\label{most-energy}
\norm{\tp_{i=1}^N r_n\psp{i}}_a
= \frac{ a\left( \psi_{n-1}, \tp_{i=1}^N r_n\psp{i} \right) }{ \norm{ \tp_{i=1}^N r_n\psp{i} }_a}
= \sup_{s \in \tp_{i\in\ii{N}}\HDiMi \setminus \{0\}} \frac{a\left( \psi_{n-1}, s \right)}{\norm{s}_a}.
\end{equation}
\end{lemma}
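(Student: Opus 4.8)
The plan is to prove the two equalities in \eqref{most-energy} separately. Set $t_n := \tp_{i=1}^N r_n\psp{i}$ for brevity. The first equality $\norm{t_n}_a = a(\psi_{n-1}, t_n)/\norm{t_n}_a$ is just a restatement of \eqref{orthogonality1} from \cref{lem:EL}: since $(r_n\psp{1}, \dotsc, r_n\psp{N})$ is in particular a local minimizer, \cref{lem:EL} gives $a(t_n, t_n) = a(\psi_{n-1}, t_n)$, i.e.\ $\norm{t_n}_a^2 = a(\psi_{n-1}, t_n)$; dividing by $\norm{t_n}_a$ (which is nonzero because $t_n \neq 0$ by \cref{thm:existence}, or more precisely because $\mathfrak m < 0$ forces the minimizer to be nonzero) yields the claim.

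For the second equality, the idea is to rewrite the minimization of $J_{f_{n-1}}$ over the tensor-product manifold in terms of the $a$-orthogonal projection. Using the identity \eqref{identity} with $f = f_{n-1}$ and its solution $\psi = \psi_{n-1}$, we have for any $s$ in the tensor-product manifold
\[ J_{f_{n-1}}(s) = \tfrac{1}{2}\, a(s - \psi_{n-1}, s - \psi_{n-1}) - \tfrac{1}{2}\, a(\psi_{n-1}, \psi_{n-1}) = \tfrac{1}{2}\,\norm{s - \psi_{n-1}}_a^2 - \tfrac12 \norm{\psi_{n-1}}_a^2. \]
Hence minimizing $J_{f_{n-1}}$ over $\tp_{i\in\ii{N}}\HDiMi$ is equivalent to minimizing $\norm{s - \psi_{n-1}}_a$ over that manifold. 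Since the manifold is invariant under scaling $s \mapsto \lambda s$ (multiply one factor by $\lambda$), a routine one-variable optimization over $\lambda$ shows that for each fixed nonzero direction $s$ the best scalar multiple $\lambda s$ satisfies $\norm{\lambda s - \psi_{n-1}}_a^2 = \norm{\psi_{n-1}}_a^2 - a(\psi_{n-1}, s)^2/\norm{s}_a^2$. Therefore
\[ \min_{s \in \tp_{i\in\ii{N}}\HDiMi} \norm{s - \psi_{n-1}}_a^2 = \norm{\psi_{n-1}}_a^2 - \sup_{s \in \tp_{i\in\ii{N}}\HDiMi \setminus \{0\}} \frac{a(\psi_{n-1}, s)^2}{\norm{s}_a^2}, \]
and since $t_n$ attains the minimum on the left, we get $a(\psi_{n-1}, t_n)^2/\norm{t_n}_a^2 = \sup_{s} a(\psi_{n-1}, s)^2/\norm{s}_a^2$. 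Taking square roots (the supremand is nonnegative, and the sup is positive since $a(\psi_{n-1},t_n) = \norm{t_n}_a^2 > 0$) and combining with the first equality gives \eqref{most-energy}.

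The only mildly delicate point is the interchange of the scaling optimization with the supremum over directions, i.e.\ justifying that optimizing over $\lambda$ and then over $s$ gives the same value as optimizing over the whole scale-invariant manifold at once; this is immediate because every nonzero element of the manifold is of the form $\lambda s$ for $s$ ranging over the manifold and $\lambda$ over $\Real$, so the double optimization is literally a reindexing of the single one. I expect no real obstacle here — the lemma is essentially the observation that a best one-term tensor approximation in the energy norm is the same as a best-direction plus best-scaling problem, which is classical for greedy algorithms; the content already did the heavy lifting in \cref{lem:EL} and \cref{thm:existence}. One should just take care that all the quantities being divided by are nonzero, which follows from $t_n \neq 0$ and $a(\psi_{n-1}, t_n) = \norm{t_n}_a^2 > 0$ (the latter in turn because $J_{f_{n-1}}(t_n) = \mathfrak m < 0$ rules out $t_n$ being $a$-orthogonal to $\psi_{n-1}$).
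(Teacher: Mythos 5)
Your proof is correct and follows essentially the same route as the paper: the first equality is read off directly from \eqref{orthogonality1}, and the second is obtained by exploiting the scale-invariance of the tensor-product manifold to reduce $J_{f_{n-1}}$-minimization to best-approximation in the $a$-norm, then optimizing over the scalar multiple in each direction. Your explicit ``optimize over $\lambda$ then over the direction $s$'' phrasing is just a mild repackaging of the paper's comparison of $\psi_{n-1} - r_n$ against $\psi_{n-1} - \frac{a(\psi_{n-1},s)}{\norm{s}_a^2}s$, and your bookkeeping of the nonvanishing denominators and the sign issue when taking square roots (via $s \mapsto -s$) is sound.
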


\begin{proof}
The first equality in \eqref{most-energy} comes directly from
\eqref{orthogonality1}. Now, analogously to \eqref{identity}, $J_{f_{n-1}}$ can
be written as
\[ J_{f_{n-1}}(\varphi) = \frac{1}{2}a(\varphi - \psi_{n-1}, \varphi - \psi_{n-1}) - \frac{1}{2}a(\psi_{n-1}, \psi_{n-1}) \qquad \forall\,\varphi \in \HDM. \]
Combining this representation of $J_{f_{n-1}}$ with the fact that $r_n :=
\tp_{i\in\ii{N}} r_n\psp{i}$ minimizes $J_{f_{n-1}}$ among the members of
$\tp_{i\in\ii{N}} \HDiMi$ and the first equality of \eqref{most-energy},
according to which $a\left( \psi_{n-1}, r_n\right) = \norm{r_n}_a^2$, we have,
for all $s \in \tp_{i\in\ii{N}} \HDiMi \setminus \{ 0 \}$, that
\begin{equation*}
\norm{\psi_{n-1} - \frac{a\left( \psi_{n-1}, r_n \right)}{\norm{r_n}_a^2} r_n }_a^2
= \norm{\psi_{n-1} - r_n}_a^2
\leq \norm{\psi_{n-1} - \frac{a(\psi_{n-1},s)}{\norm{s}_a^2} s}_a^2.
\end{equation*}
Therefore,
\[ \frac{a(\psi_{n-1},r_n)^2}{a(r_n,r_n)} \geq \frac{a(\psi_{n-1},s)^2}{a(s,s)}. \]
Taking the supremum over $s \in \tp_{i\in\ii{N}}\HDiMi \setminus \{0\}$ and noting that $r_n$ is an admissible $s$ we
get the second equality in \eqref{most-energy}.
\end{proof}

\subsection{Convergence}
\begin{theorem}\label{thm:PGA-converges}
The Pure Greedy Algorithm (\cref{PGA}) converges to the solution $\psi$ to
\eqref{FP-elliptic-4}.
\end{theorem}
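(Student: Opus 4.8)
The plan is to recognize \cref{PGA} as an instance of the pure greedy algorithm of DeVore and Temlyakov \cite{DvT} and to invoke their abstract convergence theorem for it. Concretely, I would work in the Hilbert space $\HDM$ endowed with the inner product $a(\tcdot,\tcdot)$ — which is legitimate because $a$ is symmetric, bounded and coercive on the Hilbert space $\HDM$ (\cref{lem:Hilbert}), so that $\norm{\tcdot}_a := a(\tcdot,\tcdot)^{1/2}$ is equivalent to $\norm{\tcdot}_{\HDM}$ — and take as dictionary the symmetric set $\mathcal{D} := \{\, s/\norm{s}_a \colon s \in \tp_{i\in\ii{N}}\HDiMi \setminus \{0\} \,\}$ of $a$-normalized rank-one tensor products (symmetric because negating a single factor keeps the tensor product inside $\tp_{i\in\ii{N}}\HDiMi$). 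Writing $r_n := \tp_{i=1}^N r_n\psp{i}$, the orthogonality relation \eqref{orthogonality} together with $\psi_{n-1} = \psi_n + r_n$ (from \eqref{errors}) gives the Pythagorean identity $\norm{\psi_{n-1}}_a^2 = \norm{\psi_n}_a^2 + \norm{r_n}_a^2$; hence $(\norm{\psi_n}_a)_{n\geq 0}$ is non-increasing, $\sum_{n\geq 1}\norm{r_n}_a^2 \leq \norm{\psi_0}_a^2 < \infty$, and $\norm{r_n}_a \to 0$. Moreover, \cref{lem:mostEnergy} shows that $\norm{r_n}_a = \sup_{s\in\mathcal{D}} a(\psi_{n-1},s) = \sup_{s\in\mathcal{D}}\abs{a(\psi_{n-1},s)}$, i.e.\ that $\psi_{n-1}\mapsto\psi_n = \psi_{n-1}-r_n$ is precisely the pure greedy step relative to $\mathcal{D}$. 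The abstract theorem then yields $\norm{\psi_n}_a \to 0$, equivalently $\sum_{k\leq n}\tp_{i=1}^N r_k\psp{i} \to \psi$ in $\HDM$; and, since $f_n = \mathcal{A}\psi_n$ with $\mathcal{A}\colon\HDM\to\HDM'$ an isomorphism, equivalently $\norm{f_n}_{\HDM'}\to 0$, so that for every positive $\mathtt{TOL}$ the algorithm terminates after finitely many steps.

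One hypothesis of the abstract theorem requires genuine work: the dictionary $\mathcal{D}$ must be total, i.e.\ the closed linear span of $\tp_{i\in\ii{N}}\HDiMi$ in $\HDM$ must be all of $\HDM$; equivalently, $a(v,\tp_{i=1}^N s\psp{i}) = 0$ for every ensemble $(s\psp{1},\dotsc,s\psp{N}) \in \bigtimes_{i=1}^N\HDiMi$ should force $v = 0$. I would establish this exactly as at the end of the proof of \cref{lem:belowZero}: testing only against ensembles in $\bigtimes_{i=1}^N\CIC(D_i)$ — admissible by \cref{lem:inclusions} — the functional $\mathcal{A}v \in \HDM'$, regarded (again via \cref{lem:inclusions}) as a distribution on $\D$, vanishes on all such tensor products, so \cref{lem:distrTP} gives $\mathcal{A}v = 0$ in $\mathcal{D}'(\D)$, hence in $\HDM'$, and thus $v = 0$.

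For completeness I would also sketch the self-contained route, which reveals where the difficulty lies. Boundedness of $\seq{\psi_n}{n\geq 0}$ in the reflexive Hilbert space $\HDM$ furnishes weakly convergent subsequences; for each $s \in \mathcal{D}$ one has $\abs{a(\psi_n,s)} \leq \norm{r_{n+1}}_a \to 0$ by \cref{lem:mostEnergy}, so every weak subsequential limit is $a$-orthogonal to $\operatorname{span}\mathcal{D}$ and, by the totality just discussed, equals $0$; hence $\psi_n \rightharpoonup 0$ in $\HDM$. The compact embedding $\HDM\compEmb\LTOOM$ of \cref{lem:compEmb} then upgrades this to $\psi_n \to 0$ strongly in $\LTOOM$, so the zeroth-order part $c\norm{\psi_n}_{\LTOOM}^2$ of $\norm{\psi_n}_a^2$ tends to $0$.

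The \textbf{main obstacle} is the remaining, principal (gradient) part of $\norm{\psi_n}_a^2$: for the pure greedy algorithm, weak convergence to $0$ does not of itself force strong convergence, and closing this gap is precisely what the DeVore--Temlyakov estimate — which ties the non-increasing sequence $\norm{\psi_n}_a$ to the greedy increments $\norm{r_n}_a$ and crucially exploits the totality of $\mathcal{D}$ — is designed to do. Accordingly, I expect the substance of the proof to reduce to (i) the totality of $\mathcal{D}$, dispatched as in \cref{lem:belowZero}, and (ii) the invocation of that estimate.
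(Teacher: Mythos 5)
Your proposal is correct in substance and takes a different, more economical route than the paper. You identify the same three pillars the paper uses — the $a$-Pythagorean identity from \cref{lem:EL}, the greedy-step characterization from \cref{lem:mostEnergy}, and the totality of the rank-one dictionary established via the distributional argument of \cref{lem:distrTP}/\cref{lem:belowZero} — but you then delegate the remaining work to the abstract convergence theorem for the pure greedy algorithm with a total dictionary. The paper instead reproduces that abstract argument in full: it extracts the monotone subsequence $\phi(k)$ along which $\norm{r_{\phi(k)}}_a$ is nonincreasing, proves that $\seq{\psi_{\phi(n)-1}}{n}$ is Cauchy in $\HDM$ by a telescoping estimate of $\norm{\psi_{\phi(n)-1}-\psi_{\phi(m)-1}}_a^2$ against $\sum_k\norm{r_k}_a^2 < \infty$, and only then invokes \cref{lem:belowZero} to identify the limit as $0$. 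Your citation of \cite{DvT} for the qualitative convergence is slightly loose — the paper cites \cite{DvT} only for the rate bounds (Theorems 3.6–3.7), and the pure convergence statement with a total dictionary is folklore in the greedy-approximation literature (in particular it needs the $\sup$ over the dictionary to be attained, which here is supplied by \cref{thm:existence}) — so the self-contained proof is the safer choice and is exactly what the paper does, following \cite{LLM}. Your concluding observation is apt and worth keeping: the compact embedding $\HDM\compEmb\LTOOM$ of \cref{lem:compEmb} would only dispose of the zeroth-order part of $\norm{\psi_n}_a^2$, and the gradient part is precisely what the Cauchy/telescoping argument (whether spelled out as in the paper or invoked abstractly as in your proposal) is needed for.
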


\begin{proof}
Let $\seq{ (r_n\psp{1}, \dotsc, r_n\psp{N}) }{n \geq 1}$ be a sequence in
$\bigtimes_{i=1}^N \HDiMi$ returned by the Pure Greedy Algorithm and let us adopt the shorthand notation $r_n := \tp_{i\in\ii{N}} r_n\psp{i}$. Then, from
\eqref{errors} and \eqref{orthogonality} in \cref{lem:EL} we obtain
\[ \norm{\psi_{n-1}}_a^2 = \norm{\psi_n + r_n}_a^2 = \norm{\psi_n}_a^2 + \norm{r_n}_a^2 . \]
Hence the sequence $\seq{\norm{\psi_n}_a}{n \geq 0}$ is nonnegative and
monotonic nonincreasing, and therefore converges in $\Real$; by summing the
above expression over $n$ we then deduce that
\begin{equation}\label{finiteEnergy}
\sum_{n=1}^\infty a( r_n, r_n ) < \infty.
\end{equation}
Let us define the function $\phi \colon \natural \rightarrow \natural$
recursively by $\phi(1) := 1$ and
\[ \phi(k) := \argmin_{n > \phi(k-1)} \left\{ \norm{r_n}_a \leq \norm{ r_{\phi(k-1)} }_a \right\}, \quad k \geq 2. \]
From \eqref{finiteEnergy} the function $\phi$ is well-defined and strictly
monotonic increasing. Hence, it is suitable for defining subsequences. As each
$(r_n\psp{1},\dotsc,r_n\psp{N})$ is a global solution to the problem
\eqref{greedy1} with the instance $f_{n-1}$, via \eqref{errors} and
\cref{lem:mostEnergy} we have, for $n \geq m \geq 1$,
\begin{equation*}
\begin{split}
\norm{\psi_{\phi(n)-1} - \psi_{\phi(m)-1}}_a^2
& = \norm{ \psi_{\phi(n)-1} }_a^2 + \norm{ \psi_{\phi(m)-1} }_a^2 - 2 a \left(\psi_{\phi(n)-1}, \psi_{\phi(n)-1} + \sum_{k=\phi(m)}^{\phi(n)-1} r_{k} \right)\\
& = \norm{\psi_{\phi(m)-1}}_a^2 - \norm{\psi_{\phi(n)-1}}_a^2 - 2\sum_{k=\phi(m)}^{\phi(n)-1} a\left( \psi_{\phi(n)-1}, r_k \right)\\
& \leq \norm{\psi_{\phi(m)-1}}_a^2 - \norm{\psi_{\phi(n)-1}}_a^2 + 2\sum_{k=\phi(m)}^{\phi(n)-1} \norm{r_k}_a \norm{r_{\phi(n)}}_a\\
& \leq \norm{\psi_{\phi(m)-1}}_a^2 - \norm{\psi_{\phi(n)-1}}_a^2 + 2\sum_{k=\phi(m)}^{\phi(n)-1} \norm{r_k}_a^2.
\end{split}
\end{equation*}
From the convergence of $\seq{\norm{\psi_{\phi(n)-1}}_a}{n \geq 1}$ in $\Real$
and \eqref{finiteEnergy}, we deduce that the sequence
$\seq{\psi_{\phi(n)-1}}{n \geq 1}$ is a Cauchy sequence in $\HDM$ and thus
converges to some $\psi_\infty \in \HDM$. Another consequence of the global
optimality of each $(r_n\psp{1}, \dotsc, r_n\psp{N})$ is: For all $(s\psp{1},
\dotsc, s\psp{N}) \in \bigtimes_{i\in\ii{N}} \HDiMi$ and $n \geq 1$,
\begin{equation*}
\begin{split}
\frac{1}{2}a\left( \tp_{i=1}^N s\psp{i}, \tp_{i=1}^N s\psp{i} \right) - a\left(\psi_{\phi(n)-1}, \tp_{i=1}^N s\psp{i} \right) & \geq J_{f_{\phi(n)-1}}\left( r_{\phi(n)} \right)\\
& = \frac{1}{2}a\left(r_{\phi(n)}, r_{\phi(n)}\right) - f_{\phi(n)-1}\left( r_{\phi(n)} \right)\\
& = -\frac{1}{2} a\left(r_{\phi(n)}, r_{\phi(n)}\right).
\end{split}
\end{equation*}
Taking the limit as $n$ tends to infinity at both ends, and noting that by
\eqref{finiteEnergy} the right-hand side of the last inequality converges to
$0$, we obtain
\[ \frac{1}{2} a \left( \tp_{i=1}^N s\psp{i}, \tp_{i=1}^N s\psp{i} \right) - a\left( \psi_{\infty}, \tp_{i=1}^N s\psp{i} \right) \geq 0. \]
Thus, \cref{lem:belowZero} implies that $\psi_{\infty} = 0$. Hence the
sequence $\seq{\norm{\psi_{\phi(n)-1}}}{n \geq 1}$ converges to zero as $n
\rightarrow \infty$. As the sequence $\seq{\norm{\psi_n}_a}{n \geq 0}$ is
monotonic nonincreasing and $\seq{\phi(n)-1}{n \geq 1}$ is a monotonic
increasing infinite sequence in $\natural$, if follows that the full sequence
$\seq{\norm{\psi_n}}{n \geq 1}$ converges to the common limit in $\Real$: $0 =
\norm{\psi_{\infty}}_a$, giving  $\lim_{n\to\infty} \psi_{n} = 0$
in $\HDM$. 
\end{proof}

The following corollary is a direct consequence of \cref{thm:PGA-converges}
and will prove useful later on.

\begin{corollary}\label[corollary]{cor:tensorDensity}\noindent
Let $F_i$ be a dense subset of $\HDiMi$ for $i \in \ii{N}$. Then, the span of
$\tp_{i\in\ii{N}} F_i$ is dense in $\HDM$.
\end{corollary}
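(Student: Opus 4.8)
The plan is to realize the approximation of an arbitrary $g \in \HDM$ by members of the span of $\tp_{i\in\ii{N}} F_i$ in two stages: first approximate $g$ by a short sum of tensor products of functions in the (full, undensified) spaces $\HDiMi$, and then replace each factor in each tensor product by a suitably close element of the dense subset $F_i$. The first stage is exactly what the Pure Greedy Algorithm delivers, once we recast the problem variationally. Namely, given $g \in \HDM$ and $\varepsilon > 0$, apply \cref{PGA} to the linear functional $f := \mathcal{A}g \in \HDM'$, so that the solution of \eqref{FP-elliptic-4} is $\psi = g$ itself. By \cref{thm:PGA-converges} the iterates satisfy $\psi_n \to 0$ in $\HDM$, and by the first line of \eqref{errors} we have $\psi_n = g - \sum_{k=1}^n \tp_{i=1}^N r_k\psp{i}$; hence $\sum_{k=1}^n \tp_{i=1}^N r_k\psp{i}$, a member of the span of $\tp_{i\in\ii{N}} \HDiMi$, converges to $g$ in $\HDM$. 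So the span of $\tp_{i\in\ii{N}} \HDiMi$ is dense in $\HDM$.

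The second stage is a routine multilinearity-plus-density argument. It suffices to show that every elementary tensor $\tp_{i=1}^N h\psp{i}$ with $h\psp{i} \in \HDiMi$ can be approximated in $\HDM$ by elements of the span of $\tp_{i\in\ii{N}} F_i$; combined with stage one and the triangle inequality this gives the claim. Since $F_i$ is dense in $\HDiMi$, pick $g\psp{i} \in F_i$ with $\norm{h\psp{i} - g\psp{i}}_{\HDiMi}$ small. Using the telescoping identity
\[
\tp_{i=1}^N h\psp{i} - \tp_{i=1}^N g\psp{i}
= \sum_{j=1}^N \tp_{i<j} g\psp{i} \otimes_j \bigl(h\psp{j} - g\psp{j}\bigr) \otimes \tp_{i>j} h\psp{i},
\]
each summand is again an elementary tensor, lies in $\HDM$ by Part~(1) of \cref{lem:TP}, and by the explicit norm formula \eqref{factor2} (with one factor replaced by a difference) its $\HDM$-norm is controlled by $\norm{h\psp{j}-g\psp{j}}_{\HDiMi}$ times a product of the $\HDiMi$-norms of the remaining fixed factors; the latter are bounded uniformly once the $g\psp{i}$ are fixed (or, more carefully, once they are taken in a bounded neighbourhood of the $h\psp{i}$). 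Summing over $j$ shows $\norm{\tp_{i=1}^N h\psp{i} - \tp_{i=1}^N g\psp{i}}_{\HDM}$ is as small as we wish, and $\tp_{i=1}^N g\psp{i}$ belongs to $\tp_{i\in\ii{N}} F_i \subset \operatorname{span}(\tp_{i\in\ii{N}} F_i)$.

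The only genuinely substantive input is \cref{thm:PGA-converges}; everything else is bookkeeping. The mild technical point to watch in stage two is the bound \eqref{factor2}: it is stated for a genuine tensor product of members of the $\HDiMi$, so when we insert a difference $h\psp{j} - g\psp{j}$ in one slot we should either note that $\HDiMi$ is a vector space (so the difference is again in $\HDiMi$ and \eqref{factor2} applies verbatim with $\norm{h\psp{j}-g\psp{j}}_{\LTOOMj}$ and $\norm{\grad((h\psp{j}-g\psp{j})/M_j)}$ in place of the $j$-th factor's norms), or invoke the bilinearity of the tensor-product map on $\bigtimes_i \HDiMi \to \HDM$ directly. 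I would phrase it via the vector-space structure, since that keeps the estimate within the already-established framework of \eqref{factor2}. I do not anticipate any real obstacle here.
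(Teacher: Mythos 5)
Your proof is correct and follows essentially the same route as the paper: first use \cref{thm:PGA-converges} (applied to the functional $\varphi\mapsto a(\tau,\varphi)$) to approximate in $\HDM$ by finite sums from $\tp_{i\in\ii{N}}\HDiMi$, then a telescoping decomposition together with the norm identity \eqref{factor2} to replace each factor by a nearby element of $F_i$. The only cosmetic difference is the orientation of the telescoping sum (you place the approximants $g\psp{i}$ before the difference and the exact factors $h\psp{i}$ after, whereas the paper does the mirror image), which changes nothing of substance.
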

\begin{proof}

Let $\tau \in \HDM$. Applying \cref{thm:PGA-converges} to the case in which
the right-hand side functional $f \in \HDM'$ of problem \eqref{FP-elliptic-4}
is $\varphi \mapsto a(\tau,\varphi)$ (i.e., the $\HDM$ \emph{approximation}
problem) it follows that $\tau$ can be approximated arbitrarily closely by
finite sums of the form $\sum_{m\in\ii{M}} \tp_{i\in\ii{N}} r_m\psp{i}$, where $M \in \natural$ and
$r_m\psp{i} \in \HDiMi$ for $m \in \ii{M}$ and $i \in \ii{N}$. Thus, if we can
show that $\tp_{i\in\ii{N}} F_i$ is dense in the manifold
$\tp_{i\in\ii{N}}\HDiMi$, our desired result will stand.

Let, then, $r\psp{i} \in \HDiMi$, for $i \in \ii{N}$. From the density of $F_i$
in $\HDiMi$ for each $i \in \ii{N}$, there exists a sequence
$\seq{r_n\psp{i}}{n \geq 1}$ in $F_i$, which converges to $r\psp{i}$ in
$\HDiMi$. Now,
\[ \delta_n := \tp_{i=1}^N r\psp{i} - \tp_{i=1}^N r_n\psp{i} = \sum_{k=1}^N \tp_{i=1}^N t_{n,k}\psp{i}, \qquad\text{where}\qquad t_{n,k}\psp{i} := \left\{ \begin{array}{ll}
r_n\psp{i} & \text{if}\ i > k,\\
r\psp{i} - r_n\psp{i} & \text{if}\ i = k,\\
r\psp{i} & \text{if}\ i < k.
\end{array}\right.
\]
Then, (cf.\ \eqref{factor2}),
\begin{equation*}
\norm{ \delta_n }_{\HDM}^2
\leq \sum_{k=1}^N \left[ \prod_{i=1}^N \norm{t_{n,k}\psp{i}}_{\LTOOMi}^2
+ \sum_{j=1}^N \prod_{\allibutj}^N \norm{ t_{n,k}\psp{i} }_{\LTOOMi}^2 \norm{ \grad\negthickspace\Bigg(\frac{t_{n,k}\sspsp{j}}{M_j}\Bigg)}_{[\LL^2_{M_j}(D_j)]^d}^2 \right].
\end{equation*}

As each product term on the right-hand side above consists of $N-1$ bounded
factors and one vanishing factor as $n\to\infty$, the full expression tends to
zero as $n$ tends to infinity and, therefore, so does the left-hand side.
The desired result follows.
\end{proof}

\begin{remark}\label[remark]{rem:densities} Suppose that, for each $i \in \ii{N}$,
\begin{equation}\label{conditionPartialDensity}
\CIC(D_i) \text{ is dense in } \HDiMi.
\end{equation}
Then, as $\mathrm{span}\left( \tp_{i=1}^N \CIC(D_i) \right) \subset \CIC(\D) \subset \HDM$,
we have, thanks to \cref{cor:tensorDensity}, that
\begin{equation}\label{fullDensity}
\CIC(\D) \text{ is dense in } \HDM.
\end{equation}
Springs obeying the FENE model \eqref{FENE-model} comply with
\eqref{conditionPartialDensity} under the condition $b_i \geq 2$ as is proved
in Remark 3.7 of \cite{Masmoudi}. Springs obeying the CPAIL model
\eqref{CPAIL-model}, in turn, comply with \eqref{conditionPartialDensity} as it
is shown in \cref{lem:CPAIL-results} in \cref{sec:CPAIL-results}, under
the condition $b_i \geq 3$. So, in these two cases, \eqref{fullDensity} holds.

Interesting as \eqref{fullDensity} is, we make no use of it in this work and
that is why we shall not adopt \eqref{conditionPartialDensity} as a hypothesis
on a par with hypotheses \ref{hyp:potential} and \ref{hyp:partialCompEmb} above
or hypotheses \ref{hyp:asymptotics}, \ref{hyp:potential2} and
\ref{hyp:power-like} below. However, we do use \eqref{conditionPartialDensity}
as an ingredient of the proof of the compliance of FENE and CPAIL
spring potentials with \cref{hyp:asymptotics} of \cref{sec:characterization} (cf.\
\cref{cor:easyAsymptotics} in \cref{sec:ev-asymptotics}).
\end{remark}

\begin{theorem}\label{thm:OGA-converges}
The Orthogonal Greedy Algorithm (\cref{OGA}) converges to the solution
$\psi$ to problem \eqref{FP-elliptic-4}.
\end{theorem}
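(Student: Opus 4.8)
The plan is to mimic the standard convergence argument for the orthogonal greedy algorithm (as in DeVore--Temlyakov \cite{DvT} and adapted in \cite{LLM}), exploiting the fact that step \texttt{1.2} of \cref{OGA} makes $\psi_n$ the $a$-orthogonal projection of the error onto the orthogonal complement of $\mathrm{span}\seq{r_k}{k\in\ii{n}}$, where I write $r_k := \tp_{i=1}^N r_k\psp{i}$. First I would record the Pythagorean-type consequences of the Galerkin step: since $\psi_n = \psi - \sum_{k=1}^n \alpha\psp{n}_k r_k$ by \eqref{errors} and $\psi_n$ is $a$-orthogonal to each $r_k$, $k\in\ii{n}$, the sequence $\seq{\norm{\psi_n}_a}{n\geq 0}$ is monotone nonincreasing; moreover $\norm{\psi_{n-1}}_a^2 - \norm{\psi_n}_a^2 \geq \norm{P_n}_a^2$, where $P_n$ is the $a$-orthogonal projection of $\psi_{n-1}$ onto $\mathrm{span}(r_n)$ (because the orthogonal greedy update decreases the energy at least as much as adding the single new direction $r_n$ would in the pure greedy step). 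Hence $\seq{\norm{\psi_n}_a}{n\geq 0}$ converges in $\Real$ and $\sum_n \norm{P_n}_a^2 < \infty$, which is the analogue of \eqref{finiteEnergy}.

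Next I would transfer the "most energy" optimality from \cref{lem:mostEnergy}: since $(r_n\psp{1},\dotsc,r_n\psp{N})$ is a global minimizer of \eqref{greedy2} with data $f_{n-1}$, exactly the computation in \cref{lem:mostEnergy} (whose proof used only that $r_n$ minimizes $J_{f_{n-1}}$ over $\tp_{i\in\ii{N}}\HDiMi$, which still holds here) gives
\[
\norm{P_n}_a \;=\; \frac{a(\psi_{n-1},r_n)}{\norm{r_n}_a} \;=\; \sup_{s\in\tp_{i\in\ii{N}}\HDiMi\setminus\{0\}} \frac{a(\psi_{n-1},s)}{\norm{s}_a}.
\]
Since $\norm{\psi_{n-1}}_a \leq \norm{\psi_0}_a$ for all $n$, the sequence $\seq{\psi_{n-1}}{n\geq 1}$ is bounded in $\HDM$, hence has a weakly convergent subsequence $\psi_{\phi(n)-1} \rightharpoonup \psi_\infty$. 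Because $\norm{P_n}_a \to 0$, we get $\sup_{s} a(\psi_\infty, s)/\norm{s}_a = 0$ in the limit: more precisely, for each fixed ensemble $(s\psp{1},\dotsc,s\psp{N})$, $a(\psi_{\phi(n)-1}, \tp_i s\psp{i}) \to a(\psi_\infty, \tp_i s\psp{i})$ by weak convergence, and this limit is $0$; equivalently $\tfrac12 a(\tp_i s\psp{i}, \tp_i s\psp{i}) - a(\psi_\infty, \tp_i s\psp{i}) \geq 0$ for all ensembles, so \cref{lem:belowZero} forces $\psi_\infty = 0$.

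Finally I would upgrade weak subsequential convergence to zero into strong convergence of the full sequence. The cleanest route: from $\psi_\infty = 0$ and the fact that $\psi_n$ is the $a$-orthogonal projection of $\psi$ onto the orthogonal complement of $V_n := \mathrm{span}\seq{r_k}{k\in\ii{n}}$, we have $\norm{\psi_n}_a = \mathrm{dist}_a(\psi, V_n)$, which is monotone nonincreasing in $n$; hence it suffices to show $\liminf_n \norm{\psi_n}_a = 0$. But along the subsequence $\phi(n)$, using $\norm{\psi_{\phi(n)-1}}_a^2 = a(\psi, \psi_{\phi(n)-1})$ (valid since $\psi - \psi_{\phi(n)-1} \in V_{\phi(n)-1}$ is $a$-orthogonal to $\psi_{\phi(n)-1}$), the weak convergence $\psi_{\phi(n)-1} \rightharpoonup 0$ gives $a(\psi, \psi_{\phi(n)-1}) \to 0$, so $\norm{\psi_{\phi(n)-1}}_a \to 0$. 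Monotonicity of $\seq{\norm{\psi_n}_a}{n\geq 0}$ then yields $\lim_{n\to\infty}\norm{\psi_n}_a = 0$, i.e.\ $\psi_n \to 0$ in $\HDM$, which is the assertion.

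The main obstacle I anticipate is the passage $a(\psi_{\phi(n)-1}, \tp_i s\psp{i}) \to a(\psi_\infty, \tp_i s\psp{i})$ and the identity $\norm{P_n}_a = \sup_s a(\psi_{n-1},s)/\norm{s}_a$ in the orthogonal setting: one must check that the global optimality of the OGA inner step \eqref{greedy2} is genuinely with respect to the \emph{same} functional $J_{f_{n-1}}$ that appears in the pure greedy case, so that \cref{lem:mostEnergy} applies verbatim, and that the energy decrement of the full Galerkin step \texttt{1.2} dominates that of the rank-one update. Both are essentially bookkeeping, but they are the points where the OGA argument differs from the PGA argument of \cref{thm:PGA-converges}; everything else is a direct transcription.
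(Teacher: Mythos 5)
Your proof is correct and follows the paper's proof essentially step for step: the Pythagorean inequality coming from the combined Euler--Lagrange and Galerkin optimality, monotone convergence of $\norm{\psi_n}_a$ and square-summability of $\norm{r_n}_a$, extraction of a weakly convergent subsequence, identification of its limit as $0$ via \cref{lem:belowZero}, and finally the Galerkin-orthogonality identity $\norm{\psi_{\phi(n)}}_a^2 = a(\psi,\psi_{\phi(n)})$ combined with monotonicity to upgrade to strong convergence of the full sequence. The only cosmetic difference is that you route the step ``$\psi_\infty = 0$'' through \cref{lem:mostEnergy} (after correctly observing that its proof uses only the inner minimization step, which is identical in \eqref{greedy1} and \eqref{greedy2}), whereas the paper passes directly to the limit in the optimality inequality supplied by \cref{lem:EL}; both lead to the same pairing inequality to which \cref{lem:belowZero} is applied.
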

\begin{proof}

We first note that thanks to \eqref{errors}, the optimality of $\alpha\psp{n}$ in
\eqref{galerkin} and the optimality of $(r_n\psp{1}, \dotsc, r_n\psp{N})$ in
\eqref{greedy2} (via \cref{lem:EL}),
\begin{equation*}
\norm{\psi_n}_a^2 = \norm{\psi - \sum_{k=1}^n \alpha\psp{n}_k \tp_{i=1}^N r_k\psp{i}}_a^2 \leq \norm{ \psi_{n-1} - \tp_{i=1}^N r_n\psp{i} }_a^2 = \norm{\psi_{n-1}}_a^2 - \norm{\tp_{i=1}^N r_n\psp{i}}_a^2.
\end{equation*}
Thus, just like in the proof of \cref{thm:PGA-converges}, we have that the
real sequence $\seq{\norm{\psi_n}_a}{n \geq 0}$ is decreasing and thus convergent
and that $\sum_{n \geq 1} a\left(\tp_{i\in\ii{N}} r_n\psp{i}, \tp_{i\in\ii{N}}
r_n\psp{i}\right) < \infty$. As $\seq{\psi_n}{n \geq 0}$ is a bounded sequence
in the Hilbert space $\HDM$, a weakly convergent subsequence
$\!\seq{\psi_{\phi(n)}}{\!n \geq 1}$ can be extracted and we denote the weak limit by
$\psi_\infty$. From the optimality of $(r_{\phi(n) + 1}\psp{1}, \dotsc
r_{\phi(n)+1}\psp{N})$ with respect to problem \eqref{greedy2} we have by
\cref{lem:EL} that, for all $(s\psp{1}, \dotsc, s\psp{N}) \in
\!\bigtimes_{i\in\ii{N}}\HDiMi$,
\[ \frac{1}{2} a\left( \tp_{i=1}^N s\psp{i}, \tp_{i=1}^N s\psp{i} \right) - a\left(\psi_{\phi(n)}, \tp_{i=1}^N s\psp{i} \right) \geq -\frac{1}{2} a\left( \tp_{i=1}^N r_{\phi(n)+1}\psp{i}, \tp_{i=1}^N r_{\phi(n)+1}\psp{i} \right). \]
Taking the limit $n \to \infty$ at both sides yields
\[ \frac{1}{2} a\left( \tp_{i=1}^N s\psp{i}, \tp_{i=1}^N s\psp{i} \right) - a\left(\psi_\infty, \tp_{i=1}^N s\psp{i} \right) \geq 0, \]
whence, via \cref{lem:belowZero}, $\psi_\infty = 0$.
By
Galerkin orthogonality for \eqref{galerkin}, $a(\psi -
\psi_{\phi(n)},\psi_{\phi(n)}) = 0$. That is, $\norm{\psi_{\phi(n)}}_a^2 = a(\psi,
\psi_{\phi(n)})$. Hence, $\lim_{n\to\infty} \norm{\psi_{\phi(n)}}_a^2 = \lim_{n\to\infty} a(\psi,
\psi_{\phi(n)}) = a(\psi,\psi_\infty)= 0$. As the full sequence of norms $\seq{\norm{\psi_n}_a}{n\geq
0}$ is monotonic decreasing, the full sequence $\seq{\psi_n}{n \geq 0}$
converges strongly to $0$ in $\HDM$.
\end{proof}

\subsection{Rate of convergence} The theory of nonlinear approximation provides
us with some estimates on the rate of convergence of \cref{PGA} and
\cref{OGA}. Following \cite{DvT} we introduce the space
\begin{equation}\label{definition-of-A1}
\mathcal{A}_1 := \bigcup_{M > 0} \overline{ \mathcal{A}_1^o(M) },
\end{equation}
where
\begin{multline}\label{A1oM}
\mathcal{A}_1^o(M) := \bigg\{ \varphi \in \HDM \colon \varphi = \sum_{k \in \Lambda} c_k w_k,\ w_k \in \tp_{i=1}^N \HDiMi,\ \norm{w_k}_{a} = 1,\\
\abs{\Lambda} < \infty \quad \text{and} \quad \sum_{k \in \Lambda} \abs{c_k} \leq M \bigg\},
\end{multline}
together with the norm
\begin{equation}\label{A1-norm}
\norm{\varphi}_{\mathcal{A}_1} := \inf \left\{ M > 0 \colon \varphi \in \overline{ \mathcal{A}_1^o(M) } \right\}.
\end{equation}
The importance of this space becomes apparent in the light of the following two
theorems.

\begin{theorem}[Theorem 3.6 of \cite{DvT}]\label{thm:PGA-rate}
If the solution $\psi$ of \eqref{FP-elliptic-4} is a member of $\mathcal{A}_1$, then
the $n$-th error $\psi_n$ of the Pure Greedy Algorithm (\cref{PGA})
satisfies
\[ \norm{\psi_n}_a \leq \norm{\psi}_{\mathcal{A}_1} n^{-1/6}. \]
\end{theorem}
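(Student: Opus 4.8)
The plan is to obtain this as a direct instance of \cite[Theorem 3.6]{DvT}; essentially all of the work lies in matching our set-up to the abstract framework of that paper. First I would regard $\HDM$ as a real Hilbert space under the inner product $a(\tcdot,\tcdot)$ --- legitimate because $a$ is symmetric, bounded and coercive, so that $\norm{\tcdot}_a := a(\tcdot,\tcdot)^{1/2}$ is a norm equivalent to $\norm{\tcdot}_{\HDM}$ --- and take as dictionary the set $\mathcal{D} := \{ w \in \tp_{i=1}^N \HDiMi \colon \norm{w}_a = 1 \}$ of $a$-normalized $N$-fold tensor products. By \cref{lem:TP} this is a nonempty subset of the unit sphere of $(\HDM, a)$; it is symmetric, because flipping the sign of a single factor of a tensor product flips the sign of the product; and its closed linear span is all of $\HDM$ by \cref{cor:tensorDensity}. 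With this dictionary, the nonlinear approximation space $\mathcal{A}_1$ and the gauge $\norm{\tcdot}_{\mathcal{A}_1}$ introduced in \eqref{definition-of-A1}--\eqref{A1-norm} are, by construction, precisely those of \cite{DvT}.

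The heart of the argument is to check that the iterates $\psi_n$ produced by \cref{PGA} are exactly the residuals of the pure greedy algorithm of \cite{DvT} for $\mathcal{D}$ applied to $\psi$. If $f_{n-1} = 0$ the algorithm has terminated and the estimate is vacuous, so suppose $f_{n-1} \neq 0$. By \cref{thm:existence} the minimization \eqref{greedy1} has a solution $(r_n\psp{1}, \dotsc, r_n\psp{N})$, and $r_n := \tp_{i=1}^N r_n\psp{i}$ is nonzero, since $J_{f_{n-1}}(r_n) < 0 = J_{f_{n-1}}(0)$ by \cref{lem:belowZero}; hence $g_n := r_n / \norm{r_n}_a$ is a well-defined member of $\mathcal{D}$. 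From \cref{lem:mostEnergy} together with the symmetry of $\mathcal{D}$ one reads off
\[ a(\psi_{n-1}, g_n) = \norm{r_n}_a = \sup_{s \in \tp_{i\in\ii{N}}\HDiMi \setminus \{0\}} \frac{a(\psi_{n-1}, s)}{\norm{s}_a} = \sup_{g \in \mathcal{D}} \abs{a(\psi_{n-1}, g)}, \]
so that $g_n$ realizes the greedy maximum and $r_n = a(\psi_{n-1}, g_n)\, g_n$; consequently, by \eqref{errors}, $\psi_n = \psi_{n-1} - r_n = \psi_{n-1} - a(\psi_{n-1}, g_n)\, g_n$. This is precisely the residual update of the DeVore--Temlyakov pure greedy algorithm, and $\psi_0 = \psi$, so the $\psi_n$ are its successive residuals. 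Invoking \cite[Theorem 3.6]{DvT} then gives $\norm{\psi_n}_a \leq \norm{\psi}_{\mathcal{A}_1}\, n^{-1/6}$ whenever $\psi \in \mathcal{A}_1$, as claimed.

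I expect the only genuinely substantive point to be the identification just made: our greedy step is posed as the minimization of a quadratic energy over the \emph{nonlinear} manifold $\tp_{i=1}^N \HDiMi$, whereas the abstract greedy step is the maximization of the linear functional $g \mapsto \abs{a(\psi_{n-1}, g)}$ over $\mathcal{D}$, and one must also know that this maximum is actually attained. Both of these have, however, been arranged beforehand --- attainment by \cref{thm:existence} (so no weak or relaxed variant of the abstract algorithm is needed), and the passage from energy minimization to inner-product maximization by \cref{lem:EL,lem:mostEnergy}. Everything else --- the equivalence of $\norm{\tcdot}_a$ and $\norm{\tcdot}_{\HDM}$, the symmetry of $\mathcal{D}$, the bookkeeping in \eqref{errors}, and the verbatim agreement of \eqref{definition-of-A1}--\eqref{A1-norm} with the definitions of \cite{DvT} --- is routine.
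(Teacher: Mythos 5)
The paper itself offers no proof of this theorem: it is stated as a direct citation of Theorem~3.6 of DeVore--Temlyakov, with the implicit understanding that the surrounding set-up makes the abstract result applicable. Your argument is precisely the verification that the paper leaves tacit, and it is correct: you identify $(\HDM, a)$ as the ambient Hilbert space, take the symmetric dictionary $\mathcal{D}$ of $a$-normalized tensor products, observe via \cref{cor:tensorDensity} that its closed span is $\HDM$, and then use \cref{lem:EL} and \cref{lem:mostEnergy} to show that each step of \cref{PGA} produces exactly the DeVore--Temlyakov greedy residual $\psi_n = \psi_{n-1} - a(\psi_{n-1}, g_n)\,g_n$ with $g_n$ attaining $\sup_{g\in\mathcal{D}}\abs{a(\psi_{n-1},g)}$, the attainment being guaranteed by \cref{thm:existence} so that no weak-greedy relaxation is needed. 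This is the same machinery the paper itself deploys in the proof of \cref{thm:PGA-converges}, so your proposal is fully consistent with the paper's intent; there is no gap.
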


\begin{theorem}[Theorem 3.7 of \cite{DvT}]\label{thm:OGA-rate}
If the solution $\psi$ of \eqref{FP-elliptic-4} is a member of $\mathcal{A}_1$, then
the $n$-th error $\psi_n$ of the Orthogonal Greedy Algorithm (\cref{OGA})
satisfies
\[ \norm{\psi_n}_a \leq \norm{\psi}_{\mathcal{A}_1} n^{-1/2}. \]
\end{theorem}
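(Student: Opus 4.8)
The plan is to recognize that the assertion coincides with Theorem 3.7 of \cite{DvT}, which is an abstract statement about the orthogonal greedy algorithm in an arbitrary Hilbert space with respect to an arbitrary symmetric dictionary of unit vectors; the only task is thus to identify the relevant Hilbert space and dictionary and to check that \cref{OGA} is, verbatim, that abstract algorithm. I would take the Hilbert space to be $\HDM$ equipped with the inner product $a(\tcdot,\tcdot)$ and the induced norm $\norm{\tcdot}_a$, and the dictionary to be $\mathcal{D} := \{ \tp_{i=1}^N r\psp{i} \colon r\psp{i} \in \HDiMi,\ i \in \ii{N},\ \norm[n]{\tp_{i=1}^N r\psp{i}}_a = 1 \}$, i.e.\ the manifold of \emph{normalized} elementary tensors. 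The first step is to verify the two standing hypotheses on a dictionary: every element has unit $\norm{\tcdot}_a$-norm, which holds by construction, and $\mathcal{D}$ is symmetric, since replacing $r\psp{1}$ by $-r\psp{1}$ maps $\mathcal{D}$ onto itself; I would also record that $\overline{\mathrm{span}\,\mathcal{D}} = \HDM$, which is \cref{cor:tensorDensity} applied with $F_i := \HDiMi$ (equivalently, a consequence of \cref{thm:PGA-converges}).

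The second step is to match the two substeps of \cref{OGA} with those of \cite{DvT}. For the selection substep \texttt{1.1}: \cref{thm:existence} supplies a \emph{global} minimizer $(r_n\psp{1}, \dotsc, r_n\psp{N})$ of \eqref{greedy2}, and \eqref{orthogonality1} of \cref{lem:EL} holds equally for \eqref{greedy2}; hence the computation in the proof of \cref{lem:mostEnergy} carries over unchanged and shows that $r_n := \tp_{i\in\ii{N}} r_n\psp{i}$ satisfies $\norm{r_n}_a = \sup_{s \in \mathcal{D}} a(\psi_{n-1}, s)$, so that $g_n := r_n/\norm{r_n}_a \in \mathcal{D}$ is an \emph{exact} maximizer of $s \mapsto a(\psi_{n-1}, s)$ over $\mathcal{D}$, which is precisely the greedy choice prescribed in \cite{DvT}. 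For the projection substep \texttt{1.2}: since $\mathrm{span}\seq{g_k}{k \in \ii{n}} = \mathrm{span}\seq{r_k}{k \in \ii{n}}$, the Galerkin minimization \eqref{galerkin} coincides with the $a(\tcdot,\tcdot)$-orthogonal projection of $\psi$ onto $\mathrm{span}\seq{g_k}{k \in \ii{n}}$, and therefore $\psi_n$ (cf.\ \eqref{errors}) is exactly the $n$-th residual produced by the orthogonal greedy algorithm of \cite{DvT}.

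The final step is to observe that the space $\mathcal{A}_1$ and the norm $\norm{\tcdot}_{\mathcal{A}_1}$ defined in \eqref{definition-of-A1}--\eqref{A1-norm} are, for this dictionary, literally the quantities appearing in \cite{DvT}, because $\{ w \in \tp_{i=1}^N \HDiMi \colon \norm{w}_a = 1 \} = \mathcal{D}$; consequently, under the hypothesis $\psi \in \mathcal{A}_1$, Theorem 3.7 of \cite{DvT} applies without modification and yields $\norm{\psi_n}_a \leq \norm{\psi}_{\mathcal{A}_1} n^{-1/2}$. I expect the only genuine obstacle to be conceptual rather than computational: the dictionary $\mathcal{D}$ is an infinite, non-compact nonlinear manifold rather than the finite or countable dictionaries usually considered, so one must be careful that the greedy maximum is genuinely \emph{attained} (this is exactly what \cref{thm:existence} provides, upgrading a weak greedy step to an exact one) and that $\mathrm{span}\,\mathcal{D}$ is dense in $\HDM$ (supplied by \cref{thm:PGA-converges}); once these two earlier results are invoked, the abstract theorem applies mechanically. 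An entirely analogous argument, using \cref{lem:mostEnergy} as stated, establishes \cref{thm:PGA-rate}.
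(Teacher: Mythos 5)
Your proposal is correct, and it is essentially the paper's own approach: the paper states this result as a direct citation of Theorem~3.7 of \cite{DvT} without supplying any proof, implicitly relying on the identification of $(\HDM, a(\tcdot,\tcdot))$ as the Hilbert space, the normalized elementary tensors as the (symmetric, dense-spanning) dictionary, and $\mathcal{A}_1$ as the corresponding DeVore--Temlyakov class. Your verification that \cref{thm:existence} yields an exact (not merely weak) greedy selection, that \cref{lem:mostEnergy} carries over to step \texttt{1.1} of \cref{OGA} via \eqref{orthogonality1}, and that \cref{cor:tensorDensity} supplies density of $\mathrm{span}\,\mathcal{D}$, makes explicit precisely the checks the paper leaves tacit.
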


\begin{remark}\noindent
\begin{enumerate}
\item
Pure Greedy Algorithm-based approximations such as \cref{PGA} have been
proved to obey the slightly improved rate (see \cite[Remark 2.3.11]{Temlyakov}
and references therein)
\[ \norm{\psi_n}_a \leq 4 \norm{\psi}_{\mathcal{A}_1} n^{-11/62}. \]
\item In \cite[Theorem 4.1]{CEL:2011} it is shown that the convergence of the
Orthogonal Greedy Algorithm is exponentially fast if the factor spaces
and the full ansatz space (in our setting the $\HDiMi$ and $\HDM$,
respectively) are finite-dimensional.
\end{enumerate}
\end{remark}

We note that $\mathcal{A}_1$ will remain the same space if in its
definition---in \eqref{A1oM}, in particular---we replace the energy norm
$\norm{\tcdot}_a$ with the standard norm of $\HDM$, as these two norms are
equivalent. Then, $\varphi \in \HDM$ will be a member of $\mathcal{A}_1$ if,
and only if, there exists an $M^*>0$ such that, for all $\varepsilon > 0$,
there is a $\chi_\varepsilon \in \HDM$ that satisfies
\begin{gather*}
\norm{\varphi - \chi_\varepsilon}_{\HDM} \leq \varepsilon,
\quad \chi_\varepsilon = \sum_{k \in \Lambda\psp{\varepsilon}} c\psp{\varepsilon}_k w\psp{\varepsilon}_k,
\quad \abs[n]{\Lambda\psp{\varepsilon}} < \infty,
\quad \sum_{k \in \Lambda\psp{\varepsilon}} \abs[n]{c\psp{\varepsilon}_k} \leq M^*;
\end{gather*}
and, for $k \in \Lambda\psp{\varepsilon}$, $\norm[n]{w\psp{\varepsilon}_k}_{\HDM} = 1$ and $w\psp{\varepsilon}_k \in \tp_{i=1}^N \HDiMi$.

By virtue of the isometric isomorphism described in \eqref{iso-iso}, the above
relations imply that
\begin{gather*}
\norm{\maxw^{-1}\varphi - \maxw^{-1}\chi_\varepsilon}_{\HOM} \leq \varepsilon,
\quad \maxw^{-1}\chi_\varepsilon = \sum_{k \in \Lambda\psp{\varepsilon}} c\psp{\varepsilon}_k \maxw^{-1} w\psp{\varepsilon}_k,
\end{gather*}
and, for $k \in \Lambda\psp{\varepsilon}$,
$\norm[n]{\maxw^{-1} w\psp{\varepsilon}_k}_{\HOM} = 1$
and
$\maxw^{-1} w\psp{\varepsilon}_k \in \tp_{i=1}^N \HOMi$,
the last relation being a consequence of the tensor-product structure of the
Maxwellian $\maxw$. Thus we have shown that $\maxw^{-1} \varphi \in \HOM$ can
be approximated to within any positive tolerance $\varepsilon$ in the norm of
$\HOM$ by finite linear combinations of normalized members of $\tp_{i\in\ii{N}}
\HOMi$ with the coefficients of the linear combinations having their absolute
sum bounded by $M^*$. In other words, the membership of $\varphi \in
\mathcal{A}_1$ implies the membership of $\maxw^{-1} \varphi$ in the
$\HOM$-based analogue of $\mathcal{A}_1$, namely,
\begin{equation}\label{definition-of-B1}
\mathcal{B}_1 := \bigcup_{M > 0} \overline{ \mathcal{B}_1^o(M) },
\end{equation}
where
\begin{multline}\label{B1oM}
\mathcal{B}_1^o(M) := \Bigg\{ \varphi \in \HOM \colon \varphi = \sum_{k \in \Lambda} c_k w_k,\ w_k \in \tp_{i=1}^N \HOMi,\ \norm{w_k}_{\HOM} = 1,\\
\abs{\Lambda} < \infty \quad \text{and} \quad \sum_{k \in \Lambda} \abs{c_k} \leq M \Bigg\},
\end{multline}
and
\begin{equation}\label{B1-norm}
\norm{\varphi}_{\mathcal{B}_1} := \inf \left\{ M > 0 \colon \varphi \in \overline{ \mathcal{B}_1^o(M) } \right\}.
\end{equation}
In a completely analogous way, the membership of $\maxw^{-1}\varphi$ in
$\mathcal{B}_1$ implies the membership of $\varphi$ in $\mathcal{A}_1$. We then have
the relations
\begin{equation}\label{A1-B1-isometric}
\mathcal{A}_1 = \maxw\, \mathcal{B}_1,
\qquad \norm{\tcdot}_{\mathcal{A}_1} = \norm{ \maxw^{-1} \tcdot }_{\mathcal{B}_1},
\end{equation}
where the last equality follows from the fact that the coefficients of the
approximations to $\varphi$ are the same as the coefficients of the corresponding
approximations to $\maxw^{-1}\varphi$.

\medskip

As the definition of $\mathcal{A}_1$ given in \eqref{definition-of-A1} is fairly abstract, it
is of interest to have conditions in terms of regularity that guarantee
membership in $\mathcal{A}_1$ analogous to the conditions provided in
\cite[Remark 4]{LLM} for the separated representation strategy applied to the
Laplacian defined on a tensor product of one-dimensional domains. This is the
theme of the next section. Because of \eqref{A1-B1-isometric}, we
can pose the problem in terms of membership in the $\HOM$-based $\mathcal{B}_1$
instead with no loss of generality and substantial gain in succinctness;
thus we shall henceforth phrase our results in terms of $\mathcal{B}_1$ rather than $\mathcal{A}_1$.

\section{Characterization of a subspace of rapidly converging
solutions}\label{sec:characterization}

\subsection{Eigenvalues}
We need the following two abstract lemmas, which state standard results
(essentially, the Hilbert--Schmidt theorem and some of its corollaries). As we
could not find these results in the literature in the precise form stated here,
we provide brief proofs of them.

\begin{lemma}\label[lemma]{lem:abstractEV} Let $H$ and $V$ be separable
infinite-dimensional Hilbert spaces, with $V \compEmb H$ and $\overline{V} = H$
in the norm of $H$. Let $a\colon V \times V \to \Real$ be a nonzero, symmetric,
bounded and elliptic bilinear form. Then, there exist sequences of real numbers
$\seq{\lambda_n}{n \in \mathbb{N}}$ and unit $H$-norm members of $V$
$\seq{e_n}{n \in \mathbb{N}}$, which solve the following problem: \emph{Find $\lambda \in \Real$ and
$e \in H \setminus \{ 0 \}$ such that}
\begin{equation}\label{variational-ev}
a(e,v) = \lambda \langle e, v\rangle_H \quad \forall\,v \in V.
\end{equation}
The $\lambda_n$, which can be assumed to be in increasing order with respect to $n$,
are positive, bounded from below away from $0$, and
$\lim_{n\to\infty}\lambda_n = \infty$.

Additionally, the $e_n$ form an $H$-orthonormal system whose $H$-closed span is
$H$ and the rescaling $e_n/\sqrt{\lambda_n}$ gives rise to an $a$-orthonormal
system whose $a$-closed span is $V$.
\end{lemma}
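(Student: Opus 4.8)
The plan is to recast the variational eigenvalue problem \eqref{variational-ev} as the spectral problem of a compact, self-adjoint, positive operator on $H$, and then to invoke the Hilbert--Schmidt theorem. Since $a$ is bounded and elliptic, $a(\tcdot,\tcdot)$ is an inner product on $V$ whose induced norm is equivalent to $\norm{\tcdot}_V$, so $(V, a(\tcdot,\tcdot))$ is a Hilbert space. Because $V \compEmb H$, the inclusion $V \hookrightarrow H$ is in particular bounded, say $\norm{v}_H \le C\norm{v}_V$; hence for each $f \in H$ the map $v \in V \mapsto \langle f, v\rangle_H$ is a bounded linear functional on $V$, and the Lax--Milgram theorem (equivalently, the Riesz representation theorem applied in the $a$-inner product) produces a unique $Tf \in V$ with $a(Tf, v) = \langle f, v\rangle_H$ for all $v \in V$. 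The usual estimate gives $\norm{Tf}_V \le \alpha^{-1}C\norm{f}_H$, where $\alpha$ is the ellipticity constant, and hence $\norm{Tf}_H \le \alpha^{-1}C^2\norm{f}_H$, so $T\colon H \to V$ is linear and bounded.

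Next I would set $K := \iota \circ T\colon H \to H$, where $\iota\colon V \compEmb H$ is the compact inclusion; then $K$ is compact, being the composition of a bounded operator with a compact one. It is self-adjoint: applying the defining identity of $T$ twice together with the symmetry of $a$ gives $\langle Kf, g\rangle_H = \langle Tf, g\rangle_H = a(Tg, Tf) = a(Tf, Tg) = \langle f, Tg\rangle_H = \langle f, Kg\rangle_H$ for all $f, g \in H$. It is also positive and injective, since $\langle Kf, f\rangle_H = a(Tf, Tf) \ge \alpha\norm{Tf}_V^2 \ge 0$, with equality forcing $Tf = 0$, hence $\langle f, v\rangle_H = 0$ for all $v \in V$, hence $f = 0$ by the density of $V$ in $H$. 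The Hilbert--Schmidt theorem, together with the separability and infinite-dimensionality of $H$, then supplies a countable $H$-orthonormal basis $\seq{e_n}{n\in\natural}$ of $H$ with $Ke_n = \mu_n e_n$; injectivity and positivity give $\mu_n > 0$, while compactness (whose nonzero eigenspaces are finite-dimensional, so none can exhaust an infinite-dimensional space) forces infinitely many eigenvalues accumulating only at $0$, so after relabelling $\mu_1 \ge \mu_2 \ge \dotsb > 0$, $\mu_n \to 0$, and $\mu_1 \le \norm{K} < \infty$.

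Then I would set $\lambda_n := 1/\mu_n$: this is a nondecreasing sequence of positive numbers with $\lambda_1 \ge \norm{K}^{-1} > 0$, so it is bounded away from $0$, and $\lambda_n \to \infty$. Unwinding $Ke_n = \mu_n e_n$ as $Te_n = \mu_n e_n$ in $V$ shows in particular that $e_n \in V$; substituting into the defining identity of $T$ yields $\mu_n\, a(e_n, v) = \langle e_n, v\rangle_H$, i.e. $a(e_n, v) = \lambda_n \langle e_n, v\rangle_H$ for all $v \in V$, so each pair $(\lambda_n, e_n)$ solves \eqref{variational-ev} with $\norm{e_n}_H = 1$. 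The $H$-orthonormality of the $e_n$ and the fact that their $H$-closed span is $H$ are part of the Hilbert--Schmidt statement. For the $a$-assertions, testing \eqref{variational-ev} with index $n$ against $v = e_m \in V$ gives $a(e_n, e_m) = \lambda_n \delta_{nm}$, so the rescaled family $\seq{e_n/\sqrt{\lambda_n}}{n\in\natural}$ is $a$-orthonormal; and if $v \in V$ satisfies $a(v, e_n) = 0$ for all $n$, then by symmetry $\lambda_n \langle v, e_n\rangle_H = 0$ for all $n$, whence $v = 0$ because $\seq{e_n}{n\in\natural}$ is an orthonormal basis of $H$ and $v \in H$. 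Thus this $a$-orthonormal system has trivial orthogonal complement in the Hilbert space $(V, a(\tcdot,\tcdot))$, so its $a$-closed span is all of $V$.

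I expect no genuine obstacle: the whole argument is classical bookkeeping around the Hilbert--Schmidt theorem. The only points that need care are the construction of the solution operator $T$ and the verification that the composed operator $K$ inherits compactness (this is exactly where $V \compEmb H$ enters) and is injective and positive (this is where ellipticity and the density $\overline V = H$ enter), together with the translation back and forth between the operator eigenproblem for $K$ and the variational eigenproblem \eqref{variational-ev}. The hypothesis that $a$ is nonzero plays no independent role, since ellipticity on the nonzero space $V$ already forces $a \neq 0$.
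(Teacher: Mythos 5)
Your proposal is correct and takes essentially the same route as the paper: construct the solution operator $T$ via Lax--Milgram, compose with the compact embedding to obtain a compact self-adjoint positive operator $K$ on $H$, apply the Hilbert--Schmidt theorem, observe that the eigenvectors lie in $V$ because the eigenvalues are nonzero, invert the eigenvalues to pass to the variational eigenproblem, and finish the $a$-orthonormality and spanning claims by testing the eigenvalue relation against $e_m$ and computing the $a$-orthogonal complement. The only cosmetic difference is that the paper verifies injectivity of $K$ by contradiction while you do it directly through the strict positivity of $\langle Kf,f\rangle_H$; both amount to the same use of density of $V$ in $H$.
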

\begin{proof} This proof is an adaptation of the proof of Theorem~IX.31 in
\cite{Brezis}. The Lax--Milgram lemma implies the existence of an operator
$\tilde T\colon H \rightarrow V$ where, given $h \in H$, $\tilde T(h)$ is defined
as the unique solution in $V$ to the variational problem
\begin{equation}\label{definition-of-T-tilde}
a(\tilde T(h),v) = \langle h, v\rangle_H \quad \forall\, v \in V.
\end{equation}
It also follows, via the elliptic stability estimate of the Lax--Milgram lemma
and the continuity of the embedding $V \hookrightarrow H$, that $\tilde T$ is
bounded. Let $i\colon V \rightarrow H$ denote the embedding operator that maps $V$
into $H$, i.e., $v \in V \mapsto i(v) = v \in H$. Then, $T := i \circ \tilde T$
is a bounded operator defined on $H$ with values in $H$; as $i\colon V \rightarrow H$
is a compact linear operator, it follows that $T \colon H \rightarrow H$ is a compact
linear operator. Further, for all $(h,h') \in H \times H$,
\begin{multline*}
\langle T(h),h' \rangle_H = \langle \tilde T(h), h' \rangle_H = \langle h', \tilde T(h) \rangle_H = a(\tilde T(h'), \tilde T(h))\\
= a(\tilde T(h), \tilde T(h')) = \langle h, \tilde T(h') \rangle_H = \langle h, T(h') \rangle_H,
\end{multline*}
whence $T$ is self-adjoint. Thus, thanks to Theorem~VI.11 in
\cite{Brezis}, there exists an $H$-orthonormal system $\seq{ e_n
}{n \geq 1}$ of eigenvectors of $T$ such that
\begin{equation}\label{Fourier-Parseval}
h = \sum_{n=1}^\infty \langle h, e_n \rangle_H e_n \quad\text{and}\quad
\norm{h}_H^2 = \sum_{n=1}^\infty \langle h, e_n \rangle_H^2\qquad \forall\,h \in H.
\end{equation}

As, for all $h \in H$, $\langle T(h),h\rangle_H = a(\tilde T(h), \tilde T(h))$
and $a$ is $V$-elliptic, all the eigenvalues of $T$ are nonnegative. Also, as
$T$ is bounded, the set of its eigenvalues is also bounded. Now, by Theorem~VI.8 in
\cite{Brezis}, the set of nonzero eigenvalues of $T$ is either empty, or finite, or
countable with $0$ as its only accumulation point. However, on account of
\eqref{Fourier-Parseval}, the latter alternative is then the one that holds.

If $0$ were an eigenvalue of $T$, there would exist $e \in H \setminus \{ 0 \}$
such that $T(e) = 0$; i.e., $e \in \mathrm{Ker}(T)$. However, from
\eqref{definition-of-T-tilde} we then have that $e \in V^{\perp_H}$. As $H = \overline{V} \oplus
V^{\perp_H}$ in the norm of $H$ and $V$ is dense in $H$, $V^{\perp_H} = \{ 0
\}$, which contradicts $e \neq 0$. Therefore, $0$ is not an eigenvalue of $T$.

From the above, we can take the eigenvectors $e_n$ of \eqref{Fourier-Parseval}
as associated to positive eigenvalues $\mu_n$ bounded from above, arranged in
decreasing order ($\mu_{n+1} \leq \mu_n$ for $n \geq 1$) with $\lim_{n \to
\infty} \mu_n = 0$. A consequence of the absence of $0$ from the spectrum of
$T$ is that all the eigenvectors of $T$ have to be members of the smaller space
$V$.

Assuming that $\mu \neq 0$ and $e \in V \setminus \{ 0 \}$, $T(e) = \mu e$ if,
and only if, $a(e,w) = \mu^{-1} \langle e, w\rangle_H$ for all $w\in V$. Then,
all the eigenvalues of the eigenvalue problem \eqref{variational-ev} are
reciprocals of eigenvalues of $T$ with the possible exception of $0$. However,
from the $V$-ellipticity of $a$, $0$ cannot be an eigenvalue of the problem
\eqref{variational-ev}. On defining $\lambda_n := \mu_n^{-1}$ and setting the
$e_n$ to be the same as in \eqref{Fourier-Parseval} we obtain the desired
existence and distribution statements about of the eigenvalues of
\eqref{variational-ev}.

We observe from $a(e_n,e_m) = \lambda_n \langle e_n, e_m \rangle_H$, $n \geq
1$, that $\seq{ e_n/\sqrt{\lambda_n} }{n \geq 1}$ is an $a$-orthonormal system
in $V$. Let us denote the $a$-closure of its span by $\hat{V}$. Then, $v \in
\hat{V}^{\perp_a}$ if, and only if, $a(v,e_n) = 0$ for all $n \geq 1$. As each
$e_n$ is an eigenfunction of the problem \eqref{variational-ev} associated to a
nonzero eigenvalue, it follows from \eqref{Fourier-Parseval} that $v = 0$ and
therefore $\hat{V}^{\perp_a} = \{ 0 \}$. Thus, $V = \hat{V} \oplus
\hat{V}^{\perp_a} = \hat{V}$. This, together with \eqref{Fourier-Parseval}
itself, completes the proof.
\end{proof}

\begin{lemma}\label[lemma]{lem:coefDecay}
Let the spaces $H$, $V$ and the bilinear form $a$ be as in the statement of
\cref{lem:abstractEV} and let $(\lambda_n,e_n) \in \Real_{>0} \times V$ be
the eigenpairs of \eqref{variational-ev} obtained there. Then,
\begin{equation}\label{Fourier-Parseval-H}
h = \sum_{n=1}^\infty \langle h, e_n \rangle_H e_n \quad\text{and}\quad
\norm{h}_H^2 = \sum_{n=1}^\infty \langle h, e_n \rangle_H^2\qquad \forall\,h \in H,
\end{equation}
and
\begin{equation}\label{Fourier-Parseval-V}
v = \sum_{n=1}^\infty a\left(v, \frac{e_n}{\sqrt{\lambda_n}}\right) \frac{e_n}{\sqrt{\lambda_n}} \quad\text{and}\quad
\norm{v}_a^2 = \sum_{n=1}^\infty a\left( v, \frac{e_n}{\sqrt{\lambda_n}} \right)^2 \qquad \forall\,v \in V.
\end{equation}
Further,
\begin{equation}\label{fastDecay}
h \in H \quad\text{and}\quad \sum_{n=1}^\infty \lambda_n \langle h, e_n\rangle_H^2 < \infty \iff h \in V.
\end{equation}
\end{lemma}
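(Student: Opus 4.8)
The plan is to establish the two Parseval-type expansions \eqref{Fourier-Parseval-H} and \eqref{Fourier-Parseval-V} first, and then deduce the characterization \eqref{fastDecay} from them together with the completeness of $V$ under the energy norm. Identity \eqref{Fourier-Parseval-H} is already contained in \cref{lem:abstractEV}: it is precisely the statement that $\seq{e_n}{n\geq 1}$ is an $H$-orthonormal system whose $H$-closed span is $H$, i.e.\ an orthonormal basis of $H$. For \eqref{Fourier-Parseval-V} I would invoke the other conclusion of \cref{lem:abstractEV}, namely that $\seq{e_n/\sqrt{\lambda_n}}{n\geq 1}$ is an $a$-orthonormal system whose $a$-closed span is all of $V$; since $a$ is bounded and elliptic, $\norm{\tcdot}_a$ is a Hilbert-space norm on $V$ equivalent to the given norm of $V$, so $\seq{e_n/\sqrt{\lambda_n}}{n\geq 1}$ is an orthonormal basis of the Hilbert space $(V,\norm{\tcdot}_a)$ and the abstract expansion theorem yields both statements in \eqref{Fourier-Parseval-V}.

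The bridge between the two expansions is the elementary observation that, for $v\in V$, the eigenvalue relation \eqref{variational-ev} together with the symmetry of $a$ gives $a(v,e_n/\sqrt{\lambda_n}) = \lambda_n^{-1/2} a(v,e_n) = \lambda_n^{-1/2}\lambda_n\langle v,e_n\rangle_H = \sqrt{\lambda_n}\,\langle v,e_n\rangle_H$; in particular, taking $v=e_m$ shows $a(e_n,e_m)=\lambda_n\delta_{nm}$. Substituting the first identity into the Parseval identity of \eqref{Fourier-Parseval-V} gives the working form $\norm{v}_a^2 = \sum_{n\geq 1}\lambda_n\langle v,e_n\rangle_H^2$ for every $v\in V$.

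With this in hand the equivalence \eqref{fastDecay} is immediate. For $\Leftarrow$: if $h\in V$ then $h\in H$ because $V\compEmb H$, and $\sum_{n\geq 1}\lambda_n\langle h,e_n\rangle_H^2 = \norm{h}_a^2 < \infty$ by the working form just derived. For $\Rightarrow$: given $h\in H$ with $\sum_{n\geq 1}\lambda_n\langle h,e_n\rangle_H^2 < \infty$, set $c_n:=\langle h,e_n\rangle_H$ and $v_N:=\sum_{n=1}^N c_n e_n\in V$; using $a(e_n,e_m)=\lambda_n\delta_{nm}$ one computes $\norm{v_N-v_M}_a^2 = \sum_{n=M+1}^N\lambda_n c_n^2$ for $N>M$, which tends to $0$ as $M,N\to\infty$, so $\seq{v_N}{N\geq 1}$ is Cauchy in the complete space $(V,\norm{\tcdot}_a)$ and converges there to some $v\in V$. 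Since $V$ embeds continuously in $H$, $v_N\to v$ in $H$ as well, while \eqref{Fourier-Parseval-H} gives $v_N\to h$ in $H$; hence $v=h$ and $h\in V$.

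I do not expect a genuine obstacle: the lemma is a repackaging of the Hilbert--Schmidt spectral theorem proved in \cref{lem:abstractEV}. The only points that need care are keeping straight which inner product the Fourier coefficients refer to when passing between $H$ and $V$, and, in the $\Rightarrow$ direction, combining the completeness of $(V,\norm{\tcdot}_a)$ with the uniqueness of limits in $H$ to identify the $V$-limit of the partial sums with $h$.
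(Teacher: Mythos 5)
Your proof is correct and follows essentially the same route as the paper's: both read \eqref{Fourier-Parseval-H} and \eqref{Fourier-Parseval-V} off the conclusions of \cref{lem:abstractEV}, both use the identity $a(v,e_n/\sqrt{\lambda_n})=\sqrt{\lambda_n}\,\langle v,e_n\rangle_H$ to get the $\Leftarrow$ direction of \eqref{fastDecay}, and both prove the $\Rightarrow$ direction by showing that the partial sums are Cauchy in the energy norm on $V$ and then identifying the $V$-limit with $h$ via the continuous embedding $V\hookrightarrow H$. The only cosmetic difference is that you write out $a(e_n,e_m)=\lambda_n\delta_{nm}$ explicitly, which the paper leaves implicit.
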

\begin{proof}
The expression \eqref{Fourier-Parseval-H} is just a restatement of
\eqref{Fourier-Parseval} in the proof of \cref{lem:abstractEV}, but unlike
there, here we emphasize that the $e_n$ belong to $V$. The expression
\eqref{Fourier-Parseval-V} comes from the identity between $V$ and the
$a$-closed span of the $a$-orthonormal set $\seq{ e_n/\sqrt{\lambda_n} }{n \geq
1}$---part of the statement of \cref{lem:abstractEV}---via, for example,
Theorem VI.9 of \cite{Brezis}.

As $(\lambda_n,e_n)$ is an eigenpair of \eqref{variational-ev}, $a(v ,e_n /
\sqrt{\lambda_n}) = \sqrt{\lambda} \langle v, e_n \rangle_H$ for all $v \in V$;
this and the second expression of \eqref{Fourier-Parseval-V} give the
right-to-left implication in \eqref{fastDecay}. Let us now consider an $h \in
H$ that satisfies the left-hand side of \eqref{fastDecay}. As the $e_n$ are
members of $V$, the partial sums
\[ h_k := \sum_{n=1}^k \langle h, e_n\rangle_H e_n \]
also belong to $V$. The $a$-orthonormality of the $e_n/\sqrt{\lambda_n}$ leads
to the equality, for $1 \leq k < l$,
\[ \norm{ h_l - h_k}_a^2 = \sum_{n = k+1}^l \lambda_n \langle h, e_n \rangle_H^2. \]
As the real series $\sum_{n = 1}^\infty \lambda_n \langle h, e_n \rangle_H^2$
is assumed to converge, the above expression tends to $0$ as $k$ and $l$ tend
to $\infty$. Hence, $\seq{ h_k }{k \geq 1}$ is a Cauchy sequence in $V$ and
thus converges to some $\hat h \in V$. As $V$ is continuously embedded in $H$
(part of being compactly embedded), the limit $\hat h$ has to be the same limit
the $h_k$ have in $H$. That is, $h = \hat h \in V$. This completes the proof of
\eqref{fastDecay}.
\end{proof}

The hypotheses of \cref{lem:abstractEV} and \cref{lem:coefDecay} are satisfied by the eigenvalue
problems
\begin{equation}\label{partial-ev}
\langle e\psp{i}, \varphi\rangle_{\HOMi} = \lambda\psp{i} \langle e\psp{i}, \varphi \rangle_{\LTMi} \qquad \forall\,\varphi \in \HOMi,
\end{equation}
(for $i \in \ii{N}$ here and in what follows), and
\begin{equation}\label{full-ev}
\langle e, \varphi\rangle_{\HOM} = \lambda \langle e, \varphi \rangle_{\LTM} \qquad \forall\,\varphi \in \HOM,
\end{equation}
whence their solutions do have the distribution, orthogonality and spanning
properties stated in that lemma (the hypothesis $\overline{V} = H$, which
is not discussed elsewhere, follows from the density of infinitely
differentiable and compactly supported functions in any weighted $\LL^2$
space). In particular, they have sequences of solutions (eigenpairs)
$\seq{(\lambda\psp{i}_n, e\psp{i}_n)}{n \in \natural}$ and
$\seq{(\lambda_n,e_n)}{n \in \natural}$, respectively, with
\begin{equation}\label{HOMi-from-ev}
\varphi \in \LTMi \quad \text{and} \quad \sum_{n=1}^\infty \lambda\psp{i}_n \langle \varphi, e\psp{i}_n \rangle_{\LTMi}^2 < \infty \iff \varphi \in \HOMi,
\end{equation}
and
\begin{equation}\label{HOM-from-ev}
\varphi \in \LTM \quad \text{and} \quad \sum_{n=1}^\infty \lambda_n \langle \varphi, e_n \rangle_{\LTM}^2 < \infty \iff \varphi \in \HOM.
\end{equation}
Next, we exploit the special tensor-product structure of the full Maxwellian
$\maxw$ to characterize the eigenpairs of its associated eigenvalue problem
\eqref{full-ev} in terms of the eigenpairs of the eigenvalue problem
\eqref{partial-ev} associated to the partial Maxwellians $M_i$.

\begin{lemma}\label[lemma]{lem:tensor-ev}
The net $\seq{(\lambda_{\vect{n}}, e_{\vect{n}})}{\vect{n} = (n_1, \dotsc, n_N)
\in \natural^N}$ is a full system of solutions of the eigenvalue problem
\eqref{full-ev}, where
\begin{equation}\label{tensor-ev}
\lambda_{\vect{n}} := 1 + \sumi (\lambda\psp{i}_{n_i} - 1)
\qquad\text{and}\qquad e_{\vect{n}} := \tp_{i=1}^N e\psp{i}_{n_i}.
\end{equation}
\end{lemma}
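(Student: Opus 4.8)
The plan is to verify directly that each pair $(\lambda_{\vect n}, e_{\vect n})$ given in \eqref{tensor-ev} solves \eqref{full-ev}, and then to show that these eigenpairs already exhaust the spectral picture supplied by \cref{lem:abstractEV}. First I would record that $e_{\vect n} = \tp_{i=1}^N e\psp{i}_{n_i}$ actually lies in $\HOM$: applying \cref{lem:TP}(1) to the functions $M_i e\psp{i}_{n_i} \in \HDiMi$ gives $\maxw\, e_{\vect n} = \tp_{i=1}^N (M_i e\psp{i}_{n_i}) \in \HDM$, and the isometric isomorphism \eqref{iso-iso} then yields $e_{\vect n} \in \HOM$. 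Next, for fixed $e_{\vect n}$ both sides of \eqref{full-ev} are bounded linear functionals of $\varphi \in \HOM$ (the $\LTM$-pairing being $\HOM$-continuous because $\HOM$ embeds continuously in $\LTM$), and by \cref{cor:tensorDensity}, transported through \eqref{iso-iso} using $\maxw = \tp_{i=1}^N M_i$, the span of $\tp_{i=1}^N \HOMi$ is dense in $\HOM$. Hence it suffices to check \eqref{full-ev} for product test functions $\varphi = \tp_{i=1}^N \varphi\psp{i}$ with $\varphi\psp{i} \in \HOMi$.

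For such a $\varphi$ I would expand $\langle e_{\vect n}, \varphi\rangle_{\HOM}$ using the distributional identity \eqref{gradWTP} applied with $r\psp{i} := M_i e\psp{i}_{n_i}$, i.e.\ $\grad[\conf_j] e_{\vect n} = \tp_{\allibutj}^N e\psp{i}_{n_i} \otimes_j \grad e\psp{j}_{n_j}$, together with $\maxw = \tp_{i=1}^N M_i$ and Fubini's theorem. Writing $P_i := \langle e\psp{i}_{n_i}, \varphi\psp{i}\rangle_{\LTMi}$ and $G_i := \int_{D_i} \grad e\psp{i}_{n_i} \cdot \grad \varphi\psp{i}\, M_i$, this gives
\[ \langle e_{\vect n}, \varphi\rangle_{\HOM} = \prod_{i=1}^N P_i + \sum_{j=1}^N \Bigl(\prod_{\allibutj}^N P_i\Bigr) G_j, \qquad \langle e_{\vect n}, \varphi\rangle_{\LTM} = \prod_{i=1}^N P_i. \]
Now \eqref{partial-ev}, tested against $\varphi\psp{i}$, reads $P_i + G_i = \lambda\psp{i}_{n_i} P_i$, i.e.\ $G_i = (\lambda\psp{i}_{n_i} - 1) P_i$; substituting and factoring out $\prod_{i=1}^N P_i$ collapses the first displayed expression to $\bigl(1 + \sum_{j=1}^N (\lambda\psp{j}_{n_j} - 1)\bigr)\prod_{i=1}^N P_i = \lambda_{\vect n}\, \langle e_{\vect n}, \varphi\rangle_{\LTM}$, which is precisely \eqref{full-ev} for $\varphi$. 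By the density and continuity just noted, \eqref{full-ev} then holds for all $\varphi \in \HOM$, so each $(\lambda_{\vect n}, e_{\vect n})$ is genuinely an eigenpair.

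It remains to argue that $\seq{(\lambda_{\vect n}, e_{\vect n})}{\vect n \in \natural^N}$ is a \emph{full} system. The family $\seq{e_{\vect n}}{\vect n}$ is $\LTM$-orthonormal: by Fubini and the $\LTMi$-orthonormality of the $e\psp{i}_n$ (part of \cref{lem:abstractEV}), $\langle e_{\vect n}, e_{\vect m}\rangle_{\LTM} = \prod_{i=1}^N \langle e\psp{i}_{n_i}, e\psp{i}_{m_i}\rangle_{\LTMi} = \prod_{i=1}^N \delta_{n_i m_i}$. For completeness I would use that $\LTM = \LL^2_\maxw(\D)$, with $\D = \bigtimes_{i=1}^N D_i$ and $\maxw = \tp_{i=1}^N M_i$, is unitarily the Hilbert-space tensor product $\bigotimes_{i=1}^N \LTMi$, so that tensor products of the $\LTMi$-orthonormal bases $\seq{e\psp{i}_n}{n}$ form an orthonormal basis of $\LTM$; equivalently, an iterated Fubini argument shows that any $f \in \LTM$ orthogonal to every $e_{\vect n}$ vanishes. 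Thus $\seq{e_{\vect n}}{\vect n}$ is a complete $\LTM$-orthonormal system of eigenfunctions of \eqref{full-ev}, and since \cref{lem:abstractEV} guarantees eigenvalues of finite multiplicity accumulating only at $+\infty$ together with the spanning property, every eigenpair of \eqref{full-ev} is accounted for and the eigenvalues are exactly the numbers $\lambda_{\vect n}$ counted with multiplicity. The main technical point is this last completeness step — cleanly identifying $\LTM$ with the Hilbert tensor product of the $\LTMi$ (or running the iterated-Fubini orthogonality argument in the weighted setting) — the eigenvalue identity itself being an elementary consequence of \eqref{partial-ev} once the product test functions are known to be dense.
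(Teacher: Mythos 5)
Your proof is correct and mirrors the paper's argument in its essentials: the core step --- expanding $\langle e_{\vect n}, \varphi\rangle_{\HOM}$ on product test functions via \eqref{gradWTP} and Fubini, then invoking \eqref{partial-ev} factor by factor to replace each gradient term by $(\lambda\psp{j}_{n_j}-1)$ times the corresponding $\LTMj$ pairing --- is exactly what the paper does, and your use of \cref{cor:tensorDensity} together with \eqref{iso-iso} to pass from product test functions to all of $\HOM$ matches the paper as well. The one (small) deviation is in the completeness step: you argue that the $e_{\vect n}$ form a complete $\LTM$-orthonormal system by identifying $\LTM$ with the Hilbert tensor product of the $\LTMi$ (or by iterated Fubini), whereas the paper instead deduces $\overline{\mathrm{span}\seq{e_{\vect n}}{\vect n}} = \HOM$ from the spanning statement of \cref{lem:coefDecay} combined once more with \cref{cor:tensorDensity}, and then invokes the Hilbert--Schmidt machinery of \cref{lem:abstractEV}; both routes are sound, the paper's having the minor advantage of reusing \cref{cor:tensorDensity} rather than appealing to a separate tensor-product fact for $\LTM$.
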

\begin{proof}
Given $\tau = \tp_{i\in\ii{N}} \tau\psp{i} \in \tp_{i\in\ii{N}} \CIC(D_i)$, we have that
\begin{equation*}
\begin{split}
&\left\langle e_{\vect{n}}, \tau \right\rangle_{\HDM}
 = \left\langle e_{\vect{n}}, \tau \right\rangle_{\LTM}
+ \sum_{j=1}^N \left\langle \grad e\psp{j}_{n_j}, \grad \tau\psp{j} \right\rangle_{[\LTMj]^d} \prod_{\allibutj}^N \left\langle e\psp{i}_{n_i}, \tau\psp{i} \right\rangle_{\LTMi}\\
&\qquad = \left\langle e_{\vect{n}}, \tau \right\rangle_{\LTM}
+ \sum_{j=1}^N (\lambda\psp{j}_{n_j} - 1) \left\langle e\psp{j}_{n_j}, \tau\psp{j} \right\rangle_{\LTMj} \prod_{\allibutj}^N \left\langle e\psp{i}_{n_i}, \tau\psp{i} \right\rangle_{\LTMi}
= \lambda_{\vect{n}} \left\langle e_{\vect{n}}, \tau \right\rangle_{\LTM}.
\end{split}
\end{equation*}
Since the span of $\tp_{i=1}^N \HDiMi$ is dense in $\HDM$ (as is readily seen from
\cref{cor:tensorDensity} and \eqref{iso-iso}), the equality of the first and
the last expression in the chain of equalities above is valid for all $\tau \in
\HDM$. Hence, $(\lambda_{\vect{n}}, e_{\vect{n}})$ is an eigenpair of
\eqref{full-ev}. Further, we deduce from the chain of equalities above that $e_{\vect{n}}$ is
orthogonal to $e_{\vect{m}}$ in both $\LTM$ and $\HOM$ if $\vect{n} \neq
\vect{m}$.

From \eqref{Fourier-Parseval-V} in \cref{lem:coefDecay}, for $i \in \ii{N}$,
$\overline{\mathrm{span}\seq{ e\psp{i}_n }{n \geq 1}} = \HOMi$.
Hence, by \eqref{cor:tensorDensity} and \eqref{iso-iso}, 
\[ \overline{ \tp_{i=1}^N \mathrm{span}\seq{ e\psp{i}_n }{n \geq 1} }
\subset \overline{ \mathrm{span}\seq{ e_{\vect{n}} }{ \vect{n} \in \natural^N } } = \HOM. \]
Thus, $\seq{ e_{\vect{n}} }{ \vect{n} \in \natural^N }$ forms an orthogonal
system that spans $\HOM$. Therefore, by Theorem~VI.9 of
\cite{Brezis}, all the eigenpairs of the (full) Maxwellian eigenvalue problem
\eqref{full-ev} have the form $(\lambda_{\vect{n}}, e_{\vect{n}})$ as given in
\eqref{tensor-ev} (modulo linear combinations of eigenfunctions belonging to
the same eigenspace).
\end{proof}

It follows from \cref{lem:tensor-ev} that the eigenvalues and eigenfunctions
of \eqref{full-ev} are more naturally indexed by $\natural^N$ than by
$\natural$; in what follows, we shall refrain from indexing \emph{contra
natura}.

\subsection{Characterization via summability of Fourier coefficients} As by
\cref{lem:tensor-ev} the sequence $\seq{(\lambda_{\vect{n}},
e_{\vect{n}})}{\vect{n}\in\natural^N}$ is a full system of eigenpairs of
\eqref{full-ev}, \eqref{Fourier-Parseval-V} in \cref{lem:coefDecay} ensures
that, for all $\tau \in \HOM$,
\begin{equation*}
\tau
= \sum_{\vect{n} \in \natural^N} \left\langle \tau, \frac{e_{\vect{n}}}{\sqrt{\lambda_{\vect{n}}}} \right\rangle_{\HOM} \frac{e_{\vect{n}}}{\sqrt{\lambda_{\vect{n}}}}
= \sum_{\vect{n} \in \natural^N} \sqrt{\lambda_{\vect{n}}} \left\langle \tau, e_{\vect{n}} \right\rangle_{\LTM} \frac{e_{\vect{n}}}{\sqrt{\lambda_{\vect{n}}}} \qquad \text{in } \HOM.
\end{equation*}
Hence, given the tensor-product structure of the $e_{\vect{n}}$ and the unit
$\HOM$-norm of the $e_{\vect{n}}/\sqrt{\lambda_{\vect{n}}}$, we can guarantee
that $\tau \in \mathcal{B}_1$ (cf.\ \eqref{definition-of-B1}) if
\[ \sum_{\vect{n} \in \natural^N}\sqrt{\lambda_{\vect{n}}} \abs[n]{\langle\tau,e_{\vect{n}}\rangle_{\LTM}} < \infty. \]
In turn, this holds if
\begin{equation}\label{sufficient-for-B1}
A := \sum_{\vect{n} \in \natural^N} \frac{\lambda_{\vect{n}}}{\sigma_{\vect{n}}} < \infty \qquad \text{and}\qquad
B := \sum_{\vect{n} \in \natural^N} \sigma_{\vect{n}} \langle\tau, e_{\vect{n}} \rangle_{\LTM}^2 < \infty,
\end{equation}
where $\seq{\sigma_{\vect{n}}}{\vect{n}\in\natural^N}$ is a sequence of positive
real numbers that are to be chosen below. We note that the requirement of $B$
being finite can be seen---for $\sigma_{\vect{n}} = \lambda_{\vect{n}}$, for
example, this is certainly the case, as follows from \eqref{HOM-from-ev}---as a
regularity requirement on $\tau$. Thus, there is a trade-off in
\eqref{sufficient-for-B1} between the requirement that the $\sigma_{\vect{n}}$
grow fast enough to ensure the finiteness of $A$ and the desirability of the
$\sigma_{\vect{n}}$ growing slow enough to avoid demanding more regularity than
necessary of the functions $\tau$ for which $B$ is finite.

As a first step in formalizing the above we consider, given a net $\Sigma =
\seq{\sigma_{\vect{n}}}{\vect{n} \in \natural^N}$ with entries in
$\Real_{>0}$, the space of all those $\LTM$ functions for which the term $B$, as defined in
\eqref{sufficient-for-B1}, is finite:
\begin{subequations}\label{defHSigmaM}
\begin{equation}\label{HSigmaM}
\HH^\Sigma_\maxw(\D) := \left\{ \varphi \in \LTM \colon \sum_{\vect{n} \in \natural^N} \sigma_{\vect{n}}
\left\langle \varphi, e_{\vect{n}} \right\rangle_{\LTM}^2 < \infty \right\}.
\end{equation}
We equip $\HH^\Sigma_\maxw(\D)$ with the norm
\begin{equation}\label{nHSigmaM}
\norm{\varphi}_{\HH^\Sigma_\maxw(\D)} := \left (\sum_{\vect{n} \in \natural^N} \sigma_{\vect{n}} \left\langle \varphi, e_{\vect{n}} \right\rangle_{\LTM}^2 \right)^{1/2}.
\end{equation}
\end{subequations}
It is readily seen that, if there exists a $\sigma > 0$ with
$\sigma_{\vect{n}} \geq \sigma$ for all $\vect{n} \in \natural^N$,
then $\HH^\Sigma_\maxw(\D)$ is a separable Hilbert space that is continuously
embedded in $\LTM$. Further, if there exists a $\sigma' > 0$ such that
$\sigma_{\vect{n}} \geq \sigma' \lambda_{\vect{n}}$ for all $\vect{n} \in
\natural^N$, then $\HH^\Sigma_\maxw(\D)$ is continuously embedded in $\HOM$ and,
thanks to \cref{lem:compEmb}, it is compactly embedded in $\LTM$.

At this stage we could just choose $\Sigma$ to be, e.g., $\sigma_{\vect{n}} =
\lambda_{\vect{n}} \norm{\vect{n}}_2^\alpha$ for some $\alpha > N$ and an
application of a multiple series version of the integral test for convergence
(see, for example, \cite[Proposition 7.57]{GhorpadeLimaye}) would render the
sum $A$ in \eqref{sufficient-for-B1} finite. However, the resulting
space $\HH^\Sigma_\maxw(\D)$ would then still have quite an abstract description.
What we therefore wish to do instead is to choose each $\sigma_{\vect{n}}$ as a suitable polynomial
function of the $(\lambda\psp{1},\dotsc,\lambda\psp{N})$. Then, under certain
reasonable conditions, which we will make explicit below, we shall
be able to characterize the resulting space in terms of regularity properties.
One of these conditions has to do with the fact that we can only know that $A$
of \eqref{sufficient-for-B1} is finite, with $\sigma_{\vect{n}}$ as a certain
polynomial function of the $\lambda_{\vect{n}}$, if we have some information
about the asymptotic behavior of the $\lambda_{\vect{n}}$. Consequently, we adopt the
following hypothesis.

\begin{hypothesis}\label{hyp:asymptotics}
For each $i \in \ii{N}$ there exist positive real numbers $c\psp{i}_1$ and $c\psp{i}_2$ and $n\psp{i}
\in \natural$ such that (with $d$ signifying the common dimension of the single-spring
configuration domains $D_i$)
\[ c\psp{i}_1 n^{2/d} \leq \lambda\psp{i}_n \leq c\psp{i}_2 n^{2/d}\qquad \forall n \geq n\psp{i},
\]
where $\lambda\psp{i}_n$ is the $n$-th member of the (ordered, with
repetitions according to multiplicity) sequence of eigenvalues of
\eqref{partial-ev}.
\end{hypothesis}

\begin{remark}\label[remark]{rem:asymptotics}
\cref{hyp:asymptotics} basically consists of assuming that
the eigenvalues of the problem \eqref{partial-ev} behave like the eigenvalues
of a regular elliptic operator, such as the Poisson operator. If the partial
Maxwellian $M_i$ comes from either the FENE model \eqref{FENE-model} or the
CPAIL model \eqref{CPAIL-model} \cref{hyp:asymptotics} holds; see
\cref{cor:easyAsymptotics} in \cref{sec:ev-asymptotics} for a proof (see
also \cref{rem:RussianLiterature}).
\end{remark}

\begin{theorem}\label{thm:HMix-in-B1}
Let $\Tau\psp{m} = \seq{ \tau\psp{m}_{\vect{n}} }{ \vect{n} \in \natural^N}$
be defined by
\begin{equation}\label{HMixIndices}
\tau\psp{m}_{\vect{n}} := \prod_{i=1}^N \left( \lambda\psp{i}_{n_i} \right)^m \qquad \forall\,\vect{n} \in \natural^N.
\end{equation}
Then, $\HH^{\Tau\psp{m}}_\maxw(\D) \subset \mathcal{B}_1$ if $m > \frac{d}{2} + 1$.
\end{theorem}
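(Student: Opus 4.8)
The strategy is to invoke the sufficient condition \eqref{sufficient-for-B1} with the particular choice of the free net $\Sigma$ given by $\sigma_{\vect n} := \tau\psp{m}_{\vect n} = \prod_{i=1}^N (\lambda\psp{i}_{n_i})^m$. With this choice the requirement in \eqref{sufficient-for-B1} that the quantity $B$ be finite is, by the very definition \eqref{HSigmaM}, exactly the statement $\tau \in \HH^{\Tau\psp{m}}_\maxw(\D)$. Hence, to establish the asserted inclusion it suffices to prove that, whenever $m > \frac d2 + 1$, the companion quantity
\[
A = \sum_{\vect n \in \natural^N} \frac{\lambda_{\vect n}}{\tau\psp{m}_{\vect n}}
= \sum_{\vect n \in \natural^N} \frac{\lambda_{\vect n}}{\prod_{i=1}^N (\lambda\psp{i}_{n_i})^m}
\]
is finite; for then every $\tau$ in $\HH^{\Tau\psp{m}}_\maxw(\D)$ satisfies both conditions of \eqref{sufficient-for-B1} and therefore belongs to $\mathcal{B}_1$.

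First I would dispose of the numerator $\lambda_{\vect n}$ via the explicit formula \eqref{tensor-ev}: since $\lambda_{\vect n} = 1 + \sumi (\lambda\psp{i}_{n_i} - 1) = (1 - N) + \sumi \lambda\psp{i}_{n_i} \le \sumi \lambda\psp{i}_{n_i}$ (using $N \ge 1$), one obtains
\[
A \le \sum_{i=1}^N \sum_{\vect n \in \natural^N} \frac{\lambda\psp{i}_{n_i}}{\prod_{k=1}^N (\lambda\psp{k}_{n_k})^m}
= \sum_{i=1}^N \left( \sum_{n \in \natural} (\lambda\psp{i}_n)^{1-m} \right) \prod_{k \in \ii{N} \setminus \{i\}} \left( \sum_{n \in \natural} (\lambda\psp{k}_n)^{-m} \right),
\]
where the factorization of the $N$-fold series into a product of one-dimensional series is justified by Tonelli's theorem, all summands being nonnegative (cf.\ the multiple-series results in \cite{GhorpadeLimaye}). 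Thus the whole matter reduces to the convergence of the scalar series $\sum_{n\in\natural} (\lambda\psp{i}_n)^{-s}$ for the two exponents $s = m$ and $s = m-1$.

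This is where \cref{hyp:asymptotics} enters. Splitting each such series into the finite initial block $n < n\psp{i}$ --- which is harmless, being a finite sum of strictly positive numbers --- and the tail $n \ge n\psp{i}$, the hypothesis gives $c\psp{i}_1 n^{2/d} \le \lambda\psp{i}_n \le c\psp{i}_2 n^{2/d}$ on that tail, so that $\sum_{n \ge n\psp{i}} (\lambda\psp{i}_n)^{-s}$ is comparable to the $p$-series $\sum_{n\ge n\psp{i}} n^{-2s/d}$, which converges precisely when $2s/d > 1$, i.e.\ $s > \frac d2$. For $s = m - 1$ this is the condition $m > \frac d2 + 1$, which is the hypothesis of the theorem; for $s = m$ it is the weaker condition $m > \frac d2$, which then holds a fortiori. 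Consequently every factor on the right-hand side of the displayed bound is finite, $A < \infty$, and the inclusion $\HH^{\Tau\psp{m}}_\maxw(\D) \subset \mathcal{B}_1$ follows.

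I do not expect a serious obstacle at this stage: once the reduction via \eqref{sufficient-for-B1} is made, the argument is a finite bookkeeping exercise, the only points requiring care being the legitimacy of rearranging and factorizing the $N$-fold series and the separate treatment of the finitely many eigenvalues $\lambda\psp{i}_n$ with $n < n\psp{i}$ that are not controlled by \cref{hyp:asymptotics}. The genuine difficulty has been front-loaded into establishing the sufficient condition \eqref{sufficient-for-B1} and into \cref{hyp:asymptotics} itself, whose verification for the FENE and CPAIL models is carried out later via \cref{cor:easyAsymptotics} in \cref{sec:ev-asymptotics}.
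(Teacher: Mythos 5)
Your proposal is correct and follows essentially the same route as the paper: reduce to the finiteness of $A$ in \eqref{sufficient-for-B1}, bound $\lambda_{\vect n}$ by $\sum_i \lambda\psp{i}_{n_i}$ using \eqref{tensor-ev}, factorize the $N$-fold series into one-dimensional sums, and invoke \cref{hyp:asymptotics} to compare each with a convergent $p$-series. The only cosmetic difference is that you treat the finitely many initial eigenvalues $n < n\psp{i}$ explicitly, whereas the paper absorbs them by decreasing $c\psp{i}_1$ and increasing $c\psp{i}_2$ so that the two-sided bound holds from $n = 1$.
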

\begin{proof}
According to the previous discussion, the stated inclusion will hold once we have
shown that the infinite sum over $\vect{n} \in \natural^N$ of $\lambda_{\vect{n}} /
\tau\psp{m}_{\vect{n}}$ converges; i.e., that $A$ in
\eqref{sufficient-for-B1} is finite. To prove this, we start by noting that,
modulo a decrease of $c\psp{i}_1$ and an increase of $c\psp{i}_2$, we can take
$n\psp{i}=1$ in \cref{hyp:asymptotics} as a consequence of all the
$\lambda\psp{i}_n$ being positive; we do so from now on. This, together
with \eqref{tensor-ev} and \cref{hyp:asymptotics}, yields that
\begin{equation}\label{chain}
\frac{\lambda_{\vect{n}}}{\tau\psp{m}_{\vect{n}}}
\leq \frac{ \sum_{i=1}^N \lambda\psp{i}_{n_i} }{\prod_{i=1}^N \left( \lambda\psp{i}_{n_i} \right)^m}
\leq C \frac{ \sum_{i=1}^N n_i^{2/d} }{\prod_{i=1}^N \left( n_i^{2/d} \right)^m}
\end{equation}
for all $\vect{n} \in \natural^N$ and some $C > 0$ that depends on the
$c\psp{i}_1$, the $c\psp{i}_2$, $N$ and $d$ only. Clearly, it will be enough to
show that the right-most expression in \eqref{chain} results in a convergent series. Now,
\begin{equation*}
\sum_{\vect{n}} \frac{ \sum_{i=1}^N n_i^{2/d} }{\prod_{i=1}^N \left( n_i^{2/d} \right)^m}
= \sum_{\vect{n}} \sum_{i=1}^N \frac{n_i^{2/d-2 m/d}}{\prod_{\substack{j=1\\j\neq i}}^N n_j^{2 m/d}}
= \sum_{i=1}^N \left(\prod_{\substack{j=1\\j\neq i}}^N \sum_{n_j=1}^\infty n_j^{-2m/d}\right) \left(\sum_{n_i=1}^\infty n_i^{2/d(1-m)}\right)
\end{equation*}
where the constraint on $m$ ensures that all the resulting one-dimensional sums are finite.

\end{proof}

For later reference we introduce another family of weights that also produces
subspaces of $\mathcal{B}_1$.

\begin{theorem}\label{thm:HUnif-in-B1}
Let $\Upsilon\psp{m} = \seq{ \upsilon\psp{m}_{\vect{n}} }{ \vect{n} \in \natural^N}$ be defined by
\begin{equation}\label{HUnifIndices}
\upsilon\psp{m}_{\vect{n}} := \left(\sum_{i=1}^N \lambda\psp{i}_{n_i} \right)^m \qquad \forall\,\vect{n}\in \natural^N.
\end{equation}
Then, $\HH^{\Upsilon\psp{m}}_\maxw(\D) \subset \mathcal{B}_1$ if $m > 1 + {\textstyle\frac{1}{2}}N d$.
\end{theorem}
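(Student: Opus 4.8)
The plan is to follow the template of the proof of \cref{thm:HMix-in-B1}. By the discussion immediately preceding \eqref{sufficient-for-B1}, applied with the weight net $\sigma_{\vect n} := \upsilon\psp{m}_{\vect n}$ (which is bounded below away from $0$, since each $\lambda\psp{i}_{n_i}$ is, by \cref{lem:abstractEV}, so that $\HH^{\Upsilon\psp{m}}_\maxw(\D)$ is indeed a well-defined separable Hilbert space continuously embedded in $\LTM$), the asserted inclusion $\HH^{\Upsilon\psp{m}}_\maxw(\D) \subset \mathcal{B}_1$ will follow once we show that the quantity $A := \sum_{\vect n \in \natural^N} \lambda_{\vect n}/\upsilon\psp{m}_{\vect n}$ is finite; the companion requirement that $B$ in \eqref{sufficient-for-B1} be finite is precisely the hypothesis that $\tau$ lies in $\HH^{\Upsilon\psp{m}}_\maxw(\D)$.

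First I would bound $\lambda_{\vect n}$ from above. From \eqref{tensor-ev}, $\lambda_{\vect n} = (1-N) + \sum_{i=1}^N \lambda\psp{i}_{n_i} \le \sum_{i=1}^N \lambda\psp{i}_{n_i} = \upsilon\psp{1}_{\vect n}$, the inequality holding because $N \ge 2$. Hence $\lambda_{\vect n}/\upsilon\psp{m}_{\vect n} \le \bigl( \sum_{i=1}^N \lambda\psp{i}_{n_i} \bigr)^{1-m}$, and the exponent $1-m$ is negative since $m > 1$. Next, exactly as in the proof of \cref{thm:HMix-in-B1}, I would invoke \cref{hyp:asymptotics} — first reducing to $n\psp{i}=1$ at the cost of decreasing the $c\psp{i}_1$, which is harmless because all the $\lambda\psp{i}_n$ are positive — to obtain $\lambda\psp{i}_{n_i} \ge c\psp{i}_1 n_i^{2/d}$, whence $\sum_{i=1}^N \lambda\psp{i}_{n_i} \ge c_1 \sum_{i=1}^N n_i^{2/d}$ with $c_1 := \min_i c\psp{i}_1 > 0$. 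Using the negativity of $1-m$ once more, $\bigl( \sum_i \lambda\psp{i}_{n_i} \bigr)^{1-m} \le c_1^{1-m} \bigl( \sum_i n_i^{2/d} \bigr)^{1-m}$.

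The key step — and the only place where the treatment genuinely differs from that of the ``product'' weight $\Tau\psp{m}$ — is that the ``sum'' weight $\Upsilon\psp{m}$ is separated not by Fubini but by the arithmetic--geometric mean inequality: $\sum_{i=1}^N n_i^{2/d} \ge N \prod_{i=1}^N n_i^{2/(Nd)}$, so that, again by negativity of $1-m$, $\bigl( \sum_i n_i^{2/d} \bigr)^{1-m} \le N^{1-m} \prod_{i=1}^N n_i^{-2(m-1)/(Nd)}$. Assembling the estimates gives $A \le C \sum_{\vect n \in \natural^N} \prod_{i=1}^N n_i^{-2(m-1)/(Nd)} = C \prod_{i=1}^N \bigl( \sum_{n=1}^\infty n^{-2(m-1)/(Nd)} \bigr)$ for a constant $C$ depending only on $N$, $d$ and the $c\psp{i}_1$, and each one-dimensional sum converges precisely because $2(m-1)/(Nd) > 1$, i.e. because $m > 1 + \frac{1}{2} N d$.

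I do not expect any real obstacle; the argument is elementary once one sees that AM--GM replaces the variable separation used for $\Tau\psp{m}$. The only items demanding a line of care are the verification that the reduction to $n\psp{i}=1$ in \cref{hyp:asymptotics} is legitimate and the harmless bookkeeping of the constants $c_1$, $N$ and the constant implicit in \eqref{sufficient-for-B1}. One may also note in passing that the threshold $m > 1 + \frac{1}{2}Nd$ is sharp for this method: bounding instead $\sum_i n_i^{2/d} \le N \max_i n_i^{2/d}$ and counting the $\sim N k^{N-1}$ multi-indices with $\max_i n_i = k$ shows that $\sum_{\vect n}\bigl(\sum_i n_i^{2/d}\bigr)^{1-m}$ already diverges when $m = 1 + \frac{1}{2}Nd$.
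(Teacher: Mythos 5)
Your argument is correct and arrives at the same sharp threshold $m > 1 + \tfrac12 Nd$, but by a genuinely different route than the paper's. The paper first reduces $A < \infty$, via \cref{hyp:asymptotics} and the multiple-series integral test, to the finiteness of $\int_{[1,\infty)^N} \bigl(\sum_i x_i^{2/d}\bigr)^{1-m}\,\dd x$, and then passes to spherical coordinates (after using the equivalence of the $2/d$-quasinorm with the Euclidean norm) to obtain a single radial integral $\int_1^\infty r^{\frac{2}{d}(1-m)+N-1}\,\dd r$. You instead stay discrete and use the arithmetic--geometric mean inequality, $\sum_i n_i^{2/d} \ge N\prod_i n_i^{2/(Nd)}$, to factor the $N$-fold series into a product of one-dimensional $p$-series, each requiring $2(m-1)/(Nd) > 1$. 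Both approaches are elementary and of comparable length; yours has the mild advantage of avoiding the integral test and the quasinorm-to-norm comparison entirely, and of making the factorization over the coordinate indices explicit (which mirrors the structure of the proof of \cref{thm:HMix-in-B1} more closely), while the paper's polar-coordinate computation makes the appearance of $N$ in the threshold geometrically transparent. Your closing sharpness remark, counting $\sim N k^{N-1}$ multi-indices with $\max_i n_i = k$ to exhibit divergence at $m = 1 + \tfrac12 Nd$, is also correct and is a worthwhile observation not present in the paper.
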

\begin{proof}
Using \cref{hyp:asymptotics} and the already mentioned multiple series
version of the integral test for convergence it can be shown that the result
hinges on the finiteness of the integral $\int_{[1,\infty)^N} \left(
\sum_{i=1}^N x_i^{2/d} \right)^{1-m} \dd x$. Thanks to the equivalence
of the $2/d$-quasinorm to the $2$-norm in Euclidean space and since $[1,\infty)^N \subset \{ x \in \Real_{\geq 0}^N \colon \norm{x}_2 \geq 1
\}$, the finiteness of that integral is, in turn, implied by the finiteness of
the integral
\begin{equation*}
\int_{ \{ x \in \Real_{\geq 0}^N \colon \norm{x}_2 \geq 1 \} } \norm{x}_2^{\frac{2}{d}(1-m)} \dd x
= C_N \int_1^\infty r^{\frac{2}{d}(1-m)+N-1} \dd r,
\end{equation*}
where $C_N$ is the $(N-1)$-dimensional volume of the surface $\{ x \in
\Real_{\geq 0}^N \colon \norm{x}_2 = 1 \}$. As it is assumed that $m > 1 +
\frac{1}{2} N d$, the last of the above integrals is finite and the proof
is completed.

\end{proof}

The definition of $\HH^{\Tau\psp{m}}_{\maxw}(\D)$ given by
\eqref{HMixIndices} is less abstract than the definition of
$\mathcal{B}_1$ (given in \eqref{definition-of-B1}). However, we can describe
subspaces of the former space in even less abstract terms by showing that
certain regularity conditions translate into summability conditions expressed
in terms of Fourier coefficients, such as those that define
$\HH^{\Tau\psp{m}}_{\maxw}(\D)$ (cf.\ \eqref{HSigmaM}). In order to
understand the appropriate regularity requirements for this purpose, we need to
study the regularity properties of certain degenerate elliptic operators in
Maxwellian-weighted Sobolev spaces.

\subsection{Characterization via membership in mixed-order weighted Sobolev
spaces} We start by adopting two further hypotheses.

\begin{hypothesis}\label{hyp:potential2}
For $i \in \ii{N}$ the spring potential $U_i$ is
monotonic increasing and convex.
\end{hypothesis}

\begin{hypothesis}\label{hyp:power-like} For $i \in \ii{N}$ there exists a
distance $\gamma_i \in (0,\sqrt{b_i})$, an exponent $\alpha_i > 1$
and a function $h_i \in \CC^3\left([0,\gamma_i]\right)$ that is positive on
$[0,\gamma_i]$, such that
\[ M_i(\ponf) = h_i(\mathfrak{d}_i(\ponf))\, \mathfrak{d}_i(\ponf)^{\alpha_i} \]
for all $\ponf \in D_i$ such that $\mathfrak{d}_i(\ponf) \in (0,\gamma_i)$,
where $\mathfrak{d}_i$ is the distance-to-the-boundary function in $D_i$.
\end{hypothesis}

\begin{remark}\label[remark]{rem:hypotheses2}
\cref{hyp:potential2} can be regarded as a strengthening of
\cref{hyp:potential}. It is easy to check that springs obeying the FENE
model \eqref{FENE-model} or the CPAIL model \eqref{CPAIL-model} comply with it.

With \cref{hyp:power-like} we are restricting ourselves, essentially, to
power weights. The compliance of the FENE and the CPAIL models with it is also
easy to check if their parameter $b_i$ is greater than $2$ in the FENE case and
greater than $3$ in the CPAIL case.
\end{remark}

\begin{lemma}\label[lemma]{lem:partialDensity2}
For $i \in \ii{N}$,
\begin{enumerate}
\item[(a)] the space $\CIC(D_i)$ is dense in $\HOMi$;
\item[(b)] the space $\CC^\infty(\overline{D_i})$ is dense in $\HH^m_{M_i}(D_i)$, for $m \in \natural$.
\end{enumerate}
\end{lemma}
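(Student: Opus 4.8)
The plan is to establish the two parts by different devices. For part (a) I would cut $\varphi \in \HOMi$ off near $\partial D_i$ and then mollify in the interior; the only non-routine point --- and the real crux of the lemma --- is that the cut-off commutator term is negligible in $\LTMi$, which is exactly where the power-weight structure of \cref{hyp:power-like}, and specifically its \emph{strict} inequality $\alpha_i > 1$, is needed. For part (b) a cut-off is useless (indeed, for $m \ge 2$ the space $\CIC(D_i)$ is \emph{not} dense in $\HH^m_{M_i}(D_i)$, since the cut-off estimate below breaks down), so instead I would use an interior dilation $\ponf \mapsto u(\lambda\ponf)$, $\lambda \uparrow 1$, combined with the elementary fact that $\CC^\infty(\overline{D_i})$ is dense in the unweighted $\HH^m$ of a neighbourhood of $\overline{D_i}$.

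For part (a), fix $\varphi \in \HOMi$ and, for small $\varepsilon > 0$, a smooth radial cut-off $\eta_\varepsilon$ on $D_i$ with $\eta_\varepsilon \equiv 1$ on $\{\mathfrak{d}_i \ge c_2\varepsilon\}$, $\eta_\varepsilon \equiv 0$ on $\{\mathfrak{d}_i \le c_1\varepsilon\}$ and $\abs{\grad\eta_\varepsilon} \le C\varepsilon^{-1}$, where $\mathfrak{d}_i$ is the distance-to-$\partial D_i$ function and $0 < c_1 < c_2$ are fixed. The support of $\eta_\varepsilon\varphi$ is compactly contained in $D_i$, where $M_i$ is bounded above and below (a consequence of \cref{hyp:potential}); hence $\eta_\varepsilon\varphi$ lies in the unweighted $\HH^1$ of a neighbourhood of that support, and mollifying it produces members of $\CIC(D_i)$ that converge to $\eta_\varepsilon\varphi$ in $\HOMi$. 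It therefore suffices to prove $\eta_\varepsilon\varphi \to \varphi$ in $\HOMi$ as $\varepsilon \to 0$. Now $(1-\eta_\varepsilon)\varphi \to 0$ and $\eta_\varepsilon\grad\varphi \to \grad\varphi$ pointwise a.e., dominated by $\abs{\varphi}$ and $\abs{\grad\varphi}$ respectively, so dominated convergence handles every term of $\grad(\eta_\varepsilon\varphi) - \grad\varphi$ except the commutator $\varphi\,\grad\eta_\varepsilon$, which is supported in the shell $S_\varepsilon := \{c_1\varepsilon < \mathfrak{d}_i < c_2\varepsilon\}$. On $S_\varepsilon$ one has $M_i \le C\varepsilon^{\alpha_i}$ by \cref{hyp:power-like}, so $\norm{\varphi\,\grad\eta_\varepsilon}_{\LTMi}^2 \le C\varepsilon^{\alpha_i-2}\int_{S_\varepsilon}\abs{\varphi}^2$.

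The main obstacle is to show $\varepsilon^{\alpha_i-2}\int_{S_\varepsilon}\abs{\varphi}^2 \to 0$, and this is precisely where $\alpha_i > 1$ (rather than merely $\alpha_i \ge 1$) is used. Writing $\ponf = \rho\omega$ with $\rho = \abs{\ponf} \in (0,\sqrt{b_i})$, $\omega \in S^{d-1}$ and $t = \sqrt{b_i} - \rho = \mathfrak{d}_i(\ponf)$, put $F(t) := \int_{S^{d-1}}\abs{\varphi(t,\omega)}^2\dd\omega$ and $G(t) := \int_{S^{d-1}}\abs{\partial_t\varphi(t,\omega)}^2\dd\omega$; since the Jacobian and the factor $h_i$ of \cref{hyp:power-like} are bounded above and below near the boundary, membership $\varphi \in \HOMi$ gives $\int_0^{\delta_0}F(t)\,t^{\alpha_i}\dd t < \infty$ and $\int_0^{\delta_0}G(t)\,t^{\alpha_i}\dd t < \infty$ for some small $\delta_0 > 0$. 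From $\abs{(\sqrt{F})'} \le \sqrt{G}$ and the Cauchy--Schwarz inequality against the weight $t^{\alpha_i}$, using $\int_s^{\delta_0}r^{-\alpha_i}\dd r \le C s^{1-\alpha_i}$ --- finite exactly because $\alpha_i > 1$ --- one obtains $F(t) \le C(F(\delta_0) + g(t)\,t^{1-\alpha_i})$, where $g(t) := \int_t^{\delta_0}G(r)\,r^{\alpha_i}\dd r \to 0$ as $t \to 0$. Hence $\varepsilon^{\alpha_i-2}\int_{S_\varepsilon}\abs{\varphi}^2 \le C\varepsilon^{\alpha_i-2}\int_{c_1\varepsilon}^{c_2\varepsilon}F(t)\dd t \le C F(\delta_0)\varepsilon^{\alpha_i-1} + C g(c_1\varepsilon) \to 0$, completing part (a). I expect the bookkeeping in this boundary-layer estimate --- the passage to the $(t,\omega)$ variables, keeping track of the Jacobian and of the factor $h_i$, and comparing $\abs{\grad\varphi}^2$ with $\abs{\partial_t\varphi}^2$ --- to be the most laborious step.

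For part (b), fix $u \in \HH^m_{M_i}(D_i)$ and, for $\lambda \in (0,1)$, set $u_\lambda(\ponf) := u(\lambda\ponf)$, which is defined on the ball $B(0,\sqrt{b_i}/\lambda) \supsetneq \overline{D_i}$. On each compact subset of $D_i$ the weight $M_i$ is bounded above and below (\cref{hyp:potential}), so $u$ belongs to the unweighted $\HH^m$ on each such set, whence $u_\lambda \in \HH^m(V)$ for some open $V \supset \overline{D_i}$; mollifying $u_\lambda$ therefore yields functions in $\CC^\infty(\overline{D_i})$ that converge to $u_\lambda$ in $\HH^m(D_i)$, and hence in $\HH^m_{M_i}(D_i)$ since $M_i$ is bounded on $\overline{D_i}$. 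It remains to verify $u_\lambda \to u$ in $\HH^m_{M_i}(D_i)$ as $\lambda \to 1^-$. Monotonicity of $U_i$ (\cref{hyp:potential2}) gives $M_i(\lambda^{-1}\ponf') \le M_i(\ponf')$ for $\ponf' \in \lambda D_i$, so a change of variables yields the uniform bound $\norm{u_\lambda}_{\HH^m_{M_i}(D_i)} \le \lambda^{-d/2}\norm{u}_{\HH^m_{M_i}(D_i)}$; together with $\partial_\alpha u_\lambda \to \partial_\alpha u$ in $\LL^1_{\loc}(D_i)$ for $\abs{\alpha} \le m$, this gives $u_\lambda \rightharpoonup u$ weakly in $\HH^m_{M_i}(D_i)$, and then Fatou's lemma gives $\norm{u_\lambda}_{\HH^m_{M_i}(D_i)} \to \norm{u}_{\HH^m_{M_i}(D_i)}$; weak convergence together with convergence of norms in the Hilbert space $\HH^m_{M_i}(D_i)$ forces strong convergence, and the proof is complete.
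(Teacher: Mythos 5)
Your proof is correct, but it takes a genuinely different route from the paper's. The paper disposes of both parts by citing Kufner's monograph \cite{Kufner:1985} (Proposition~9.10 for part~(a), Theorem~7.2 for part~(b)), which establishes the corresponding density statements for pure power weights $\mathfrak{d}_i^{\alpha}$, and then invokes \cref{hyp:power-like} to transfer them to $M_i$ via the bilateral boundedness of $h_i$. You give self-contained proofs instead: a cut-off followed by a radial Hardy-type estimate for~(a) --- which is close to the mechanism inside Kufner's Proposition~9.10 and correctly isolates $\alpha_i>1$ as the crucial exponent --- and an interior dilation $u(\lambda\,\tcdot)$, $\lambda\uparrow 1$, for~(b), which is a leaner classical device: notably, it avoids \cref{hyp:power-like} altogether, using only the monotonicity of $U_i$ from \cref{hyp:potential2} to get $M_i(\lambda^{-1}\tcdot)\le M_i$ on $\lambda D_i$, and the continuity of $M_i$ up to $\overline{D_i}$ (a consequence of \cref{hyp:potential}) to pass from unweighted to weighted convergence. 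The paper's citation buys brevity; your argument buys transparency about which structural features of the Maxwellian drive each half. Two small points worth tidying: in~(a) the inequality $\abs{(\sqrt F)'}\le\sqrt G$ presupposes absolute continuity of $t\mapsto F(t)$ on compact subsets of $(0,\sqrt{b_i})$, which holds by a short Fubini argument but deserves a sentence; and in~(b) the step you attribute to Fatou is really weak lower semicontinuity of the norm squeezed against the upper bound $\lambda^{-d/2}\norm{u}_{\HH^m_{M_i}(D_i)}$, after which the Radon--Riesz conclusion is fine. Your parenthetical claim that $\CIC(D_i)$ is \emph{not} dense in $\HH^m_{M_i}(D_i)$ for $m\ge 2$ is plausible for the relevant $\alpha_i$, but the observation that one particular cut-off estimate breaks down does not by itself establish that negative statement.
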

\begin{proof}
In Proposition 9.10 (resp.\ Theorem 7.2) of \cite{Kufner:1985} the result (a)
(resp.\ (b)) is stated for weights that are powers greater than $1$ (resp.\
greater or equal than $0$) of the distance-to-the-boundary function; the
bilateral boundedness of the function $h_i$ by positive constants, implied by
\cref{hyp:power-like}, extends the statement to our case.
\end{proof}

The additional requirements on the potentials $U_i$, $i \in \ii{N}$, and the preceding lemma
allow us to prove a first elliptic regularity result.

\begin{lemma}\label[lemma]{lem:ellipReg} Let $i \in \ii{N}$ and suppose that $g \in \LTMi$;
 then, there exists a constant $C_i > 0$, independent of $g$, such that the solution $z \in \HOMi$ of
\begin{equation}\label{partial-elliptic-problem}
\langle z, \varphi \rangle_{\HOMi} = \langle g, \varphi \rangle_{\LTMi}
\qquad \forall\,\varphi \in \HOMi
\end{equation}
obeys the regularity estimate
\begin{equation*}
\norm{z}_{\HH^2_{M_i}(D_i)} + \norm{\frac{1}{M_i} \div(M_i\grad z)}_{\LTMi}
\leq C_i \norm{g}_{\LTMi}.
\end{equation*}
\end{lemma}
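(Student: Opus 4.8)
The plan is to localise and to bound the three quantities on the left-hand side of the asserted estimate separately. The Lax--Milgram lemma applied to \eqref{partial-elliptic-problem} already gives $\norm{z}_{\HOMi} \leq \norm{g}_{\LTMi}$; and taking $\varphi \in \CIC(D_i)$ in \eqref{partial-elliptic-problem} and integrating by parts shows that $\div(M_i\grad z) = (z-g)\,M_i$ in $\mathcal{D}'(D_i)$, whence $\frac{1}{M_i}\div(M_i\grad z) = z - g$ almost everywhere and $\norm{\frac{1}{M_i}\div(M_i\grad z)}_{\LTMi} \leq \norm{z}_{\LTMi} + \norm{g}_{\LTMi} \leq 2\norm{g}_{\LTMi}$. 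It therefore remains only to bound $\sum_{\abs{\alpha}=2}\norm{\partial_\alpha z}_{\LTMi}$ by a constant multiple of $\norm{g}_{\LTMi}$, which I would do by treating separately a relatively compact inner region $\Omega_i^{\mathrm{in}} := \{\ponf \in D_i \colon \mathfrak{d}_i(\ponf) > \gamma_i/2\}$ and a collar $\Omega_i^{\mathrm{co}} := \{\ponf \in D_i \colon \mathfrak{d}_i(\ponf) < \gamma_i\}$ of $\partial D_i$ on which \cref{hyp:power-like} is in force; these two sets cover $D_i$.

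On $\{\ponf \in D_i \colon \mathfrak{d}_i(\ponf) > \gamma_i/4\}$, which is compactly contained in $D_i$, the weight $M_i = Z_i^{-1}\exp(-U_i(\onehalf\abs{\tcdot}^2))$ is, by \cref{hyp:potential}, of class $\CC^1$ and hence bounded between positive constants and Lipschitz; thus $z$ solves there the uniformly elliptic divergence-form equation $\div(M_i\grad z) = (z-g)M_i$ with right-hand side in $\LL^2$, and classical interior elliptic regularity for divergence-form operators with Lipschitz coefficients yields a bound on $\norm{z}_{\HH^2(\Omega_i^{\mathrm{in}})}$ by a constant times $\norm{z}_{\LL^2} + \norm{g}_{\LL^2}$ over the enlarged region; on account of the positive lower and (global) upper bounds for $M_i$ off $\partial D_i$ and of the energy estimate, this is at most $C\norm{g}_{\LTMi}$, which already controls the pure second derivatives on $\Omega_i^{\mathrm{in}}$.

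The collar is the crux. Since $\partial D_i$ is a $\CC^\infty$ sphere and $\mathfrak{d}_i = \sqrt{b_i} - \abs{\tcdot}$ is smooth near it, I would cover $\partial D_i$ by finitely many coordinate patches and straighten the boundary in each by a smooth diffeomorphism $y = \Psi(\ponf)$ whose last component equals $\mathfrak{d}_i(\ponf)$, so that $z$ satisfies, in a half-ball, a weak equation $\frac{1}{\tilde w}\div[y](\tilde w\,A\,\grad[y] z) = \tilde g$, where, by \cref{hyp:power-like}, $\tilde w(y) = \tilde h(y)\,y_d^{\alpha_i}$ with $\tilde h$ of class $\CC^3$ and positive (absorbing $h_i\circ\mathfrak{d}_i$ and the Jacobian), $A$ is smooth, symmetric and uniformly positive definite, and $\norm{\tilde g}_{\LL^2_{\tilde w}} \leq C\norm{g}_{\LTMi}$. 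Testing the equation, in the usual Nirenberg manner, against a cut-off times a second difference quotient in the tangential directions $y'$ --- which leave the degeneracy of $\tilde w$ in the normal variable $y_d$ essentially untouched --- produces a uniform $\LL^2_{\tilde w}$-bound, on a smaller half-ball, for every second derivative of $z$ except the pure normal one; solving the equation for $A_{dd}\,\partial_{y_d}^2 z$ then controls that last derivative, provided the singular first-order term $\frac{\partial_{y_d}\tilde w}{\tilde w}\,\partial_{y_d} z = \big(\alpha_i\,y_d^{-1} + O(1)\big)\partial_{y_d} z$ can be absorbed. This is exactly where $\alpha_i > 1$ enters: the slicewise one-dimensional weighted Hardy inequality
\[
\int_0^{\gamma_i} t^{\alpha_i - 2}\,\abs{v(t)}^2 \dd t \;\leq\; C_{\alpha_i}\bigg( \int_0^{\gamma_i} t^{\alpha_i}\,\abs{v'(t)}^2 \dd t + \abs{v(\gamma_i)}^2 \bigg),
\]
valid precisely for $\alpha_i > 1$ (one integration by parts, the factor $\alpha_i - 1 > 0$ killing the boundary term at $t = 0$), applied to $v = \partial_{y_d} z$, trades the factor $y_d^{-1}$ against one of the two missing powers of $y_d$ in the weight; the resulting reabsorption into $\norm{\partial_{y_d}^2 z}_{\LL^2_{\tilde w}}$ is made consistent by an integration by parts of Bochner type in which the monotonicity and convexity of $U_i$ (\cref{hyp:potential2}, which makes $\grad^2\big(U_i(\onehalf\abs{\tcdot}^2)\big) = U_i'\,I + U_i''\,\ponf\otimes\ponf$ positive semidefinite) contribute a term of the right sign and simultaneously show that no boundary contribution survives on the degenerate face $\{y_d = 0\}$.

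Summing the collar estimates over the finitely many patches and combining with the interior estimate through a partition of unity subordinate to $\{\Omega_i^{\mathrm{in}}, \Omega_i^{\mathrm{co}}\}$ then gives the required bound on $\sum_{\abs{\alpha}=2}\norm{\partial_\alpha z}_{\LTMi}$, and hence the lemma. The main obstacle is the collar analysis: the competition between the degenerate weight $y_d^{\alpha_i}$ and the second normal derivative is genuinely borderline, and the delicate point is to carry out the Hardy-type reabsorption rigorously while verifying that the boundary terms at the degenerate face really do vanish --- it is here, and essentially only here, that \cref{hyp:potential2} and \cref{hyp:power-like}, and in particular the restriction $\alpha_i > 1$, are used.
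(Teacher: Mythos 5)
Your opening reductions are correct: the Lax--Milgram stability bound, the identity $\frac{1}{M_i}\div(M_i\grad z)=z-g$ obtained by testing against $\CIC(D_i)$, and the interior $\HH^2$ estimate on your set $\Omega_i^{\mathrm{in}}$ via classical elliptic regularity. The collar analysis, however, takes a genuinely different route from the paper's and contains a gap precisely at the point you flag as delicate. The paper does not prove the $\HH^2$ estimate from scratch: it invokes Theorem~3.4 of \cite{dPL1} for the Ornstein--Uhlenbeck equation $\onehalf\tilde z-\onehalf\lapl\tilde z+\grad V_i\cdot\grad\tilde z=\onehalf g$ with $V_i:=\onehalf U_i(\onehalf\abs{\tcdot}^2)$ convex by \cref{hyp:potential2}. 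That result rests on a \emph{global} Bochner/Bakry--\'Emery integration by parts, in which the Hessian of $V_i$ supplies a sign-definite term and the vanishing of boundary contributions on the degenerate boundary is handled once and for all, with no localization, flattening or Hardy inequalities; the paper then identifies $\tilde z$ with the weak solution $z$ using the density of $\CIC(D_i)$ in $\HOMi$ given in part~(a) of \cref{lem:partialDensity2}.

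Your collar argument replays, for the $\LL^2\to\HH^2$ step, the localize--flatten--tangential-difference-quotient strategy that the paper uses only for the higher-order shift in \cref{lem:H2-to-H4}, where it works because the $\HH^2$ regularity supplied by \emph{this very lemma} is already in hand; here it is not, and the Hardy reabsorption you describe is circular. Applied to $v=\partial_{y_d}z$, the Hardy inequality bounds $\norm{y_d^{-1}\partial_{y_d}z}_{\LL^2_{\tilde w}}$ in terms of $\norm{\partial_{y_d}^2z}_{\LL^2_{\tilde w}}$ --- the norm to be estimated --- and the resulting self-absorption coefficient is $\alpha_i\sqrt{C_{\alpha_i}}/A_{dd}$; with the sharp Hardy constant $C_{\alpha_i}=4/(\alpha_i-1)^2$ this equals $2\alpha_i/\big((\alpha_i-1)A_{dd}\big)\geq 2/A_{dd}$, which cannot in general be taken less than one. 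The non-circular variant of this step (integrate the equation once in $y_d$ and apply Hardy to the resulting integral representation, as in Part~3 of the proof of \cref{lem:H2-to-H4}) requires the vanishing of the integration constant at $y_d=0$, which is justified there by the trace estimate \eqref{trace-like} --- and that estimate needs $\partial_{y_d}z\in\HH^1$, i.e., the very $\HH^2$ regularity under proof. The ``Bochner-type integration by parts'' you invoke as the fix is exactly the mechanism of the Da~Prato--Lunardi reference; but you do not show how to localize it, and once the global identity is in hand, the flattening and Hardy machinery becomes superfluous.
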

\begin{proof}
By \cref{hyp:potential} and \cref{hyp:potential2} the
function $V_i \colon D_i \rightarrow \Real$ defined by $V_i :=
\frac{1}{2} U_i(\frac{1}{2}\abs{\tcdot}^2)$ is convex and tends to
$+\infty$ as its argument approaches the boundary of $D_i$ from within. Then, it
follows from Theorem~3.4 of \cite{dPL1} and the density of $\CIC(D_i)$ in
$\HH^1_{M_i}(D_i)$ given in part (a) of \cref{lem:partialDensity2} that
there exists a unique solution $\tilde z$ in $\{ u \in \HH^2_{M_i}(D_i) \colon
\grad V_i \cdot \grad u \in \LTMi \}$ to
\begin{equation}\label{partial-elliptic-problem-2}
\onehalf \tilde z - \onehalf \lapl \tilde z + \grad V_i \cdot \grad \tilde z = \onehalf g,
\end{equation}
considered as an equation in $\LTMi$, and it obeys the estimates
\begin{subequations}\label{estimate}
\begin{align}
\norm{\tilde z}_{\LTMi} \leq 2 \norm{\onehalf g}_{\LTMi} & = \norm{g}_{\LTMi},\\
\norm{\grad \tilde z}_{[\LTMi]^d} \leq 2\sqrt{2} \norm{\onehalf g}_{\LTMi} & = \sqrt{2}\norm{g}_{\LTMi},\\
\norm{\grad\grad \tilde z}_{[\LTMi]^{d \times d}} \leq 4 \norm{\onehalf g}_{\LTMi} & = 2 \norm{g}_{\LTMi}.
\end{align}
\end{subequations}
The regularity of $M_i$ and $\tilde z$ admits the use of the
Leibniz formula for the product of a regular distribution and a continuously
differentiable function provided in \cref{lem:LeibnizFormula} in \cref{sec:distrResults}.
We can then write $M_i \lapl \tilde z - 2 M_i \grad V_i \cdot \grad
\tilde z = \div(M_i\grad \tilde z)$ (for this we have used that $M_i$ is
proportional to $\exp(-2 V_i)$ (cf.\ \eqref{partial-maxw}). Plugging this into
\eqref{partial-elliptic-problem-2} gives
\begin{equation}\label{uglyEstimate}
\norm{\frac{1}{M_i}\div(M_i\grad\tilde z)}_{\LTMi} \leq \norm{g}_{\LTMi} + \norm{\tilde z}_{\LTMi} \leq 2 \norm{g}_{\LTMi}.
\end{equation}

Multiplying \eqref{partial-elliptic-problem-2} by $2 M_i$ and using the Leibniz
formula for the product of a regular distribution and a continuously
differentiable function again, we find that
\[ \int_{D_i} \tilde z \varphi M_i + \int_{D_i} \grad \tilde z \cdot \grad \varphi M_i = \int_{D_i} g \varphi \]
for all $\varphi \in \CIC(D_i)$. It follows from the density of $\CIC(D_i)$ in
$\HH^1_{M_i}(D_i)$ and the uniqueness of the solution $z$ of
\eqref{partial-elliptic-problem} that $z = \tilde z$ and hence \eqref{estimate}
and \eqref{uglyEstimate} give the desired result.
\end{proof}

In order to obtain an iterated elliptic regularity result, we need the
technical lemma that follows.

\begin{lemma}[Hardy inequalities]\label{lem:Hardy} Let $H > 0$. Then, there
exists $C_H > 0$ such that
\begin{equation}\label{standardHardy}
\int_0^H \frac{1}{y^2} \left( \int_0^y f(s) \dd s \right)^2 \dd y \leq C_H \int_0^H f(s)^2 \dd s\qquad\forall\,f\in \LL^1((0,H)).
\end{equation}
If $\alpha > 1$, then there exists $C_{H,\alpha}$ such that
\begin{equation}\label{anotherHardy}
\int_0^H y^{\alpha-2} f(y)^2 \dd y \leq C_{H,\alpha} \int_0^H y^\alpha \left[ f(y)^2 + f'(y)^2 \right] \dd y \qquad\forall\,f\in\HH^1_{(\tcdot)^\alpha}((0,H)).
\end{equation}
\end{lemma}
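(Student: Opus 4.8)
The plan is to establish the two estimates separately by self-contained one-dimensional arguments; the only inputs I would use from outside the statement are the elementary Sobolev embedding $\HH^1((a,b)) \hookrightarrow \CC([a,b])$ in one variable (which makes pointwise values meaningful and, crucially, guarantees that all the truncated integrals below are finite) and a one-dimensional trace estimate. Both inequalities are proved by integrating by parts on a subinterval $(\epsilon,H)$ bounded away from the singular endpoint $y=0$, estimating the boundary contributions, and then letting $\epsilon\downarrow 0$ by monotone convergence.

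For \eqref{standardHardy} I would first note that we may assume $f\in\LL^2((0,H))$, since otherwise the right-hand side is infinite. Writing $F(y):=\int_0^y f(s)\dd s$ (an absolutely continuous, bounded function on $[0,H]$ with $F'=f$ a.e.), integration by parts on $(\epsilon,H)$ gives
\[ \int_\epsilon^H \frac{F(y)^2}{y^2}\dd y = \frac{F(\epsilon)^2}{\epsilon} - \frac{F(H)^2}{H} + 2\int_\epsilon^H \frac{F(y)f(y)}{y}\dd y. \]
The term $-F(H)^2/H$ is non-positive; by Cauchy--Schwarz $F(\epsilon)^2/\epsilon \le \int_0^\epsilon f^2 \le \int_0^H f^2$ and $2\int_\epsilon^H y^{-1}\lvert F\rvert\,\lvert f\rvert \le 2\big(\int_\epsilon^H y^{-2}F^2\big)^{1/2}\big(\int_0^H f^2\big)^{1/2}$. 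Since $F$ is bounded on $[\epsilon,H]$ the quantity $X_\epsilon:=\big(\int_\epsilon^H y^{-2}F^2\big)^{1/2}$ is finite, so the displayed estimate reads $X_\epsilon^2 \le Y^2 + 2X_\epsilon Y$ with $Y:=\big(\int_0^H f^2\big)^{1/2}$, whence $X_\epsilon \le (1+\sqrt2)Y$ uniformly in $\epsilon$; letting $\epsilon\downarrow 0$ and using monotone convergence yields \eqref{standardHardy} with $C_H = (1+\sqrt2)^2$, in fact independent of $H$. (Equivalently, \eqref{standardHardy} follows at once from the classical Hardy inequality on $(0,\infty)$ after extending $f$ by zero.)

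For \eqref{anotherHardy}, with $\alpha>1$, the decisive structural point is that $\int_0^H y^{\alpha-2}\dd y<\infty$. Fix $f\in\HH^1_{(\tcdot)^\alpha}((0,H))$. On any interval $(\epsilon,H)$ the weight $y^\alpha$ is bounded above and below by positive constants, so $f\in\HH^1((\epsilon,H))\hookrightarrow\CC([\epsilon,H])$ and $\int_\epsilon^H y^{\alpha-2}f^2$ is finite. Integrating by parts (using $\frac{\mathrm d}{\mathrm dy}\frac{y^{\alpha-1}}{\alpha-1}=y^{\alpha-2}$),
\[ \int_\epsilon^H y^{\alpha-2} f(y)^2 \dd y = \frac{H^{\alpha-1}}{\alpha-1} f(H)^2 - \frac{\epsilon^{\alpha-1}}{\alpha-1} f(\epsilon)^2 - \frac{2}{\alpha-1}\int_\epsilon^H y^{\alpha-1} f(y) f'(y)\dd y. \]
The middle term is non-positive, and Young's inequality applied to $y^{\alpha-1}\lvert f\rvert\,\lvert f'\rvert = \big(y^{(\alpha-2)/2}\lvert f\rvert\big)\big(y^{\alpha/2}\lvert f'\rvert\big)$ with parameter $\delta=(\alpha-1)/2$ allows one to absorb $\tfrac12\int_\epsilon^H y^{\alpha-2}f^2$ into the left-hand side (legitimately, since that integral is finite), leaving
\[ \tfrac12\int_\epsilon^H y^{\alpha-2} f^2 \le \frac{H^{\alpha-1}}{\alpha-1} f(H)^2 + \frac{2}{(\alpha-1)^2}\int_0^H y^\alpha f'^2. \]
Letting $\epsilon\downarrow 0$ (monotone convergence) and bounding $f(H)^2$ by a one-dimensional trace estimate on $(H/2,H)$ — where $y^\alpha$ is comparable to a constant, so $f(H)^2 \le C\,\lVert f\rVert_{\HH^1((H/2,H))}^2 \le C_{H,\alpha}\int_0^H y^\alpha(f^2+f'^2)$ — yields \eqref{anotherHardy} with a constant $C_{H,\alpha}$ depending only on $H$ and $\alpha$.

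I expect the only real care required to be with the endpoint contributions: justifying the integration by parts on $(\epsilon,H)$ and the passage to the limit $\epsilon\downarrow 0$, both of which rest on the finiteness of the truncated integrals (a consequence of the $\HH^1\hookrightarrow\CC$ embedding on intervals bounded away from $0$), together with the absorption step in \eqref{anotherHardy}, which is valid precisely because the term being absorbed is finite at the truncated stage. The hypothesis $\alpha>1$ enters exactly through the convergence of $\int_0^H y^{\alpha-2}\dd y$ and through the favourable sign of the boundary term $-\epsilon^{\alpha-1}f(\epsilon)^2/(\alpha-1)$.
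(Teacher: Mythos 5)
Your proof is correct, but it takes a genuinely different route from the paper's for \eqref{anotherHardy}, so a comparison is worth recording. The paper proves \eqref{anotherHardy} by invoking, as a black box, the one-sided weighted Hardy inequality $\int_0^H y^{\alpha-2}f^2 \leq C\int_0^H y^\alpha f'^2$ for $\CC^1([0,H])$ functions with $f(H)=0$ (cited from Opic--Kufner), then reducing the general case to it via a smooth partition of unity subordinate to $(0,2H/3)\cup(H/3,H)$ --- the cutoff near $0$ feeds the cited inequality, the cutoff near $H$ lives where $y^\alpha$ is comparable to a constant --- and finally closing with a density argument: the inequality proved for $\CC^1([0,H])$ is propagated to all of $\HH^1_{(\tcdot)^\alpha}((0,H))$ using completeness of $\LL^2_{(\tcdot)^{\alpha-2}}$ and continuity of the embedding $\LL^2_{(\tcdot)^{\alpha-2}}\hookrightarrow\LL^2_{(\tcdot)^\alpha}$. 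You instead work directly with a general element $f$ of $\HH^1_{(\tcdot)^\alpha}((0,H))$: truncate to $(\epsilon,H)$ where standard $\HH^1\hookrightarrow\CC$ regularity applies, integrate by parts, discard the favourable boundary term at $\epsilon$, absorb $\tfrac12\int_\epsilon^H y^{\alpha-2}f^2$ by Young (legitimate because the truncated integral is finite), control $f(H)^2$ by an interior trace estimate on $(H/2,H)$, and pass to the limit $\epsilon\downarrow 0$ by monotone convergence. Your route buys self-containedness: it avoids both the cited base inequality and the density of $\CC^1([0,H])$ in $\HH^1_{(\tcdot)^\alpha}((0,H))$, which the paper asserts without proof at this point. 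What the paper's route buys is brevity given the cited ingredient, and a structure (localize near $0$, localize near $H$, then densify) that generalizes more routinely to higher dimensions. Your first-inequality argument (truncate, integrate by parts, bootstrap $X_\epsilon^2\leq Y^2+2X_\epsilon Y$) is likewise a self-contained alternative to the paper's citation of the classical Hardy inequality; the constant it produces is $(1+\sqrt 2)^2$ rather than the sharp $4$, but that is immaterial here. All the delicate points you flagged --- justifying the integration by parts on $(\epsilon,H)$ via $\HH^1((\epsilon,H))\hookrightarrow\CC([\epsilon,H])$, the finiteness needed for absorption, and the role of $\alpha>1$ in making $\int_0^H y^{\alpha-2}\dd y$ finite and the $\epsilon$-boundary term non-positive --- are handled correctly.
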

\begin{proof}
The inequality \eqref{standardHardy} follows from the standard Hardy inequality (the $H = \infty$ case);
see, for example, \cite[Proposition VIII.18.1]{DiBenedetto}. Alternatively, see \cite[Theorem 1.14]{OK} for a
very general form, which encompasses \eqref{standardHardy}.

To prove \eqref{anotherHardy} we will use a procedure inspired by the proof of
Theorem 8.2 of \cite{Kufner:1985}. The first ingredient is the inequality
\[ \int_0^H y^{\alpha-2} f(y)^2 \dd y \leq \const \int_0^H y^\alpha f'(y)^2 \dd y \]
valid for all $f$ in $\CC^1([0,H])$ such that $f(H) = 0$ (see, e.g.,
\cite[Example 6.8.ii]{OK}). Let now $\varphi_0$ and $\varphi_1$ form a smooth
partition of unity subordinate to the covering $H = (0,2H/3) \cup (H/3, H)$.
Then, given any $f \in \CC^1([0,H])$, let $f_0 := \varphi_0 f$ and $f_1 :=
\varphi_1 f$. Using the above inequality, the validity of \eqref{anotherHardy} for
$\CC^1([0,H])$ functions follows from
\begin{equation*}
\begin{split}
\norm{ f }_{\LL^2_{(\tcdot)^{\alpha-2}}((0,H))}
& \leq \norm{ f_0 }_{\LL^2_{(\tcdot)^{\alpha-2}}((0,2H/3))} + \norm{ f_1 }_{\LL^2_{(\tcdot)^{\alpha-2}}((H/3,H))}\\
& \leq \const*^{1/2} \norm{ f_0' }_{\LL^2_{(\tcdot)^\alpha}((0,2H/3))} + \norm{ (\tcdot)^{-1} f_1 }_{\LL^2_{(\tcdot)^\alpha}((H/3,H))}\\
& \leq \const*^{1/2} \norm{ \varphi_0 f' + \varphi_0' f}_{\LL^2_{(\tcdot)^\alpha}((0,2H/3))} + 3/H \norm{ f_1 }_{\LL^2_{(\tcdot)^\alpha}((H/3,H))}\\
& \leq \const \norm{ f' }_{\LL^2_{(\tcdot)^\alpha}((0,2H/3))} + \const \norm{ f }_{\LL^2_{(\tcdot)^\alpha}((0,2H/3))} + \const \norm{ f }_{\LL^2_{(\tcdot)^\alpha}((H/3,H))}\\
& \leq \const \left( \norm{f}_{\LL^2_{(\tcdot)^\alpha}((0,H))}^2 + \norm{f'}_{\LL^2_{(\tcdot)^\alpha}((0,H))}^2 \right)^{1/2}.
\end{split}
\end{equation*}
The validity of the inequality for all $f \in \HH^1_{(\tcdot)^\alpha}((0,H))$ is
then a consequence of the density of $\CC^1([0,H])$ functions in
$\HH^1_{(\tcdot)^\alpha}((0,H))$, the completeness of
$\LL^2_{(\tcdot)^{\alpha-2}}((0,H))$ and the continuity of the injection of that
latter space into $\LL^2_{(\tcdot)^\alpha}((0,H))$.
\end{proof}

We shall now iterate \cref{lem:ellipReg}: extra regularity for $g$ implies extra
regularity for $z$.

\begin{lemma}\label[lemma]{lem:H2-to-H4}
Let $i \in \ii{N}$ and $g \in \HH^2_{M_i}(D_i)$. Then, there exists a constant $C_i>0$,
independent of $g$, such that the solution $z \in \HOMi$ of
\begin{equation}\label{partial-elliptic-problem-3}
\langle z, \varphi \rangle_{\HOMi} = \langle g, \varphi \rangle_{\LTMi} \qquad \forall\,\varphi\in\HOMi
\end{equation}
obeys the regularity estimate
\[ \norm{z}_{\HH^4_{M_i}(D_i)} \leq C_i \norm{g}_{\HH^2_{M_i}(D_i)}. \]
\end{lemma}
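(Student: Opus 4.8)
The plan is to combine \cref{lem:ellipReg} with a shift theorem for the degenerate elliptic operator $\mathcal{L}_i v := \tfrac{1}{M_i}\div(M_i\grad v)$: namely, that $\mathcal{L}_i$ gains two orders of Maxwellian-weighted Sobolev regularity away from its degeneracy at $\partial D_i$. Proving that shift theorem is the substantial part, and the Hardy inequalities of \cref{lem:Hardy} (with their power-weight generalisations) are the essential tool.

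First I would reduce to a statement about $\mathcal{L}_i$ alone. By \cref{lem:ellipReg}, applied with the present datum $g \in \HH^2_{M_i}(D_i) \subset \LTMi$, the solution $z$ of \eqref{partial-elliptic-problem-3} lies in $\HH^2_{M_i}(D_i)$ and, by the weak formulation, satisfies $\mathcal{L}_i z = z - g$ as an identity in $\LTMi$; since $z$ (just obtained) and $g$ (by hypothesis) both lie in $\HH^2_{M_i}(D_i)$, this forces $\mathcal{L}_i z \in \HH^2_{M_i}(D_i)$, with $\norm{z}_{\HH^2_{M_i}(D_i)} + \norm{\mathcal{L}_i z}_{\HH^2_{M_i}(D_i)} \leq C_i \norm{g}_{\HH^2_{M_i}(D_i)}$. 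It therefore suffices to prove the implication
\[ v \in \HH^2_{M_i}(D_i) \ \text{ and }\ \mathcal{L}_i v \in \HH^2_{M_i}(D_i) \ \Longrightarrow\ v \in \HH^4_{M_i}(D_i), \]
together with $\norm{v}_{\HH^4_{M_i}(D_i)} \leq C_i\big(\norm{v}_{\HH^2_{M_i}(D_i)} + \norm{\mathcal{L}_i v}_{\HH^2_{M_i}(D_i)}\big)$, and then to apply it with $v = z$.

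To establish this implication I would localise by a smooth partition of unity $1 = \eta_0 + \eta_1$ on $\overline{D_i}$ with $\supp \eta_1 \subset \{ \mathfrak{d}_i < \gamma_i \}$ and $\supp \eta_0$ at positive distance from $\partial D_i$, where $\gamma_i$, $\alpha_i$, $h_i$, $\mathfrak{d}_i$ are as in \cref{hyp:power-like}. On $\supp \eta_0$ the problem is non-degenerate — $M_i$ is bounded between positive constants and regular (\cref{hyp:potential}), and $\HH^m_{M_i} \simeq \HH^m$, $\LTMi \simeq \LL^2$ locally — so standard interior elliptic regularity gives $\eta_0 v \in \HH^4$ with the required bound. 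On $\supp \eta_1$ I would straighten $\partial D_i$ — a sphere, hence $\CC^\infty$ — by a local diffeomorphism and use \cref{hyp:power-like} to write $\mathcal{L}_i v = \mathfrak{d}^{-\alpha_i}\partial_{\mathfrak{d}}\big(\mathfrak{d}^{\alpha_i}\partial_{\mathfrak{d}} v\big) + (\text{tangentially uniformly elliptic second-order terms}) + (\text{lower-order terms})$ in the normal coordinate $\mathfrak{d}$; then (i) differentiating in the tangential directions, which commute with the degenerate weight up to lower-order terms with bounded coefficients, transfers the tangential $\HH^2_{M_i}$-regularity of $\mathcal{L}_i v$ to $v$; (ii) the remaining normal derivatives of $v$, up to fourth order, are obtained algebraically from the transplanted equation; and (iii) each term thereby produced in which a $\partial_{\mathfrak{d}}$ falls on $\mathfrak{d}^{\alpha_i}$ or on $h_i$ is re-expressed in terms of already-controlled quantities by chaining the Hardy inequalities \eqref{standardHardy} and \eqref{anotherHardy} of \cref{lem:Hardy} and their power-weight analogues — available because $\alpha_i > 1$. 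Summing over $\eta_0, \eta_1$ and tracking constants yields the estimate; the distributional differentiations and the weak form of the localised equations are justified by \cref{lem:partialDensity2} and the Leibniz formula \cref{lem:LeibnizFormula}.

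The main obstacle is this boundary-layer estimate, and within it step (iii): keeping the degenerate normal operator in a shape to which the Hardy inequalities apply throughout the flattening, and doing the bookkeeping of mixed tangential--normal derivatives so that every $\mathfrak{d}^{-1}$ or $\mathfrak{d}^{-2}$ generated by differentiating the weight is absorbed. This requires the Hardy inequalities to be applied in sequence — hence the power-weight hypothesis \cref{hyp:power-like} — rather than singly, and it is exactly the feature that has no counterpart in the non-degenerate Poisson problem treated in \cite{LLM}.
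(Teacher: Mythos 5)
Your overall plan --- reduce via \cref{lem:ellipReg} to a shift theorem for $\mathcal{L}_i v := M_i^{-1}\div(M_i\grad v)$, localise with a partition of unity, handle the interior by standard elliptic regularity, flatten the spherical boundary, separate tangential from normal derivatives, and use the Hardy inequalities of \cref{lem:Hardy} to tame the degeneracy in the normal direction --- coincides with the architecture of the paper's proof (its Parts 1--4), and your tangential step is right: differentiate the localised equation tangentially and re-apply \cref{lem:ellipReg} to the resulting problem.

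The gap is in step (iii). After flattening, with $\tilde M$ the pulled-back weight and $p_d$ the flattened normal coordinate (so $\tilde M$ behaves like $p_d^{\alpha_i}$ near the boundary, by \cref{hyp:power-like}), the normal part of the equation reads $-\partial_{dd}\tilde z = f + w$, where $w := \frac{\partial_d(\tilde M a)}{\tilde M a}\,\partial_d\tilde z$ has a coefficient blowing up like $\alpha_i/p_d$; one must show $w \in \HH^2_{\tilde M}$. Your phrase ``chaining the Hardy inequalities \eqref{standardHardy} and \eqref{anotherHardy} and their power-weight analogues'' does not close this as stated: $\partial_d w$ produces a term of order $p_d^{-2}\partial_d\tilde z$, and applying \eqref{anotherHardy} with the shifted power $\alpha_i - 2$ requires $\alpha_i > 3$, whereas \cref{hyp:power-like} only grants $\alpha_i > 1$ (e.g., FENE with $b_i \in (2,6]$, or CPAIL with $b_i \in (3,9]$, gives $\alpha_i \leq 3$), and substituting $\partial_{dd}\tilde z = -w-f$ into the offending term simply loops back to $w$. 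What is missing --- and is the content of Part 3 of the paper's proof --- is a structural device: first show that $\lim_{p_d\to 0_+}(\partial_d\tilde z\, a\tilde M) = 0$ via a weighted trace inequality, itself derived from \eqref{anotherHardy}; this legitimises integrating the normal ODE to obtain the representation $(\partial_d\tilde z\, a\tilde M)[\ponf',p_d] = \int_0^{p_d}(f\,a\,\tilde M)[\ponf',s]\dd s$, hence an explicit integral operator $f\mapsto w$; and one then proves that operator bounded on $\HH^2_{\tilde M}$ by differentiating the representation twice in $p_d$ and invoking \eqref{standardHardy}. Without this device the boundary-layer normal estimate does not close, so you should make it the centrepiece of step (iii) rather than subsuming it under ``re-expressing in terms of already-controlled quantities''.
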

\begin{proof}\resetconst
The core of this proof is based on Lemmas 3.1 and 3.3 of \cite{French:1987}.
As their adaptation to our geometry is nontrivial, we give a
detailed argument. Note that in this proof we shall omit the spring index $i$ in
order to avoid cluttering the notation.

\underline{Part 1}: We start by describing a change of coordinates and how
\eqref{partial-elliptic-problem} transforms under it.

Given $\ponf \in \Real^d$, let $\ponf'$ denote
$(p_1,\dotsc,p_{d-1}) \in \Real^{d-1}$. Let $\zeta$ be some constant in
$(0,1)$ and let us define, for $\varepsilon \in (0,\zeta]$, the sets
$\tilde U_\varepsilon := P'_\varepsilon \times \left( 0, \varepsilon \gamma \right)$
and $U_\varepsilon := S(\tilde U_\varepsilon)$,
where $\gamma$ is the distance (with its spring index omitted) mentioned in
\cref{hyp:power-like} and
\begin{equation*}
P'_\varepsilon := \begin{cases}
\varepsilon (-\nicefrac{\pi}{2},\nicefrac{\pi}{2}) & \text{if } d = 2,\\
\varepsilon (-1,1) \times \varepsilon (-\nicefrac{\pi}{2},\nicefrac{\pi}{2}) & \text{if } d = 3
\end{cases}
\end{equation*}
and $S \colon \tilde U_\zeta \rightarrow U_\zeta$ is defined by the formula
\begin{equation*}
S(\ponf) = \begin{cases}
(\sqrt{b}-p_2) \left( \cos(p_1),\ \sin(p_1) \right) & \text{if } d = 2,\\
(\sqrt{b}-p_3) \left( \sqrt{1-p_1^2} \cos(p_2),\ \sqrt{1-p_1^2} \sin(p_2),\ p_1 \right) & \text{if } d = 3,
\end{cases}
\qquad \forall\,\ponf \in \tilde U_\zeta.
\end{equation*}
Note that if $0 < \varepsilon_1 < \varepsilon_2 \leq \zeta$ then $U_{\varepsilon_1}
\subset U_{\varepsilon_2} \subset D$ and $\tilde U_{\varepsilon_1} \subset
\tilde U_{\varepsilon_2}$. Having its domain and defining formula carefully
crafted for the purpose, the transformation $S$ turns out to be invertible,
orientation-preserving and $\CC^\infty(\overline{\scriptstyle \tilde
U_\zeta})$-regular. All of this is easy to see if one takes into account that
$S$ is a variant of the polar (resp.\ spherical) to Cartesian coordinate
transformation if $d = 2$ (resp.\ $d = 3$) with the radial variable being
measured from the boundary of $D$ and increasing towards its center. We denote the
inverse of $S$ by $T$; it, too, has uniformly bounded derivatives of all orders.

If $f$ is a function with domain $U_\zeta$ we will write $\tilde f := f \circ
S$. Then, $\varphi \in \CC^\infty(\overline{U_\zeta}) \iff \tilde\varphi \in
\CC^\infty(\overline{\scriptstyle \tilde U_\zeta})$. If $m$ is a positive
integer, part (b) of \cref{lem:partialDensity2} states that $\CC^\infty(\overline{D})$
is dense in $\HH^m_M(D)$; as, for any $\varepsilon \in (0,\zeta]$,
$U_\varepsilon$ is regular enough (a Lipschitz domain),
$\CC^\infty(\overline{U_\varepsilon})$ is exactly the set of restrictions to
$U_\varepsilon$ of $\CC^\infty(\overline{D})$ functions, whence
$\CC^\infty(\overline{U_\varepsilon})$ is dense in $\HH^m_M(U_\varepsilon)$ as
well. We also have from \cref{lem:weightedSpacesMap} in \cref{sec:distrResults},
that $f \in \HH^m_M(U_\varepsilon) \iff \tilde f
\in \HH^m_{\tilde M}(\tilde U_\varepsilon)$ and that
\begin{equation}\label{tildeBounds}
c_1(m) \norm[n]{\tilde f}_{\HH^m_{\tilde M}(\tilde U_\varepsilon)} \leq
\norm{f}_{\HH^m_M(U_\varepsilon)}
\leq c_2(m) \norm[n]{\tilde f}_{\HH^m_{\tilde M}(\tilde U_\varepsilon)}
\qquad \forall\,f\in\HH^m_M(U_\varepsilon),
\end{equation}
where the positive constants $c_1$ and $c_2$ depend on $m$ but can be chosen to
be independent of $\varepsilon$.

\begin{figure}
\centering\includegraphics[scale=1.0225]{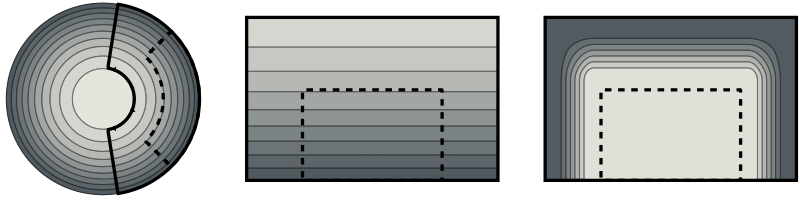} 
\caption{Illustration of the construction used in the proof of
\cref{lem:H2-to-H4}. From left to right: Contour plot of $M$ on $D$ with
$U_\zeta$ and $U_\varepsilon$ enclosed by the thick continuous and dashed
lines, respectively; contour plot of $\tilde M$ on $\tilde U_\zeta$ with
$\tilde U_\varepsilon$ enclosed by the thick dashed line; contour plot of an
admissible $\tilde \omega$ on $\tilde U_\zeta$ with $\tilde U_\varepsilon$
enclosed by the thick dashed line}
\label{rectangulize}
\end{figure}

As $\CC^\infty({\overline U_\zeta})$ is mapped by composition with $S$ to
$\CC^\infty(\overline{\scriptstyle \tilde U_\zeta})$ bijectively, it follows
that $\CC^\infty(\overline{\scriptstyle \tilde U_\zeta})$ is dense in
$\HH^1_{\tilde M}(\tilde U_\zeta)$. The rules of calculus and the density of
$\CC^\infty(\overline{U_\zeta})$ and $\CC^\infty(\overline{\scriptstyle \tilde
U_\zeta})$ functions in $\HH^1_M(U_\zeta)$ and $\HH^1_{\tilde M}(\tilde
U_\zeta)$, respectively, give the equalities
\begin{subequations}\label{mappedBF}
\begin{equation}\label{mappedBF-1}
\int_{U_\zeta} u\, v M = \int_{\tilde U_\zeta} \tilde u\, \tilde v \tilde M a
\qquad\text{and}\qquad
\int_{U_\zeta} \grad u \cdot \grad v M = \int_{\tilde U_\zeta} \grad \tilde u A \grad \tilde v^{\trans} \tilde M,
\end{equation}
where we have used the shorthand notations
\begin{equation}
a = \det(\grad S) \qquad\text{and}\qquad A = (\grad S)^{-1} (\grad S)^{-\trans} \det(\grad S).
\end{equation}
\end{subequations}
The first equality in \eqref{mappedBF-1} is valid for $u$ and $v$ in
$\LL^2_{M}(U_\zeta)$ and the second for $u$ and $v$ in $\HH^1_M(U_\zeta)$.
Direct calculations give
\begin{equation}\label{partsOfA}
A = A^{\trans}, \quad A_{k,d} = A_{d,k} = 0 \quad\forall\,k\in\ii{d-1}, \quad A_{d,d} = a, \quad\text{and}\quad \partial_k a = 0 \quad \forall\,k\in\ii{d-1},
\end{equation}
which we will exploit later. The need for the last equality in \eqref{partsOfA}
is the rationale behind taking of the sine of the polar angle instead of
the polar angle itself as the first argument of the transformation $S$ in the
case of $d = 3$. Additionally, by construction, $\tilde M$ is a function of the
radial variable $p_d$ only---namely, for all $\ponf \in \tilde U_\zeta$,
$\tilde M(\ponf) = h(p_d) p_d^\alpha$, where $h$ and $\alpha$, with the spring
index omitted, are those of \cref{hyp:power-like}.

Let us fix $\varepsilon \in (0,\zeta)$. For localization purposes we pick a
$\CC^\infty(U_\zeta)$ function $\omega$ with range $[0,1]$, identically $1$ in
$U_\varepsilon$, with support bounded away from $\partial U_\zeta \setminus
\partial D$ and such that $\partial_d \tilde\omega(\ponf) = 0$ if $\ponf$ is
within a finite distance $\gamma' > 0$ of $\partial \tilde U_\zeta \cap
T(\partial D)$ (such a function is readily constructed as $\omega = \tilde
\omega \circ T$ where $\tilde\omega (\ponf)= s(\ponf') t(p_d)$ and $s$ and $t$
are suitable mollified step functions). See \cref{rectangulize} for a
depiction of the construction so far.

Now, as every member of $\CIC(\tilde U_\zeta)$ can be put in the form $\varphi
\circ S$, where $\varphi \in \CIC(U_\zeta) \subset\HH^1_M(U_\zeta)$,
\eqref{partial-elliptic-problem-3} and the equalities in \eqref{mappedBF} imply
that $\tilde z$ obeys the distributional equation
\begin{equation}\label{mappedPDE}
-\div\left(\grad \tilde z \lsp A \tilde M \right) + \tilde z a \tilde M = \tilde g \tilde M a
\quad \text{in}\ \tilde U_\zeta.
\end{equation}

\underline{Part 2}: In this part we show that the relevant derivatives of
$\tilde z$ in directions tangential to the radial (i.e., the $d$-th) coordinate
possess additional regularity. The argument is a nontrivial
adaptation of Lemma 3.1 of \cite{French:1987}.

Let $k \in \ii{d-1}$. Then, using the Leibniz formula (cf.\ \cref{lem:LeibnizFormula})
and simple consequences of
\eqref{partsOfA} and the fact that $\tilde M(\ponf)$ depends on $p_d$ only, the distributional equation
\eqref{mappedPDE} conduces to
\begin{multline}\label{sevenTerms}
-\div\left(\grad(\tilde\omega \partial_k \tilde z) \lsp A \tilde M\right) + \tilde \omega\partial_k\tilde z \lsp a \tilde M\\
= \tilde\omega\partial_k\tilde g \lsp a \tilde M + \tilde \omega \grad\grad \tilde z : \partial_k A \, \tilde M + \tilde \omega\grad\tilde z \cdot \div(\partial_k A)\tilde M + \tilde\omega \grad\tilde M \cdot (\grad\tilde z \lsp \partial_k A)\\ - 2 \grad\tilde\omega \cdot (\grad(\partial_k \tilde z) \lsp A \tilde M) - \partial_k \tilde z \lsp \div(\grad \tilde \omega \lsp A) \tilde M - \partial_k \tilde z \lsp \grad \tilde M \cdot (\grad \tilde \omega \lsp A).
\end{multline}
We want to show that all the terms resulting above are (the linear combination of)
members of the space $a \tilde M \LL^2_{\tilde M}(\tilde U_\zeta) = \tilde M
\LL^2_{\tilde M}(\tilde U_\zeta)$. Of the resulting seven terms above, the
first three, the fifth and the sixth pose no problem, thanks to the regularity
of $g$ and \cref{lem:ellipReg}. The fourth vanishes after making full use of
the equalities in \eqref{partsOfA} and the sole dependence of $\tilde M$ on the
radial variable---this is what the fourth equation in \eqref{partsOfA} is truly
for. The membership of the seventh term in $\tilde M \LL^2_{\tilde M}(\tilde
U_\zeta)$ stems from observing that
\begin{multline*}
\norm{\partial_k \tilde z \lsp \grad \tilde M \cdot (\grad\tilde\omega \lsp A ) / \tilde M}_{\LL^2_{\tilde M}(\tilde U_\zeta)}
= \norm{\partial_k \tilde z \lsp \partial_d \tilde M \lsp \partial_d \tilde\omega \lsp a / \tilde M}_{\LL^2_{\tilde M}(P'_{\zeta} \times (\gamma',\zeta\gamma))}\\
\leq \sup_{P'_{\zeta}\times(\gamma', \zeta\gamma)} \left( \partial_d \tilde M \lsp \partial_d\tilde\omega \lsp a / \tilde M \right) \norm{\partial_k \tilde z}_{\LL^2_{\tilde M}(\tilde U_\zeta)} < \infty.
\end{multline*}

Let $\hat f$, given some function $f$ defined on $\tilde U_\zeta$, denote $f
\circ T = f \circ S^{-1}$. Also, let $f_{(k)}$ denote the ratio of the
right-hand side of \eqref{sevenTerms} and $\tilde M a$. Then, \eqref{sevenTerms}
and the identities in \eqref{mappedBF} give
\begin{equation}\label{backTransform}
-\div(M \grad \widehat{\tilde\omega \partial_k\tilde z}) + \widehat{\tilde\omega \partial_k\tilde z} M = \widehat{f_{(k)}}
\end{equation}
in $U_\zeta$, with $\widehat{f_{(k)}} \in \LL^2_M(U_\zeta)$. As the support of
$\tilde \omega$ is bounded away from $\partial\tilde U_\zeta \setminus
T(\partial D)$, we can extend $\widehat{\tilde\omega \partial_k\tilde z}$ and
$\widehat{f_{(k)}}$ to the whole of $D$ by zero while still satisfying
\eqref{backTransform}. Then, \cref{lem:ellipReg} ensures that the extension
of $\widehat{\tilde\omega \partial_k\tilde z}$ belongs to $\HH^2_M(D)$. It
follows that $\tilde\omega \partial_k\tilde z$ belongs to $\HH^2_{\tilde
M}(U_\zeta)$ and, consequently, $\partial_k\tilde z \in \HH^2_{\tilde M}(\tilde
U_\varepsilon)$.

This procedure can be iterated. Within $U_\varepsilon$ the identity \eqref{sevenTerms}
particularizes to
\begin{equation*}
-\div\left(\grad(\partial_k \tilde z) \lsp A \tilde M \right) + \partial_k \tilde z \lsp a \tilde M
= \partial_k g \lsp a \tilde M + \grad\grad \tilde z : \partial_k A \,\tilde M + \grad\tilde z \cdot \div(\partial_k A) \tilde M.
\end{equation*}
Let $g_{(k)} := \partial_k \tilde g + \grad\grad \tilde z : \partial_k A \lsp /
a + \grad \tilde z \cdot \div(\partial_k A)/a$ and let us redefine $\tilde
\omega$ so that the role of $\tilde U_\zeta$ is now taken up by $\tilde
U_\varepsilon$ and the role of the latter is taken up by $\tilde U_\delta$,
where $\delta$ is some fixed number in $(0,\varepsilon)$. Thus, we can obtain
an analogue of \eqref{sevenTerms} for $\tilde\omega\partial_{l,k}\tilde z$,
where $l,k  \in \ii{d-1}$:
\begin{multline*}
-\div(\grad(\tilde\omega \partial_{l,k}\tilde z) \lsp A \tilde M) + \tilde\omega \partial_{l,k}\tilde z \lsp a \tilde M\\
= \tilde\omega \partial_l g_{(k)} \lsp a \tilde M + \tilde\omega \grad\grad\partial_k \tilde z : \partial_l A \,\tilde M + \tilde\omega \grad\partial_k\tilde z \cdot \div(\partial_l A)\tilde M + \tilde\omega \grad \tilde M \cdot (\grad(\partial_k \tilde z) \lsp \partial_l A)\\ - 2\grad\tilde\omega \cdot (\grad(\partial_{l,k}\tilde z) \lsp A \tilde M) - \partial_{l,k}\tilde z \lsp \div(\grad\tilde\omega \lsp A)\tilde M - \partial_{l,k}\tilde z \lsp \grad M \cdot (\grad\tilde\omega \lsp A).
\end{multline*}
Analogously to the study of the first-order tangential derivatives we need all
seven terms on the right-hand side of the above equation to belong to
$\tilde M \LL^2_{\tilde M}(\tilde
U_\varepsilon)$ now. As, at this stage, we know that $\partial_k \tilde z \in
\HH^2_{\tilde M}(\tilde U_\varepsilon)$, the second, the third, the fifth and
the sixth term above pose no difficulties. The fourth term and the seventh term
can be dealt with just as their counterparts in \eqref{sevenTerms}. When
it comes to the first term, it is enough to show that $\partial_l g_{(k)} \in
\LL^2_{\tilde M}(\tilde U_\varepsilon)$. Now,
\begin{equation*}
\partial_l g_{(k)} = \partial_{l,k} \tilde g + \grad\grad \tilde z : \partial_l (\partial_k A \lsp / a) + \grad\tilde z \cdot \partial_l(\div(\partial_k A) / a) + \partial_l \grad\grad \tilde z : (\partial_k A \lsp / a) + \partial_l \grad \tilde z \cdot \div(\partial_k A) / a.
\end{equation*}
The first three terms above are clearly in $\LL^2_{\tilde M}(\tilde
U_\varepsilon)$---the first because of our hypotheses on $g$; so is the fifth,
for the second derivatives of $\tilde z$ have the desired integrability. Finally,
$\partial_l \grad\grad \tilde z \in \LL^2_{\tilde M}(\tilde
U_\varepsilon)$ because $\partial_l \tilde z \in \HH^2_{\tilde M}(\tilde
U_\varepsilon)$, as shown above. Proceeding with the argument one finds, after
localization, that $\partial_{k,l} \tilde z \in \HH^2_{\tilde M}(\tilde
U_\delta)$. We mention in passing that by closely following the arguments above
the linear operators $g \in \HH^2_M(D) \mapsto \partial_l \tilde z \in
\HH^2_{\tilde M}(\tilde U_\varepsilon)$ and $g \in \HH^2_M(D) \mapsto
\partial_{k,l}\tilde z \in \HH^2_{\tilde M}(\tilde U_\delta)$ can be seen to be continuous;
i.e., bounded.

\underline{Part 3}: In this part we show the additional regularity of some
derivatives of $\tilde z$ that involve the radial direction.

Expanding and rearranging the distributional equation \eqref{mappedPDE}, noting
the sole dependence of $\tilde M$ on the last component of its
argument and the properties of $A$ given by \eqref{partsOfA} we get
\begin{equation}\label{definition-of-f}
\begin{split}
- \frac{1}{\tilde M a} \partial_d (\partial_d \tilde z \lsp a \tilde M)
& = \tilde g - \tilde z + \frac{1}{a} \sum_{k=1}^{d-1}\sum_{j=1}^{d-1} (\partial_{j,k} \tilde z \lsp A_{j,k} + \partial_j \tilde z \lsp \partial_k A_{j,k}) =: f
\end{split}
\end{equation}
in $\tilde U_\delta$. From the previous part of the proof and our assumptions
on $g$ we have that $f \in \HH^2_{\tilde M}(\tilde U_\delta)$. Multiplying
 \eqref{definition-of-f} by $\tilde M a$ and integrating with respect to the $d$-th
variable we obtain
\begin{equation}\label{with-limit}
(\partial_d \tilde z \lsp a \tilde M)[\ponf',p_d] - \lim_{s \to 0_+} (\partial_d \tilde z \lsp a \tilde M)[\ponf',s]
= \int_0^{p_d} (f a \tilde M) [\ponf',s] \dd s
\end{equation}
for almost every $\ponf'$ in $P'_{\delta}$. We note in passing that in
this part of the proof we reserve square brackets for arguments of functions.
Our first task is to show that the limit on the left-hand side of \eqref{with-limit} vanishes.
To this end, we first observe that, for $p_d, s \in (0,\delta\gamma)$,
\[ p_d^{\alpha/2} \partial_d \tilde z[\ponf',p_d] = s^{\alpha/2}\partial_d\tilde z[\ponf',s] + \int_s^{p_d} \diff{}{\sigma}\left( \sigma^{\alpha/2} \partial_d\tilde z[\ponf',\sigma] \right) \dd\sigma, \]
whence
\begin{equation*}
p_d^{\alpha/2} \abs{\partial_d \tilde z[\ponf',p_d]}
\leq s^{\alpha/2}\abs{\partial_d\tilde z[\ponf',s]}
+ \abs{\int_s^{p_d} \frac{\alpha}{2}\sigma^{\alpha/2-1} \partial_d \tilde z[\ponf',\sigma] \dd\sigma}
+ \abs{\int_s^{p_d} \sigma^{\alpha/2} \partial_{d,d}\tilde z[\ponf',\sigma] \dd\sigma}.
\end{equation*}
Furthermore,
\begin{setlength}{\multlinegap}{0pt}
\begin{multline*}
p_d^\alpha \abs{\partial_d \tilde z[\ponf',p_d]}^2\\
\begin{aligned}
& \leq 3 s^\alpha \abs{\partial_d\tilde z[\ponf',s]}^2
+ \frac{3\alpha^2}{4} \abs{\int_s^{p_d} \sigma^{\alpha/2-1} \partial_d \tilde z[\ponf',\sigma] \dd\sigma}^2
+ 3 \abs{\int_s^{p_d} \sigma^{\alpha/2} \partial_{d,d}\tilde z[\ponf',\sigma] \dd\sigma}^2\\
& \leq 3 s^\alpha \abs{\partial_d\tilde z[\ponf',s]}^2
+ \frac{3\alpha^2}{4} \abs{p_d-s} \int_s^{p_d}\sigma^{\alpha-2}\abs{\partial_d\tilde z[\ponf',\sigma]}^2\dd\sigma
+ 3 \abs{p_d-s} \int_s^{p_d} \sigma^\alpha \abs{\partial_{d,d}\tilde z[\ponf',\sigma]}^2\dd\sigma\\
& \leq 3 s^\alpha \abs{\partial_d\tilde z[\ponf',s]}^2
+ \frac{3\alpha^2}{4} \delta\gamma \int_0^{\delta\gamma} \sigma^{\alpha-2}\abs{\partial_d\tilde z[\ponf',\sigma]}^2\dd\sigma
+ 3 \delta\gamma \int_0^{\delta\gamma} \sigma^\alpha \abs{\partial_{d,d}\tilde z[\ponf',\sigma]}^2\dd\sigma.
\end{aligned}
\end{multline*}
\end{setlength}
Integrating this chain of inequalities with respect to $s$ from $0$ to $\delta\gamma$
and applying the Hardy inequality \eqref{anotherHardy} stated in \cref{lem:Hardy} we
obtain
\begin{equation*}
\begin{split}
& \delta\gamma\, p_d^\alpha \abs{\partial_d \tilde z[\ponf',p_d]}^2\\
& \leq 3 \int_0^{\delta\gamma}\! s^\alpha \abs{\partial_d \tilde z[\ponf',s]}^2 \dd s
+ \frac{3\alpha^2}{4} (\delta\gamma)^2 \int_0^{\delta\gamma} \sigma^{\alpha-2} \abs{\partial_d \tilde z[\ponf',\sigma]}^2 \dd \sigma
+ 3 (\delta\gamma)^2\int_0^{\delta\gamma}\! \sigma^\alpha \abs{\partial_{d,d} \tilde z[\ponf',\sigma]}^2 \dd\sigma\\
& \leq 3 \int_0^{\delta\gamma} s^\alpha \abs{\partial_d \tilde z[\ponf',s]}^2 \dd s
+ \frac{3\alpha^2 C_{\delta\gamma,\alpha}}{4} (\delta\gamma)^2 \int_0^{\delta\gamma} \sigma^\alpha \abs{\partial_d \tilde z[\ponf',\sigma]}^2 \dd \sigma\\
& \qquad \qquad + 3 (\delta\gamma)^2 \left(\frac{\alpha^2 C_{\delta\gamma,\alpha}}{4} + 1\right) \int_0^{\delta\gamma} \sigma^\alpha \abs{\partial_{d,d} \tilde z[\ponf',\sigma]}^2 \dd\sigma.
\end{split}
\end{equation*}
Dividing by $\delta\gamma$, integrating with respect to $\ponf'$ in
$P'_{\delta}$, using the bilateral boundedness of $h$ and $a$ by positive
constants, and consolidating the constants, we get the trace-inequality-like bound
\begin{equation}\label{trace-like}
\int_{P'_{\delta}} p_d^\alpha \abs{\partial_d \tilde z[\ponf',p_d]}^2 \dd\ponf'
\leq \const \norm{ \partial_d \tilde z }_{\HH^1_{\tilde M}(\tilde U_\delta)}^2.
\end{equation}
Thus,
\begin{equation*}
\int_{P'_{\delta}} \abs{ (\partial_d\tilde z \lsp a \tilde M)[\ponf',p_d] } \dd \ponf'
\leq \const\, h[p_d]^{1/2} p_d^{\alpha/2}\left(\int_{P'_{\delta}} (\abs{\partial_d \tilde z}^2 \tilde M a)[\ponf] \dd \ponf' \right)^{1/2} \to 0 \quad \text{as } p_d \to 0_+,
\end{equation*}
which implies the vanishing of the limit in \eqref{with-limit}.

Let us define $w\colon \tilde U_\delta \rightarrow \Real$ by
\begin{equation}\label{definition-of-w}
w[\ponf] := \frac{\partial_d (\tilde M a)[\ponf]}{(\tilde M a)[\ponf]} \partial_d \tilde z [\ponf]
= \frac{ \left( (h a)[p_d] p_d^\alpha \right)' }{ (h a)[p_d]^2 p_d^{2\alpha} } \int_0^{p_d} (f a \tilde M)[\ponf',s] \dd s,
\end{equation}
where we have taken the liberty of treating $a$ as an univariate function,
which it is in the algebraic sense. The equality is valid for almost every
$\ponf \in \tilde U_\delta$. Note that $w$ is a member of $\LL^2_{\tilde
M}(\tilde U_\delta)$ because $\grad M \cdot \grad z / M \in \LL^2_M(U_\delta)$;
this, in turn, is a consequence of \cref{lem:ellipReg}. We intend to show
that $w \in \HH^2_{\tilde M}(U_\delta)$. Let $\seq{f_n}{n \in \natural}$ be a
sequence of $\CC^\infty(\overline{\scriptstyle \tilde U_\delta})$ functions
converging to $f$ in $\HH^2_{\tilde M}(\tilde U_\delta)$ (its existence
having been discussed in Part 1) and let
\begin{equation}\label{definition-of-wn}
\begin{split}
&w_n[\ponf]  := \frac{ \left( (h a)[p_d] p_d^\alpha \right)' }{ (h a)[p_d]^2 p_d^{2\alpha} } \int_0^{p_d} (f_n a \tilde M)[\ponf',s] \dd s\\
& = \int_0^{p_d} \frac{ (h a)'[p_d] p_d + \alpha (h a)[p_d]}{ (h a)[p_d]^2 } \smartxi^\alpha \frac{(h a f_n)[\ponf',s]}{p_d} \dd s = \mathrm{aux}[p_d] \int_0^1 \xi^\alpha (h a f_n)[\ponf',p_d \xi] \dd \xi
\end{split}
\end{equation}
Here we have written $\mathrm{aux}[p_d]$ in place of the first fraction in the
second integral and denoted the function $\ponf \in \tilde U_\zeta \mapsto
h(p_d) \in \Real$ by $h$ as well. The second equality comes via the change of
variable $\xi = s/p_d$. As the function $h$ and the determinant $a$ have uniform
$\CC^3$ and $\CC^\infty$ regularity in $U_\delta$, the function $\mathrm{aux}
\in \CC^2(\overline{\scriptstyle \tilde U_\delta})$ and $w_n$ is twice
continuously differentiable in the $d$-th direction.

Differentiating the last integral representation of $w_n$ with respect to its
$d$-th variable twice and then reversing the change of variable we obtain
\begin{equation*}
\begin{split}
\partial_{d,d} w_n[\ponf]
& = \sum_{k=0}^{2} \binom{2}{k} \diff{^{2-k} \mathrm{aux}}{p_d^{2-k}}[p_d] \int_0^1 \partial_d^k(h a f_n)[\ponf', p_d \xi] \lsp \xi^{\alpha+k} \dd \xi\\
& = \sum_{k=0}^2 \binom{2}{k} \diff{^{2-k} \mathrm{aux}}{p_d^{2-k}}[p_d]\int_0^{p_d} \partial_d^k(h a f_n)[\ponf',s] \lsp \smartxi^{\alpha+k} \frac{1}{p_d} \dd s,
\end{split}
\end{equation*}
whence, as $s/p_d \in (0,1)$ if $s \in (0,p_d)$,
\begin{equation*}
\begin{split}
p_d^{\alpha/2} \abs{\partial_{d,d} w_n[\ponf]}
& \leq \frac{1}{p_d} \int_0^{p_d} \left( \sum_{k=0}^2 \binom{2}{k}\abs{\diff{^{2-k} \mathrm{aux}}{p_d^{2-k}}[p_d] \lsp \partial_d^k (h a f_n)[\ponf',s] } \smartxi^{\alpha/2 + k} \right) s^{\alpha/2} \dd s\\
& \leq \frac{1}{p_d} \int_0^{p_d} \left( \sum_{k=0}^2 \binom{2}{k}\abs{\diff{^{2-k} \mathrm{aux}}{p_d^{2-k}}[p_d] \lsp \partial_d^k (h a f_n)[\ponf',s] } \right) s^{\alpha/2} \dd s\\
& \leq \frac{\const}{p_d} \int_0^{p_d} \left( \abs{(h a f_n)}^2 + \abs{\partial_d(h a f_n)}^2 + \abs{\partial_{d,d}(h a f_n)}^2 \right)^{1/2}[\ponf',s] s^{\alpha/2} \dd s
\end{split}
\end{equation*}
for some $\const* > 0$ independent of $\ponf = (\ponf',p_d)$. We square the
resulting inequality, integrate it with respect to $p_d$ from $0$ to
$\delta\gamma$, use the Hardy inequality \eqref{standardHardy} in
\cref{lem:Hardy} and note yet again the bilateral boundedness of
$h$ and $a$ by positive constants to obtain
%
\begin{multline*}
\int_0^{\delta\gamma} p_d^\alpha (h a)[p_d] \abs{\partial_{d,d} w_n[\ponf]}^2 \dd p_d\\
\leq \const \int_0^{\delta\gamma} \left( \abs{(h a f_n)}^2 + \abs{\partial_d(h a f_n)}^2 + \abs{\partial_{d,d}(h a f_n)}^2 \right) \negmedspace [\ponf]\, s^\alpha (h a)[p_d] \dd p_d,
\end{multline*}
where $\const*$ is still independent of $\ponf' \in P'_{\delta}$. Integrating
this with respect to $\ponf' \in P'_{\delta}$, using the regularity of $h$ and
$a$ and taking into account that $(\tilde M a)[\ponf] = (h a)[p_d] p_d^\alpha$
for all $\ponf \in \tilde U_\delta$ one gets
\[ \norm{ \partial_{d,d} w_n }_{\LL^2_{\tilde M}(\tilde U_\delta)} \leq \const \norm{ f_n }_{\HH^2_{\tilde M}(\tilde U_\delta)}. \]
This argument can be carried over to all derivatives of order less than or equal
to two of $w_n$ (including zeroth order derivatives of $w_n$, meaning $w_n$ itself). The result is
\[ \norm{ w_n }_{\HH^2_{\tilde M}(\tilde U_\delta)} \leq \const \norm{f_n}_{\HH^2_{\tilde M}(\tilde U_\delta)}. \]

As $\HH^2_{\tilde M}(\tilde U_\delta)$ is a Hilbert space, there exists a
subsequence $\seq{ w_{\phi(n)} }{n \geq 1}$ with a weak limit $w^* \in
\HH^2_{\tilde M}(\tilde U_\delta)$. By the continuity of the injection of
$\HH^2_{\tilde M}(\tilde U_\delta)$ into $\LL^2_{\tilde M}(\tilde U_\delta)$,
$w^*$ is also the weak limit of the $w_{\phi(n)}$ in $\LL^2_{\tilde M}(\tilde
U_\delta)$.

Now, given any $\chi \in \LL^2_{\tilde M}(\tilde U_\delta)$,
\begin{setlength}{\multlinegap}{0pt}
\begin{multline*}
\int_{\tilde U_\delta} \left( \frac{\mathrm{aux}[p_d]}{p_d} \int_0^{p_d} \smartxi^\alpha (h a \chi)[\ponf',s] \dd s \right)^2 \tilde M[\ponf] \dd \ponf\\
\begin{aligned}
& = \int_{\tilde U_\delta} \frac{\mathrm{aux}[p_d]^2}{p_d^2} \left( \int_0^{p_d} \smartxi^{\alpha/2} s^{\alpha/2} (h a \chi)[\ponf',s]\dd s \right)^2 h[p_d] \dd \ponf\\
& \leq \const \int_{P'_\delta} \int_0^{\delta\gamma} \frac{1}{p_d^2} \left( \int_0^{p_d} s^{\alpha/2} (h a \chi)[\ponf',s]\dd s \right)^2 \dd p_d \dd \ponf'\\
& \leq \const \int_{P'_\delta} \int_0^{\delta\gamma} s^\alpha \abs{(h a \chi)[\ponf',s]}^2 \dd s \dd \ponf'\\
& \leq \const \norm{\chi}_{\LL^2_{\tilde M}(\tilde U_\delta)}^2.
\end{aligned}
\end{multline*}
\end{setlength}%
Hence, the operation that defines $w$ (resp.\ $w_n$) in terms of $f$ (resp.\
$f_n$) in \eqref{definition-of-w} (resp.\ \eqref{definition-of-wn}) is a
bounded map from $\LL^2_{\tilde M}(\tilde U_\delta)$ to itself. Therefore,
$\lim_{n\to\infty} f_n = f$ in $\LL^2_{\tilde M}(\tilde U_\delta)$ implies
$\lim_{n\to\infty} w_n = w$ in the same space. Thus, $w$ and the weak limit
$w^*$ have to be the same measurable function and so $w \in \HH^2_{\tilde
M}(\tilde U_\delta)$. We get the bound $\norm{w}_{\HH^2_{\tilde M}(\tilde
U_\delta)} \leq \liminf_{n \to \infty} \norm[n]{w_{\phi(n)}}_{\HH^2_{\tilde
M}(\tilde U_\delta)} \leq \shiftedconst{-3}
\norm[n]{f_{\phi(n)}}_{\HH^2_{\tilde M}(\tilde U_\delta)}$. As (with no loss of
generality) we can assume that the $f_n$ are scaled so that their
$\HH^2_{\tilde M}(\tilde U_\delta)$ norm is identically equal to the same norm
of $f$, it follows that
\begin{equation*}
\norm{w}_{\HH^2_{\tilde M}(\tilde U_\delta)} \leq \shiftedconst{-3} \norm{f}_{\HH^2_{\tilde M}(\tilde U_\delta)}.
\end{equation*}

From \eqref{definition-of-f} and \eqref{definition-of-w},
\[ -\partial_{d,d} \tilde z = f + \frac{\partial_d(\tilde M a)}{\tilde M a} \partial_d \tilde z = f + w, \]
whence $\norm{\partial_{d,d} \tilde z}_{\HH^2_{\tilde M}(\tilde U_\delta)} \leq
(1+\shiftedconst{-3}) \norm{f}_{\HH^2_{\tilde M}(\tilde U_\delta)} \leq \const \norm{\tilde
g}_{\HH^2_{\tilde M}(\tilde U_\delta)}$. We know from the previous part that
all second derivatives of $\tilde z$ that do not involve the $d$-th
direction have $\HH^2_{\tilde M}(\tilde U_\delta)$ norms bounded by the
$\HH^2_{\tilde M}(\tilde U_\delta)$ norm of $\tilde g$. This and the
corresponding result for $\partial_{d,d}\tilde z$ is enough to be able to bound
all derivatives of $\tilde z$ of order less than or equal to four, and thus
deduce that
\[ \norm{\tilde z}_{\HH^4_{\tilde M}(\tilde U_\delta)} \leq \const \norm{g}_{\HH^2_{M}(D)} \]
or, equivalently in the light of \eqref{tildeBounds}, that
\begin{equation}\label{localizedH4}
\norm{z}_{\HH^4_M(U_\delta)} \leq \const \norm{g}_{\HH^2_M(U_\delta)}.
\end{equation}

\underline{Part 4}: By modifying the transformation $S$ one can get a
localized bound of the form \eqref{localizedH4} for any origin-centered
rotation of $U_\delta$. It follows that \eqref{localizedH4} remains valid
(with some other constant $\const*$) if we replace $U_\delta$ by the annulus/spherical
shell $\{ \ponf \in D \colon \abs{\ponf} > \sqrt{b}-\delta\gamma \}$.

Let $D_0$ be the ball $B(0,\sqrt{b}-\delta\gamma/2) \compEmb D$. As
$\CIC(D_0) \subset \CIC(D)$, we have that
\[ \langle z, \varphi \rangle_{\HH^1_{M}(D_0)} = \langle g, \varphi \rangle_{\LL^2_{M}(D_0)} \qquad \forall\, \varphi \in \CIC(D_0). \]
The existence of a positive infimum of $M$ in $D_0$ implies that $z$ is the
weak solution to a regular (i.e., uniformly) elliptic problem in $D_0$ with
$\HH^2(D_0)$ right-hand side. It follows, via the $\CC^{2,1}(D_0)$ regularity
of $M$ (see, e.g., \cite[Theorem 8.10]{GT}), that for some $\const > 0$,
\[ \norm{z}_{\HH^{4}_{M}(D_0')} \leq \const* \norm{g}_{\HH^2_{M}(D_0)} \leq \const* \norm{g}_{\HH^2_{M}(D)}, \]
with $D_0' := B(0,\sqrt{n}-3\delta\gamma/4) \compEmb D_0$.

Combining this last estimate with the result in the annulus/spherical shell mentioned above (which in union with $D_0'$ covers $D$), we obtain
the desired global bound.
\end{proof}

The next lemma is an almost trivial corollary of
\cref{lem:H2-to-H4}, yet it is a true iterate of \cref{lem:ellipReg} in
the sense that the hypothesis on the right-hand side function is the thesis on
the solution in \cref{lem:ellipReg}. This makes it suitable for the
arguments that will be used in the proof of \cref{lem:partialEquiv}.

\begin{lemma}\label[lemma]{lem:iteratedEllipReg}
Let $i \in \ii{N}$ and suppose that $g \in \HH^2_{M_i}(D_i)$ and that $M_i^{-1}\div(M_i\grad g)
\in \LTMi$. Then, there exists a constant $C_i > 0$, independent of $g_i$, such that the solution $z \in \HOMi$ of
\[ \langle z, \varphi \rangle_{\HOMi} = \langle g, \varphi\rangle_{\LTMi} \qquad\forall\,\varphi\in\HOMi \]
obeys the regularity estimate
\begin{multline*}
\norm{ z }_{\HH^4_{M_i}(D_i)}
+ \norm{ \frac{1}{M_i}\div(M_i \grad z) }_{\HH^2_{M_i}(D_i)}
+ \norm{ \frac{1}{M_i}\div\left(M_i \grad\left[\frac{1}{M_i}\div(M_i\grad z)\right]\right) }_{\LTMi}\\
\leq C_i \left( \norm{ g }_{\HH^2_{M_i}(D_i)} + \norm{\frac{1}{M_i} \div(M_i\grad g) }_{\LTMi} \right).
\end{multline*}
\end{lemma}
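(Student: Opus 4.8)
The plan is to read the full conclusion off \cref{lem:ellipReg} and \cref{lem:H2-to-H4} applied to the \emph{same} right-hand side $g$, and then to rewrite the two higher-order expressions appearing on the left of the asserted estimate as purely algebraic combinations of $z$ and $g$ by means of the pointwise form of the equation. The solution $z$ is unambiguous: the problem $\langle z,\varphi\rangle_{\HOMi}=\langle g,\varphi\rangle_{\LTMi}$ has a unique solution in $\HOMi$, so the $z$ of both lemmas is one and the same.

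First I would invoke \cref{lem:ellipReg}. Since $g\in\HH^2_{M_i}(D_i)\subset\LTMi$, it provides $z\in\HH^2_{M_i}(D_i)$, $\frac{1}{M_i}\div(M_i\grad z)\in\LTMi$, and $\norm{z}_{\HH^2_{M_i}(D_i)}+\norm{\frac{1}{M_i}\div(M_i\grad z)}_{\LTMi}\le C_i\norm{g}_{\LTMi}\le C_i\norm{g}_{\HH^2_{M_i}(D_i)}$. More importantly, the \emph{proof} of \cref{lem:ellipReg} identifies $z$ with the solution of the pointwise identity $z-\frac{1}{M_i}\div(M_i\grad z)=g$ in $\LTMi$; I would single out the rearranged form $\frac{1}{M_i}\div(M_i\grad z)=z-g$, call it $(\star)$, as the workhorse of the argument. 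Next, since $g\in\HH^2_{M_i}(D_i)$, \cref{lem:H2-to-H4} applies verbatim and yields $z\in\HH^4_{M_i}(D_i)$ with $\norm{z}_{\HH^4_{M_i}(D_i)}\le C_i\norm{g}_{\HH^2_{M_i}(D_i)}$, which already disposes of the first term.

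For the second term, $(\star)$ presents $\frac{1}{M_i}\div(M_i\grad z)$ as the difference $z-g$ of an $\HH^4_{M_i}(D_i)$-function and an $\HH^2_{M_i}(D_i)$-function; hence it lies in $\HH^2_{M_i}(D_i)$ with $\HH^2_{M_i}(D_i)$-norm at most $\norm{z}_{\HH^2_{M_i}(D_i)}+\norm{g}_{\HH^2_{M_i}(D_i)}\le C_i\norm{g}_{\HH^2_{M_i}(D_i)}$ (using $\HH^4_{M_i}(D_i)\hookrightarrow\HH^2_{M_i}(D_i)$). For the third term, set $k:=\frac{1}{M_i}\div(M_i\grad g)$, which the hypothesis places in $\LTMi$, meaning $\div(M_i\grad g)=M_i k$ in $\mathcal{D}'(D_i)$; similarly $(\star)$ reads $\div(M_i\grad z)=M_i(z-g)$ in $\mathcal{D}'(D_i)$. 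By the linearity of $v\mapsto\div(M_i\grad v)$ and $(\star)$, $\div(M_i\grad[\frac{1}{M_i}\div(M_i\grad z)])=\div(M_i\grad(z-g))=\div(M_i\grad z)-\div(M_i\grad g)=M_i\bigl((z-g)-k\bigr)$, so $\frac{1}{M_i}\div(M_i\grad[\frac{1}{M_i}\div(M_i\grad z)])=(z-g)-k$ belongs to $\LTMi$ with norm at most $\norm{z}_{\LTMi}+\norm{g}_{\LTMi}+\norm{k}_{\LTMi}\le C_i\bigl(\norm{g}_{\HH^2_{M_i}(D_i)}+\norm{\frac{1}{M_i}\div(M_i\grad g)}_{\LTMi}\bigr)$, where $\norm{z}_{\LTMi}\le\norm{z}_{\HH^4_{M_i}(D_i)}\le C_i\norm{g}_{\HH^2_{M_i}(D_i)}$. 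Summing the three bounds gives the claim.

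The argument is thus almost mechanical; the only point needing a word of care — the closest thing to an obstacle, though it is routine — is the legitimacy of distributing the operator $\frac{1}{M_i}\div(M_i\grad\,\cdot\,)$ over the difference $z-g$ and of treating the resulting quotients by $M_i$ as genuine $\LTMi$ functions. This is handled by interpreting, throughout, the symbol $\frac{1}{M_i}\div(M_i\grad v)$ as shorthand for a fixed $k_v\in\LTMi$ with $\div(M_i\grad v)=M_i k_v$ in $\mathcal{D}'(D_i)$; such a $k_v$ exists for $v=z$ by $(\star)$ (that is, by \cref{lem:ellipReg}) and for $v=g$ by hypothesis, and the manipulations above then reduce to additions of regular distributions of the form $M_i k_v$, valid since $M_i\in\CC^1(D_i)$ by \cref{hyp:potential} (and, if one prefers to expand $\div(M_i\grad\,\cdot\,)$ termwise, by the Leibniz formula of \cref{lem:LeibnizFormula}).
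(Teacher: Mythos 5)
Your proof is correct and takes essentially the same approach as the paper's (much terser) proof, which simply observes that $M_i^{-1}\div(M_i\grad z)$ equals the difference of $z$ and $g$ pointwise and then reads the estimate off \cref{lem:H2-to-H4}; you spell out the steps the paper leaves implicit. (Your sign $M_i^{-1}\div(M_i\grad z)=z-g$ is the correct one for the stated weak formulation, whereas the paper writes $g-z$; either way the norm bounds are unaffected.)
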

\begin{proof}
This follows directly from \cref{lem:H2-to-H4} on noting that
$M_i^{-1}\div(M_i\grad z) = g - z$ in the distributional sense first, and then in the
sense of measurable functions.
\end{proof}

\begin{lemma}\label[lemma]{lem:partialEquiv}
Let $i \in \ii{N}$. The following statements of equivalence hold:
\begin{multline}\label{partialFastDecay2}
\tau \in \HH^2_{M_i}(D_i) \quad \text{and} \quad \frac{1}{M_i} \div(M_i \grad \tau) \in \LTMi\\
\iff \tau \in \LTMi \quad \text{and} \quad \sum_{n=1}^\infty (\lambda_n\psp{i})^2 \langle \tau, e\psp{i}_n \rangle_{\LTMi}^2 < \infty;
\end{multline}
and
\begin{multline}\label{partialFastDecay4}
\tau \in \HH^4_{M_i}(D_i),\ \frac{1}{M_i} \div(M_i \grad \tau) \in \HH^2_{M_i}(D_i) \quad \text{and} \quad \frac{1}{M_i}\div\left(M_i\grad \left[\frac{1}{M_i}\div(M_i\grad \tau)\right]\right) \in \LTMi\\
\iff \tau \in \LTMi \quad \text{and} \quad \sum_{n=1}^\infty (\lambda_n\psp{i})^4 \langle \tau, e\psp{i}_n \rangle_{\LTMi}^2 < \infty.
\end{multline}
\end{lemma}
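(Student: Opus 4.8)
The two equivalences in \cref{lem:partialEquiv} both follow the same template: express the relevant differential operator as a spectral multiplier with respect to the $\LTMi$-orthonormal eigenbasis $\seq{e\psp{i}_n}{n\geq1}$ of \eqref{partial-ev}, and then recognize the summability conditions as membership statements. The key algebraic identity, valid in the distributional sense (and then in the sense of measurable functions, via \cref{lem:ellipReg} and \cref{lem:iteratedEllipReg}), is
\[
-\frac{1}{M_i}\div(M_i\grad e\psp{i}_n) = (\lambda\psp{i}_n - 1)\, e\psp{i}_n,
\]
which is simply the strong form of the weak eigenvalue equation \eqref{partial-ev} rewritten using the Maxwellian-weighted Green's identity (the same one used in the proof of \cref{lem:ellipReg}, where $M_i \propto \exp(-2V_i)$ so that $M_i\lapl - 2M_i\grad V_i\cdot\grad = \div(M_i\grad\,\tcdot)$). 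Introducing the shift $L_i := \mathrm{Id} - M_i^{-1}\div(M_i\grad\,\tcdot)$, we have $L_i e\psp{i}_n = \lambda\psp{i}_n e\psp{i}_n$, so $L_i$ acts on $\LL^2_{M_i}$-Fourier coefficients by multiplication by $\lambda\psp{i}_n$, and $L_i^2$ by $(\lambda\psp{i}_n)^2$.

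\textbf{Proof of \eqref{partialFastDecay2}.} For the forward direction, suppose $\tau\in\HH^2_{M_i}(D_i)$ with $M_i^{-1}\div(M_i\grad\tau)\in\LTMi$; then $L_i\tau\in\LTMi$, and its Fourier coefficients are $\langle L_i\tau, e\psp{i}_n\rangle_{\LTMi} = \langle \tau, L_i e\psp{i}_n\rangle_{\LTMi} = \lambda\psp{i}_n\langle\tau,e\psp{i}_n\rangle_{\LTMi}$, where the self-adjointness of $L_i$ on the appropriate domain is justified by testing against $e\psp{i}_n\in\HOMi$ and using the weak form \eqref{partial-elliptic-problem}. Bessel's inequality applied to $L_i\tau$ then gives $\sum_n (\lambda\psp{i}_n)^2\langle\tau,e\psp{i}_n\rangle_{\LTMi}^2 = \norm{L_i\tau}_{\LTMi}^2 < \infty$. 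For the converse, given $\tau\in\LTMi$ with $\sum_n(\lambda\psp{i}_n)^2\langle\tau,e\psp{i}_n\rangle_{\LTMi}^2<\infty$, set $g := \sum_n \lambda\psp{i}_n\langle\tau,e\psp{i}_n\rangle_{\LTMi} e\psp{i}_n\in\LTMi$ (convergent by hypothesis), and let $z\in\HOMi$ solve $\langle z,\varphi\rangle_{\HOMi} = \langle g,\varphi\rangle_{\LTMi}$. By \cref{lem:ellipReg}, $z\in\HH^2_{M_i}(D_i)$ with $M_i^{-1}\div(M_i\grad z)\in\LTMi$; comparing Fourier coefficients (using that $(\lambda\psp{i}_n,e\psp{i}_n)$ solves \eqref{partial-ev}, so $\langle z,e\psp{i}_n\rangle_{\HOMi} = \lambda\psp{i}_n\langle z,e\psp{i}_n\rangle_{\LTMi}$) yields $\lambda\psp{i}_n\langle z,e\psp{i}_n\rangle_{\LTMi} = \langle g,e\psp{i}_n\rangle_{\LTMi} = \lambda\psp{i}_n\langle\tau,e\psp{i}_n\rangle_{\LTMi}$, and since $\lambda\psp{i}_n>0$ we get $z=\tau$; hence $\tau$ has the claimed regularity. (Here one uses \cref{lem:coefDecay}, specifically \eqref{HOMi-from-ev}, to see that $g\in\LTMi$ with these coefficients forces $\tau\in\HOMi$ in the first place, so that $z=\tau$ makes sense.)

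\textbf{Proof of \eqref{partialFastDecay4}.} This is the iterate of the above. The left-hand side says precisely $\tau\in\HH^2_{M_i}(D_i)$, $L_i\tau - \tau = -M_i^{-1}\div(M_i\grad\tau)\in\HH^2_{M_i}(D_i)$ and $L_i(L_i\tau) = L_i^2\tau\in\LTMi$ (unwinding the nested divergence expressions and using $M_i^{-1}\div(M_i\grad\,\tcdot) = \mathrm{Id} - L_i$ twice). By the first equivalence applied to $\tau$ and then to $L_i\tau - \tau$, the left-hand side is equivalent to: $L_i^2\tau\in\LTMi$, which by the coefficient computation above has Fourier coefficients $(\lambda\psp{i}_n)^2\langle\tau,e\psp{i}_n\rangle_{\LTMi}$; Bessel gives finiteness of $\sum_n(\lambda\psp{i}_n)^4\langle\tau,e\psp{i}_n\rangle_{\LTMi}^2$. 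For the converse, the same recipe as before — defining $g := \sum_n(\lambda\psp{i}_n)^2\langle\tau,e\psp{i}_n\rangle_{\LTMi}e\psp{i}_n$ and invoking \cref{lem:iteratedEllipReg}, whose hypothesis ($g'\in\HH^2_{M_i}$ with $M_i^{-1}\div(M_i\grad g')\in\LTMi$, where $g' := L_i\tau$) is guaranteed by the first equivalence applied to $L_i\tau$ — produces a $z$ matching $\tau$ coefficient-by-coefficient (now dividing by $(\lambda\psp{i}_n)^2>0$), so $z=\tau$ inherits the full $\HH^4$-type regularity. \textbf{The main obstacle} is not conceptual but bookkeeping: one must carefully verify that the nested operator $M_i^{-1}\div(M_i\grad[M_i^{-1}\div(M_i\grad\,\tcdot)])$ genuinely equals the spectral multiplier $(\lambda\psp{i}_n-1)^2$ on eigenfunctions in the distributional sense before converting to measurable-function identities, and that the algebraic rearrangement relating the three displayed left-hand-side conditions to $\{L_i\tau\in\HH^2_{M_i},\ L_i^2\tau\in\LTMi\}$ is reversible — this is where the Leibniz formula for products of regular distributions with $\CC^1$ functions (\cref{lem:LeibnizFormula}) and the regularity of $M_i$ from \cref{hyp:power-like} do the work, exactly as in the proof of \cref{lem:ellipReg}.
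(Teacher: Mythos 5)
Your proof is correct and follows essentially the same route as the paper: diagonalize the shifted operator $\hat L_i := \mathrm{Id} - M_i^{-1}\div(M_i\grad\,\tcdot)$ (your $L_i$) on the eigenbasis $\seq{e\psp{i}_n}{n}$, use Parseval for the forward direction, and use \cref{lem:ellipReg} and \cref{lem:iteratedEllipReg} for the converse. Your converse is arranged slightly differently — you construct $g$ directly from the Fourier data and identify the resulting solution with $\tau$ coefficient-by-coefficient, whereas the paper runs a Cauchy-sequence argument on the partial sums $\tau_k$ and then invokes the boundedness of $\hat L^{-1}$ and $\hat L^{-2}$ — but this is a cosmetic variation on the same idea rather than a different method.
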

\begin{proof}
We will omit the spring index when proving \eqref{partialFastDecay2} and
\eqref{partialFastDecay4}. We start by denoting by $L$ the operator that
associates each $\varphi \in \mathrm{W}^{2,1}_{\loc}(D)$ with the distribution
$M^{-1}\div(M\grad\varphi)$ (this is a well-defined distribution because of the
regularity of $\varphi$ and $M$; cf.\ \cref{lem:LeibnizFormula} in \cref{sec:distrResults}).
We also write $\hat L := -L + I$ where $I$ is the
operator that associates each distribution with itself.
Let us define the Hilbert spaces (and associated norms)
\begin{subequations}\label{partial-Ht2}
\begin{gather}
\tilde\HH^2_M(D) := \left\{ \varphi \in \HH^2_M(D) \colon L(\varphi) \in \LL^2_M(D) \right\},\\
\norm{\varphi}_{\tilde\HH^2_M(D)}^2 := \norm{\varphi}_{\HH^2_M(D)}^2 + \norm{L(\varphi)}_{\LL^2_M(D)}^2
\end{gather}
\end{subequations}
and
\begin{subequations}\label{partial-Ht4}
\begin{gather}\tilde\HH^4_M(D) := \left\{ \varphi \in \HH^4_M(D)\colon L(\varphi) \in \tilde\HH^2_M(D) \right\},\\
\norm{\varphi}_{\tilde\HH^4_M(D)}^2 := \norm{\varphi}_{\HH^4_M(D)}^2 + \norm{L(\varphi)}_{\tilde\HH^2_M(D)}^2.
\end{gather}
\end{subequations}

Because of the definition of $\tilde\HH^2_M(D)$, $\hat L \colon \HH^2_M(D) \rightarrow \LL^2_M(D)$ is a bounded
linear operator. As for every $\varphi \in \LL^2_M(D)$
the solution $z \in \HH^1_M(D)$ to
\[ \langle z, \psi \rangle_{\HH^1_M(D)} = \langle f, \psi\rangle_{\LL^2_M(D)} \qquad\forall\,\psi \in \HH^1_M(D) \]
exists and, thanks to \cref{lem:ellipReg}, is bounded in $\tilde\HH^2_M(D)$,
$\hat L^{-1}\colon\LL^2_M(D) \rightarrow \tilde\HH^2_M(D)$ is well-defined and
bounded. Similarly, by the definition of $\tilde\HH^4_M(D)$ and
\cref{lem:iteratedEllipReg}, $\hat L^2 \colon \tilde\HH^4_M(D)
\rightarrow \LL^2_M(D)$ is a bounded linear operator with a bounded inverse.

Let $\tau \in \tilde\HH^2_M(D)$; i.e., $\tau$ complies with the left-hand side of
\eqref{partialFastDecay2}. It then follows that $f_\tau := -L(\tau) + \tau \in
\LL^2_M(D)$ and Parseval's identity thus yields
\begin{equation*}
\infty > \norm{f_\tau}_{\LL^2_M(D)}^2
= \sum_{n \geq 1} \langle f_\tau, e_n\rangle_{\LL^2_M(D)}^2
= \sum_{n \geq 1} \langle \tau, e_n\rangle_{\HH^1_M(D)}^2
= \sum_{n \geq 1} \lambda_n^2 \langle \tau, e_n\rangle_{\LL^2_M(D)}^2,
\end{equation*}
where $\langle f_\tau, e_n \rangle_{\LL^2_M(D)} = \langle \tau,
e_n\rangle_{\HH^1_M(D)}$ follows by the density of $\CIC(D)$ in
$\HH^1_M(D)$.

To prove the converse, note that the eigenfunctions $e_n$ of \eqref{partial-ev}
are solutions of $e_n = \hat L^{-1}(\lambda_n e_n)$, whence
$\norm{e_n}_{\tilde\HH^2_M(D)} \leq C \norm{\lambda_n e_n}_{\LTM} = C
\lambda_n$. Consequently, the partial sums
\[ \tau_k := \sum_{n=1}^k \langle \tau, e_n\rangle_{\LL^2_M(D)} e_n \]
are members of $\tilde\HH^2_M(D)$. Then, if $k\leq l$, the
$\LL^2_M(D)$-orthonormality of the $e_n$ yields that
\begin{equation*}
\norm{\hat L(\tau_l) - \hat L(\tau_k)}_{\LL^2_M(D)}^2
= \norm{\sum_{n=k+1}^l \langle \tau, e_n\rangle_{\LL^2_M(D)} \hat L(e_n)}_{\LL^2_M(D)}^2
= \sum_{n=k+1}^l \lambda_n^2 \langle \tau, e_n\rangle_{\LL^2_M(D)}^2.
\end{equation*}
As the sum $\sum_{n\geq 1}\lambda_n^2 \langle \tau, e_n\rangle_{\LL^2_M(D)}^2$
is assumed to converge, the sequence $\seq{ \hat L(\tau_k) }{k \geq 1}$ is a
Cauchy sequence in $\LL^2_M(D)$ and hence it converges to some $f^* \in
\LL^2_M(D)$. The continuity of $\hat L^{-1}$ implies that the sequence $\seq{
\tau_k }{k \geq 1}$ converges in $\tilde\HH^2_M(D)$ to $\hat L^{-1}(f^*)$. The
same sequence converges in $\LL^2_M(D)$ to $\tau$. The continuity of the
injection of $\HH^2_M(D)$ into $\LL^2_M(D)$ then implies that $\tau = \hat
L^{-1}(f^*) \in \tilde\HH^2_M(D)$. This completes the proof of
\eqref{partialFastDecay2}.

Let us suppose now that $\tau$ in $\tilde\HH^4_M(D)$; i.e., $\tau$ complies with the
left-hand side of \eqref{partialFastDecay4}. It follows that $f_\tau :=
-L(\tau) + \tau \in \tilde\HH^2_M(D)$ and $g_\tau: = -L(f_\tau) + f_\tau \in
\LL^2_M(D)$. Parseval's identity gives
%
\begin{multline*}
\infty > \norm{g_\tau}_{\LL^2_M(D)}^2
= \sum_{n \geq 1} \langle g_\tau, e_n \rangle_{\LL^2_M(D)}^2
= \sum_{n \geq 1} \langle f_\tau, e_n \rangle_{\HH^1_M(D)}^2\\
= \sum_{n \geq 1} \lambda_n^2 \langle f_\tau, e_n \rangle_{\LL^2_M(D)}^2
= \sum_{n \geq 1} \lambda_n^4 \langle \tau, e_n \rangle_{\LL^2_M(D)}^2,
\end{multline*}
%
where the second equality follows, similarly as above, by the density of
$\CIC(D)$ in $\HH^1_M(D)$ thanks to the boosted regularity of $f_\tau$. The latter
also allows the use of \eqref{partial-ev} to obtain the third equality.

To prove the converse we first note that each $e_n$ is a solution of $e_n =
\hat L^{-2}(\lambda_n^2 e_n)$, whence $\norm{e_n}_{\tilde\HH^4_M(D)} \leq C
\norm{\lambda_n e_n}_{\LL^2_M(D)} = C \lambda_n$. Thus, the partial sums
$\tau_k$ are members of $\tilde\HH^4_M(D)$; hence, if $k \leq l$,
\begin{equation*}
\norm{ \hat L^2(\tau_l) - \hat L^2(\tau_k) }_{\LL^2_M(D)}^2
 = \norm{ \sum_{n = k+1}^l \langle \tau, e_n \rangle_{\LL^2_M(D)} \hat L^2(e_n) }_{\LL^2_M(D)}^2
 = \sum_{n = k+1}^l \lambda_n^4 \langle \tau, e_n \rangle_{\LL^2_M(D)}^2.
\end{equation*}
The finiteness of the sum $\sum_{n \geq 1}\lambda_n^4\langle
\tau,e_n\rangle_{\LL^2_M(D)}$ thus implies that $\seq{ \hat L^2(\tau_k) }{k \geq
1}$ is a Cauchy sequence, which by virtue of the completeness of $\LL^2_M(D)$
converges to some $g^* \in \LL^2_M(D)$. The continuity of $\hat L^{-2}$
implies that the $\tau_k$ converge to $\hat L^{-2} g^*$ in $\tilde\HH^4_M(D)$.
As the partial sums converge in $\LL^2_M(D)$ to $\tau$, $\tau = \hat L^{-2} g^*
\in \tilde\HH^4_M(D)$. We have thus proved \eqref{partialFastDecay4}.
\end{proof}

We intend to exploit the previous single-domain results in order to say
something about the multi-domain case. To this end, we define, for $i \in \ii{N}$,
the distributional operators $L_i\colon \{ \varphi \in \LL^1_{\loc}(\D) \colon \grad[\conf_i]\varphi \in [\mathrm{W}^{1,1}_{\loc}(\D)]^d \} \rightarrow
\mathcal{D}'(\D)$ by
\begin{equation*}
L_i(\varphi) := M_i^{-1} \div[\conf_i](M_i \grad[\conf_i]\varphi).
\end{equation*}
We also define $\hat L_i := -L_i + I$, where $I$ is the identity operator
mapping $\mathcal{D}'(\D)$ onto itself. An easily verifiable and important property of these operators is
that, as long as their composition is well-defined, they commute with respect to their spring index. Hence, we can naturally
use multi-indices in $\natural_0^N$ to denote the repeated application of
distinct $L_i$ or $\hat L_i$:
\begin{equation*}
L_\beta := L_1^{\beta_1} \circ \dotsm \circ L_N^{\beta_N}, \qquad
\hat L_\beta := \hat L_1^{\beta_1} \circ \dotsm \circ \hat L_N^{\beta_N},
\end{equation*}
where any zero among the $\beta_i$ is assumed to give rise to the identity
operator. For these multi-indices we define the function $\abs{\beta}_\infty :=
\max_{i \in \ii{N}} \beta_i$. Now, for derivatives in
$\mathcal{D}'(\D)$, the multi-indices belong to $\natural_0^{N d}$ and come
naturally grouped in $N$ length-$d$ sub-multi-indices (one for each factor of
the Cartesian product $\D = D_1 \times \dotsm \times D_N$). Thus we define
the function $\abs{\tcdot}_{\infty,1}\colon \natural_0^{N d}
\rightarrow \natural_0$ by
\begin{equation*}
\abs{\vect{\alpha}}_{\infty,1} = \abs{(\alpha_1,\dotsc,\alpha_N)}_{\infty,1} := \max_{i \in \ii{N}} \abs{\alpha_i}_1 = \max_{i \in \ii{N}} \abs{\alpha_i};
\end{equation*}
that is, the maximal order among the component single-domain
multi-indices.

With this compact notation, we now define the Hilbert spaces (with corresponding norms)
\begin{subequations}\label{H2mix}
\begin{gather}
\label{H2mixSpace}
\tilde\HH^{2,\mathrm{mix}}_\maxw(\D)
:= \left\{ \varphi \in \LTM \colon \partial_{\vect{\alpha}} \in \LTM,\ \abs{\vect{\alpha}}_{\infty,1} \leq 2;\ L_\beta(\varphi) \in \LTM,\ \abs{\beta}_\infty = 1 \right\},\\
\label{H2mixNorm}
\norm{\varphi}_{\tilde\HH^{2,\mathrm{mix}}_\maxw(\D)}^2
:= \sum_{\substack{\vect{\alpha} \in \natural_0^{N d}\\\abs{\vect{\alpha}}_{\infty,1}\leq 2}} \norm{\partial_{\vect{\alpha}} \varphi}_{\LTM}^2
+ \sum_{\substack{\beta \in \natural_0^d\\\abs{\beta}_\infty = 1}} \norm{L_\beta(\varphi)}_{\LTM}^2
\end{gather}
\end{subequations}
and
\begin{subequations}\label{H4mix}
\begin{multline}\label{H4mixSpace}
\tilde\HH^{4,\mathrm{mix}}_\maxw(\D)
:= \Big\{ \varphi \in \LTM \colon \partial_{\vect{\alpha}} \in \LTM,\ \abs{\vect{\alpha}}_{\infty,1} \leq 4;\\
L_\beta(\varphi) \in \HH^2_\maxw(\D),\ \abs{\beta}_\infty = 1;\ L_\beta(\varphi) \in \LTM,\ \abs{\beta}_\infty = 2 \Big\},
\end{multline}
\begin{equation}\label{H4mixNorm}
\norm{\varphi}_{\tilde\HH^{4,\mathrm{mix}}_\maxw(\D)}^2
:= \sum_{\substack{\vect{\alpha} \in \natural_0^{N d}\\\abs{\vect{\alpha}}_{\infty,1}\leq 4}} \norm{\partial_{\vect{\alpha}} \varphi}_{\LTM}^2
+ \sum_{\substack{\beta \in \natural_0^d\\ \abs{\beta}_\infty = 1}} \norm{L_\beta(\varphi)}_{\HH^2_\maxw(\D)}^2
+ \sum_{\substack{\beta \in \natural_0^d\\ \abs{\beta}_\infty = 2}} \norm{L_\beta(\varphi)}_{\LTM}^2.
\end{equation}
\end{subequations}
The following lemma holds.
\begin{lemma}\label[lemma]{lem:fullEmbed}
For $m \in \{2, 4\}$, $\tilde\HH^{m,\mathrm{mix}}_{\maxw}(\D) \subset
\HH^{\Tau\psp{m}}_\maxw(\D)$.
\end{lemma}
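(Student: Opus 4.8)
The plan is to reduce the claimed inclusion to Parseval's identity for the $\LTM$-orthonormal tensor basis $\seq{e_{\vect n}}{\vect n\in\natural^N}$. Given $\varphi\in\tilde\HH^{m,\mathrm{mix}}_{\maxw}(\D)$, I would produce an explicit $G\in\LTM$ whose Fourier coefficients satisfy $\langle G,e_{\vect n}\rangle_{\LTM}=\mu_{\vect n}\langle\varphi,e_{\vect n}\rangle_{\LTM}$ with $\mu_{\vect n}^2=\tau\psp{m}_{\vect n}$; then, using \eqref{Fourier-Parseval-H} applied to $\LTM$ (cf.\ \cref{lem:tensor-ev}),
\[
\norm[n]{\varphi}_{\HH^{\Tau\psp{m}}_\maxw(\D)}^2
=\sum_{\vect n\in\natural^N}\tau\psp{m}_{\vect n}\,\langle\varphi,e_{\vect n}\rangle_{\LTM}^2
=\sum_{\vect n\in\natural^N}\langle G,e_{\vect n}\rangle_{\LTM}^2
=\norm[n]{G}_{\LTM}^2<\infty,
\]
which is exactly $\varphi\in\HH^{\Tau\psp{m}}_\maxw(\D)$. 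The natural candidate is the iterated degenerate-elliptic operator $G:=\hat L_1\circ\dotsm\circ\hat L_N(\varphi)$ when $m=2$ and $G:=\hat L_1^2\circ\dotsm\circ\hat L_N^2(\varphi)$ when $m=4$. Since the $i$-th factor $e\psp{i}_{n_i}$ of $e_{\vect n}$ solves \eqref{partial-ev} and $\hat L_i$ acts only on the $\conf_i$ variable, one has $\hat L_i(e_{\vect n})=\lambda\psp{i}_{n_i}e_{\vect n}$, so that $\mu_{\vect n}=\prod_{i\in\ii{N}}\lambda\psp{i}_{n_i}$ for $m=2$ and $\mu_{\vect n}=\prod_{i\in\ii{N}}(\lambda\psp{i}_{n_i})^2$ for $m=4$, whence in both cases $\mu_{\vect n}^2=\prod_{i\in\ii{N}}(\lambda\psp{i}_{n_i})^m=\tau\psp{m}_{\vect n}$ by \eqref{HMixIndices}.

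First I would check that $G$ is a well-defined member of $\LTM$. The operators $L_i$ commute pairwise (as noted before the definition of $L_\beta$), so expanding the composite gives, when $m=2$, $\hat L_1\circ\dotsm\circ\hat L_N=\sum_{\beta\in\{0,1\}^N}(-1)^{\abs[n]{\beta}_1}L_\beta$: every $\beta$ occurring has $\abs[n]{\beta}_\infty\le 1$, so each $L_\beta(\varphi)$ lies in $\LTM$ by membership of $\varphi$ in $\tilde\HH^{2,\mathrm{mix}}_{\maxw}(\D)$ (the term $\beta=0$ being $\varphi$ itself). When $m=4$, $\hat L_1^2\circ\dotsm\circ\hat L_N^2=\sum_{\gamma\in\{0,1,2\}^N}c_\gamma L_\gamma$ with integer coefficients $c_\gamma$, and each $L_\gamma(\varphi)$ with $\abs[n]{\gamma}_\infty\le 2$ lies in $\LTM$ by definition of $\tilde\HH^{4,\mathrm{mix}}_{\maxw}(\D)$ (using $\HH^2_\maxw(\D)\subset\LTM$ for the terms with $\abs[n]{\gamma}_\infty=1$). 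The mixed-derivative requirements $\partial_{\vect\alpha}\varphi\in\LTM$ built into \eqref{H2mixSpace} and \eqref{H4mixSpace} are precisely what make every intermediate distributional operator well-defined, so these composites are unambiguous.

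The crux is the identity $\langle G,e_{\vect n}\rangle_{\LTM}=\mu_{\vect n}\langle\varphi,e_{\vect n}\rangle_{\LTM}$, i.e.\ that $\seq{e_{\vect n}}{\vect n}$ simultaneously diagonalizes the $\hat L_i$ on their distributional domains. I would establish it by reduction to the single-variable statement $\langle\hat L_i u,e_{\vect n}\rangle_{\LTM}=\lambda\psp{i}_{n_i}\langle u,e_{\vect n}\rangle_{\LTM}$, valid for every $u\in\LTM$ with $\grad[\conf_i]u\in[\LTM]^d$ and $\hat L_i u\in\LTM$. By Fubini, for almost every frozen value $\conf_{\hat\imath}$ of the remaining variables the slice $u(\cdot,\conf_{\hat\imath})$ lies in $\HOMi$ and solves, in the distributional sense on $D_i$, the equation $z-M_i^{-1}\div(M_i\grad z)=(\hat L_i u)(\cdot,\conf_{\hat\imath})\in\LTMi$; pairing against a test function $\chi\in\CIC(D_i)$ and integrating by parts in $\conf_i$ (no boundary term appears, $\chi$ being compactly supported), then extending to all $\chi\in\HOMi$ via the density in part (a) of \cref{lem:partialDensity2}, shows that the slice is the weak solution of \eqref{partial-elliptic-problem} with right-hand side $(\hat L_i u)(\cdot,\conf_{\hat\imath})$; taking $\chi=e\psp{i}_{n_i}$ and using \eqref{partial-ev} yields $\langle(\hat L_i u)(\cdot,\conf_{\hat\imath}),e\psp{i}_{n_i}\rangle_{\LTMi}=\lambda\psp{i}_{n_i}\langle u(\cdot,\conf_{\hat\imath}),e\psp{i}_{n_i}\rangle_{\LTMi}$ (this is, equivalently, the ``easy'' direction of \cref{lem:partialEquiv} read off fiberwise). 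Multiplying by $\prod_{j\neq i}e\psp{j}_{n_j}(\conf_j)M_j(\conf_j)$ and integrating in $\conf_{\hat\imath}$ gives the single-variable identity, by the tensor-product structure of $e_{\vect n}$ and $\maxw$. One then iterates over $i\in\ii{N}$: writing $G=\hat L_1(\psi_1)$ with $\psi_1=\hat L_2\circ\dotsm\circ\hat L_N(\varphi)$ (and similarly at each later stage), the expansion of the previous paragraph shows that each such $\psi_k$ still carries the modest $\conf_k$-directional Sobolev regularity needed for the single-variable step, and the operators together with the partial pairings in distinct variables commute; the case $m=4$ is handled identically with $\hat L_i^2$ in place of $\hat L_i$, using \eqref{partialFastDecay4} fiberwise.

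The step I expect to be the main obstacle is this simultaneous-diagonalization identity, chiefly for the bookkeeping it requires: one must keep careful track that restriction to slices commutes with the distributional operators $\hat L_i$ (a Fubini argument for distributions, licit here because the mixed-derivative hypotheses in \eqref{H2mixSpace}--\eqref{H4mixSpace} grant enough integrable derivatives), and one must verify that the $N$-fold iteration never leaves the function class in which the single-variable argument is valid. The degeneracy of $\maxw$ on $\partial\D$, which obstructs free integration by parts elsewhere in the paper, causes no trouble in this direction because every integration by parts in the single-variable step is performed against a compactly supported test function, so no boundary contribution is ever produced; this is why the argument relies only on the density result \cref{lem:partialDensity2}(a) together with the weak eigenproblem \eqref{partial-ev}, and not on the elliptic-regularity machinery of \cref{lem:H2-to-H4}. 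Once the identity is in hand, the conclusion is immediate from Parseval's identity and the elementary relation $\mu_{\vect n}^2=\tau\psp{m}_{\vect n}$.
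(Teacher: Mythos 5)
Your proposal follows essentially the same route as the paper: the paper also applies $\hat L_{(m/2,\dotsc,m/2)}$ to $\varphi$, observes that the expansion in commuting $L_\beta$'s (all with $\abs{\beta}_\infty \le m/2$) places the result in $\LTM$, and then concludes via Parseval's identity using the eigenvalue relation $\hat L_i e_{\vect n} = \lambda\psp{i}_{n_i} e_{\vect n}$. You supply considerably more detail than the paper on the simultaneous-diagonalization step (the fiberwise Fubini argument justifying $\langle \hat L_i u, e_{\vect n}\rangle_{\LTM} = \lambda\psp{i}_{n_i}\langle u, e_{\vect n}\rangle_{\LTM}$), which the paper dispatches in a single clause by appealing to density of $\CIC(\D)$ in $\HOM$ and the tensor-product structure of $\maxw$.
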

\begin{proof}
We recall that, by \cref{lem:tensor-ev}, $\seq{ (\lambda_{\vect{n}},
e_{\vect{n}}) }{ \vect{n} \in \natural^N }$ as defined in \eqref{tensor-ev}
is a complete set of solutions of the $\maxw$-weighted eigenvalue problem
\eqref{full-ev} and that the latter have tensor-product structure. Also, by
the definitions in \eqref{defHSigmaM} and \eqref{HMixIndices},
$\HH^{\Tau\psp{m}}_\maxw(\D)$ is the space of $\LTM$
functions whose squared Fourier coefficients, weighted with the coefficients
defined by
\[ \tau\psp{m}_{\vect{n}} = \prod_{i=1}^N \left( \lambda\psp{i}_{n_i} \right)^m \qquad \forall\,\vect{n} \in \natural^N, \]
have finite sum.

If $\varphi \in \tilde\HH^{m,\mathrm{mix}}_{\maxw}(\D)$, one can apply to it
each operator $\hat L_i$ a total of $m/2$ times \emph{cumulatively} and the result will lie in $\LTM$;
i.e., $\hat L_{(m/2,\dotsc,m/2)}(\varphi) \in \LTM$. By Parseval's identity,
%
\begin{equation}\label{ParsevalMix}
\begin{split}
\infty > \norm{ \hat L_{(m/2,\dotsc,m/2)} \varphi }_{\LTM}^2
& = \sum_{\vect{n} \in \natural^N} \left\langle \hat L_{(m/2,\dotsc,m/2)}(\varphi), e_{\vect{n}} \right\rangle_{\LTM}^2
= \sum_{\vect{n} \in \natural^N} \prod_{i=1}^N \left(\lambda\psp{i}_{n_i}\right)^m \langle \varphi, e_{\vect{n}} \rangle_{\LTM}^2,
\end{split}
\end{equation}
where the second equality is justified by the density of $\CIC(\D)$ functions
in $\HOM$, the regularity of $\varphi$ and the Cartesian product form of the
domain $\D$ and the tensor-product form of the Maxwellian weight function $\maxw$. As the finiteness of the last expression in \eqref{ParsevalMix} is exactly the condition for membership in $\HH^{\Tau\psp{m}}_\maxw(\D)$, we obtain the desired result.

\end{proof}

We recall that \cref{thm:HMix-in-B1} gives a condition on the parameter of
the space $\HH^{\Tau\psp{m}}_\maxw(\D)$ under which it
becomes a subspace of the abstract space $\mathcal{B}_1$, which in turn is
connected by \eqref{A1-B1-isometric} to the space $\mathcal{A}_1$ of fast
convergence of the greedy algorithms (cf.\ \cref{thm:PGA-rate} and
\cref{thm:OGA-rate}). Then, from \cref{lem:fullEmbed} it is apparent
that the arguably less abstract
space $\tilde\HH^{m,\mathrm{mix}}_\maxw(\D)$ will be a
subspace of $\mathcal{B}_1$ for a suitable choice of the parameter.
We shall now make this statement more precise.

\begin{theorem}\label{thm:HtMix-in-B1} Let $\HH^{\Tau\psp{4}}_\maxw(\D)$ be
defined according to \eqref{HMixIndices}, where $d \in \{2, 3\}$, as elsewhere,
is the common dimensionality of the Cartesian factors that make up $\D$; then,
\[ \tilde\HH^{4,\mathrm{mix}}_\maxw(\D) \subset \HH^{\Tau\psp{4}}_\maxw(\D) \subset \mathcal{B}_1. \]
\end{theorem}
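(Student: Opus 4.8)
The plan is to combine two inclusions that have already been set up in the excerpt. The second inclusion, $\HH^{\Tau\psp{4}}_\maxw(\D) \subset \mathcal{B}_1$, is exactly the content of \cref{thm:HMix-in-B1} applied with $m = 4$: the hypothesis there is $m > \frac{d}{2} + 1$, and since $d \in \{2,3\}$ we have $\frac{d}{2} + 1 \leq \frac{5}{2} < 4$, so the condition is satisfied regardless of whether $d = 2$ or $d = 3$. The first inclusion, $\tilde\HH^{4,\mathrm{mix}}_\maxw(\D) \subset \HH^{\Tau\psp{4}}_\maxw(\D)$, is the $m = 4$ case of \cref{lem:fullEmbed}. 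So the proof is essentially a one-line chaining of these two facts.

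Concretely, I would first invoke \cref{lem:fullEmbed} with $m = 4$ to get $\tilde\HH^{4,\mathrm{mix}}_\maxw(\D) \subset \HH^{\Tau\psp{4}}_\maxw(\D)$. Then I would check the numerical condition: since $d \le 3$, we have $\frac{d}{2} + 1 \le \frac{5}{2} < 4$, so \cref{thm:HMix-in-B1} with $m = 4$ applies and yields $\HH^{\Tau\psp{4}}_\maxw(\D) \subset \mathcal{B}_1$. Composing the two inclusions gives the claimed chain. There is essentially no obstacle here — the theorem is a corollary packaged for later reference, and all the real work (the elliptic regularity in \cref{lem:H2-to-H4}, the Fourier-coefficient characterizations in \cref{lem:partialEquiv}, the tensorization in \cref{lem:fullEmbed}, and the summability estimate in \cref{thm:HMix-in-B1}) has already been done.

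The only point deserving a moment's care is making explicit that the hypothesis of \cref{thm:HMix-in-B1} is met for \emph{both} admissible values of $d$, which is why I would spell out the inequality $\frac{d}{2}+1 \le \frac{5}{2} < 4$ rather than just assert it. Here is the proof.

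\begin{proof}
By \cref{lem:fullEmbed} with $m = 4$ we have the inclusion $\tilde\HH^{4,\mathrm{mix}}_\maxw(\D) \subset \HH^{\Tau\psp{4}}_\maxw(\D)$. On the other hand, since $d \in \{2,3\}$ we have $\frac{d}{2} + 1 \le \frac{5}{2} < 4$, so $m = 4$ satisfies the hypothesis $m > \frac{d}{2} + 1$ of \cref{thm:HMix-in-B1}; that theorem therefore gives $\HH^{\Tau\psp{4}}_\maxw(\D) \subset \mathcal{B}_1$. Combining these two inclusions yields
\[ \tilde\HH^{4,\mathrm{mix}}_\maxw(\D) \subset \HH^{\Tau\psp{4}}_\maxw(\D) \subset \mathcal{B}_1, \]
which is the assertion of the theorem.
\end{proof}
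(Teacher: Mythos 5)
Your proof is correct and is essentially identical to the paper's own: both invoke \cref{lem:fullEmbed} with $m=4$ for the first inclusion, then check $4 > 1 + \frac{d}{2}$ for $d\in\{2,3\}$ and apply \cref{thm:HMix-in-B1} for the second. The only difference is that you spell out the arithmetic $\frac{d}{2}+1 \le \frac{5}{2} < 4$ slightly more explicitly, which is harmless.
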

\begin{proof}
\Cref{lem:fullEmbed} gives that $\tilde\HH^{4,\mathrm{mix}}_\maxw(\D)
\subset \HH^{\Tau\psp{4}}_\maxw(\D)$. As $4$ is greater than $1+\frac{d}{2}$
for both $d = 2$ and $d = 3$, \cref{thm:HMix-in-B1} gives that
$\HH^{\Tau\psp{4}}_\maxw(\D) \subset \mathcal{B}_1$.

\end{proof}

\begin{remark}\label[remark]{rem:On-HMix}
If the hypotheses we have been making throughout this work (i.e., Hypotheses
\ref{hyp:potential}, \ref{hyp:partialCompEmb}, \ref{hyp:asymptotics},
\ref{hyp:potential2} and \ref{hyp:power-like}) are met, nothing in our
arguments essentially restricts the results to the physically relevant cases $d
= 2$ and $d = 3$. In particular, in the case of $d = 1$, the combination of
\cref{thm:HMix-in-B1} with \cref{lem:fullEmbed} and the fact that $2 > 1+\frac{1}{2}$ yields that
\[ \tilde\HH^{2,\mathrm{mix}}_\maxw(\D) \subset \HH^{\Tau\psp{2}}_\maxw(\D) \subset \mathcal{B}_1 .\]

Sobolev spaces of dominating mixed smoothness akin to $\tilde\HH^{2,\mathrm{mix}}_\maxw(\D)$ can also be shown to be subspaces of the regularity class $\mathcal{B}_1$ in the case of the classical Poisson problem studied in \cite{LLM}: Find $\psi \in \HH^1_0(\D)$ (with the standard meaning of
the Sobolev space $\HH^1_0(\D)$; i.e., the set of all elements of $\HH^1(\D)$ that have
zero trace on $\partial\D$---not a zero-weighted Sobolev space!) such that
\[ \langle \psi, \varphi \rangle_{\HH^1_0(\D)} = \langle f, \varphi \rangle_{\LL^2(\D)} \quad \forall\,\varphi\in \HH^1_0(\D),\]
where $\D = D_1 \otimes \dotsm \otimes D_N$ and each $D_i$, $i \in \ii{N}$, is an open
interval. The corresponding greedy algorithms seek approximations that are
linear combinations of $\tp_{i\in\ii{N}} \HH^1_0(D_i)$ functions. The argument
of \cref{thm:HMix-in-B1} above holds in this case without any change, and so, given
that the $n$-th eigenvalue of the corresponding analogue to the partial-domain
eigenvalue problem \eqref{partial-ev} is proportional to $n^2$, we have that
\begin{equation*}
\left\{
\varphi \in \LL^2(\D)\colon \sum_{\vect{n} \in \natural^N} \left[ \left( \sum_{i=1}^N n_i^2 \right)^2 + \prod_{i=1}^N \left( n_i^2 \right)^2 \right] \langle \varphi, e_{\vect{n}} \rangle_{\LL^2(\D)}^2 < \infty
\right\} \subset \mathcal{B}_1.
\end{equation*}
In this non-degenerate setting it is possible to identify the space on the
left-hand side of the above expression with
\begin{equation*}
\HH^{2,\mathrm{mix}}(\D) \cap \HH^1_0(\D) := \left\{ \varphi \in \HH_0^1 \colon \partial_\alpha \varphi \in \LL^2(\D),\ \abs{\alpha}_\infty = \max_{1\leq i \leq N} \alpha_i \leq 2 \right\}.
\end{equation*}
This characterization should be contrasted with the condition for membership in $\mathcal{A}_1$ (which is identical to $\mathcal{B}_1$ in this unweighted setting)
derived in \cite[Remark 4]{LLM}, which demands, instead, that the true solution belongs to
$\HH^m(\D) \cap \HH^1_0(\D)$, with $m > 1 + N/2$. In fact the characterization given in \cite[Remark 4]{LLM}
can be generalized to the requirement that the exact solution belongs to $\HH^m(\D) \cap \HH^1_0(\D)$, with
$m > 1 + N d / 2$, when the factor domains are no longer one-dimensional but $d$-dimensional; and, thanks
to \cref{thm:HUnif-in-B1}, such a characterization in terms of standard Sobolev spaces (rather than
spaces of dominating mixed smoothness) also has a counterpart in our degenerate setting.

An attractive feature of spaces of dominating mixed
smoothness is that their regularity index is independent of $N$ and such spaces are
therefore more naturally suited to (high-dimensional) tensor-product settings such as ours.
We note in this respect that we conjecture
that the reverse of the inclusion stated in \cref{lem:fullEmbed} also holds, implying
equality of the two spaces there---just as in \cref{lem:partialEquiv} for the single-domain
spaces. We suspect that the proof of this would involve tensorizing the statements of
\cref{lem:ellipReg} and \cref{lem:iteratedEllipReg}.
However, even if \cref{lem:fullEmbed} held with an equality of spaces, there would
still be some slack between the discussed mixed smoothness levels and the lower bound of the admissible parameter $m$ such that $\HH^{\Tau\psp{m}}_\maxw(\D) \subset \mathcal{B}_1$. The reason is that we have gone about obtaining elliptic
regularity results by two integer degrees of differentiation at a time. Consequently,
we have not defined anything we could label $\tilde\HH^m_{M_i}(D_i)$ or
$\tilde\HH^{m,\mathrm{mix}}_\maxw(\D)$ with $m \notin \{2,4\}$ while being consistent with the
definitions we have given for the single-spring spaces $\tilde\HH^2_{M_i}(D_i)$ in
\eqref{partial-Ht2} and $\tilde\HH^4_{M_i}(D_i)$ in \eqref{partial-Ht4}, and with
the multi-spring spaces $\tilde\HH^{2,\mathrm{mix}}_\maxw(\D)$ in \eqref{H2mix} and
$\tilde\HH^{4,\mathrm{mix}}_\maxw(\D)$ in \eqref{H4mix}. Given the presence of
the second-order operators of the form $M_i^{-1}\div(M_i\grad \tcdot)$ and
$\maxw^{-1}\div(\maxw \grad \tcdot)$ in the definition of these even-indexed
spaces, it is not immediately clear what a suitable explicit definition of the analogous
odd-indexed---let alone non-integer-indexed---spaces might be.
We shall address this question by using function space interpolation.

We start with the fact that, given two nets of positive weights
$\Sigma\psp{i} = \seq{ \sigma\psp{i}_{\vect{n}} }{ \vect{n} \in \natural^N
}$, $i \in \{1, 2\}$, one can show that for $\theta \in (0,1)$ the (real) $(\theta,2)$-interpolation space
between them obeys
\[ \left[\HH^{\Sigma\psp{1}}_\maxw(\D), \HH^{\Sigma\psp{2}}_\maxw(\D)\right]_{\theta,2} = \HH^{\tilde\Sigma}_\maxw(\D), \]
where
$\tilde\Sigma = \seq{ \tilde\sigma_{\vect{n}} }{ \vect{n} \in \natural^N }$ and $\tilde\sigma_{\vect{n}} := \left(\sigma\psp{1}_{\vect{n}}\right)^{1-\theta} \left(\sigma\psp{2}_{\vect{n}}\right)^{\theta}$ for all $\vect{n} \in \natural^N$,
with equivalence of norms (the proof is a simple modification of the argument
given in \cite[Chapter 23]{Tartar}). As, according to the definition in \eqref{HMixIndices},
\begin{equation*}
\tau\psp{2\theta}_{\vect{n}} = \left( \tau\psp{0}_{\vect{n}} \right)^{1-\theta} \left( \tau\psp{2}_{\vect{n}} \right)^\theta
\quad\text{and}\quad \tau\psp{2+2\theta}_{\vect{n}} = \left( \tau\psp{2}_{\vect{n}} \right)^{1-\theta} \left( \tau\psp{4}_{\vect{n}} \right)^\theta
\end{equation*}
for all $\theta \in (0,1)$ and $\vect{n}$ in $\natural^N$, it follows that
\begin{equation*}
\HH^{\Tau\psp{2\theta}}_\maxw(\D) = \left[ \HH^{\Tau\psp{0}}_\maxw(\D), \HH^{\Tau\psp{2}}_\maxw(\D) \right]_{\theta,2}
\quad\text{and}\quad \HH^{\Tau\psp{2+2\theta}}_\maxw(\D) = \left[ \HH^{\Tau\psp{2}}_\maxw(\D), \HH^{\Tau\psp{4}}_\maxw(\D) \right]_{\theta,2},
\end{equation*}
with equivalence of norms. Given that the inclusions in \cref{lem:fullEmbed}
are actually continuous embeddings and $\HH^{\Tau\psp{0}}_\maxw(\D) = \LTM$, it follows that
\begin{equation*}
\left[\LTM,\tilde\HH^{2,\mathrm{mix}}_\maxw(\D)\right]_{\theta,2} \subset \HH^{\Tau\psp{2\theta}}
\quad\text{and}\quad
\left[\tilde\HH^{2,\mathrm{mix}}_\maxw(\D), \tilde\HH^{4,\mathrm{mix}}_\maxw(\D)\right]_{\theta,2} \subset \HH^{\Tau\psp{2+2\theta}}_\maxw(\D),
\end{equation*}
with continuous embedding. Since whenever $m > 1 + \frac{d}{2}$ we have that $\HH^{\Tau\psp{m}}_\maxw(\D)
\subset \mathcal{B}_1$, defining $\tilde\HH^{m,\mathrm{mix}}_\maxw(\D)$ as the
interpolation space appearing on the left-hand side of the corresponding inclusion above if $m \in (0,2)$ or $m \in (2,4)$ and as $\LTM$ if $m = 0$ is an appealing idea, for then we can simply state that
\[ m > 1+\frac{2}{2} \implies \tilde\HH^{m,\mathrm{mix}}_\maxw(\D) \subset \mathcal{B}_1. \]
\end{remark}

\section{Conclusions and directions for future work}
\label{sec:conclusions}

We proved the well-posedness (\cref{thm:existence}) and convergence
(\cref{thm:PGA-converges} and \cref{thm:OGA-converges}) of two greedy
algorithms, which seek approximations to solutions of high-dimensional
and degenerate Fokker--Planck equations using a separated representation
procedure. We then gave sufficient conditions on the true solution of
the equation for the fast-convergence of the approximations given by those
algorithms; first, in terms of summability of Fourier coefficients
(\cref{thm:HMix-in-B1}), and then, in terms of regularity
(\cref{thm:HtMix-in-B1}). In the process of proving these main results, a
number of auxiliary results
were proved, some of which are of interest in their own right; e.g., function
spaces with tensor-product weights inherit compact embedding
(\cref{lem:compEmb}) and density (\cref{cor:tensorDensity}) properties
from the spaces corresponding to the weights that appear as factors of the
tensor product weight; the existence of elliptic regularity results for
single-spring degenerate elliptic problems (particularly
\cref{lem:iteratedEllipReg}); and eigenvalue asymptotics in the same
degenerate setting (\cref{lem:conditionsForWeyl} in
\cref{sec:ev-asymptotics}).


The greedy algorithms described in \cref{sec:SR} are abstract. They entail
obtaining the true minima of functionals in nonlinear manifolds embedded in
infinite-dimensional function spaces (cf.\ \eqref{greedy1} and
\eqref{greedy2}). Any practical implementation of the separated representation
strategy must then introduce a discretization  (e.g., by a finite element method or
a spectral method) and a procedure for the approximation of those minima in the
resulting discretized manifolds (e.g., an alternating direction scheme, Newton
iteration, etc.). The mathematical analysis of the effects of the discretization and
the use of approximate minimization algorithms on the convergence of the greedy
algorithms is the subject of ongoing research.
On a related note, we are also interested in the implementation of the
combination of the separated representation strategy and the alternating
direction scheme described in \eqref{FP-weak-SD-x-2} and \eqref{FP-weak-SD-q-2}
in order to approximate the full Fokker--Planck equation \eqref{FP-full}.
Further up in model complexity is the coupling between the full Fokker--Planck
equation and the Navier--Stokes equations for the velocity and pressure of an
incompressible solvent, which is also of interest to us. The Navier--Stokes--Fokker--Planck
system is a fully coupled macro-micro system, since
the configuration probability density function given by
the Fokker--Planck equation feeds into the Navier--Stokes equations a
contribution to the extra-stress tensor while the Navier--Stokes velocity field
enters in the Fokker--Planck equation (cf.\ \cite[\S 15.2]{Bird2}). An
important property of the full Fokker--Planck equation is that its solution
is almost everywhere nonnegative and has unit integral over
the configuration space $\D$ (i.e., it is a probability density function) at
almost every point in time $t$ and space $\vect{x}$ if the initial condition
has those properties. It is of interest to learn whether the separated
representation strategy can be adapted to give approximations that also
preserve the property of being a probability density function.


The generalization of our results to other tensor-product-based
high-dimensional PDEs is also of interest. In particular, the adaptation of the separated
representation strategy to the Fokker--Planck equations for the configuration
of \emph{bead-rod} polymer chains (see, e.g., \cite[\S 11.3]{Bird2}) is of
relevance; these models are not covered by our arguments, at least not in their
present form.

\appendix

\section{Basic results for Sobolev spaces weighted with CPAIL Maxwellians}
\label{sec:CPAIL-results}

We shall derive some key properties of the function spaces associated with the
CPAIL force model \eqref{CPAIL-model}, with parameter $b \geq 3$, using the
corresponding properties of the function spaces associated with the FENE force
model \eqref{FENE-model}, with parameter $2b/3$.

Let $b \geq 3$. It follows from \eqref{FENE-model}, \eqref{CPAIL-model} and
\eqref{partial-maxw} that the Maxwellian $M_\mathrm{C}$ associated to a spring
obeying the CPAIL model with parameter $b$ and the Maxwellian $M_\mathrm{F}$
associated to a spring obeying the FENE model with parameter $2b/3$ are,
respectively,
\[ M_\mathrm{C}(\ponf) = Z_\mathrm{C} \exp(-\abs{\ponf}^2/6) \left( 1 - \frac{\abs{\ponf}^2}{b} \right)^{b/3}, \qquad \ponf \in D_\mathrm{C} = B\left(0,\sqrt{b}\right) \subset \Real^d \]
and
\[ M_\mathrm{F}(\ponf) = Z_\mathrm{F} \left( 1 - \frac{\abs{\ponf}^2}{2 b/3} \right)^{b/3}, \qquad \ponf \in D_\mathrm{F} = B\left(0,\sqrt{2b/3}\right) \subset \Real^d, \]
where $Z_\mathrm{C}$ and $Z_\mathrm{F}$ are positive constants whose specific
values are of no particular relevance below. Let us denote by $T$ the invertible map
$\ponf \in D_\mathrm{C} \mapsto \sqrt{2/3}\,\ponf \in D_\mathrm{F}$. On defining
$\tilde M\colon D_\mathrm{C} \rightarrow \Real$ via $\tilde M := M_\mathrm{F} \circ
T$ we find that there exist positive constants $c_1$ and $c_2$ such that $c_1
\tilde M \leq M_{\mathrm{C}} \leq c_2 \tilde M$. This implies that
$\HH^1_{M_\mathrm{C}}(D_\mathrm{C})$ and $\HH^1_{\tilde M}(D_\mathrm{C})$ (the
latter is well-defined since $\tilde M^{-1}$ inherits from $M_\mathrm{C}^{-1}$
its $\LL^1_{\loc}(D_\mathrm{C})$ regularity---thereby falling under the
hypotheses of \cite[Theorem 1.11]{KO}) are algebraically and topologically the
same space. The same is true of the pairs of spaces given by
$\LL^2_{M_\mathrm{C}}(D_\mathrm{C})$ and $\LL^2_{\tilde M}(D_\mathrm{C})$ and
$\HH(M_\mathrm{C};D_\mathrm{C})$ and $\HH(\tilde M;D_\mathrm{C})$.

Now, $T$ and $T^{-1}$ are $[\CC^\infty(\overline{D_\mathrm{C}})]^d$ and
$[\CC^\infty(\overline{D_\mathrm{F}})]^d$ functions, respectively. Then, an
argument analogous to \cref{lem:weightedSpacesMap} leads to the fact that
the composition with $T^{-1}$ is a well-defined, invertible, linear and bounded
operator between $\HH^1_{\tilde M}(D_\mathrm{C})$ and
$\HH^1_{M_\mathrm{F}}(D_\mathrm{F})$ and also between $\LL^2_{\tilde
M}(D_\mathrm{C})$ and $\LL^2_{M_\mathrm{F}}(D_\mathrm{F})$, and its inverse is
the composition with $T$. By \eqref{iso-iso}, composition with $T^{-1}$ is
also such an operator between $\HH(D_\mathrm{C};\tilde M)$ and
$\HH(D_\mathrm{F};M_\mathrm{F})$ having as its inverse the composition with
$T$.

We can thus use the connection between the $M_\mathrm{F}$-weighted spaces and
the $\tilde M$-weighted spaces and the connection between the latter and the
$M_\mathrm{C}$-weighted spaces to state that
\begin{equation*}
\HH^1_{M_\mathrm{F}}(D_\mathrm{F}) \compEmb \LL^2_{M_\mathrm{F}}(D_\mathrm{F}) \implies
\HH^1_{M_\mathrm{C}}(D_\mathrm{C}) \compEmb \LL^2_{M_\mathrm{C}}(D_\mathrm{C})
\end{equation*}
and
\begin{equation*}
\overline{\CIC(D_\mathrm{F})}^{\HH(D_\mathrm{F};M_\mathrm{F})} = \HH(D_\mathrm{F};M_\mathrm{F}) \implies
\overline{\{ f \circ T \colon f \in \CIC(D_\mathrm{F})\}}^{\HH(D_\mathrm{C};M_\mathrm{C})} = \HH(D_\mathrm{C};M_\mathrm{C}).
\end{equation*}
As $2b/3 \geq 2$, the statements on the left-hand side of the above
implications hold (as noted in \cref{rem:hypotheses} and
\cref{rem:densities}); consequently, so do the statements on each right-hand
side. By noting that, on account of its infinite differentiability, the
composition with $T$ maps $\CIC(D_\mathrm{F})$ into $\CIC(D_\mathrm{C})$ and
that $M_\mathrm{C}$ itself is a $\CC^\infty(D_\mathrm{C})$ function, we have
proved the following lemma.

\begin{lemma}\label[lemma]{lem:CPAIL-results} Let $M\colon D\rightarrow\Real$ be the
Maxwellian associated to a spring obeying the CPAIL force model
\eqref{CPAIL-model} with parameter $b \geq 3$. Then,
the compact embedding $\HH^1_M(D) \compEmb \LL^2_M(D)$ holds; and
the set $\CIC(D)$ is dense in $\HH(D;M)$.
\end{lemma}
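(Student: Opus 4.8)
The plan is to obtain the two stated properties of the CPAIL-weighted spaces by transporting them from the corresponding FENE-weighted spaces, where they are already available, using the observation that the CPAIL Maxwellian with parameter $b$ agrees---up to a factor bounded above and below by positive constants---with a suitably rescaled FENE Maxwellian with parameter $2b/3$.

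Concretely, I would first write down, from \eqref{FENE-model}, \eqref{CPAIL-model} and \eqref{partial-maxw}, the explicit forms of $M_\mathrm{C}$ on $D_\mathrm{C} = B(0,\sqrt b) \subset \Real^d$ (the CPAIL Maxwellian with parameter $b$) and of $M_\mathrm{F}$ on $D_\mathrm{F} = B(0,\sqrt{2b/3}) \subset \Real^d$ (the FENE Maxwellian with parameter $2b/3$). Introducing the linear scaling $T\colon \ponf \in D_\mathrm{C} \mapsto \sqrt{2/3}\,\ponf \in D_\mathrm{F}$ and setting $\tilde M := M_\mathrm{F} \circ T$ on $D_\mathrm{C}$, a one-line computation shows that the finitely-extensible factors of $M_\mathrm{C}$ and $\tilde M$ coincide identically, while the residual Gaussian factor $\exp(-\abs{\ponf}^2/6)$ appearing in $M_\mathrm{C}$ is bounded between two positive constants on the bounded set $D_\mathrm{C}$; hence $c_1 \tilde M \le M_\mathrm{C} \le c_2 \tilde M$ for some $c_1, c_2 > 0$. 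This two-sided bound gives algebraic and topological identity of $\HH^1_{M_\mathrm{C}}(D_\mathrm{C})$ and $\HH^1_{\tilde M}(D_\mathrm{C})$ (the latter being well defined because $\tilde M^{-1} \in \LL^1_{\loc}(D_\mathrm{C})$, so that \cite[Theorem 1.11]{KO} applies), of $\LL^2_{M_\mathrm{C}}(D_\mathrm{C})$ and $\LL^2_{\tilde M}(D_\mathrm{C})$, and, via \eqref{iso-iso}, of $\HH(D_\mathrm{C};M_\mathrm{C})$ and $\HH(D_\mathrm{C};\tilde M)$. Since $T$ and $T^{-1}$ are smooth with uniformly bounded derivatives of all orders, an argument patterned on \cref{lem:weightedSpacesMap} then shows that composition with $T^{-1}$ is a bounded linear bijection from $\HH^1_{\tilde M}(D_\mathrm{C})$ onto $\HH^1_{M_\mathrm{F}}(D_\mathrm{F})$ and from $\LL^2_{\tilde M}(D_\mathrm{C})$ onto $\LL^2_{M_\mathrm{F}}(D_\mathrm{F})$---hence also, by \eqref{iso-iso}, from $\HH(D_\mathrm{C};\tilde M)$ onto $\HH(D_\mathrm{F};M_\mathrm{F})$---with inverse the composition with $T$.

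Chaining the two correspondences above, the compact embedding $\HH^1_{M_\mathrm{F}}(D_\mathrm{F}) \compEmb \LL^2_{M_\mathrm{F}}(D_\mathrm{F})$ and the density $\overline{\CIC(D_\mathrm{F})}^{\HH(D_\mathrm{F};M_\mathrm{F})} = \HH(D_\mathrm{F};M_\mathrm{F})$, both of which hold since $2b/3 \ge 2$ (cf.\ \cref{rem:hypotheses} and \cref{rem:densities}), transfer respectively to $\HH^1_{M_\mathrm{C}}(D_\mathrm{C}) \compEmb \LL^2_{M_\mathrm{C}}(D_\mathrm{C})$ and to $\overline{\{ f \circ T \colon f \in \CIC(D_\mathrm{F}) \}}^{\HH(D_\mathrm{C};M_\mathrm{C})} = \HH(D_\mathrm{C};M_\mathrm{C})$; and since composition with $T$ carries $\CIC(D_\mathrm{F})$ into $\CIC(D_\mathrm{C})$ (by infinite differentiability of $T$) and $M_\mathrm{C}$ is itself $\CC^\infty(D_\mathrm{C})$, the dense set may be replaced by $\CIC(D_\mathrm{C})$ itself, which gives the lemma. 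I expect the \emph{main obstacle} to be the change-of-variables step: one has to verify that composition with the diffeomorphism genuinely preserves membership in these degenerate weighted spaces and yields a bounded operator with bounded inverse, and that the auxiliary weight $\tilde M$ really does fall within the framework of \cite[Theorem 1.11]{KO}; the benign explicit form of the scaling $T$ is precisely what makes these verifications routine rather than delicate.
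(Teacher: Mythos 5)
Your proposal is correct and follows essentially the same route as the paper's proof: rescale $D_\mathrm{C}$ onto $D_\mathrm{F}$ via $T(\ponf)=\sqrt{2/3}\,\ponf$, note that $M_\mathrm{C}$ and $\tilde M := M_\mathrm{F}\circ T$ (FENE with parameter $2b/3$) differ only by the Gaussian factor $\exp(-\abs{\ponf}^2/6)$, which is bilaterally bounded on the bounded ball $D_\mathrm{C}$, and then transport the compact embedding and density statements from the FENE spaces (valid since $2b/3\ge 2$) via the resulting isomorphisms of weighted spaces. The one detail the paper spells out that you should keep in mind is that the density statement is initially obtained for $\{f\circ T \colon f\in\CIC(D_\mathrm{F})\}$ and is upgraded to $\CIC(D_\mathrm{C})$ exactly as you indicate, using that composition with $T$ maps $\CIC(D_\mathrm{F})$ into $\CIC(D_\mathrm{C})$.
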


\section{Some results on distributions}
\label{sec:distrResults}

Throughout this section $\Omega$ will denote an open subset of $\Real^d$.

\begin{lemma}\label[lemma]{lem:LeibnizFormula}
Let $\alpha \in \natural_0^d$ and let $f \in \LL^1_{\loc}(\Omega)$ and $g \in
\CC(\Omega)$ be such that $\partial_\beta f \in \LL^1_{\loc}(\Omega)$ and
$\partial_\beta g \in \CC(\Omega)$ for all $\beta \leq \alpha$. Then,
$\partial_\alpha(f g) \in \LL^1_{\loc}(\Omega)$ and
\begin{equation}\label{LeibnizFormula}
\partial_\alpha(f g) = \sum_{\beta \leq \alpha} \frac{\alpha!}{\beta! (\alpha - \beta)!} \partial_\beta f \,\partial_{\alpha-\beta} g,
\end{equation}
where the convention $\gamma! = \prod_{i\in\ii{d}} \gamma_i!$ for all $\gamma
\in \natural_0^d$ has been followed.
\end{lemma}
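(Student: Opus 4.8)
The plan is to prove \eqref{LeibnizFormula} by induction on $\abs{\alpha}$, reducing everything to the single elementary case $\alpha = e_k$ (a first-order derivative in one coordinate direction), which I would handle by a mollification argument. First I would dispose of the base case $\alpha = 0$, which is trivial, and then treat the case $\abs{\alpha} = 1$, say $\alpha = e_k$. Here the claim is that $f g \in \LL^1_{\loc}(\Omega)$ (immediate, since $f \in \LL^1_{\loc}$ and $g \in \CC(\Omega) \subset \LL^\infty_{\loc}$) and that $\partial_k(fg) = (\partial_k f) g + f (\partial_k g)$ in $\mathcal{D}'(\Omega)$. To see this, fix $\varphi \in \CIC(\Omega)$, let $\seq{f_n}{n \geq 1}$ be standard mollifications of $f$ converging to $f$ in $\LL^1_{\loc}$ and with $\partial_k f_n \to \partial_k f$ in $\LL^1_{\loc}$ (these converge on any open set compactly contained in $\Omega$, which is all that is needed because $\supp\varphi$ is compact). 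For the smooth functions $f_n$ the classical product rule gives $\int_\Omega f_n g\, \partial_k \varphi = -\int_\Omega \bigl[(\partial_k f_n) g + f_n (\partial_k g)\bigr]\varphi$; passing to the limit (using $g, \partial_k g \in \CC(\Omega)$ bounded on $\supp\varphi$) yields the distributional identity, and since the right-hand side lies in $\LL^1_{\loc}(\Omega)$ we also conclude $\partial_k(fg) \in \LL^1_{\loc}(\Omega)$.

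Next I would run the induction. Suppose \eqref{LeibnizFormula} holds for all multi-indices of order at most $\ell$ and of the stated regularity class, and let $\abs{\alpha} = \ell + 1$. Write $\alpha = \alpha' + e_k$ for some $k$ with $\alpha_k \geq 1$, so $\abs{\alpha'} = \ell$. By the inductive hypothesis applied to $f$, $g$ and $\alpha'$ (note $\partial_\beta f \in \LL^1_{\loc}$, $\partial_\beta g \in \CC$ for $\beta \leq \alpha' \leq \alpha$, so the hypotheses are met), we have $\partial_{\alpha'}(fg) = \sum_{\beta \leq \alpha'} \binom{\alpha'}{\beta} \partial_\beta f\, \partial_{\alpha'-\beta} g \in \LL^1_{\loc}(\Omega)$. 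Now I apply the $\abs{\cdot} = 1$ case to each summand $\partial_\beta f\, \partial_{\alpha'-\beta} g$: here $\partial_\beta f$ plays the role of the $\LL^1_{\loc}$ factor (and $\partial_k \partial_\beta f = \partial_{\beta + e_k} f \in \LL^1_{\loc}$ since $\beta + e_k \leq \alpha' + e_k = \alpha$), while $\partial_{\alpha'-\beta} g$ plays the role of the continuous factor (and $\partial_k \partial_{\alpha'-\beta} g = \partial_{\alpha'-\beta + e_k} g \in \CC$ since $\alpha' - \beta + e_k \leq \alpha$). This gives
\[
\partial_\alpha(fg) = \partial_k\Bigl(\partial_{\alpha'}(fg)\Bigr) = \sum_{\beta \leq \alpha'} \binom{\alpha'}{\beta}\Bigl(\partial_{\beta+e_k} f\,\partial_{\alpha'-\beta} g + \partial_\beta f\,\partial_{\alpha'-\beta+e_k} g\Bigr),
\]
all terms in $\LL^1_{\loc}(\Omega)$. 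Reindexing the first sum by $\gamma = \beta + e_k$ and the second by $\gamma = \beta$, the coefficient of $\partial_\gamma f\, \partial_{\alpha - \gamma} g$ becomes $\binom{\alpha'}{\gamma - e_k} + \binom{\alpha'}{\gamma}$, which by Pascal's rule (applied in the $k$-th slot, with the other slots untouched) equals $\binom{\alpha}{\gamma}$; the boundary terms $\gamma$ with $\gamma_k = 0$ or $\gamma_k = \alpha_k$ come out with the correct single binomial coefficient. Hence $\partial_\alpha(fg) = \sum_{\gamma \leq \alpha} \binom{\alpha}{\gamma} \partial_\gamma f\, \partial_{\alpha-\gamma} g$, completing the induction.

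The only genuinely non-routine point is the $\abs{\alpha} = 1$ case, i.e.\ justifying the product rule for a single weak derivative when one factor is merely $\LL^1_{\loc}$ with $\LL^1_{\loc}$ weak derivative and the other is $\CC^1$-type; the mollification passage to the limit is standard but must be stated carefully because convergence is only local. Everything after that is bookkeeping with multi-indices and Pascal's identity, so I would not belabor it. (Alternatively, one can cite the classical Leibniz rule for distributions against a smooth multiplier — e.g.\ \cite[\S1]{Vladimirov} — but since the smooth factor here is only assumed $\CC^{\abs{\alpha}}$ rather than $\CC^\infty$, the inductive reduction to the first-order case is the cleanest self-contained route.)
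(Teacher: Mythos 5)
Your proof is correct and follows essentially the same route as the paper's: the trivial base case, the $\abs{\alpha}=1$ case, and then induction via Pascal's rule on multi-indices. The only difference is that where the paper disposes of the first-order case by citing the discussion following Theorem~7.4 of Gilbarg--Trudinger \cite{GT}, you supply the standard mollification argument that underlies that reference, which is a perfectly sound self-contained substitute.
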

\begin{proof}
The result is obviously true for $\alpha = (0,\dotsc,0)$. Then, in the
$\abs{\alpha} = 1$ case, the result is stated under the assumption of $f$, $g$,
$\partial_\alpha f $, $\partial_\alpha g$, $f g$ and $f\partial_\alpha g +
g\partial_\alpha f$ being members of $\LL^1_{\loc}(\Omega)$ (which is clearly
implied by our hypotheses) in the discussion that follows Theorem~7.4 of
\cite{GT}. The final result follows from standard combinatorial arguments and
an induction procedure.

\end{proof}

The purpose of the following lemma is to formulate a result analogous to
Theorem~3.41 of \cite{AF:2003} for weighted Sobolev spaces without resorting to
density arguments, which may be unavailable for one or both
of the weighted Sobolev spaces being connected.

\begin{lemma}\label[lemma]{lem:distrComposition}
Let $T$ be an invertible $\CC^\infty(\overline{\Omega})$ transformation with
codomain $\tilde \Omega$ and let $f \in \LL^1_{\loc}(\Omega)$ be such that its
distributional derivatives are in $\LL^1_{\loc}(\Omega)$ up to the order
$\alpha \in \natural^d$. Then,
\begin{equation}\label{FaaDiBruno}
\partial_\alpha (f \circ T^{-1})
= \sum_{1 \leq \abs{\beta} \leq \abs{\alpha}} M_{\alpha,\beta} (\partial_\beta f \circ T^{-1})\in \LL^1_{\loc}(\tilde\Omega),
\end{equation}
where $M_{\alpha,\beta}$ is a polynomial of degree not exceeding $\abs{\beta}$
in derivatives of orders not exceeding $\abs{\alpha}$ of the various components
of $T^{-1}$.
\end{lemma}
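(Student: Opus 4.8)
\noindent
The plan is to prove \eqref{FaaDiBruno} by induction on $\abs{\alpha}$, the substantive work being the case $\abs{\alpha}=1$, a distributional chain rule under a smooth change of variables. The hypotheses on $T$ already presuppose that $T^{-1}$ is $\CC^\infty$ with locally bounded derivatives of every order, and both $T$ and $T^{-1}$ map compact subsets of their domains to compact subsets with Jacobian determinants bounded above and below by positive constants on such sets. For $\abs{\alpha}=1$, say $\alpha = e_k$ (the $k$-th unit multi-index), I would fix $K \compEmb \tilde\Omega$, put $K' := T^{-1}(K) \compEmb \Omega$, and mollify $f$ on a neighbourhood of $K'$ to obtain $f_\varepsilon := f * \rho_\varepsilon \in \CC^\infty$ with $f_\varepsilon \to f$ and $\partial_j f_\varepsilon \to \partial_j f$ in $\LL^1(K')$ as $\varepsilon \to 0$. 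For the smooth approximants the classical chain rule gives, pointwise, $\partial_k(f_\varepsilon \circ T^{-1}) = \sum_j (\partial_j f_\varepsilon \circ T^{-1})\,\partial_k(T^{-1})_j$; pairing this with a test function supported in $K$, applying the change-of-variables formula (its Jacobian and the $\partial_k(T^{-1})_j$ being bounded on $K$), and letting $\varepsilon \to 0$ using the $\LL^1(K')$ convergence just recorded, I obtain
\[
\partial_k(f \circ T^{-1}) = \sum_{j=1}^d (\partial_j f \circ T^{-1})\,\partial_k(T^{-1})_j \qquad \text{in } \mathcal{D}'(\tilde\Omega),
\]
the right-hand side lying in $\LL^1_{\loc}(\tilde\Omega)$ since each $\partial_j f \circ T^{-1}$ does (change of variables again) and each $\partial_k(T^{-1})_j$ is smooth. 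This is exactly \eqref{FaaDiBruno} for $\abs{\alpha}=1$, with $M_{e_k,e_j} = \partial_k(T^{-1})_j$.

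For the inductive step I would assume \eqref{FaaDiBruno} holds for every multi-index of order at most $m$ and for every $f$ whose distributional derivatives up to that order are locally integrable, and take $\abs{\alpha}=m+1$, written $\alpha = \alpha' + e_k$ with $\abs{\alpha'}=m$. The induction hypothesis gives $\partial_{\alpha'}(f\circ T^{-1}) = \sum_{1 \le \abs{\beta} \le m} M_{\alpha',\beta}\,(\partial_\beta f \circ T^{-1})$. Since $\abs{\beta}\le m<\abs{\alpha}$, the function $g := \partial_\beta f$ satisfies $\partial_j g = \partial_{\beta+e_j} f \in \LL^1_{\loc}(\Omega)$, so the $\abs{\alpha}=1$ result applies to $g$; together with the Leibniz rule of \cref{lem:LeibnizFormula} (legitimate because $M_{\alpha',\beta}$ is $\CC^\infty$ and the product together with its other factor have locally integrable first derivatives) it yields
\[
\partial_k\bigl[M_{\alpha',\beta}(\partial_\beta f \circ T^{-1})\bigr] = (\partial_k M_{\alpha',\beta})(\partial_\beta f \circ T^{-1}) + M_{\alpha',\beta}\sum_{j=1}^d (\partial_{\beta+e_j} f \circ T^{-1})\,\partial_k(T^{-1})_j.
\]
Summing over $\beta$ and re-indexing the two groups of terms by the multi-index $\gamma$ carrying $\partial_\gamma f$ (with $1 \le \abs{\gamma} \le m+1$), I read off $M_{\alpha,\gamma}$ as the sum of $\partial_k M_{\alpha',\gamma}$ (present only when $\abs{\gamma}\le m$) and $\sum_{j} M_{\alpha',\gamma-e_j}\,\partial_k(T^{-1})_j$ (over those $j$ with $\gamma - e_j \in \natural_0^d$ of positive length). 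The two bookkeeping checks are then elementary: $\partial_k M_{\alpha',\gamma}$ is a polynomial of degree $\le \abs{\gamma}$ in derivatives of $T^{-1}$ of order $\le m+1$, and $M_{\alpha',\gamma-e_j}\,\partial_k(T^{-1})_j$ has degree $\le \abs{\gamma-e_j}+1 = \abs{\gamma}$ in such derivatives of order $\le m+1$; hence $M_{\alpha,\gamma}$ has degree $\le \abs{\gamma}$ in derivatives of orders $\le \abs{\alpha}$, as claimed, and local integrability of $\partial_\alpha(f\circ T^{-1})$ is immediate since each term is smooth times an $\LL^1_{\loc}$ function.

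The main obstacle is the base case: establishing the distributional chain rule $\partial_k(f\circ T^{-1}) = \sum_j (\partial_j f \circ T^{-1})\,\partial_k(T^{-1})_j$ under only the hypothesis $f,\partial_j f \in \LL^1_{\loc}$, without invoking density of smooth functions in any (possibly degenerate) weighted Sobolev space---which is the whole point of the lemma, since its model, Theorem~3.41 of \cite{AF:2003}, is proved precisely by such a density argument. The delicate points are purely local and of a soft-analysis nature: carrying out the mollification on a neighbourhood of the relevant compact set, verifying that composition with $T^{-1}$ is continuous from $\LL^1(K')$ to $\LL^1(K)$ (uniform bounds on the Jacobian and on the first derivatives of $T^{-1}$ over $K$), and that these bounds justify interchanging $\varepsilon\to0$ with the pairing against a test function. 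Once this is in place the induction is entirely formal.
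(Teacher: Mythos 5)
Your proof is correct and its skeleton---base case $\abs{\alpha}=1$ followed by an induction driven by the Leibniz rule of \cref{lem:LeibnizFormula}---matches the paper's, but the base case is established by a genuinely different route. The paper cites Theorem~6.1.2 of H\"ormander, which furnishes a continuous pull-back $S^*$ on $\mathcal{D}'(\Omega)$ for the diffeomorphism $S=T^{-1}$ together with the chain rule $\partial_j S^* u = \sum_k \partial_j S_k\, S^*\partial_k u$; combined with the explicit formula $S^*u(\varphi) = u\big((\varphi\circ T)\,\abs{\det\nabla T}\big)$ and the change-of-variables theorem, this identifies $S^*f$ and $S^*\partial_k f$ with the regular distributions $f\circ S$ and $\partial_k f\circ S$, giving the first-order case in one line. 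You instead prove that distributional chain rule from scratch by local mollification on a compact neighbourhood of $T^{-1}(K)$, applying the classical pointwise chain rule to $f_\varepsilon$, pairing with a test function, changing variables, and passing to the limit via the $\LL^1$ convergence of $f_\varepsilon$ and $\partial_j f_\varepsilon$ on that compact set. Both routes are sound: the H\"ormander citation is shorter but imports a result whose full strength (pull-backs of arbitrary distributions under general submersions) is far more than is needed here, whereas your argument is self-contained and stays entirely within the category of $\LL^1_{\loc}$ functions. You also spell out the inductive re-indexing and the degree/order tally for $M_{\alpha,\gamma}$, which the paper compresses into its closing one-liner; that bookkeeping is correct.
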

\begin{proof}
Let $S$ denote the inverse of $T$ and let $S_k$ denote the its $k$-th
component. From Theorem~6.1.2 in \cite{Hormander:1983-I} and the remark that
follows it we know, first, that there exists a unique continuous linear map $S^*
\colon \mathcal{D}'(\Omega) \rightarrow \mathcal{D}'(\tilde \Omega)$ whose
restriction to $\CC(\Omega)$ is $u \mapsto u \circ S$ and, second, that the
chain rule,
\[ \partial_j S^* u = \sum_{k=1}^d \partial_j S_k\, S^* \partial_k u \]
holds in $\mathcal{D}'(\tilde \Omega)$. It is easy to see (either directly or
from the proof of Theorem~6.1.2 of \cite{Hormander:1983-I}) that $S^* u$ has
the explicit form
\[ S^*u (\varphi) = u\big( (\varphi \circ T) \abs{\det(\grad T)} \big) \qquad \forall\,\varphi \in \CIC(\tilde \Omega). \]

For a regular distribution such as $f$ the above characterization and the change
of variable formula for integrable functions (see, e.g.,
\cite[Theorem 3.7.1]{Bogachev}) makes $S^* f$ precisely the regular
distribution associated with the $\LL^1_{\loc}(\tilde \Omega)$ function $f
\circ S$. Similarly, $S^* \partial_k f$ will be the regular distribution
associated with the $\LL^1_{\loc}(\tilde \Omega)$ function $\partial_k f \circ
S$. Hence, $\partial_j(f \circ T^{-1}) = \sum_{k=1}^d \partial_j S_k \partial_k
f \circ S$ and \eqref{FaaDiBruno} is proved for $\abs{\alpha} = 1$. An
induction argument then establishes \eqref{FaaDiBruno} in the general case.
\end{proof}

\begin{lemma}\label[lemma]{lem:weightedSpacesMap}
Let $\tilde\Omega$ and $T$ be as in \cref{lem:distrComposition} and let $w$
be a weight function defined on $\Omega$. Then, $f \in \HH^m_w(\Omega)$ if, and only if, $f \circ
T^{-1} \in \HH^m_{\tilde w}(\tilde \Omega)$ and there exist positive constants
$c_1(m)$ and $c_2(m)$ such that
\begin{equation*}
c_1 \norm[n]{f \circ T^{-1}}_{\HH^m_{\tilde w}(\tilde \Omega)}
\leq \norm{f}_{\HH^m_w(\Omega)}
\leq c_2 \norm[n]{f \circ T^{-1}}_{\HH^m_{\tilde w}(\tilde \Omega)},
\end{equation*}
where $\tilde w = w \circ T^{-1}$.
\end{lemma}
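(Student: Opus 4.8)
The plan is to reduce the statement to \cref{lem:distrComposition} together with the change-of-variables formula for Lebesgue integrals, using that $T$ — being an invertible $\CC^\infty$ map, hence a diffeomorphism of $\overline{\Omega}$ onto $\overline{\tilde\Omega}$ with $T^{-1} \in \CC^\infty(\overline{\tilde\Omega})$ — has all partial derivatives of $T$ and of $T^{-1}$ bounded on $\overline{\Omega}$, respectively $\overline{\tilde\Omega}$, and that $\abs{\det(\grad T)}$ is bounded above and below away from $0$ on $\overline{\Omega}$. One first notes (this holds for every weight occurring in the paper) that $w^{-1} \in \LOL(\Omega)$, so that $\LL^2_w(\Omega) \subset \LOL(\Omega)$ and hence every $f \in \HH^m_w(\Omega)$ has all its distributional derivatives up to order $m$ in $\LOL(\Omega)$; this is precisely what makes \cref{lem:distrComposition} applicable.

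First I would dispose of the case $m = 0$. For $f \in \LOL(\Omega)$ the substitution $x = T^{-1}(y)$ (see, e.g., \cite[Theorem 3.7.1]{Bogachev}) gives, for every compact $K \subset \tilde\Omega$,
\[ \int_K \abs{f \circ T^{-1}} = \int_{T^{-1}(K)} \abs{f}\,\abs{\det(\grad T)} < \infty, \]
so $f \circ T^{-1} \in \LOL(\tilde\Omega)$; applied to $\abs{f}^2 w$ (and using $w = \tilde w \circ T$), the same substitution yields
\[ \int_{\tilde\Omega} \abs{f \circ T^{-1}}^2 \tilde w = \int_{\Omega} \abs{f}^2 w\,\abs{\det(\grad T)}. \]
Since $0 < c \leq \abs{\det(\grad T)} \leq C < \infty$ on $\overline{\Omega}$, this proves $f \in \LL^2_w(\Omega) \iff f \circ T^{-1} \in \LL^2_{\tilde w}(\tilde\Omega)$, with norm equivalence, in the case $m = 0$.

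Next, for $m \geq 1$ and $f \in \HH^m_w(\Omega)$, \cref{lem:distrComposition} gives, for each multi-index $\alpha$ with $\abs{\alpha} \leq m$,
\[ \partial_\alpha(f \circ T^{-1}) = \sum_{1 \leq \abs{\beta} \leq \abs{\alpha}} M_{\alpha,\beta}\,(\partial_\beta f \circ T^{-1}) \qquad \text{in } \LOL(\tilde\Omega), \]
where each $M_{\alpha,\beta}$ is a polynomial in derivatives of the components of $T^{-1}$ and hence lies in $\LL^\infty(\tilde\Omega)$ with $\LL^\infty$-norm controlled by $m$ and $T$ only. Applying the $m = 0$ step to each $\partial_\beta f \in \LL^2_w(\Omega)$ (legitimate since $\abs{\beta} \leq m$) shows $\partial_\beta f \circ T^{-1} \in \LL^2_{\tilde w}(\tilde\Omega)$, whence $\partial_\alpha(f \circ T^{-1}) \in \LL^2_{\tilde w}(\tilde\Omega)$ and
\[ \norm{\partial_\alpha(f \circ T^{-1})}_{\LL^2_{\tilde w}(\tilde\Omega)} \leq \sum_{1 \leq \abs{\beta} \leq m} \norm{M_{\alpha,\beta}}_{\LL^\infty(\tilde\Omega)}\,\norm{\partial_\beta f \circ T^{-1}}_{\LL^2_{\tilde w}(\tilde\Omega)} \leq c_2(m)\,\norm{f}_{\HH^m_w(\Omega)}. \]
Summing over $\abs{\alpha} \leq m$ gives $f \circ T^{-1} \in \HH^m_{\tilde w}(\tilde\Omega)$ together with the right-hand inequality.

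Finally, for the converse implication and the left-hand inequality I would re-run exactly the same argument with $(\Omega, w, T)$ replaced by $(\tilde\Omega, \tilde w, T^{-1})$, observing that $(T^{-1})^{-1} = T$ is again $\CC^\infty(\overline{\Omega})$ and $(w \circ T^{-1}) \circ T = w$; this yields $g \in \HH^m_{\tilde w}(\tilde\Omega) \implies g \circ T \in \HH^m_w(\Omega)$ with $\norm{g \circ T}_{\HH^m_w(\Omega)} \leq c_2'(m)\,\norm{g}_{\HH^m_{\tilde w}(\tilde\Omega)}$, and specializing to $g = f \circ T^{-1}$ closes the equivalence with $c_1 := 1/c_2'(m)$. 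The only genuinely delicate point is the uniform boundedness of the coefficients $M_{\alpha,\beta}$ and of $\abs{\det(\grad T)}^{\pm 1}$, which is exactly where the hypothesis that $T$ is smooth up to the closure is used; everything else is the bookkeeping already encapsulated in \cref{lem:distrComposition}, so I expect no conceptual obstacle beyond that.
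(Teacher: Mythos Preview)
Your proposal is correct and is essentially a spelled-out version of what the paper does: the paper's proof merely says to use \cref{lem:distrComposition} in place of the first part of the proof of \cite[Theorem~3.41]{AF:2003} and then carry the rest of that argument over \emph{mutatis mutandis}, which is precisely the change-of-variables and Fa\`a~di~Bruno bookkeeping you have written down. The only point worth flagging is that the uniform boundedness of the $M_{\alpha,\beta}$ and of $\abs{\det(\grad T)}^{\pm 1}$ relies on the derivatives of $T$ and $T^{-1}$ being bounded up to the closure, which in the paper's actual use (the transformation $S$ on the bounded box $\tilde U_\zeta$ in the proof of \cref{lem:H2-to-H4}) is immediate; you have made this explicit, which is good.
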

\begin{proof} We use \cref{lem:distrComposition} to replace the first part
of the proof of Theorem~3.41 of \cite{AF:2003}. Then, the rest of that proof,
\emph{mutatis mutandis}, carries over to our case.
\end{proof}


\section{Eigenvalue asymptotics for Ornstein--Uhlenbeck operators with FENE and CPAIL potentials via the Liouville transformation}
\label{sec:ev-asymptotics}

\begin{lemma}\label[lemma]{lem:easyAsymptotics}
Let $\Omega \subset \Real^d$ be a bounded and convex domain of class $\CC^3$
and let $w \in \CC^2(\Omega)$ be a positive function such that $\CC^2_0(\Omega)$
is dense in $\HH^1_w(\Omega)$ and $\HH^1_w(\Omega) \compEmb \LL^2_w(\Omega)$.
We further assume that
\begin{enumerate}
\item\label{QBounded} $\displaystyle\inf_{\ponf \in \Omega} Q_1(\ponf) > -\infty$, or
\item\label{dsqQBoundedByThat} there exists a $\Theta > 0$ such that $\displaystyle\gamma_\Theta :=
\inf_{\ponf \in \Omega} \mathfrak{d}(\ponf)^2 Q_\Theta(\ponf) \in (-1/4,0]$,
\end{enumerate}
where $Q_\Theta := \Theta-w^{-1/2}\div(w\grad w^{-1/2})$
and $\mathfrak{d}$ is the distance-to-the-boundary function in $\Omega$.

Let $\seq{\lambda_n}{n \in \natural}$ be the (ordered, with repetitions
according to multiplicity) sequence of eigenvalues of the problem: Find
$\lambda \in \Real$ and $u \in \HH^1_w(\Omega) \setminus \{0\}$ such that
\begin{equation}\label{unshifted-ev}
\langle u, v\rangle_{\HH^1_w(\Omega)} = \lambda \langle u, v\rangle_{\LL^2_w(\Omega)} \qquad \forall\,v \in \HH^1_w(\Omega).
\end{equation}
Then, there exist positive numbers $c_1$ and $c_2$ and a natural number $n_0$
such that
\begin{equation}\label{twoSidedBound}
n \geq n_0 \implies c_1 n^{2/d} \leq \lambda_n \leq c_2 n^{2/d}.
\end{equation}
\end{lemma}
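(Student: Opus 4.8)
The plan is to reduce the degenerate weighted eigenvalue problem \eqref{unshifted-ev} to a standard (non-degenerate) Dirichlet-type eigenvalue problem on $\Omega$ by means of the Liouville transformation $u \mapsto v := w^{1/2} u$, and then invoke the classical Weyl asymptotics together with comparison (min-max) arguments. First I would observe that the bilinear form $\langle u,v\rangle_{\HH^1_w(\Omega)}$, after the substitution $u = w^{-1/2}\phi$, $v = w^{-1/2}\chi$, transforms (at least on the dense subspace $\CC^2_0(\Omega)$) into a form of Schr\"odinger type
\[
\int_\Omega \left( \grad\phi \cdot \grad\chi + Q_1 \, \phi\chi \right),
\]
acting on (the closure of $\CC^2_0(\Omega)$ in) the \emph{unweighted} space $\LL^2(\Omega)$, where $Q_1 = 1 - w^{-1/2}\div(w\grad w^{-1/2})$ is exactly the potential appearing in the hypotheses. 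The density of $\CC^2_0(\Omega)$ in $\HH^1_w(\Omega)$, assumed in the statement, is what makes this unitary equivalence rigorous rather than merely formal: it guarantees that the two eigenvalue problems — the degenerate one for $(\lambda,u)$ and the Schr\"odinger one for $(\lambda,\phi)$ — have the same spectrum with the same multiplicities.

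Next, under alternative \eqref{QBounded}, i.e. $\inf_\Omega Q_1 > -\infty$, the transformed form differs from $\int_\Omega \grad\phi\cdot\grad\chi$ by a bounded-below potential, so by the min-max principle the $n$-th eigenvalue $\lambda_n$ differs from the $n$-th Dirichlet eigenvalue of $-\lapl$ on $\Omega$ by at most an additive constant; since $\Omega$ is a bounded Lipschitz (indeed $\CC^3$, convex) domain, the classical Weyl law gives that the Dirichlet eigenvalues of $-\lapl$ grow like $c\, n^{2/d}$, and the additive shift does not affect the two-sided bound \eqref{twoSidedBound} for $n$ large. Under alternative \eqref{dsqQBoundedByThat} the potential $Q_\Theta$ (hence $Q_1$, up to the additive constant $1-\Theta$) is allowed to be unbounded below near $\partial\Omega$, but only at the subcritical Hardy rate $\mathfrak{d}(\ponf)^{-2}$ with constant strictly above $-1/4$; here I would invoke the Hardy inequality on $\Omega$ (valid because $\Omega$ is a bounded $\CC^3$ domain, so $\mathfrak{d}$ is well-behaved and $\int_\Omega |\grad\phi|^2 \geq \tfrac14\int_\Omega \mathfrak d^{-2}\phi^2$ on $\CC^2_0(\Omega)$) to absorb the negative part of the potential into a fraction of the Dirichlet form, obtaining a two-sided form comparison
\[
c\!\int_\Omega |\grad\phi|^2 \;\leq\; \int_\Omega\!\left(|\grad\phi|^2 + Q_\Theta\,\phi^2\right) + C\!\int_\Omega \phi^2 \;\leq\; C'\!\int_\Omega |\grad\phi|^2 + C''\!\int_\Omega \phi^2,
\]
which again, via min-max, sandwiches $\lambda_n$ between constant multiples of the $n$-th Dirichlet eigenvalue of $-\lapl$ on $\Omega$ plus lower-order terms, and hence yields \eqref{twoSidedBound}.

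The main obstacle I anticipate is making the Liouville transformation and the subsequent form comparisons \emph{watertight} in the degenerate setting: one must be careful that $w^{1/2}\,\CC^2_0(\Omega)$ is genuinely a core for the transformed operator and that the unweighted form closure is exactly $\HH^1_0(\Omega)$ (or a space between $\HH^1_0$ and $\HH^1$ on which Weyl's law still holds) — this is where the compact embedding hypothesis $\HH^1_w(\Omega)\compEmb\LL^2_w(\Omega)$ and the $\CC^3$-regularity of $\partial\Omega$ pull their weight, since they ensure the spectrum is discrete and that the comparison domain is a nice Sobolev space. A secondary technical point is the behaviour near $\partial\Omega$ in case \eqref{dsqQBoundedByThat}: verifying that the strict inequality $\gamma_\Theta > -1/4$ really does leave a positive fraction of the Dirichlet form after the Hardy absorption, uniformly up to the boundary, requires a localization-and-partition argument near $\partial\Omega$ of the same flavour as the one used in \cref{lem:Hardy} and \cref{lem:H2-to-H4}. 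Once these points are settled, the passage from the form comparison to \eqref{twoSidedBound} is a routine application of the min-max characterization of eigenvalues and the classical Weyl asymptotics for the Dirichlet Laplacian on a bounded smooth domain.
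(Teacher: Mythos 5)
Your Liouville transformation and the treatment of the \emph{lower} bound on $\lambda_n$ are sound and match the paper's argument: after the unitary substitution the form becomes $\int_\Omega|\grad y|^2 + \int_\Omega Q_\Theta\, y^2$ over $\CC^2_0(\Omega)$, and either $Q_\Theta\geq 0$ for some $\Theta$ (case (\ref{QBounded})) or the Hardy inequality $\int_\Omega\phi^2/\mathfrak d^2 \leq 4\int_\Omega|\grad\phi|^2$ together with $\gamma_\Theta > -1/4$ absorbs the negative part, in both cases leaving a positive fraction of the Dirichlet form and hence $\lambda_n\gtrsim n^{2/d}$ by Weyl's law for $-\lapl$ on $\Omega$.

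The gap is in the \emph{upper} bound. In case (\ref{QBounded}) you claim the eigenvalues differ from those of the Dirichlet Laplacian ``by at most an additive constant''; but the hypothesis bounds $Q_1$ only from \emph{below}, while for the weights of interest (Maxwellians of FENE or CPAIL type) $Q_1$ blows up like $\mathfrak d^{-2}$ near $\partial\Omega$, so no additive comparison from above is available. Similarly, in case (\ref{dsqQBoundedByThat}) your proposed ``two-sided form comparison'' requires $\int_\Omega Q_\Theta\,\phi^2 \leq C'\int_\Omega|\grad\phi|^2 + C''\int_\Omega\phi^2$, which does not follow from the hypothesis $\gamma_\Theta = \inf_\Omega \mathfrak d^2 Q_\Theta > -1/4$: that is a one-sided constraint and places no upper bound on $Q_\Theta$. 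The paper obtains the upper bound by an entirely different mechanism: it restricts to a compactly embedded $\CC^3$ subdomain $\Omega_\varepsilon\compEmb\Omega$ of measure $(1-\varepsilon)|\Omega|$, notes that zero extensions of $\CC^2_0(\Omega_\varepsilon)$ functions lie in $\CC^2_0(\Omega)$ so the min-max quotient over $\Omega_\varepsilon$ dominates that over $\Omega$, and observes that on $\Omega_\varepsilon$ the weight $w$ is bounded above and below, so the restricted problem is a regular uniformly elliptic eigenvalue problem to which classical Weyl asymptotics apply. You would need to add this interior-restriction (or some equivalent domain-monotonicity) argument to close the upper bound; as written the proposal only proves half of \eqref{twoSidedBound}.
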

\begin{proof}
Let, for $\Theta > 0$, $\seq{\lambda_{\Theta,n}}{n \in \natural}$ be the
(ordered, with repetitions according to multiplicity) sequence of of
eigenvalues of the shifted problem: Find $\lambda^\Theta \in \Real$ and $u \in
\HH^1_w(\Omega) \setminus \{0\}$ such that
\begin{equation}\label{shifted-ev}
\langle u, v \rangle_{\HH^1_w(\Omega),\Theta} := \langle \grad u, \grad v\rangle_{[\LL^2_w(\Omega)]^d} + \Theta \langle u, v\rangle_{\LL^2_w(\Omega)} = \lambda^\Theta \langle u, v\rangle_{\LL^2_w(\Omega)} \quad\forall\,v\in\HH^1_w(\Omega).
\end{equation}
By the hypotheses of the lemma the existence and the accumulation at $\infty$
only of the $\lambda_{\Theta,n}$ is guaranteed via \cref{lem:abstractEV}. It
further follows from the spectral theory of self-adjoint compact operators that
$\lambda_{\Theta,n}$ can be characterized by the Courant--Fischer--Weyl min-max principle:
\begin{equation}\label{EV-infsup}
\lambda_{\Theta,n}
= \min_{\substack{\dim(S) = n\\S \subset \HH^1_w(\Omega)}} \max_{z \in S\setminus\{0\}}
\frac{\left\langle z,z\right\rangle_{\HH^1_w(\Omega),\Theta}}{\left\langle z,z\right\rangle_{\LL^2_w(\Omega)}}
= \inf_{\substack{\dim(S) = n\\S \subset \CC^2_0(\Omega)}} \sup_{z \in S\setminus\{0\}}
\frac{\left\langle z,z\right\rangle_{\HH^1_w(\Omega),\Theta}}{\left\langle z,z\right\rangle_{\LL^2_w(\Omega)}},
\end{equation}
the second equality being a consequence of the density of $\CC^2_0(\Omega)$ in
$\HH^1_w(\Omega)$ (cf.\ \cite[Theorem 4.5.3]{Davies}). Note that when $\Theta =
1$ the problem \eqref{shifted-ev} and the problem \eqref{unshifted-ev} coincide
(and so do the sequences $\seq{ \lambda_{\Theta,n} }{n \in \natural}$ and
$\seq{ \lambda_n }{n \in \natural}$).

Let $L := w^{-1/2} \in \CC^2(\Omega)$, let $z$ be an arbitrary $\CC^2_0(\Omega)$
function and let $y := L^{-1} z$. Then,
\begin{equation*}
\begin{split}
\norm{z}_{\HH^1_w(\Omega),\Theta}^2 
& = \int_{\Omega} \left(\abs{\grad(L y)}^2 + \Theta (L y)^2\right) L^{-2}\\
& = \int_{\Omega} \abs{\grad y}^2 + \int_{\Omega} \left(\Theta + L^{-2}\abs{\grad L}^2\right) y^2 + \int_{\Omega} L^{-1} \grad L \cdot \grad (y^2)\\
& = \int_{\Omega} \abs{\grad y}^2 + \int_{\Omega} \left(\Theta + L^{-2}\abs{\grad L}^2\right) y^2 - \int_{\Omega} \div\left(L^{-1} \grad L\right) y^2\\
& = \int_{\Omega} \abs{\grad y}^2 + \int_{\Omega} \left[\Theta - L\, \div(L^{-2}\grad L)\right] y^2\\
& = \int_{\Omega} \abs{\grad y}^2 + \int_{\Omega} Q_\Theta \,y^2.
\end{split}
\end{equation*}
Similarly, $\norm{z}_{\LL^2_w(\Omega)}^2 = \norm{y}_{\LL^2(\Omega)}^2$. As $z
\in \CC^2_0(\Omega)$ is arbitrary and $z \mapsto L^{-1} z$ is a bijection of
$\CC^2_0(\Omega)$ into itself, \eqref{EV-infsup} begets
\begin{equation*}
\lambda_{\Theta,n}
= \inf_{\substack{\dim(S) = n\\ S \subset\CC^2_0(\Omega)}} \sup_{y \in S\setminus\{0\}} \frac{ \norm{\grad y}_{[\LL^2(\Omega)]^d}^2 + \int_{\Omega} Q_\Theta\, y^2 }{ \norm{y}_{\LL^2(\Omega)}^2 }.
\end{equation*}
If condition (\ref{QBounded}) holds, there must exist a $\Theta > 0$ such
that $Q_\Theta \geq 0$ in $\Omega$. For such a $\Theta$, of course, $\int_\Omega
Q_\Theta y^2 \geq 0$. On the other hand, if condition (\ref{dsqQBoundedByThat})
is met, then with the particular $\Theta$ given in the condition we have that
\begin{equation*}
\int_{\Omega} Q_\Theta\, y^2
\geq \gamma_\Theta \int_{\Omega} \frac{y^2}{\mathfrak{d}^2}
\geq \frac{\gamma_\Theta}{4} \norm{\grad y}_{[\LL^2_w(\Omega)]^d}^2,
\end{equation*}
the last inequality being a multi-dimensional Hardy inequality (see, e.g.,
\cite[Theorem 11]{MMP-1998}, bearing in mind that $\gamma_\Theta$ has been assumed to be
nonpositive). In either case, we can write
\begin{equation}\label{lowerComparison}
\lambda_{\Theta,n} \geq \inf_{\substack{\dim(S) = n\\ S \subset\CC^2_0(\Omega)}} \sup_{y \in S\setminus\{0\}} \frac{ \alpha \norm{\grad y}_{[\LL^2(\Omega)]^d}^2 }{ \norm{y}_{\LL^2(\Omega)}^2 },
\end{equation}
where
\begin{equation*}
0 < \alpha := \begin{cases}
1 & \text{if condition (\ref{QBounded}) holds},\\
(1+\gamma_\Theta/4) & \text{if condition (\ref{dsqQBoundedByThat}) holds}.
\end{cases}
\end{equation*}
The $\CC^3$ regularity of $\partial\Omega$ implies the
existence of an $\varepsilon_0 \in (0,1)$ such that for each $\varepsilon \in
(0,\varepsilon_0)$ there exists a subdomain $\Omega_\varepsilon \compEmb
\Omega$ that is also of class $\CC^3$ and has measure
$(1-\varepsilon)\abs{\Omega}$. Fixing $\varepsilon \in (0,\varepsilon_0)$, the
fact that the extensions by zero of functions in $\CC^2_0(\Omega_\varepsilon)$
form a subspace of $\CC^2_0(\Omega)$ and \eqref{EV-infsup} imply that the eigenvalues
of the unshifted problem \eqref{unshifted-ev} can be bounded from above
according to
\begin{equation}\label{upperComparison}
\lambda_n \leq \inf_{\substack{\dim(S) = n\\ S \subset \CC^2_0(\Omega_\varepsilon)}} \sup_{z \in S\setminus\{0\}}
\frac{ \langle z, z\rangle_{\HH^1_w(\Omega_\varepsilon)} }{ \langle z, z\rangle_{\LL^2_w(\Omega_\varepsilon)} }.
\end{equation}

Now, the right-hand side of \eqref{lowerComparison} and the right-hand side of
\eqref{upperComparison} are precisely the $n$-th eigenvalue associated with the
(variational form of the) problem
\[ -\alpha \lapl y = \mu y \ \text{ in } \Omega, \qquad y = 0 \text{ on } \partial \Omega \]
and the problem
\begin{equation*}
-\div(w \grad y) + w y = \nu w y \ \text{ in } \Omega_\varepsilon, \qquad y = 0
\text{ on } \partial\Omega_\varepsilon,
\end{equation*}
respectively. These standard eigenvalue problems obey Weyl's law (this results
from the fairly general Theorem~2.4 of \cite{Clark} with input from the
regularity result in \cite[Theorem 2.4]{Browder:1961}---alternatively, see
\cite[\S VI.4.4]{CH1}); that is,
\begin{subequations}\label{countingLimits}
\begin{gather}\lim_{\mu\to\infty} \frac{\# \{ n \in \natural\colon \mu_n \leq \mu
\}}{\mu^{d/2}} = \frac{\alpha^{-d/2}\abs{\Omega}}{(2\sqrt{\pi})^d \, \Gamma(1+d/2)} = \alpha^{-d/2} C > 0,\\
\lim_{\nu\to\infty} \frac{\# \{ n \in \natural\colon \nu_n \leq \nu
\}}{\nu^{d/2}} = \frac{\abs{\Omega_\varepsilon}}{(2\sqrt{\pi})^d \, \Gamma(1+d/2)} = (1-\varepsilon) C > 0,
\end{gather}
\end{subequations}
where $C := \abs{\Omega}((2\sqrt{\pi})^d \Gamma(1+d/2))^{-1}$. Particularizing
these limits to $\mu = \mu_n$ and $\nu = \nu_n$ they turn into statements about
the rate of growth of the eigenvalues themselves, as opposed to the counting
functions. That is,
\[ \lim_{n\to\infty} \mu_n / n^{2/d} = \alpha C^{-2/d} \qquad \text{and} \qquad \lim_{n\to\infty} \nu_n / n^{2/d} = (1-\varepsilon)^{-2/d} C^{-2/d}. \]
From the definition of the shifted eigenvalue problem \eqref{shifted-ev}, for
any $\Theta$, it is immediate that
$\lambda_{\Theta,n} = \lambda_n + \Theta - 1$ for all $n\in\natural$.
We then deduce, via the inequalities \eqref{lowerComparison} and
\eqref{upperComparison}, that the asymptotic bounds \eqref{twoSidedBound} hold.
\end{proof}

\begin{remark}\label[remark]{rem:notTheBest}\noindent
\begin{enumerate}
\item It follows from the proof of \cref{lem:easyAsymptotics} that, if
condition (\ref{QBounded}) holds, the constants $c_1$ and $c_2$ of
\eqref{twoSidedBound} can be taken arbitrarily close to $C^{-2/d}$ and,
consequently, to each other.
\item One might relax the condition of convexity of the domain in
\cref{lem:easyAsymptotics} at the possible cost of having a stricter lower
bound for $\gamma_\Theta$ in condition (\ref{dsqQBoundedByThat}), as the
constant for the Hardy inequality might deteriorate. The $\CC^3$ regularity
condition on the domain can be drastically relaxed (see, for example
\cite{BirmanSolomjak:1970}); however, the literature tends to force one to
choose at most two among readability, the size of the class of problems covered,
and frugality in terms of hypotheses. For our purposes, the statement in
\cref{lem:easyAsymptotics} suffices.
\end{enumerate}
\end{remark}

\begin{corollary}\label[corollary]{cor:easyAsymptotics}
The eigenvalues of the eigenvalue problem \eqref{partial-ev} associated with
both the FENE model \eqref{FENE-model} and the CPAIL model \eqref{CPAIL-model}
obey \eqref{twoSidedBound} if their parameter $b_i$ is greater than $2$ and
$3$, respectively.
\end{corollary}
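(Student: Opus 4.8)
The plan is to apply \cref{lem:easyAsymptotics} with $\Omega = D_i = B(0,\sqrt{b_i})$ and $w = M_i$. The ball $D_i$ is a bounded convex domain of class $\CC^\infty$ (in particular $\CC^3$), and $M_i$, defined by \eqref{partial-maxw} with $U_i$ as in \eqref{FENE-model} or \eqref{CPAIL-model}, is $\CC^\infty(D_i)$ and strictly positive on $D_i$. Density of $\CC^2_0(D_i)$ in $\HH^1_{M_i}(D_i)$ follows from part~(a) of \cref{lem:partialDensity2}, using \cref{rem:hypotheses2} to see that FENE with $b_i > 2$ and CPAIL with $b_i > 3$ satisfy \cref{hyp:power-like}, together with $\CIC(D_i) \subset \CC^2_0(D_i)$; and $\HH^1_{M_i}(D_i) \compEmb \LL^2_{M_i}(D_i)$ is \cref{hyp:partialCompEmb}, which holds for FENE with $b_i \geq 2$ by \cref{rem:hypotheses} and for CPAIL with $b_i \geq 3$ by \cref{lem:CPAIL-results}. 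Thus it remains only to verify one of conditions~(\ref{QBounded}) or~(\ref{dsqQBoundedByThat}) of \cref{lem:easyAsymptotics}.

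For that I would first recast $Q_\Theta = \Theta - M_i^{-1/2}\div(M_i\grad M_i^{-1/2})$. Writing $V_i := \onehalf U_i(\onehalf\abs{\tcdot}^2)$, the function already used in the proof of \cref{lem:ellipReg}, so that $M_i$ is proportional to $\exp(-2 V_i)$, an application of the Leibniz rule (\cref{lem:LeibnizFormula}; the computation is classical inside the ball) gives the ground-state-substitution identity $Q_\Theta = \Theta + \abs{\grad V_i}^2 - \lapl V_i$. Since $V_i$ is radial, this is an explicit rational function of $\abs{\ponf}^2$; for the FENE model,
\[ Q_\Theta = \Theta + \frac{\left(\frac{1}{4} - \frac{1}{b_i}\right)\abs{\ponf}^2}{\left(1 - \abs{\ponf}^2/b_i\right)^2} - \frac{d/2}{1 - \abs{\ponf}^2/b_i}, \]
and there is a similar formula for CPAIL, whose leading singularity $\left(1 - \abs{\ponf}^2/b_i\right)^{-2}$ at $\partial D_i$ carries, in the limit $\abs{\ponf}^2 \to b_i$, a coefficient equal to a positive multiple of $b_i - 4$ (FENE) resp.\ $b_i - 6$ (CPAIL). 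When $b_i > 4$ (FENE) resp.\ $b_i > 6$ (CPAIL), this coefficient is positive, so $Q_0$ tends to $+\infty$ at $\partial D_i$ and, being continuous on $D_i$, is bounded below there; hence condition~(\ref{QBounded}) holds.

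For the remaining ranges $2 < b_i \leq 4$ (FENE) and $3 < b_i \leq 6$ (CPAIL) I would verify condition~(\ref{dsqQBoundedByThat}). Using $\mathfrak{d}(\ponf) = \sqrt{b_i} - \abs{\ponf}$ and the factorization $1 - \abs{\ponf}^2/b_i = \mathfrak{d}(\ponf)(\sqrt{b_i} + \abs{\ponf})/b_i$, one checks that $\mathfrak{d}^2 Q_0$ extends continuously to $\overline{D_i}$ with boundary value $\frac{1}{16}b_i(b_i - 4)$ (FENE) resp.\ $\frac{1}{36}b_i(b_i - 6)$ (CPAIL). The algebraic identities $b_i(b_i - 4) + 4 = (b_i - 2)^2$ and $b_i(b_i - 6) + 9 = (b_i - 3)^2$ show this boundary value lies in $(-1/4, 0]$ exactly because $b_i > 2$ resp.\ $b_i > 3$ --- this is where the sharp hypotheses on $b_i$ enter. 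Given this, I would fix $\epsilon \in (0,1/4)$ and $\delta > 0$ so small that $\mathfrak{d}^2 Q_0 > -1/4 + \epsilon$ on $\{\mathfrak{d} < \delta\}$, and then choose $\Theta > 0$ large enough that $Q_0 + \Theta \geq 0$ on the compact set $\{\mathfrak{d} \geq \delta\} \subset D_i$; since $\Theta\mathfrak{d}^2 \geq 0$, it follows that $\mathfrak{d}^2 Q_\Theta > -1/4$ throughout $D_i$, while $\gamma_\Theta = \inf_{D_i}\mathfrak{d}^2 Q_\Theta$ is bounded above by the boundary value, hence $\leq 0$. Therefore $\gamma_\Theta \in (-1/4, 0]$, so condition~(\ref{dsqQBoundedByThat}) holds. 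In either case \cref{lem:easyAsymptotics} delivers \eqref{twoSidedBound}, which is the claim.

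The main obstacle is precisely this last case: one must carry the computation of $Q_\Theta$ through carefully enough --- the CPAIL $\lapl V_i$ and $\abs{\grad V_i}^2$ being the most tedious, though entirely routine, part --- to pin down the limiting value of $\mathfrak{d}^2 Q_0$ at $\partial D_i$ and to recognize that the thresholds $b_i = 2$ and $b_i = 3$ are exactly the parameter values at which that limit equals $-1/4$; and then to observe that enlarging $\Theta$ modifies $\mathfrak{d}^2 Q_\Theta$ only away from the boundary, so that the genuinely singular near-boundary contribution --- the only dangerous one --- stays strictly above the critical constant $-1/4$.
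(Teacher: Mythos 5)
Your proposal is correct and follows essentially the same route as the paper: invoke \cref{lem:easyAsymptotics}, check regularity, compact embedding and density, then compute $Q_\Theta$ explicitly and extract the boundary limit of $\mathfrak{d}^2 Q_\Theta$, which equals $b_i(b_i-4)/16$ (FENE) and $b_i(b_i-6)/36$ (CPAIL), so that conditions~(1) and~(2) of the lemma split at $b_i = 4$ resp.\ $b_i = 6$ and the sharp lower bounds $b_i>2$ resp.\ $b_i>3$ emerge from the identities $b_i(b_i-4)+4=(b_i-2)^2$ and $b_i(b_i-6)+9=(b_i-3)^2$. Your ground-state-substitution identity $Q_\Theta = \Theta + \abs{\grad V_i}^2 - \lapl V_i$ and your explicit $\Theta$-selection step (truncating near $\partial D_i$ and enlarging $\Theta$ on the complementary compact) are a slightly cleaner writeup of what the paper merely asserts via ``a direct calculation'' and ``is equivalent to demanding'', but the argument is the same.
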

\begin{proof}
We shall apply \cref{lem:easyAsymptotics}. For both the FENE and CPAIL
models the domains (being balls) and their associated Maxwellian weights are
regular enough. The compact embedding and density hypotheses are satisfied in
the parameter ranges under consideration (cf.\ \cref{hyp:partialCompEmb},
\cref{rem:hypotheses}, \cref{rem:densities} and \eqref{iso-iso}). It only
remains to prove condition (\ref{QBounded}) or condition
(\ref{dsqQBoundedByThat}).

From \eqref{FENE-model} and \eqref{partial-maxw} it follows that the Maxwellian
associated to the FENE potential is
\begin{equation}\label{M-FENE}
M_i(\ponf) = Z_i^{-1} \left(1-\nicefrac{\abs{\ponf}^2}{b_i}\right)^{b_i/2}, \qquad \ponf \in B(0,\sqrt{b_i}),
\end{equation}
where $Z_i$ is a positive constant. A direct calculation returns that with this
weight $Q_\Theta$ is
\[ Q_\Theta(\ponf) = \Theta + \left(\frac{1}{4}-\frac{1}{b_i}\right) \abs{\ponf}^2 \left(1-\frac{\abs{\ponf}^2}{b_i}\right)^{-2}-\frac{d}{2}\left(1-\frac{\abs{\ponf}^2}{b_i}\right)^{-1}. \]
In this form, it is readily apparent that $Q_1$ is bounded from below in its
domain $B(0,\sqrt{b_i})$ (i.e., (\ref{QBounded}) holds) if $b_i>4$. From the fact
that $\mathfrak{d}(\ponf) = \sqrt{b} - \abs{\ponf}$ for all $\ponf$ in the
domain under consideration it is easy to see that $\mathfrak{d}^2 Q_\Theta$ is
always bounded from below and uniformly continuous up to the boundary. If $b_i
\in (2,4]$, $Q_\Theta$ is never bounded from below, so it takes negative values
and thus the infimum of $\mathfrak{d}^2 Q_\Theta$ is strictly less than zero.
As $\mathfrak{d}^2$ is continuous and positive within the domain yet zero at
its boundary, the existence of a $\Theta$ that makes case (\ref{dsqQBoundedByThat}) hold
is equivalent to demanding that
\[ \lim_{\abs{\ponf}\to\sqrt{b_i}_-} \mathfrak{d}(\ponf)^2 Q_1(\ponf) \in (-1/4,0]. \]
As in the range $b_i \in (2,4]$ that limit is $b_i(b_i/4-1)/4$ we see that the
condition (\ref{dsqQBoundedByThat}) holds there.

Analogously, \eqref{CPAIL-model} and \eqref{partial-maxw} imply that the
Maxwellian associated to the CPAIL potential is
\begin{equation}\label{M-CPAIL}
M_i(\ponf) = Z_i^{-1} \exp\left(-\nicefrac{\abs{\ponf}^2}{6}\right) \left(1-\nicefrac{\abs{\ponf}^2}{b_i}\right)^{b_i/3}, \qquad \ponf \in B(0,\sqrt{b_i}),
\end{equation}
with $Z_i$ a positive constant. Again, a direct calculation yields
\begin{equation*}
Q_\Theta(\ponf) = \Theta - \frac{d}{6} + \frac{\abs{\ponf}^2}{36} + \left(\frac{1}{9}-\frac{2}{3 b_i}\right)
   \abs{\ponf}^2 \left(1-\frac{\abs{\ponf}^2}{b_i}\right)^{-2} - \left(\frac{d}{3} - \frac{\abs{\ponf}^2}{9}\right) \left(1-\frac{\abs{\ponf}^2}{b_i}\right)^{-1}.
\end{equation*}
By arguments similar to those given when considering the FENE potential, we
have that condition (\ref{QBounded}) holds if $b_i > 6$ or if $b_i = 6$ and $d
= 2$; and that condition (\ref{dsqQBoundedByThat}) holds if $b_i \in (3,6]$.
\end{proof}

If two weights $w$ and $\tilde w$ defined on a domain $\Omega$ are
comparable---that is, there exist two positive constants $c_1$ and $c_2$ such
that $c_1 w \leq \tilde w \leq c_2 w$---a number of consequences follow
immediately. As discussed elsewhere, $\LL^2_w(\Omega)$ and $\LL^2_{\tilde
w}(\Omega)$ on the one hand and $\HH^1_w(\Omega)$ and $\HH^1_{\tilde
w}(\Omega)$ on the other will be one and the same algebraically and
topologically. In particular, the hypotheses of \cref{lem:abstractEV} will
be met by the eigenvalue problem
\[ \langle e, v \rangle_{\HH^1_w(\Omega)} = \lambda \langle e, v\rangle_{\LL^2_w(\Omega)} \qquad \forall\,v\in\HH^1_w(\Omega) \]
if, and only if, they are met by the eigenvalue problem
\[ \langle e, v \rangle_{\HH^1_{\tilde w}(\Omega)} = \tilde \lambda \langle e, v\rangle_{\LL^2_{\tilde w}(\Omega)} \qquad \forall\,v\in\HH^1_{\tilde w(\Omega)}. \]
The inf-sup characterization (cf.\ \eqref{EV-infsup}) of the
successive eigenvalues of both problems allow for the bounds
\[ \frac{c_1}{c_2} \lambda_n \leq \tilde\lambda_n \leq \frac{c_2}{c_1} \lambda_n. \]
That is, the bounds \eqref{twoSidedBound} will hold for one set of eigenvalues
if, and only if, they hold for the other. This allows for establishing the
following sufficiency condition for weights defined on two- or
three-dimensional balls, which is in most cases much easier to test than the
conditions of \cref{lem:easyAsymptotics}.

\begin{lemma}\label[lemma]{lem:conditionsForWeyl}
Let $\Omega$ be an open ball in two or three dimensions and let $w$ be a
positive and continuous weight defined on $\Omega$ with the property
\begin{equation*}
\sigma_1 \mathfrak{d}(\ponf)^\alpha \leq w(\ponf) \leq \sigma_2 \mathfrak{d}(\ponf)^\alpha
\end{equation*}
for all $\ponf \in \Omega$ such that $\mathfrak{d}(\ponf) < \delta$, for some
exponent $\alpha > 1$, for some margin $\delta > 0$ and some positive constants
$\sigma_1$ and $\sigma_2$.

Then, the eigenvalues of the problem
\[ \langle e, v \rangle_{\HH^1_w(\Omega)} = \lambda \langle e, v \rangle_{\LL^2_w(\Omega)} \qquad\forall\,v\in\HH^1_w(\Omega) \]
obey the two-sided bounds \eqref{twoSidedBound}.
\end{lemma}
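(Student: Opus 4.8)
The plan is to reduce the statement to \cref{lem:easyAsymptotics} by means of the comparability principle recorded immediately before the lemma: if $c_1 w \le \tilde w \le c_2 w$ on $\Omega$ with $c_1,c_2>0$, then $\LL^2_w(\Omega)=\LL^2_{\tilde w}(\Omega)$ and $\HH^1_w(\Omega)=\HH^1_{\tilde w}(\Omega)$ as topological vector spaces, the hypotheses of \cref{lem:abstractEV} transfer from one weight to the other, and the min-max characterization \eqref{EV-infsup} gives $(c_1/c_2)\lambda_n\le\tilde\lambda_n\le(c_2/c_1)\lambda_n$; hence \eqref{twoSidedBound} holds for the $w$-eigenvalues if and only if it holds for the $\tilde w$-eigenvalues. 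So it suffices to produce one weight $\tilde w$, comparable to $w$, for which all the hypotheses of \cref{lem:easyAsymptotics} can be verified. Writing $\Omega=B(0,R)$, the natural choice is $\tilde w(\ponf):=(R^2-\abs{\ponf}^2)^\alpha$, which is $\CC^\infty(\Omega)$ and positive and, since $\mathfrak{d}(\ponf)=R-\abs{\ponf}$, satisfies the identity $\tilde w=(2R-\mathfrak{d})^\alpha\,\mathfrak{d}^\alpha$ on all of $\Omega$, the factor $(2R-\mathfrak{d})^\alpha$ being bounded above and below by positive constants on $\overline\Omega$. Thus $\tilde w$ is comparable to $\mathfrak{d}^\alpha$ on $\Omega$; combining this with the hypothesis $\sigma_1\mathfrak{d}^\alpha\le w\le\sigma_2\mathfrak{d}^\alpha$ on $\{\mathfrak{d}<\delta\}$ and with the positivity and continuity of $w$ on the compact set $\{\ponf\in\Omega\colon\mathfrak{d}(\ponf)\ge\delta\}$, one gets $c_1\tilde w\le w\le c_2\tilde w$ throughout $\Omega$.

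Next I would verify the hypotheses of \cref{lem:easyAsymptotics} for $\tilde w$. A ball is bounded, convex and of class $\CC^\infty$, and $\tilde w\in\CC^2(\Omega)$ is positive. Density of $\CC^2_0(\Omega)$ in $\HH^1_{\tilde w}(\Omega)$ follows exactly as in part (a) of \cref{lem:partialDensity2}, whose proof only uses that the weight is a bilaterally bounded multiple of a power greater than $1$ of the distance to the boundary; the compact embedding $\HH^1_{\tilde w}(\Omega)\compEmb\LL^2_{\tilde w}(\Omega)$ reduces to the same fact for the power weight $\mathfrak{d}^\alpha$, which follows from the characterization of \cite[Theorem 17.6]{OK} used already in \cref{lem:compEmb}, together with the Hardy inequality \eqref{anotherHardy} and the pointwise bound $\mathfrak{d}^\alpha\le\varepsilon^2\mathfrak{d}^{\alpha-2}$ on $\{\mathfrak{d}<\varepsilon\}$. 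Finally, a direct computation with $\tilde w^{-1/2}=(R^2-\abs{\ponf}^2)^{-\alpha/2}$ gives, on $\Omega$,
\[ Q_\Theta(\ponf)=\Theta+\alpha(\alpha-2)\,\abs{\ponf}^2\,(R^2-\abs{\ponf}^2)^{-2}-\alpha d\,(R^2-\abs{\ponf}^2)^{-1}, \]
so that $\mathfrak{d}(\ponf)^2Q_\Theta(\ponf)\to\alpha(\alpha-2)/4$ as $\mathfrak{d}(\ponf)\to0$. If $\alpha>2$ this limit is positive, so $Q_1\to+\infty$ at $\partial\Omega$ while remaining bounded on compact subsets of the interior; hence $\inf_\Omega Q_1>-\infty$, which is condition (1). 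If $1<\alpha\le2$, the limit lies in $(-1/4,0]$ --- strictly above $-1/4$ precisely because $(\alpha-1)^2>0$ --- and $\mathfrak{d}^2Q_\Theta$ is negative near $\partial\Omega$; rewriting (using $R+\abs{\ponf}=2R-\mathfrak{d}$) $\mathfrak{d}^2Q_\Theta=\Theta\mathfrak{d}^2+\alpha(\alpha-2)\abs{\ponf}^2(R+\abs{\ponf})^{-2}-\alpha d\,\mathfrak{d}(R+\abs{\ponf})^{-1}$, and bounding the middle term below by $\alpha(\alpha-2)/4$ (valid since $\alpha(\alpha-2)\le0$ and $\abs{\ponf}^2(R+\abs{\ponf})^{-2}<1/4$) and $\Theta\mathfrak{d}^2-\alpha d\mathfrak{d}(R+\abs{\ponf})^{-1}\ge\Theta\mathfrak{d}^2-(\alpha d/R)\mathfrak{d}\ge-\alpha^2d^2/(4\Theta R^2)$, one finds $\mathfrak{d}^2Q_\Theta\ge\alpha(\alpha-2)/4-\alpha^2d^2/(4\Theta R^2)$ on all of $\Omega$, so that for $\Theta$ large enough $\gamma_\Theta:=\inf_\Omega\mathfrak{d}^2Q_\Theta$ lies in $(-1/4,0)$, which is condition (2). \cref{lem:easyAsymptotics} then yields \eqref{twoSidedBound} for the $\tilde w$-eigenvalues, and the comparability principle transfers it to $w$.

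The argument involves no genuine difficulty beyond this bookkeeping; the points requiring care are that the comparability $c_1\tilde w\le w\le c_2\tilde w$ be \emph{global} on $\Omega$ (this uses the positivity and continuity of $w$ on compacta and the boundedness of the correction factor $(2R-\mathfrak{d})^\alpha$), and, when $1<\alpha\le2$, the observation --- resting on the identity $(\alpha-1)^2>0$ --- that condition (2) of \cref{lem:easyAsymptotics} is attainable, the sub-case $\alpha=2$ being covered because the subleading term $-\alpha d\,\mathfrak{d}(R+\abs{\ponf})^{-1}$ forces $\mathfrak{d}^2Q_\Theta<0$ near $\partial\Omega$. An even shorter alternative, at the cost of an affine rescaling of the domain, is to note that $\alpha>1$ gives $2\alpha>2$, to rescale so that $\Omega=B(0,\sqrt{2\alpha})$ (which transforms the eigenvalues of \eqref{partial-ev} affinely, preserving \eqref{twoSidedBound}), and to observe --- via the identity $1-\abs{\ponf}^2/(2\alpha)=\mathfrak{d}(2\sqrt{2\alpha}-\mathfrak{d})/(2\alpha)$ used in the proof of \cref{cor:easyAsymptotics} --- that $w$ is then globally comparable to the FENE Maxwellian of parameter $2\alpha$, whose eigenvalues obey \eqref{twoSidedBound} by \cref{cor:easyAsymptotics}; the comparability principle again concludes.
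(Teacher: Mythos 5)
Your proposal is correct. Your \emph{main} argument takes a genuinely different route from the paper's: you manufacture the explicit smooth comparison weight $\tilde w(\ponf)=(R^2-\abs{\ponf}^2)^\alpha$ and verify the hypotheses of \cref{lem:easyAsymptotics} for it directly — computing $Q_\Theta$ for $\tilde w$, checking condition (1) when $\alpha>2$ and establishing condition (2) for a large enough $\Theta$ when $1<\alpha\le 2$ (the observation $\alpha(\alpha-2)/4=((\alpha-1)^2-1)/4>-1/4$ is exactly the right thing here, and your handling of the borderline case $\alpha=2$ via the subleading term is correct). This is a self-contained verification, essentially re-deriving the content of \cref{cor:easyAsymptotics} for the pure power weight. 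The paper instead takes the much shorter route you yourself sketch in your final paragraph: rescale $\Omega$ to $B(0,\sqrt{2\alpha})$ (an affine change of eigenvalues, harmless for \eqref{twoSidedBound}), observe that the rescaled $w$ is then globally comparable to the FENE Maxwellian of parameter $2\alpha$ as in \eqref{M-FENE}, and invoke \cref{cor:easyAsymptotics} together with the comparability discussion preceding the lemma. What the paper's route buys is that it reuses \cref{cor:easyAsymptotics} wholesale, so the density and compactness hypotheses of \cref{lem:easyAsymptotics} need not be re-verified; what your direct route buys is independence from the FENE-specific computations (though the compactness verification you sketch — via \cite[Theorem 17.6]{OK}, the one-dimensional Hardy inequality \eqref{anotherHardy} and the bound $\mathfrak{d}^\alpha\le\varepsilon^2\mathfrak{d}^{\alpha-2}$ — would need a little more polish, e.g.\ passing to polar coordinates to apply \eqref{anotherHardy} radially, to be a complete argument). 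Both approaches are sound.
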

\begin{proof}
If the radius of the ball happens to be $\sqrt{2 \alpha}$ the conditions on $w$
force it to be comparable to the FENE Maxwellian \eqref{M-FENE} and so the
result follows from the above discussion. Otherwise, one just needs to rescale
the domain; this will effect a fixed linear transformation on the eigenvalues,
but will not affect the validity of the bounds \eqref{twoSidedBound} (the
constants involved will change, though).
\end{proof}

\begin{remark}\label[remark]{rem:RussianLiterature}
The eigenvalue problem \eqref{partial-ev} associated with either the FENE or
the CPAIL model falls within what is called \emph{weak degeneracy} case in the
Russian spectral theory literature; i.e., problems of the form: Given $\Omega \subset
\Real^d$, find $(\lambda,u) \in \Real \times (\HH^1_{\mathfrak{d}^\alpha}(\Omega)
\setminus \{0\})$ such that
\begin{equation}\label{ev-VulisSolomjak}
\int_\Omega \left(A \grad u \cdot \grad v + h\,u\,v\right) \mathfrak{d}^\alpha
= \lambda \int_\Omega b\,u\,v\,\mathfrak{d}^\beta \qquad \forall\,v\in\HH^1_{\mathfrak{d}^\alpha}(\Omega),
\end{equation}
where $\alpha-\beta < 2/d$ (see \cite[\S 1]{VS:1974} for the precise statement,
which includes additional conditions on $\Omega$, $A$, $h$, $b$, $\alpha$ and
$\beta$). As, in the FENE and CPAIL versions of \eqref{partial-ev}, the same
weight (the associated Maxwellian) appears in both the left- and right-hand
side bilinear forms, and, in both cases, that weight is bounded from above and
below by powers of $\mathfrak{d}$ (cf.\ \eqref{M-FENE}, \eqref{M-CPAIL}), it
turns out that our problem is equivalent to a problem of the form
\eqref{ev-VulisSolomjak} with $\alpha - \beta = 0$.

The result, according to \cite[Theorem 1.1]{VS:1974} and assuming that $b\geq0$
is that
\begin{equation}\label{fromVulisSolomjak}
\lim_{\lambda \to \infty} \lambda^{-d/2}\#\{n \in \natural\colon \lambda_n < \lambda\} = \frac{1}{(2\sqrt{\pi})^d \Gamma(1+d/2)}\int_{\Omega}\frac{\mathfrak{d}^{-(\alpha-\beta)d/2} b^{d/2}}{\sqrt{\det(A)}}
\end{equation}
(compare this with \eqref{countingLimits}; note also that in \cite{VS:1974} the
statement is made in terms of what in our notation is $1/\lambda$). The problem
with this particular source is that, for a proof, it remits the reader to
either one of two publications. The first, \cite{BirmanSolomjak:1972} proves
related yet not directly applicable results---there is a gap that needs to be
bridged by means, perhaps elementary, that are unknown to us. We have not been
able to get hold of the second, \cite{Tashchiyan:1975} by
G.~M.~Ta\v{s}\v{c}ijan (also romanized as Tashchiyan). However, the latter is
also cited in \cite[Theorem 1]{Tashchiyan:1981}, where a generalization of
\eqref{fromVulisSolomjak} is proved, under the condition (in our notation) $d >
2$.
\end{remark}

\textbf{Acknowledgement.} We are grateful to Professor Marco Marletta (Cardiff University) for helpful
suggestions regarding the Liouville transformation.

\bibliographystyle{pillito}
\bibliography{references}

\newcommand{\etalchar}[1]{$^{#1}$}
\def\cprime{$'$}
  \providecommand{\href}[2]{}\providecommand{\DOIhref}[1]{\href{http://dx.doi.org/#1}{doi:#1}}
\providecommand{\bysame}{\leavevmode\hbox to3em{\hrulefill}\thinspace}
\providecommand{\MRhref}[2]{}\renewcommand{\MRhref}[2]{\href{http://www.ams.org/mathscinet-getitem?mr=#1}{#2}}
\providecommand{\DOIhref}[1]{\href{http://dx.doi.org/#1}{doi:#1}}
\begin{thebibliography}{AMCK07}

\bibitem[AF03]{AF:2003}
R.~A. Adams and J.~J.~F. Fournier, \emph{Sobolev spaces}, second ed., Pure and
  Applied Mathematics (Amsterdam), vol. 140, Elsevier/Academic Press,
  Amsterdam, 2003, \MRhref{MR2424078}{MR2424078 (2009e:46025)}.

\bibitem[AMCK06]{AMChK06}
A.~Ammar, B.~Mokdad, F.~Chinesta, and R.~Keunings, \emph{A new family of
  solvers for some classes of multidimensional partial differential equations
  encountered in kinetic theory modeling of complex fluids}, J. Non-Newton.
  Fluid Mech. \textbf{139} (2006), no.~3, 153--176,
  \DOIhref{10.1016/j.jnnfm.2006.07.007}.

\bibitem[AMCK07]{AMChK07}
\bysame, \emph{A new family of solvers for some classes of multidimensional
  partial differential equations encountered in kinetic theory modelling of
  complex fluids: Part {II}: Transient simulation using space-time separated
  representations}, J. Non-Newton. Fluid Mech. \textbf{144} (2007), no.~2--3,
  98--121, \DOIhref{10.1016/j.jnnfm.2007.03.009}.

\bibitem[AND{\etalchar{+}}10]{ANDGCCh}
A.~Ammar, M.~Normandin, F.~Daim, D.~Gonz{\'a}lez, E.~Cueto, and F.~Chinesta,
  \emph{Non incremental strategies based on separated representations:
  applications in computational rheology}, Commun. Math. Sci. \textbf{8}
  (2010), no.~3, 671--695, \MRhref{MR2730326}{MR2730326}.

\bibitem[BCAH87]{Bird2}
R.~B. Bird, C.~F. Curtiss, R.~C. Armstrong, and O.~Hassager, \emph{Dynamics of
  polymeric liquids, volume 2, kinetic theory}, second ed., John Wiley and
  Sons, New York, 1987.

\bibitem[Bog07]{Bogachev}
V.~I. Bogachev, \emph{Measure theory. {V}olumes {I} and {II}}, Springer-Verlag,
  Berlin, Heidelberg, 2007, \DOIhref{10.1007/978-3-540-34514-5},
  \MRhref{MR2267655}{MR2267655 (2008g:28002)}.

\bibitem[Bre83]{Brezis}
H.~Brezis, \emph{Analyse fonctionnelle: Th{\'e}orie et applications},
  Collection Math\'ematiques Appliqu\'ees pour la Ma\^\i trise, Masson, Paris,
  1983, \MRhref{MR697382}{MR697382 (85a:46001)}.

\bibitem[Bro61]{Browder:1961}
F.~E. Browder, \emph{On the spectral theory of elliptic differential operators.
  {I}}, Math. Ann. \textbf{142} (1961), 22--130, \DOIhref{10.1007/BF01343363},
  \MRhref{MR0209909}{MR0209909 (35 \#804)}.

\bibitem[BS70]{BirmanSolomjak:1970}
M.~{\v{S}}. Birman and M.~Z. Solomjak, \emph{The principal term of the spectral
  asymptotics for ``non-smooth''\ elliptic problems}, Funkcional. Anal. i
  Prilo\v zen. \textbf{4} (1970), no.~4, 1--13, \MRhref{MR0278126}{MR0278126
  (43 \#3857)}, Translated in Funct. Anal. Appl., \textbf{4} (1970), 265--275.

\bibitem[BS72]{BirmanSolomjak:1972}
\bysame, \emph{Spectral asymptotics of nonsmooth elliptic operators. {I}},
  Trudy Moskov. Mat. Ob\v s\v c. \textbf{27} (1972), 3--52,
  \MRhref{MR0364898}{MR0364898 (51 \#1152)}, Translated in Trans. Moscow Math.
  Soc. \textbf{27} (1972), 1--5.

\bibitem[BS07]{BS:2007}
J.~W. Barrett and E.~S{\"u}li, \emph{Existence of global weak solutions to some
  regularized kinetic models for dilute polymers}, Multiscale Model. Simul.
  \textbf{6} (2007), no.~2, 506--546 (electronic), \DOIhref{10.1137/060666810},
  \MRhref{MR2338493}{MR2338493 (2009i:76042)}.

\bibitem[BS08]{BS:2008}
\bysame, \emph{Existence of global weak solutions to dumbbell models for dilute
  polymers with microscopic cut-off}, Math. Models Methods Appl. Sci.
  \textbf{18} (2008), no.~6, 935--971, \DOIhref{10.1142/S0218202508002917},
  \MRhref{MR2419205}{MR2419205 (2009b:35317)}.

\bibitem[BS09]{BS:2009}
\bysame, \emph{Numerical approximation of corotational dumbbell models for
  dilute polymers}, IMA J. Numer. Anal. \textbf{29} (2009), no.~4, 937--959,
  \DOIhref{10.1093/imanum/drn022}, \MRhref{MR2557051}{MR2557051 (2010m:65213)}.

\bibitem[BS11a]{BS:2011a}
\bysame, \emph{Existence and equilibration of global weak solutions to kinetic
  models for dilute polymers {I}: finitely extensible nonlinear bead-spring
  chains}, Math. Models Methods Appl. Sci. \textbf{21} (2011), no.~6,
  1211--1289, \DOIhref{10.1142/S0218202511005313}.

\bibitem[BS11b]{BS:2011b}
\bysame, \emph{Finite element approximation of kinetic dilute polymer models
  with microscopic cut-off}, M2AN Math. Model. Numer. Anal. \textbf{45} (2011),
  no.~1, 39--89, \DOIhref{10.1051/m2an/2010030}.

\bibitem[CALK11]{ChALK}
F.~Chinesta, A.~Ammar, A.~Leygue, and R.~Keunings, \emph{An overview of the
  proper generalized decomposition with applications in computational
  rheology}, J. Non-Newton. Fluid Mech. \textbf{166} (2011), no.~11, 578--592,
  \DOIhref{10.1016/j.jnnfm.2010.12.012}.

\bibitem[CEL11]{CEL:2011}
E.~Canc{\`e}s, V.~Ehrlacher, and T.~Leli{\`e}vre, \emph{Convergence of a greedy
  algorithm for high-dimensional convex nonlinear problems}, Math. Models
  Methods Appl. Sci. \textbf{21} (2011), no.~12, 2433--2467,
  \DOIhref{10.1142/S0218202511005799}.

\bibitem[CH53]{CH1}
R.~Courant and D.~Hilbert, \emph{Methods of mathematical physics. {V}ol. {I}},
  Interscience Publishers, Inc., New York, N.Y., 1953,
  \MRhref{MR0065391}{MR0065391 (16,426a)}.

\bibitem[Cla67]{Clark}
C.~Clark, \emph{The asymptotic distribution of eigenvalues and eigenfunctions
  for elliptic boundary value problems}, SIAM Rev. \textbf{9} (1967), 627--646,
  \DOIhref{10.1137/1009105}, \MRhref{MR0510064}{MR0510064 (58 \#23164)}.

\bibitem[Coh91]{Cohen}
A.~Cohen, \emph{A {P}ad\'e approximant to the inverse {L}angevin function},
  Rheol. Acta \textbf{30} (1991), no.~3, 270--273,
  \DOIhref{10.1007/BF00366640}.

\bibitem[Dav95]{Davies}
E.~B. Davies, \emph{Spectral theory and differential operators}, Cambridge
  Studies in Advanced Mathematics, vol.~42, Cambridge University Press,
  Cambridge, 1995, \DOIhref{10.1017/CBO9780511623721},
  \MRhref{MR1349825}{MR1349825 (96h:47056)}.

\bibitem[DiB02]{DiBenedetto}
E.~DiBenedetto, \emph{Real analysis}, Birkh\"auser Advanced Texts: Basler
  Lehrb\"ucher [Birkh\"auser Advanced Texts: Basel Textbooks], Birkh\"auser
  Boston Inc., Boston, MA, 2002, \MRhref{MR1897317}{MR1897317 (2003d:00001)}.

\bibitem[DPL04]{dPL1}
G.~Da~Prato and A.~Lunardi, \emph{On a class of elliptic operators with
  unbounded coefficients in convex domains}, Atti Accad. Naz. Lincei Cl. Sci.
  Fis. Mat. Natur. Rend. Lincei (9) Mat. Appl. \textbf{15} (2004), no.~3-4,
  315--326, \MRhref{MR2148888}{MR2148888 (2006a:35044)}.

\bibitem[DT96]{DvT}
R.~A. DeVore and V.~N. Temlyakov, \emph{Some remarks on greedy algorithms},
  Adv. Comput. Math. \textbf{5} (1996), no.~2-3, 173--187,
  \DOIhref{10.1007/BF02124742}, \MRhref{MR1399379}{MR1399379 (97g:41029)}.

\bibitem[Fre87]{French:1987}
D.~A. French, \emph{The finite element method for a degenerate elliptic
  equation}, SIAM J. Numer. Anal. \textbf{24} (1987), no.~4, 788--815,
  \DOIhref{10.1137/0724051}, \MRhref{MR899704}{MR899704 (88k:65110)}.

\bibitem[GACC10]{GAChC}
D.~Gonz{\'a}lez, A.~Ammar, F.~Chinesta, and E.~Cueto, \emph{Recent advances on
  the use of separated representations}, Internat. J. Numer. Methods Engrg.
  \textbf{81} (2010), no.~5, 637--659, \MRhref{MR2640987}{MR2640987}.

\bibitem[GL10]{GhorpadeLimaye}
S.~R. Ghorpade and B.~V. Limaye, \emph{A course in multivariable calculus and
  analysis}, Undergraduate Texts in Mathematics, Springer, New York, 2010,
  \DOIhref{10.1007/978-1-4419-1621-1}, \MRhref{MR2583676}{MR2583676}.

\bibitem[Gri85]{Grisvard}
P.~Grisvard, \emph{Elliptic problems in nonsmooth domains}, Monographs and
  Studies in Mathematics, vol.~24, Pitman (Advanced Publishing Program),
  Boston, MA, 1985, \MRhref{MR775683}{MR775683 (86m:35044)}.

\bibitem[GT01]{GT}
D.~Gilbarg and N.~S. Trudinger, \emph{Elliptic partial differential equations
  of second order}, Classics in Mathematics, Springer-Verlag, Berlin, 2001,
  \MRhref{MR1814364}{MR1814364 (2001k:35004)}, Reprint of the 1998 edition.

\bibitem[H{\"o}r83]{Hormander:1983-I}
L.~H{\"o}rmander, \emph{The analysis of linear partial differential operators.
  {I}}, Grundlehren der Mathematischen Wissenschaften [Fundamental Principles
  of Mathematical Sciences], no. 256, Springer-Verlag, Berlin, 1983,
  \MRhref{MR717035}{MR717035 (85g:35002a)}.

\bibitem[HUL01]{HUL}
J.-B. Hiriart-Urruty and C.~Lemar{\'e}chal, \emph{Fundamentals of convex
  analysis}, Grundlehren Text Editions, Springer-Verlag, Berlin, 2001,
  \MRhref{MR1865628}{MR1865628 (2002i:90002)}, Abridged version of {\it Convex
  analysis and minimization algorithms. I} [Springer, Berlin, 1993; MR1261420
  (95m:90001)] and {\it II} [ibid.; MR1295240 (95m:90002)].

\bibitem[KG42]{KG}
W.~Kuhn and F.~Gr\"un, \emph{Beziehungen zwischen elastischen {K}onstanten und
  {D}ehnungsdoppelbrechung hochelastischer {S}toffe}, Kolloid Z. \textbf{101}
  (1942), no.~3, 248--271, \DOIhref{10.1007/BF01793684}.

\bibitem[KO84]{KO}
A.~Kufner and B.~Opic, \emph{How to define reasonably weighted {S}obolev
  spaces}, Comment. Math. Univ. Carolin. \textbf{25} (1984), no.~3, 537--554,
  \MRhref{MR775568}{MR775568 (86i:46036)}.

\bibitem[Kuf85]{Kufner:1985}
A.~Kufner, \emph{Weighted {S}obolev spaces}, John Wiley \& Sons Inc., New York,
  1985, \MRhref{MR802206}{MR802206 (86m:46033)}, Translated from the Czech.

\bibitem[LBLM09]{LLM}
C.~Le~Bris, T.~Leli{\`e}vre, and Y.~Maday, \emph{Results and questions on a
  nonlinear approximation approach for solving high-dimensional partial
  differential equations}, Constr. Approx. \textbf{30} (2009), no.~3, 621--651,
  \DOIhref{10.1007/s00365-009-9071-1}, \MRhref{MR2558695}{MR2558695}.

\bibitem[LP09]{LP:2009}
G.~M. Leonenko and T.~N. Phillips, \emph{On the solution of the
  {F}okker-{P}lanck equation using a high-order reduced basis approximation},
  Comput. Methods Appl. Mech. Engrg. \textbf{199} (2009), no.~1-4, 158--168,
  \DOIhref{10.1016/j.cma.2009.09.028}, \MRhref{MR2566221}{MR2566221
  (2010j:76006)}.

\bibitem[Mas08]{Masmoudi}
N.~Masmoudi, \emph{Well-posedness for the {FENE} dumbbell model of polymeric
  flows}, Comm. Pure Appl. Math. \textbf{61} (2008), no.~12, 1685--1714,
  \DOIhref{10.1002/cpa.20252}, \MRhref{MR2456183}{MR2456183}.

\bibitem[MMP98]{MMP-1998}
M.~Marcus, V.~J. Mizel, and Y.~Pinchover, \emph{On the best constant for
  {H}ardy's inequality in {$\mathbf{R}\sp n$}}, Trans. Amer. Math. Soc.
  \textbf{350} (1998), no.~8, 3237--3255,
  \DOIhref{10.1090/S0002-9947-98-02122-9}, \MRhref{MR1458330}{MR1458330
  (98k:26035)}.

\bibitem[NLM09]{NL}
A.~Nouy and O.~P. Le~Ma{\^{\i}}tre, \emph{Generalized spectral decomposition
  for stochastic nonlinear problems}, J. Comput. Phys. \textbf{228} (2009),
  no.~1, 202--235, \DOIhref{10.1016/j.jcp.2008.09.010},
  \MRhref{MR2464076}{MR2464076 (2010d:60154)}.

\bibitem[Nou07]{Nouy:2007}
A.~Nouy, \emph{A generalized spectral decomposition technique to solve a class
  of linear stochastic partial differential equations}, Comput. Methods Appl.
  Mech. Engrg. \textbf{196} (2007), no.~45-48, 4521--4537,
  \DOIhref{10.1016/j.cma.2007.05.016}, \MRhref{MR2354451}{MR2354451
  (2008g:65012)}.

\bibitem[Nou08]{Nouy:2008}
\bysame, \emph{Generalized spectral decomposition method for solving stochastic
  finite element equations: invariant subspace problem and dedicated
  algorithms}, Comput. Methods Appl. Mech. Engrg. \textbf{197} (2008),
  no.~51-52, 4718--4736, \DOIhref{10.1016/j.cma.2008.06.012},
  \MRhref{MR2464512}{MR2464512 (2009m:60151)}.

\bibitem[OK90]{OK}
B.~Opic and A.~Kufner, \emph{Hardy-type inequalities}, Pitman Research Notes in
  Mathematics Series, vol. 219, Longman Scientific \& Technical, Harlow, 1990,
  \MRhref{MR1069756}{MR1069756 (92b:26028)}.

\bibitem[Sch07]{Schmidt}
E.~Schmidt, \emph{{Z}ur {T}heorie der linearen und nichtlinearen
  {I}ntegralgleichungen}, Math. Ann. \textbf{63} (1907), no.~4, 433--476,
  \DOIhref{10.1007/BF01449770}.

\bibitem[Tar07]{Tartar}
L.~Tartar, \emph{An introduction to {S}obolev spaces and interpolation spaces},
  Lecture Notes of the Unione Matematica Italiana, vol.~3, Springer, Berlin,
  2007, \MRhref{MR2328004}{MR2328004 (2008g:46055)}.

\bibitem[Ta{\v{s}}75]{Tashchiyan:1975}
G.~M. Ta{\v{s}}{\v{c}}ijan, \emph{The spectral asymptotic behavior of elliptic
  boundary value problems with weak degeneracy}, Proceedings of the {S}ixth
  {W}inter {S}chool on {M}athematical {P}rogramming and {R}elated {Q}uestions
  ({D}rogobych, 1973), {F}unctional analysis and its applications ({R}ussian),
  Akad. Nauk SSSR Central. \`Ekonom.-Mat. Inst., Moscow, 1975,
  \MRhref{MR0481633}{MR0481633 (58 \#1739)}, pp.~277--293.

\bibitem[Ta{\v{s}}81]{Tashchiyan:1981}
\bysame, \emph{The classical formula of the asymptotic behavior of the spectrum
  of elliptic equations that are degenerate on the boundary of the domain},
  Mat. Zametki \textbf{30} (1981), no.~6, 871--880, 959,
  \MRhref{MR641661}{MR641661 (83b:35128)}, Translated in Mathematical Notes
  \textbf{30} (1981), no. 6, 937--942, \DOIhref{10.1007/BF01145775}.

\bibitem[Tem08]{Temlyakov}
V.~N. Temlyakov, \emph{Greedy approximation}, Acta Numer. \textbf{17} (2008),
  235--409, \DOIhref{10.1017/S0962492906380014}, \MRhref{MR2436013}{MR2436013
  (2009g:41066)}.

\bibitem[Vla02]{Vladimirov}
V.~S. Vladimirov, \emph{Methods of the theory of generalized functions},
  Analytical Methods and Special Functions, vol.~6, Taylor \& Francis, London,
  2002, \MRhref{MR2012831}{MR2012831 (2005b:46077)}.

\bibitem[VS74]{VS:1974}
I.~L. Vulis and M.~Z. Solomjak, \emph{Spectral asymptotic analysis for
  degenerate second order elliptic operators}, Izv. Akad. Nauk SSSR Ser. Mat.
  \textbf{38} (1974), 1362--1392, \MRhref{MR0358081}{MR0358081 (50 \#10546)},
  Translated in Mathematics of the USSR-Izvestiya \textbf{8} (1974), no. 6,
  1343--1371.

\bibitem[War72]{Warner:1972}
H.~R. Warner, \emph{Kinetic theory and rheology of dilute suspensions of
  finitely extendible dumbbells}, Ind. Eng. Chem. Fundamentals \textbf{11}
  (1972), no.~3, 379--387, \DOIhref{10.1021/i160043a017}.

\end{thebibliography}

\end{document}